%% file: nthOrderChoquardClassification-arxiv.tex
\documentclass[reqno]{amsart}

\input{structure.tex}

\usepackage{amsaddr}

\newif\ifdetails
\title[$n^{\text{th}}$ order nonlocal classification]{Classification of solutions to an $n^{\text{th}}$ order conformally invariant elliptic equation on $\bb R^n$ with nonlocal nonlinearity}
\author[M. Gluck and J. Marasinghe]{Mathew Gluck and Janani Marasinghe}
\subjclass[2020]{35B08, 35B53, 35J30, 35J61, 35J91}
\thanks{This material is based upon work supported by the National Science Foundation under Grant No. DMS-2418889.}
\address{Southern Illinois University, \\School of Mathematical and Statistical Sciences\\ Carbondale, IL}
\email{mathew.gluck@siu.edu}
\keywords{Choquard nonlinearity, fractional Laplacian, Liouville theorem, moving spheres}
\date{\today}
\begin{document}
\begin{abstract}
This paper concerns a conformally invariant elliptic problem on $\bb R^n$ driven by $(-\lap)^{n/2}$ that has a nonlocal exponential nonlinearity of Choquard type. The problem under consideration is a nonlocal generalization of the constant $Q$-curvature problem on $\bb R^n$. We classify the asymptotic behavior at infinity of all solutions that satisfy a suitable integrability assumption. Under a growth restriction at infinity and a lower bound on the energy we provide an explicit classification for solutions. The classification is heuristically consistent with the classification of the corresponding local problem. 
\end{abstract}
\maketitle
\ifdetails{\color{gray} 
\tableofcontents
} 
\fi 
\section{Introduction}
This note concerns the properties of distributional solutions to the problem 
\begin{equation}
\label{eq:entire_nonlocal}
\begin{cases}
	\ds(-\lap)^{n/2} u = (n - 1)!I_\mu[e^{\varrho u}]e^{\varrho u} & \text{ in }\bb R^n\\
	e^u\in L^n(\bb R^n)
\end{cases}
\end{equation}
where $\mu\in (0, n)$, 
\begin{equation}
\label{eq:rho}
	\varrho = n - \frac\mu 2 \in \left(\frac n2, n\right)
\end{equation}
and $I_\mu$ is the convolution operator
\begin{equation*}
	I_\mu[f](x) = \int_{\bb R^n}\frac{f(y)}{|x- y|^\mu}\; \d y. 
\end{equation*}
The motivation for considering problem \eqref{eq:entire_nonlocal} comes from the applications of the problem
\begin{equation}
\label{eq:entire_local}
\begin{cases}
	(-\lap)^{n/2} u = (n - 1)!e^{nu} & \text{ in }\bb R^n\\
	V_0= \int_{\bb R^n}e^{nu} \; \d x < \infty 
\end{cases}
\end{equation}
in conformal geometry and from the heuristic principle that one recovers problem \eqref{eq:entire_local} from problem \eqref{eq:entire_nonlocal} as $\mu\to 0^+$ (after addition of a suitable constant to $u$). A solution $u$ to \eqref{eq:entire_local} corresponds to the metric $g_u = e^{2u}g_0$ on $\bb R^n$ which is conformally equivalent to the standard Euclidean metric $g_0$ and for which the $Q$-curvature, volume, and total $Q$-curvature of $g_u$ are $(n - 1)!$, $V_0$, and $(n - 1)!\int_{\bb R^n}e^{nu}$ respectively. By pulling the standard metric of $\bb S^n$ back to $\bb R^n$ via stereographic projection, one can verify that the function $U(x) = \log 2 - \log(1+ \abs x^2)$ is a solution to \eqref{eq:entire_local} with $V_0 = \abs{\bb S^n}$. Since \eqref{eq:entire_local} is invariant under translations and dilations, each member of the family of functions 
\begin{equation}
\label{eq:SphericalSolutionFamily}
	U_{\bar x,d}(x)
	= 
	\log \frac{2d}{d^2 +\abs{x- \bar x}^2}
\end{equation}
parameterized by $(\bar x, d)\in \bb R^n\times(0, \infty)$ is also a solution to \eqref{eq:entire_local} with $V_0 = \abs{\bb S^n}$. The functions in this family are called \emph{spherical solutions}. 

Over the past three decades, considerable effort has been devoted to classifying all solutions to problem \eqref{eq:entire_local}. Such a classification has applications in proving quantization, compactness and existence results for solutions to equations locally modeled by \eqref{eq:entire_local}, see for example \cite{BrezisMerle1991, LiShafrir1994, RobertStruwe2004, AdimurthiRobertStruwe2006, Malchiodi2006, Robert2007, Martinazzi2009concentration, MartinazziPetrache2010, Martinazzi2011} and the references therein. When $n = 2$  it was shown in \cite{ChenLi1991} that every solution to \eqref{eq:entire_local} is spherical and hence satisfies $V_0= |\bb S^2|$. On the other hand, in higher dimensions there are non spherical solutions to \eqref{eq:entire_local}, see \cite{ChangChen2001}. Concerning general dimension $n\geq 2$, the following asymptotic estimate has been established. 
\begin{oldtheorem}[\cite{Lin1998, Martinazzi2009, Jin2015, Hyder2019}]
\label{OldTheorem:Asymptotic}
Every solution $u$ to \eqref{eq:entire_local} satisfies the asymptotic estimate
\begin{equation}
\label{eq:GeneralDecay}
	u(x)= p(x) - \frac{2V_0}{\abs{\bb S^n}}\log \abs x + \circ(\log \abs x)
	\qquad
	\text{ as }\abs x\to \infty, 
\end{equation}
for some polynomial $p$ that is bounded above and for which ${\rm deg}(p)\leq n -1$.
\end{oldtheorem}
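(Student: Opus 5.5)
The plan is the standard splitting $u = v + p$, where $v$ is a logarithmic potential of the right-hand side and $p$ is a polynomial. Set
\begin{equation*}
	v(x) = \frac{2}{\abs{\bb S^n}}\int_{\bb R^n}\log\left(\frac{1+\abs y}{\abs{x-y}}\right)e^{nu(y)}\; \d y .
\end{equation*}
The normalization is chosen because $\frac{1}{\gamma_n}\log\frac{1}{\abs x}$ is a fundamental solution of $(-\lap)^{n/2}$ with $\gamma_n = \frac{(n-1)!\abs{\bb S^n}}{2}$. The integral converges since $e^{nu}\in L^1$, and $(-\lap)^{n/2}v = (n-1)!e^{nu}$ in the distributional sense. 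Then $p = u - v$ is $(-\lap)^{n/2}$-harmonic. When $n$ is odd this operator is nonlocal, so $u$ must be taken in the weighted space $L_{n/2}$ (as in \cite{Jin2015, Hyder2019}). Either way, a Liouville theorem shows that $p$ is a polynomial once we know a one-sided growth bound for $p$. I will get that bound from the upper estimate $v(x)\ge -\frac{2V_0}{\abs{\bb S^n}}\log\abs x - C$ for large $\abs x$. This estimate follows by splitting the integral into $\abs y\le \abs x/2$, $\abs{x-y}\le 1$ and the remaining region, using $\log\frac{1}{\abs{x-y}}\ge 0$ near $x$. Consequently $p\le u+C\log\abs x$.

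Next I show that $p$ is bounded above and that $\deg p\le n-1$. Since $e^{nu}=e^{nv}e^{np}\in L^1$ and $v\ge -C\log\abs x$, we get $\int e^{np}\abs x^{-C}\,\d x<\infty$. A polynomial with this property must be bounded above, since otherwise $p\to+\infty$ polynomially fast on an open cone. For the degree, a harmonic-type polynomial of degree $n$ or more would force $u$ to violate the $L_{n/2}$ or $L^1$ growth condition. Alternatively, in even dimensions I would follow \cite{Martinazzi2009} and use $\lap^{n/2}p=0$ together with $p\le C$: a nonconstant polynomial bounded above has even degree, and the degree bound comes from the growth constraint coming from the equation.

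The main obstacle is the asymptotics of $v$ itself: I need $v(x) = -\frac{2V_0}{\abs{\bb S^n}}\log\abs x + \circ(\log\abs x)$, not just a one-sided bound. The lower-order part is handled by
\begin{equation*}
	v(x)+\frac{2V_0}{\abs{\bb S^n}}\log\abs x = \frac{2}{\abs{\bb S^n}}\int \log\frac{\abs x(1+\abs y)}{\abs{x-y}}\,e^{nu}\,\d y .
\end{equation*}
On $\abs y\le \abs x/2$ the integrand is controlled by $\log(1+\abs y)$ weighted by the tail, which is $\circ(\log\abs x)$ by dominated convergence. On $\abs{y}\ge 2\abs x$ the same argument applies. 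The delicate region is $\abs{x-y}<1$, where $\int\log\frac{1}{\abs{x-y}}e^{nu}\,\d y$ could be large at isolated points. Here I first prove the estimate in an averaged sense, or outside a set of small density. I then upgrade it to a pointwise estimate using $e^{nu}\in L^1$ together with the fact that $p$ is bounded above. Concretely, I would apply a Brezis--Merle-type estimate in the spirit of \cite{Lin1998}: it shows $e^{nu}$ is locally bounded far out, hence $e^{nu}\in L^\infty$ near infinity, and this makes the near-diagonal term $\circ(\log\abs x)$. Combining the two parts gives \eqref{eq:GeneralDecay}.
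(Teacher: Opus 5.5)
The paper does not reprove this cited result, but its proof of the nonlocal analogue (Theorem \ref{theorem:asymptotic}) follows the same route as yours. You write $u=(n-1)!K(e^{nu})+p$, use Lin's lower bound $v\ge -\frac{2V_0}{|\mathbb S^n|}\log|x|-C$ (Lemma \ref{lemma:Kf_lower_decay}), prove $\sup p<\infty$ by contradiction with finiteness of the volume, and control the near-diagonal term with a Brezis--Merle/Jensen estimate that uses $\sup p<\infty$ (Lemmas \ref{lemma:for_Kf_upper_decay} and \ref{lemma:Kf_upper_decay_estimate}).

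\textbf{The step that fails is your proof that $p$ is bounded above.} You claim that a polynomial which is unbounded above must tend to $+\infty$ polynomially fast on an open cone. This is false once $\deg p\ge 2$, which the theorem allows for $n\ge 3$. For example, $p(x)=x_1-x_2^2$ tends to $+\infty$ along the positive $x_1$-axis. But every open cone contains a direction $\omega$ with $\omega_2\neq 0$, and along it $p(t\omega)=t\omega_1-t^2\omega_2^2\to-\infty$. Even rays are not enough: $p(x)=x_1-x_1^2(x_1x_2-1)^2$ is bounded above on every straight line, yet equals $x_1$ on the hyperbola $x_1x_2=1$. In general the region where $p$ is large can be a polynomially thin neighbourhood of a curve. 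So you need a genuine structural fact about polynomials to see that this region still carries infinite mass for $e^{np}|x|^{-C}$.

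The missing ingredient is Gorin's theorem \cite{Gorin1961}: if $\sup p=+\infty$, there is $s>0$ with $r^{-s}\max_{\partial B_r}p\to+\infty$. Choose $x_r\in\partial B_r$ with $p(x_r)=\max_{\partial B_r}p$, and use $|\nabla p(x)|\le C(1+|x|^{n-2})$, which follows from $\deg p\le n-1$. Then $p\ge p(x_r)-C$ on $B_{r^{2-n}}(x_r)$. Combined with the lower bound on $v$, this gives $u\ge r^s$ there for large $r$. Hence $V_0\ge\int_{B_{r^{2-n}}(x_r)}e^{nu}\ge c\,r^{-n(n-2)}e^{nr^s}\to\infty$, a contradiction. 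This is exactly Lemma \ref{lemma:polynomial_part_bounded_above}, and your near-diagonal step depends on it.

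Two smaller remarks. First, with $u\in L_{n/2}$, the facts that $p$ is a polynomial and $\deg p\le n-1$ come directly from Lemma \ref{lemma:KernelClassification}, with no one-sided growth bound needed. If you do use a one-sided Liouville theorem instead, note that $p\le u+C\log|x|$ only becomes a growth bound after you combine it with $u^+\le e^{nu}/n$. Second, to make the Jensen bound uniform in $x$, you must first use your splitting to obtain $v(z)\le\frac{2}{|\mathbb S^n|}\int_{B_1(z)}\log\frac{1}{|z-y|}e^{nu(y)}\,\d y$ for large $|z|$ (the local analogue of \eqref{eq:as_in_Lin}). Bounding the far part crudely by $C\log|x|$ only gives $\int_{B_1(x)}e^{qu}\le C|x|^{C}$, which is not enough.
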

It was also shown in the above cited works that the asymptotic growth assumption $u(x) = \circ(\abs x^2)$ as $\abs x\to \infty$ is necessary and sufficient for a solution to \eqref{eq:entire_local} to be spherical. Partial converses to the asymptotic estimate \eqref{eq:GeneralDecay} are available. It was shown that in even dimensions $n\geq 4$, given any $V_0\in (0, \abs{\bb S^n})$ and any polynomial $p$ satisfying both ${\rm deg}(p)\leq n-2$ and $x\cdot\Grad p(x) \to \infty$ as $\abs x\to \infty$, there is a solution $u$ to \eqref{eq:entire_local} with  asymptotic behavior as in \eqref{eq:GeneralDecay}, see \cite{HyderMartinazzi2015, WeiYe2008}. In odd dimensions $n\geq 3$, it was shown that for every $V_0\in (0, \abs{\bb S^n})$ and every polynomial $p$ satisfying both ${\rm deg}(p)\leq n - 1$ and $p(x)\to-\infty$ there is a solution $u$ to \eqref{eq:entire_local}  satisfying the asymptotic estimate \eqref{eq:GeneralDecay}, see \cite{Jin2015, Hyder2016}. Concerning the possibility of existence of ``large volume'' solutions to \eqref{eq:entire_local} it was shown in \cite{Lin1998} for $n = 4$ and in \cite{Jin2015} for $n = 3$ that every solution to \eqref{eq:entire_local} must satisfy $0< V_0\leq \abs{\bb S^n}$. Thus, in these cases problem \eqref{eq:entire_local} does not admit large volume solutions. On the other hand, it has been shown that in dimensions $n \geq 5$, for every $V_0\in (0, \infty)$ there is a solution to \eqref{eq:entire_local}, see \cite{Martinazzi2013, HuangYe2015, Hyder2017}.

Problems \eqref{eq:entire_local} and \eqref{eq:entire_nonlocal} enjoy invariance under the same symmetries. In particular, for any $(x_0, \sigma)\in \bb R^n\times(0, \infty)$ they are both invariant under the rescaling
\begin{equation}
\label{eq:natural_rescaling}
	u\mapsto u(\sigma(\cdot - x_0)) + \log \sigma, 
\end{equation}
\ifdetails{\color{gray}
(see Lemma \ref{lemma:rescaling_invariance} for verification that \eqref{eq:entire_nonlocal} is invariant under this rescaling)
}\fi 
and they are both invariant under the logarithmic Kelvin transform
\begin{equation*}
	u(x)\mapsto u\left(x_0 + \frac{\sigma^2(x - x_0)}{|x - x_0|^2}\right) + 2\log\frac{\sigma}{|x - x_0|}. 
\end{equation*}
These shared symmetries prompt us to pose the following question:
\begin{question}
\label{question:extend_to_nonlocal}
To what extent do the results stated above for problem \eqref{eq:entire_local} hold for problem \eqref{eq:entire_nonlocal}? 
\end{question}
To answer this question we note that the integrability condition in \eqref{eq:entire_nonlocal} together with the Hardy-Littlewood-Sobolev inequality guarantees that the quantity 
\begin{equation}
\label{eq:nonlocal_volume}
	V = \int_{\bb R^n}I_\mu[e^{\varrho u}]e^{\varrho u}
\end{equation}
is finite. As we will see, this quantity plays the role for problem \eqref{eq:entire_nonlocal} that $V_0$ plays for problem \eqref{eq:entire_local}. 
Partial answers to Question \ref{question:extend_to_nonlocal} have already been addressed in the literature. For example, in the case $n = 2$, it was shown in \cite{Gluck2025classification} that if $u$ is a solution to \eqref{eq:entire_nonlocal} then there is $(\bar x, d)\in \bb R^2\times (0, \infty)$ such that 
\begin{equation*}
	u(x) = \log\frac{2d}{d^2 + |x - \bar x|^2} - \log 2+ \frac{1}{4 - \mu}\log\left(\frac{2(2 - \mu)}{\pi}\right)
\end{equation*}
and
\begin{equation*}
\begin{split}
	I_\mu[e^{\varrho u}](x)
	& = \mc H(2, \mu)^{1/\varrho}(4\pi)^{\frac{2 - \mu}{4 - \mu}}e^{\mu u(x)/2}\\
	& = \left(\frac{4}{\pi(2 - \mu)}\right)^{1/2}\left(\frac{2d}{d^2 + |x - \bar x|^2}\right)^{\mu/2},
\end{split} 
\end{equation*}
where $\mc H(2, \mu)$ is the sharp constant in the Hardy-Littlewood-Sobolev inequality with exponents $4/(4 - \mu)$ and $4/\mu$, see \eqref{eq:sharp_HLS_constant} below. In particular, up to an additive constant, every such $u$ is a spherical solution to \eqref{eq:entire_local}. Moreover, the additive constant must be chosen so that both 
\begin{equation*}
	V = |\bb S^2|
	\quad \text{ and }\quad
	\int_{\bb R^2}e^{2u} = \left(4(2 - \mu)\right)^{\frac{2}{2 - \mu}}\left(\frac{\pi^{2 - \mu}}{4}\right)^{\frac1{4 - \mu}}.  
\end{equation*}
In this sense we say that all solutions to \eqref{eq:entire_nonlocal} in the two-dimensional setting are spherical. This classification result was used in \cite{Gluck2025quantization} to prove a quantization result for blowing up sequences of solutions to a problem having problem \eqref{eq:entire_nonlocal} with $n = 2$ as its ``limiting problem''.  

Our aim in this note is to provide further insight to Question \ref{question:extend_to_nonlocal}. Our first result is an analog of Theorem \ref{OldTheorem:Asymptotic} for problem \eqref{eq:entire_nonlocal}. 
\begin{theorem}
\label{theorem:asymptotic}
Let $n\geq 1$, let $\mu\in (0, n)$ and let $\varrho$ be as in \eqref{eq:rho}. If $u$ is a distributional solution to \eqref{eq:entire_nonlocal} then there is a polynomial $p$ of degree at most $n - 1$ that is bounded above for which 
\begin{equation*}
	u(x) = p(x) - \frac{2V}{|\bb S^n|}\log|x| + \circ(\log|x|)
\end{equation*}	
as $|x|\to\infty$. 
\end{theorem}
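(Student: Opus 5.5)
Set $K := (n-1)!\,I_\mu[e^{\varrho u}]e^{\varrho u}$. The plan is to follow the standard route of Lin, Martinazzi, Jin and Hyder for Theorem \ref{OldTheorem:Asymptotic}, with $K$ playing the role of $(n-1)!e^{nu}$. By the Hardy--Littlewood--Sobolev inequality and $e^u\in L^n$, we have $e^{\varrho u}\in L^{n/\varrho}=L^{2n/(2n-\mu)}$, so $K\in L^1(\bb R^n)$ with $\int K = (n-1)!\,V$. Define
\begin{equation*}
	v(x) = \frac{2}{(n-1)!\,|\bb S^n|}\int_{\bb R^n}\log\left(\frac{1+|y|}{|x-y|}\right)K(y)\,\d y ,
\end{equation*}
using that $\frac{2}{(n-1)!|\bb S^n|}\log\frac1{|x|}$ is a fundamental solution of $(-\lap)^{n/2}$. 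Then $v$ is well defined (log-integrability of $K$ near points follows from $K\in L^1$ together with local $L^q$ bounds for some $q>1$, established below), and $(-\lap)^{n/2}v = K$ in the distributional sense. The function $h = u - v$ is then $(-\lap)^{n/2}$-harmonic. In the distributional framework used for \eqref{eq:entire_nonlocal}, where $u$ has the appropriate growth weighted integrability $\int |u|/(1+|x|)^{2n}<\infty$ when $n$ is odd, a Liouville theorem gives that $h$ is a polynomial $p$. The bound $\deg p\le n-1$ and $\sup p<\infty$ follow as in the local case. Since $e^u\in L^n$ and $v\ge -\frac{2V}{|\bb S^n|}\log|x|-C$ away from a small set, we get $\int e^{np}|x|^{-2nV/|\bb S^n|-o(1)}<\infty$ on most of space. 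An averaging argument then forces $p$ to be bounded above and excludes leading terms of degree $\ge n$: a polynomial with a direction of positive growth yields $e^{np}$ non-integrable on a cone.

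It remains to show $v(x) = -\frac{2V}{|\bb S^n|}\log|x| + o(\log|x|)$. Split $\bb R^n$ into $|y|\le |x|/2$, $|y-x|\le |x|/2$, and the rest. The first and third regions give $-\frac{2V}{|\bb S^n|}\log|x|+o(\log|x|)$ by dominated convergence, using only $K\in L^1$ (and $\int_{|y|>R}K\log(1+|y|)$-type tails controlled by $\log|x|$ times vanishing mass). The middle region contributes $\int_{|x-y|<|x|/2}\log\frac{|x|}{|x-y|}K(y)\,\d y\ge 0$. This term needs an upper bound $o(\log|x|)$, and controlling it is the main obstacle.

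In the local problem this near-diagonal term is handled by the fact that $e^{nu}$ is pointwise controlled by $e^{nv}e^{np}$, with $p$ bounded above, so Brezis--Merle type estimates give $K\in L^q(B_{|x|/2}(x))$ uniformly. Here $K=(n-1)!\,I_\mu[e^{\varrho u}]e^{\varrho u}$ is nonlocal, so I would first bound the potential. Split $I_\mu[e^{\varrho u}](x)$ into $|y-x|\ge 1$, which is at most $\|e^{\varrho u}\|_{L^1}$ if that is finite and otherwise is handled via Hölder with $e^{\varrho u}\in L^{n/\varrho}$ and $|\cdot|^{-\mu}\in L^{(n/\varrho)'}(\{|z|\ge1\})$, since $\mu (n/\varrho)'=2n\cdot\frac{n}{\mu}\cdot\frac{\mu}{2n}\cdot$ ... $>n$. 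Check: $(n/\varrho)' = 2n/\mu$, so the exponent is $\mu\cdot 2n/\mu = 2n>n$, and this part is bounded uniformly. The part $|y-x|<1$ is handled by local Brezis--Merle integrability: $e^{\varrho u}\in L^{s}_{loc}$ uniformly for some $s>n/(n-\mu)$ on balls where the mass of $K$ is small, giving $I_\mu[e^{\varrho u}]\in L^\infty$ there. Hence $K\le C e^{\varrho u}$ outside finitely many concentration balls, and these cannot occur at infinity because $K\in L^1$. With $K\le Ce^{\varrho u}$ and $u\le p+v\le C+v$, the usual argument (Jensen on $e^{q\varrho v}$ restricted to the near region, whose mass tends to $0$) shows $\int_{B_{|x|/2}(x)}K^q\le C$ for some $q>1$ and large $|x|$. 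Away from a set of small density this yields $\int_{|x-y|<|x|/2}\log\frac{|x|}{|x-y|}K \le o(\log|x|)$. Passing from a density-type estimate to the pointwise statement uses that $u-p$ is bounded above by $v+C$ plus a subharmonicity/mean-value argument, as in \cite{Martinazzi2009}.
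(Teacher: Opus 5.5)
Your architecture is the paper's: the representation $u=p+v$ via the logarithmic potential and Lemma \ref{lemma:KernelClassification}, $p$ bounded above by contradiction, Lin's lower bound, and the near-diagonal integral as the crux. Two steps, however, fail as written.

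\textbf{First, the upper bound on $p$.} A polynomial that is unbounded above need not be large on any cone. For $n=3$, $p(x)=x_1-x_2^2$ is admissible ($\deg p=2$) and unbounded above, yet $p\to-\infty$ along every ray with $x_2\neq 0$, so $p$ is not large on any open cone. Knowing only that $p(x_k)\to\infty$ along some sequence is not enough either. The bound $|\Grad p(x)|\le C(1+|x|^{n-2})$ only keeps $p\ge p(x_k)-C$ on a ball of radius $|x_k|^{2-n}$, whose weighted volume is polynomially small. You therefore need the quantitative growth $\sup_{\bdy B_r}p\ge r^s$ from Gorin's theorem \cite{Gorin1961}, and this is exactly how the paper argues. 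Once that is in place, contradicting $e^u\in L^n(\bb R^n)$, as you propose, is slightly cleaner than the paper's contradiction with $V<\infty$, since no lower bound on $I_\mu[e^{\varrho u}]$ over the small ball is needed. By Lemma \ref{lemma:Kf_lower_decay},
$$\int_{B_{r^{2-n}}(x_r)}e^{nu}\ge c\,r^{-n(n-2)-2nV/|\bb S^n|}e^{n(r^s-C)}\to\infty.$$

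\textbf{Second, the near-diagonal term.} Write $F=(n-1)!I_\mu[e^{\varrho u}]e^{\varrho u}$ (your $K$). A uniform bound on $\int_{B_{|x|/2}(x)}F^q$ does not give $o(\log|x|)$ through H\"older on that ball, because $\|\log(|x|/|x-\cdot|)\|_{L^{q'}(B_{|x|/2}(x))}$ is comparable to $|x|^{n/q'}$. Your proposed repair runs the wrong way. It takes an estimate off a set of small density and upgrades it to a pointwise one by a mean-value argument. But for $n\ge 2$ the kernel $x\mapsto\log\frac1{|x-y|}$ is superharmonic, so $v$ is superharmonic and averages bound $v(x)$ from below, not from above.

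None of this is needed. Split $\log\frac{|x|}{|x-y|}\le\log|x|+\log^+\frac1{|x-y|}$. The first piece contributes $\log|x|\int_{B_{|x|/2}(x)}F=o(\log|x|)$. The second is at most $\int_{B_1(x)}\log\frac1{|x-y|}F\le C\|F\|_{L^q(B_1(x))}$, and your own ingredients bound this at every large $|x|$, with no exceptional set:
\begin{itemize}
\item small local mass of $F$ and Jensen give $\int_{B_1(x)}e^{s\varrho v}\le C$ for any fixed $s$ once $|x|$ is large;
\item with $\sup p<\infty$ this gives $I_\mu[e^{\varrho u}]\le C$ near infinity;
\item hence $F\le Ce^{\varrho v}$ there.
\end{itemize}
This is the paper's reduction \eqref{eq:as_in_Lin} together with Lemma \ref{lemma:for_Kf_upper_decay}.

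The paper then avoids any pointwise bound on $I_\mu[e^{\varrho u}]$. It uses a three-exponent H\"older inequality with $I_\mu[e^{\varrho u}]\in L^{2n/\mu}(\bb R^n)$ (from HLS), $\log|x-\cdot|\in L^r(B_1(x))$ and $e^{\varrho v}\in L^t(B_1(x))$, where $\frac1r+\frac1t=1-\frac{\mu}{2n}$. Your detour through $\sup_{\bb R^n\setminus B_R}I_\mu[e^{\varrho u}]<\infty$ is valid, and it gives that bound for every $V$ without the lower bound assumed in Lemma \ref{lemma:Imu_initial_decay}, but it costs an extra step.
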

If, in addition to the assumptions of Theorem \ref{theorem:asymptotic}, $u$ is assumed to satisfy $u(x) = \circ(|x|^2)$ as $|x|\to\infty$, then the polynomial $p$ in the conclusion of Theorem \ref{theorem:asymptotic} is constant. In this case, under a mild lower bound assumption on $V$ we can deduce the explicit form of $u$.   
\begin{theorem}
\label{theorem:classification}
Let $n\geq 1$, let $\mu\in (0, n)$ and let $\varrho$ be as in \eqref{eq:rho}. If $u$ is a distributional solution to \eqref{eq:entire_nonlocal} for which both
\begin{equation}
\label{eq:V_lower_bound_assumption}
	\frac{n - \mu}{2n - \mu}<\frac{V}{|\bb S^n|}
\end{equation}
and 
\begin{equation}
\label{eq:asymtpotic_growth_assumption}
    u(x)= \circ(|x|^2)
    \quad \text{ as }|x|\to\infty, 
\end{equation}
then there is $(\bar x, d)\in \bb R^n\times (0, \infty)$ for which 
\begin{equation}
\label{eq:nonlocal_bubbles}
	u(x) = \log\frac{2d}{d^2 + |x - \bar x|^2} 
	-\frac{1}{2n - \mu}\log\left(\mc H(n, \mu)|\bb S^n|^{1 - \mu/n}\right),
\end{equation}
where $\mc H(n, \mu)$ is the sharp constant in the Hardy-Littlewood-Sobolev inequality with exponents $n/\varrho$ and $2n/\mu$ as given in \eqref{eq:sharp_HLS_constant} below. Moreover, 
\begin{equation}
\label{eq:Imu_erhou_form}
	I_\mu[e^{\varrho u}]
	= \mc H(n, \mu)^{\frac{n}{2n - \mu}}|\bb S^n|^{\frac{n - \mu}{2n - \mu}}e^{\mu u/2}
	= \mc H(n, \mu)^{\frac 12}|\bb S^n|^{\frac{n - \mu}{2n}}e^{\mu U_{\bar x, d}/2}. 
\end{equation}
\end{theorem}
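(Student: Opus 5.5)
We will use the method of moving spheres applied to an integral representation of $u$, in the spirit of the two-dimensional argument of \cite{Gluck2025classification}.

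\emph{Step 1: integral representation and asymptotics.} Set $K = (n-1)!\,I_\mu[e^{\varrho u}]e^{\varrho u}\in L^1(\bb R^n)$. By Theorem \ref{theorem:asymptotic} and the growth assumption \eqref{eq:asymtpotic_growth_assumption}, the polynomial $p$ has degree at most one and is bounded above, hence constant. The argument behind Theorem \ref{theorem:asymptotic} then yields
\begin{equation*}
	u(x) = \frac{2}{|\bb S^n|}\int_{\bb R^n}\log\frac{1+|y|}{|x-y|}\,\frac{K(y)}{(n-1)!}\,\d y + C,
\end{equation*}
using that $\frac{2}{(n-1)!|\bb S^n|}\log\frac1{|x|}$ is the fundamental solution of $(-\lap)^{n/2}$. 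In particular $u(x) = -\alpha\log|x| + \circ(\log|x|)$ with $\alpha = 2V/|\bb S^n|$.

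\emph{Step 2: the role of \eqref{eq:V_lower_bound_assumption}.} The moving sphere argument needs $e^{\varrho u}$ to decay fast enough that $I_\mu[e^{\varrho u}](x)\sim |x|^{-\mu}\int e^{\varrho u}$ at infinity, and that the Kelvin transform has the right sign structure. Assumption \eqref{eq:V_lower_bound_assumption} reads $\varrho\alpha > n-\mu$. Combined with $e^u\in L^n$, which already forces $\alpha\ge 1$, this makes $e^{\varrho u}$ integrable with enough decay. I expect to sharpen the bound to $\alpha\ge 2$ (indeed $\alpha=2$) only at the end, via Pohozaev or via the moving sphere conclusion itself.

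\emph{Step 3: moving spheres.} For $x_0\in\bb R^n$ and $\lambda>0$, let
\begin{equation*}
	u_{x_0,\lambda}(x) = u\Bigl(x_0+\tfrac{\lambda^2(x-x_0)}{|x-x_0|^2}\Bigr)+2\log\tfrac{\lambda}{|x-x_0|}.
\end{equation*}
The conformal invariance of both the logarithmic kernel and the Riesz kernel gives a representation of $u - u_{x_0,\lambda}$ on $|x-x_0|>\lambda$. This representation is an integral over the exterior region against a positive kernel difference. Its integrand involves $K - K_{x_0,\lambda}$, where $K_{x_0,\lambda}$ is the same expression built from $u_{x_0,\lambda}$. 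The nonlocal term $I_\mu[e^{\varrho u}]$ transforms consistently and also splits into interior and exterior parts with a positive kernel difference. Hence $u\ge u_{x_0,\lambda}$ for $|x-x_0|\ge\lambda$ implies $K\ge K_{x_0,\lambda}$ there, which is the monotonicity needed.

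The standard steps follow. First, the inequality holds for small $\lambda$, using the asymptotics from Step 1 with $\alpha>$ the critical threshold. Second, define $\bar\lambda(x_0)$ as the supremum of admissible $\lambda$. If $\bar\lambda(x_0)<\infty$ for some $x_0$, then $u\equiv u_{x_0,\bar\lambda}$, which forces $\alpha=2$ and $\bar\lambda<\infty$ for every $x_0$. If $\bar\lambda=\infty$ for all $x_0$, then $u$ is constant, which is impossible.

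\emph{Main obstacle.} The hardest step is the small-$\lambda$ start and the ``sphere can be pushed further'' step. Both require uniform control of $I_\mu[e^{\varrho u}]$ near infinity after Kelvin transform, and this is exactly where \eqref{eq:V_lower_bound_assumption} enters. I would first try to prove $|x|^\mu I_\mu[e^{\varrho u}](x)\to\int e^{\varrho u}$ together with the decay bound $e^{\varrho u}\le C|x|^{-\varrho\alpha+\epsilon}$.

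\emph{Step 4: conclusion.} Once $u_{x_0,\bar\lambda(x_0)}\equiv u$ holds for all $x_0$, the standard calculus lemma (Li--Zhang) gives $e^{u} = c\,\frac{2d}{d^2+|x-\bar x|^2}$ for some constant $c>0$. By the sharp HLS equality case (Lieb), $I_\mu[(2d/(d^2+|\cdot-\bar x|^2))^{\varrho}]$ is an explicit multiple of $(2d/(d^2+|x-\bar x|^2))^{\mu/2}$. Writing $\mc H(n,\mu)$ in terms of this multiple gives \eqref{eq:Imu_erhou_form}. Substituting into the equation and comparing with $(-\lap)^{n/2}U = (n-1)!e^{nU}$ determines $c^{2n-\mu}$, which yields \eqref{eq:nonlocal_bubbles}.
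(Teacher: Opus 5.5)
There is a genuine gap: nothing in your outline rules out $V>|\bb S^n|$ (that is, $\alpha>2$), and in that regime your moving-sphere scheme cannot start. Step 3 glosses over the fact that $V$ is not known in advance, so the Kelvin-difference representation is not a pure exterior integral against a positive kernel. With $F=I_\mu[e^{\varrho u}]e^{\varrho u}$ and $F_{x_0,\sigma}=I_\mu[e^{\varrho u_{x_0,\sigma}}]e^{\varrho u_{x_0,\sigma}}$ it reads
\[
\frac{u_{x_0,\sigma}(x)-u(x)}{(n-1)!\,c_n}+(V-|\bb S^n|)\log\frac{\sigma}{|x-x_0|}
=\int_{\bb R^n\setminus B_\sigma(x_0)}\mc L(x_0,\sigma;x,y)\bigl(F_{x_0,\sigma}(y)-F(y)\bigr)\,\d y .
\]
For $|x-x_0|>\sigma$ the extra term helps you when $V\le|\bb S^n|$. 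When $V<|\bb S^n|$ it gives strict inequality at any finite critical radius, so the spheres never stop and the calculus lemma yields a contradiction. That is how the paper rules out slow decay, and your dichotomy essentially covers it.

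If $\alpha>2$, however, $u(x)-u_{x_0,\lambda}(x)=-(\alpha-2)\log|x|+\circ(\log|x|)\to-\infty$ for every $\lambda>0$. So no $\lambda$ is admissible and $\bar\lambda(x_0)$ is not even defined. A lower bound on $\alpha$, as in your ``$\alpha>$ the critical threshold'', cannot make the start work; the start requires the upper bound $V\le|\bb S^n|$. Your fallbacks do not close this. ``Via the moving sphere conclusion itself'' is circular. A Pohozaev identity is only named, not derived, and the paper deliberately avoids it: with only $e^u\in L^n$, the decay of $I_\mu[e^{\varrho u}]$ is too weak to justify one in general.

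The missing idea is to run the spheres in reverse. If $V>|\bb S^n|$, then $e^{\varrho u}(1+|\log|\cdot-x_0||)\in L^{n/\varrho}$ near infinity. This gives $u\le u_{x_0,\sigma}$ and $I_\mu[e^{\varrho u}]\le I_\mu[e^{\varrho u_{x_0,\sigma}}]$ outside $B_\sigma(x_0)$ for all large $\sigma$. The extra term now has the opposite sign and gives strict inequality at any positive critical radius, so $\sigma$ shrinks to $0$. The calculus lemma applied to $f=e^{-u}$ with $\gamma=-2$ then forces $u$ to be constant, a contradiction. Only once $V=|\bb S^n|$ is established does the extra term vanish. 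A final moving-sphere run then gives $u\equiv u_{x_0,\Sigma(x_0)}$ with $\Sigma(x_0)<\infty$ for every $x_0$.

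A second problem is the tool you plan to use for the start and continuation steps. Step 2 claims that $\varrho\alpha>n-\mu$ together with $\alpha\ge1$ makes $e^{\varrho u}$ integrable. This is false: integrability at infinity needs $\varrho\alpha>n$, i.e.\ $V>\frac{n}{2n-\mu}|\bb S^n|$. Inside the contradiction argument for slow decay, $\alpha$ may lie in $(1,\frac{2n}{2n-\mu})$, where $\int_{\bb R^n}e^{\varrho u}=\infty$. So $|x|^\mu I_\mu[e^{\varrho u}](x)\to\int e^{\varrho u}$ is not available; this is exactly what separates the present setting from that of Huang--Niu.

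The paper needs much less. The lower bound on $V$ is used only to get $I_\mu[e^{\varrho u}]\in L^\infty$ and $\log(1+|y|)F\in L^1$, so that $u$ is the $\log\frac{1}{|x-y|}$ potential of $F$ plus a constant. Both the start and the continuation use a self-improving $L^{n/\varrho}$ estimate for $e^{\varrho u_{x_0,\sigma}}-e^{\varrho u}$ on the set where $u<u_{x_0,\sigma}$, with a constant uniform in $\sigma$. No pointwise asymptotics of $I_\mu$ are needed.

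Incidentally, your observation $\alpha\ge1$ does show that \eqref{eq:V_lower_bound_assumption} is automatic under \eqref{eq:asymtpotic_growth_assumption}, since $\frac{n-\mu}{2n-\mu}<\frac12$. Your Step 4, which fixes the constant by substituting $u=U_{\bar x,d}+\log c$ into the equation, is a legitimate alternative to the paper's computation through $V=|\bb S^n|$ and the equality cases of H\"older and HLS.
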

As mentioned above, the asymptotic growth assumption $u(x) = \circ(\abs x^2)$ as $\abs x\to \infty$ is necessary and sufficient for a solution to problem \eqref{eq:entire_local} to be spherical. Theorem \ref{theorem:classification} implies a similar result for solutions to the nonlocal problem \eqref{eq:entire_nonlocal}. Indeed, when Theorem \ref{theorem:classification} applies the equality 
\begin{equation*}
	I_\mu[e^{\varrho u}](x)e^{\varrho u}(x)
	= \left(\frac{2d}{d^2 + |x - \bar x|^2}\right)^n 
	= e^{n U_{\bar x, d}(x)}
\end{equation*}
holds for all $x\in \bb R^n$ so the right-hand side of equation \eqref{eq:entire_nonlocal} is independent of $\mu$. In particular, Theorem \ref{theorem:classification} implies that $V = |\bb S^n|$. Thus, under assumption \eqref{eq:V_lower_bound_assumption}, the condition $u(x) = \circ(|x|^2)$ as $|x|\to\infty$ is necessary and sufficient for solutions to \eqref{eq:entire_nonlocal} to be spherical (again in this nonlocal context ``spherical'' means the sum of a spherical solution to \eqref{eq:entire_local} and a constant, where the constant is chosen so that $V = |\bb S^n$|). With the exception of the case $n = 2$ where every solution to \eqref{eq:entire_nonlocal} is spherical, 
\ifdetails{\color{gray}
(even in the absence of both of assumptions \eqref{eq:V_lower_bound_assumption} and \eqref{eq:asymtpotic_growth_assumption}),
}\fi
we do not know whether the same is true when $\frac V{|\bb S^n|}\in (0, \frac{n - \mu}{2n - \mu}]$.

As $\mu\to 0^+$ there is a heuristic consistency between Theorem \ref{theorem:classification} and the classification of solutions to \eqref{eq:entire_local} for which $u(x) = \circ(|x|^2)$ as $|x|\to\infty$. Indeed, letting $\mu\to 0^+$ in \eqref{eq:entire_nonlocal} one obtains the problem
\begin{equation*}
	(-\lap)^{n/2}u = (n - 1)!\|e^{nu}\|_{L^1(\bb R^n)}e^{nu},
\end{equation*}
so the function $v = u + \frac 1n\log\|e^{nu}\|_{L^1(\bb R^n)}$ satisfies both \eqref{eq:entire_local} and $v(x) = \circ(|x|^2)$ with $\|e^{nv}\|_{L^1(\bb R^n)} = \|e^{nu}\|_{L^1(\bb R^n)}^2$. The classification of solutions to \eqref{eq:entire_local} gives $v= U_{\bar x, d}$ for some $(\bar x, d)\in \bb R^n\times(0, \infty)$ and thus $|\bb S^n|= \|e^{nv}\|_{L^1(\bb R^n)} = \|e^{nu}\|_{L^1(\bb R^n)}^2$. In particular, 
\begin{equation*}
	u = U_{\bar x, d} - \frac 1{2n}\log|\bb S^n|,  
\end{equation*}
which is precisely what one obtains by letting $\mu\to 0^+$ in \eqref{eq:nonlocal_bubbles}.

The proof of Theorem \ref{theorem:asymptotic} is based on establishing a one-sided inverse $K$ for $(-\lap)^{n/2}$, estimating the asymptotic behavior of $K(I_\mu[e^{\varrho u}]e^{\varrho u})(x)$ as $|x|\to\infty$, and analyzing the elements of $\ker((-\lap)^{n/2})$ that are of the form $u - K(I_\mu[e^{\varrho u}]e^{\varrho u})$ for some solution $u$ to \eqref{eq:entire_nonlocal}. Many of the technicalities of the proof have already been addressed in \cite{Lin1998, Martinazzi2009, Jin2015, Hyder2019}. Our proof of Theorem \ref{theorem:asymptotic} is a modification of the approach in these references that can accommodate the nonlocal term present in problem \eqref{eq:entire_nonlocal}. 

Theorem \ref{theorem:classification} is established via the method of moving spheres. We note that in \cite{HuangNiu2023} a classification of solutions to problem \eqref{eq:entire_nonlocal} was established, but with the single integrability assumption $e^u\in L^n(\bb R^n)$ replaced by the pair of assumptions $e^{\varrho u}\in L^1(\bb R^n)$ and $I_\mu[e^{\varrho u}]e^{\varrho u}\in L^1(\bb R^n)$. However, our motivation for classifying solutions comes from the application of such a classification in describing asymptotic behavior of blow-up sequences to problems locally modeled by \eqref{eq:entire_nonlocal}. In this context, a classification theorem is only useful if it is established under assumptions that are compatible with the symmetries of the problem. While the integrability assumption $e^{\varrho u}\in L^1(\bb R^n)$ used in \cite{HuangNiu2023} is incompatible with the natural rescaling of problem \eqref{eq:entire_nonlocal} given in \eqref{eq:natural_rescaling}, our assumption $e^u\in L^n(\bb R^n)$ is compatible with this rescaling. There is also a technical difference between our method and the method of \cite{HuangNiu2023}. Because $\varrho < n$, the assumption $e^{\varrho u}\in L^1(\bb R^n)$ used in \cite{HuangNiu2023} is stronger at infinity than our assumption $e^{nu}\in L^1(\bb R^n)$. Moreover, this stronger assumption allows the authors in \cite{HuangNiu2023} to obtain a stronger decay estimate of $I_\mu[e^{\varrho u}](x)$ as $|x|\to\infty$ than we can obtain by assuming $e^{nu}\in L^1(\bb R^n)$. The stronger decay estimate for $I_\mu[e^{\varrho u}]$ allows one to derive a Pohozaev identity from which the exact decay rate for solutions $u$ can be computed. Our weaker decay estimate for $I_\mu[e^{\varrho u}]$ does not permit the use of a Pohozaev identity to capture the precise decay rate of $u$. To circumvent this, we use two applications of the method of moving spheres to capture the exact decay rate; one to ensure that decay is not too fast and the other to ensure that decay is not too slow. 

This manuscript is organized as follows. In Section \ref{s:preliminaries} we discuss some preliminary notions including the definition of $(-\lap)^{n/2}$ and the notion of distributional solution. Section \ref{s:preliminaries} also introduces and gives relevant properties of a one-sided inverse for $(-\lap)^{n/2}$ that will be needed in the sequel. Section \ref{s:regularity_decay} is devoted to the regularity and decay properties of distributional solutions to \eqref{eq:entire_nonlocal}. The proof of Theorem \ref{theorem:asymptotic}, which follows from these regularity and decay properties, is also presented in Section \ref{s:regularity_decay}. Section \ref{s:integral_representations} establishes some integral representations of solutions to \eqref{eq:entire_nonlocal} and their Kelvin transforms under assumptions \eqref{eq:V_lower_bound_assumption} and \eqref{eq:asymtpotic_growth_assumption}. In Section \ref{s:precise_decay} these integral representations are used to compute the precise decay rate of solutions to \eqref{eq:entire_nonlocal} satisfying both \eqref{eq:V_lower_bound_assumption} and \eqref{eq:asymtpotic_growth_assumption}. This decay rate is used in Section \ref{s:classification} where the proof of Theorem \ref{theorem:classification} is presented. 

Throughout the manuscript we assume that $n\in \bb N$, that $\mu\in (0, n)$ and that $\varrho$ is as in \eqref{eq:rho}. These assumptions are to be understood even when they are not explicitly stated in lemmas, propositions, etc. We use $C$ to denote various positive constants whose value may change from one line to the next and even within the same line. When it is important to do so we will emphasize the quantities on which $C$ depends in the notation.

\section{Preliminaries}
\label{s:preliminaries}
%
For $s>0$ we understand $(-\lap)^s$ as the $\mc S'(\bb R^n)$-valued map defined on 
\begin{equation*}
	L_s(\bb R^n)
	= \left\{u\in L^1_{\loc}(\bb R^n): \int_{\bb R^n}\frac{|u(x)|}{1 + |x|^{n + 2s}}\; \d x< \infty\right\}
\end{equation*}
and given by 
\begin{equation*}
	\lb(-\lap)^s u, \varphi\rb
	= \int_{\bb R^n}u (-\lap)^s\varphi
	\qquad \text{ for }\varphi\in \mc S(\bb R^n),  
\end{equation*}
where $\mc S(\bb R^n)$ is the Schwartz space of rapidly decreasing functions and $\mc S'(\bb R^n)$ is its dual. The convergence of the integral appearing on the right-hand side of this equality is guaranteed by Proposition 2.1 of \cite{Hyder2019}.
\begin{defn}
If $f\in L^1(\bb R^n)$, a \emph{distributional solution} to $(-\lap)^{n/2}u = f$ in $\bb R^n$ is a function $u\in L_{n/2}(\bb R^n)$ for which 
\begin{equation}
\label{eq:distributional_solution}
	\int_{\bb R^n}u(-\lap)^{n/2}\varphi
	= \int_{\bb R^n}f\varphi
\end{equation}
for all $\varphi \in \mc S(\bb R^n)$. 
\end{defn}
If $n$ is even the assumption $u\in L_{n/2}(\bb R^n)$ is sufficient but not necessary for the integral on the left-hand side of \eqref{eq:distributional_solution} to be meaningful. Since the assumption $u\in L_{n/2}(\bb R^n)$ will be need for all $n$ to establish Theorems \ref{theorem:asymptotic} and \ref{theorem:classification} (specifically, it is needed for Lemma \ref{lemma:KernelClassification}) we include this assumption in the definition of distributional solution. 

The sharp Hardy-Littlewood-Sobolev inequality will play a key role in the sequel. For brevity we refer to this inequality as the HLS inequality. 
\begin{oldtheorem}[HLS inequality]
Let $\mu\in (0, n)$ and suppose $p, q\in (1, \infty)$ satisfy $\frac 1q = \frac 1p - \frac{n - \mu}n$. There is an optimal constant $\mc H = \mc H(n, \mu, p)>0$ such that the inequality 
\begin{equation*}
	\|I_\mu[f]\|_{L^q(\bb R^n)}
	\leq \mc H(n, \mu, p)\|f\|_{L^p(\bb R^n)}
\end{equation*}
holds for all $f\in L^p(\bb R^n)$. 
\end{oldtheorem}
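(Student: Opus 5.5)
The statement just before this point is the HLS inequality: the existence of an optimal constant $\mc H(n,\mu,p)$ with $\|I_\mu[f]\|_{L^q}\leq \mc H\|f\|_{L^p}$ whenever $\frac1q=\frac1p-\frac{n-\mu}n$. The plan is to obtain first the non-sharp inequality, i.e.\ a finite constant. The existence of an optimal constant then follows immediately: the set of admissible constants is a nonempty subset of $(0,\infty)$ that is closed under the infimum, so the infimum $\mc H(n,\mu,p)$ itself works. Positivity of $\mc H$ is seen by testing against any nonnegative nonzero $f$, for which $I_\mu[f]>0$ everywhere. So everything reduces to proving boundedness of the Riesz-type potential $I_\mu$, which is convolution with $|x|^{-\mu}$, $0<\mu<n$, from $L^p$ to $L^q$.

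\textbf{Step 1: Hedberg's pointwise bound.} Fix $x$, take a radius $R>0$, and split $I_\mu[|f|](x)$ into the parts over $|x-y|<R$ and $|x-y|\geq R$.
\begin{itemize}
\item For the near part, decompose into dyadic annuli. Since $\mu<n$, this gives the bound $C R^{n-\mu}Mf(x)$, where $M$ is the Hardy--Littlewood maximal function.
\item For the far part, use H\"older's inequality. The function $|y|^{-\mu p'}$ is integrable outside $B_R$ exactly when $\mu p'>n$, which is equivalent to $q<\infty$ under our exponent relation; this yields the bound $C R^{n/p'-\mu}\|f\|_{L^p}$.
\end{itemize}

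\textbf{Step 2: Optimize and integrate.} Choose $R$ so that the two terms are equal, namely $R^{n/p}=\|f\|_p/Mf(x)$. This gives
\[
|I_\mu[f](x)|\leq C\,Mf(x)^{p/q}\,\|f\|_{L^p}^{1-p/q}.
\]
Taking the $L^q$ norm and invoking the maximal theorem, which requires $p>1$, gives $\|I_\mu f\|_q\leq C\|f\|_p$.

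\textbf{Main obstacle.} The only real care needed is the exponent bookkeeping. One must check that the exponent relation is exactly what makes the far-field integral converge and what makes the powers of $R$ balance; the dependence of the exponents of $R$ on $\mu$ (as opposed to the usual $n-\alpha$ convention) needs attention here. Sharpness of the constant, i.e.\ Lieb's explicit value in the diagonal case relevant later, is not part of this statement and is not addressed by this plan.
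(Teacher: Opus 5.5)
The proposal is correct, but the paper contains no proof to compare it with: it quotes this as a classical result (an \texttt{oldtheorem}). Beyond the bare inequality, the only thing the paper takes from the literature is Lieb's identification \cite{Lieb1983} of the extremals \eqref{eq:HLS_extremal} and of the value \eqref{eq:sharp_HLS_constant} in the diagonal case $p=2n/(2n-\mu)$, $q=2n/\mu$.

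Your Hedberg argument is a standard, self-contained route, and the exponent bookkeeping is right:
\begin{itemize}
\item The near part is bounded by $CR^{n-\mu}Mf(x)$ by summing over dyadic annuli, which needs $\mu<n$.
\item The far part is bounded by $CR^{n/p'-\mu}\|f\|_{L^p}=CR^{-n/q}\|f\|_{L^p}$ by H\"older's inequality, which needs $\mu p'>n$, i.e.\ $q<\infty$.
\item Balancing at $R^{n/p}=\|f\|_{L^p}/Mf(x)$ gives $|I_\mu[f](x)|\le C\,Mf(x)^{p/q}\|f\|_{L^p}^{1-p/q}$, and the maximal theorem (which needs $p>1$) finishes.
\end{itemize}
Taking $\mc H(n,\mu,p)$ to be the infimum of admissible constants, with positivity from testing any $0\le f\not\equiv 0$, is exactly what the statement asserts.

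What your approach buys is a short proof covering the whole off-diagonal range, with some finite constant. What it cannot give is the sharp value \eqref{eq:sharp_HLS_constant} and the fact that the functions \eqref{eq:HLS_extremal} attain it. The paper genuinely needs both in the proof of Theorem \ref{theorem:classification}. There, equality in HLS and H\"older is used to force $I_\mu[e^{\varrho u}]=c\,e^{\mu u/2}$ and to determine the constant $a$. Those facts require Lieb's rearrangement and conformal-invariance argument rather than a maximal-function estimate.
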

In the special case that $p = n/\varrho = 2n/(2n - \mu)$ and $q = 2n/\mu$ we use the shortened notation $\mc H(n, \mu) = \mc H(n, \mu, n/\varrho)$. In this case, as shown in \cite{Lieb1983}, the extremal functions are of the form 
\begin{equation}
\label{eq:HLS_extremal}
	f(x) = \pm\left(\frac a{d^2 + |x - \bar x|^2}\right)^{\varrho}
\end{equation}
for some $a\in (0, \infty)$ and some $(\bar x, d)\in \bb R^n\times (0, \infty)$ and the value of the sharp constant is
\begin{equation}
\label{eq:sharp_HLS_constant}
	\mc H(n, \mu) 
	= \pi^{\mu/2}\frac{\Gamma\left(\frac{n + \mu}2\right)}{\Gamma\left(n - \frac \mu 2\right)}\left[\frac{\Gamma\left(\frac n2\right)}{\Gamma(n)}\right]^{-1 + \mu/n}. 
\end{equation}
The next lemma show that the integrability condition in \eqref{eq:entire_nonlocal} guarantees the finiteness of $V$ as defined in \eqref{eq:nonlocal_volume}. The proof is a simple consequence of H\"older's inequality and the HLS inequality and is therefore omitted. 
\begin{lemma}
\label{lemma:RHS_L1_integrable}
There is a constant $C = C(n, \mu)>0$ such that for all $u:\bb R^n\to \bb R$ for which $e^u\in L^n(\bb R^n)$ the estimate
\begin{equation*}
	\|I_\mu[e^{\varrho u}]e^{\varrho u}\|_{L^1(\bb R^n)}
	\leq C\|e^u\|_{L^n(\bb R^n)}^{2\varrho}
\end{equation*}
holds. In particular if $u\in L_{n/2}(\bb R^n)$ with $e^u\in L^n(\bb R^n)$ then problem \eqref{eq:entire_nonlocal} has a meaning in the sense of distributions. 
\end{lemma}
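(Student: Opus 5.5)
The plan is a direct application of the HLS inequality in the case singled out above, $p = n/\varrho = 2n/(2n-\mu)$ and $q = 2n/\mu$, followed by H\"older's inequality with the conjugate pair $(n/\varrho,\, 2n/\mu)$. Note that $\frac{\varrho}{n} + \frac{\mu}{2n} = \frac{2n - \mu + \mu}{2n} = 1$, so these exponents are indeed conjugate. The relation $\frac1q = \frac1p - \frac{n-\mu}{n}$ also holds, since $\frac{2n-\mu}{2n} - \frac{n-\mu}{n} = \frac{\mu}{2n}$.

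First, I set $f = e^{\varrho u}\ge 0$ and record that $\|f\|_{L^{n/\varrho}} = \big(\int e^{nu}\big)^{\varrho/n} = \|e^u\|_{L^n}^{\varrho}$, which is finite by hypothesis. Applying H\"older's inequality gives
\begin{equation*}
\int_{\bb R^n} I_\mu[f]\, f \le \|I_\mu[f]\|_{L^{2n/\mu}}\|f\|_{L^{n/\varrho}} .
\end{equation*}
The HLS inequality then bounds the first factor by $\mc H(n,\mu)\|f\|_{L^{n/\varrho}}$. Combining the two,
\begin{equation*}
\|I_\mu[e^{\varrho u}]e^{\varrho u}\|_{L^1} \le \mc H(n,\mu)\,\|e^u\|_{L^n}^{2\varrho}.
\end{equation*}
Both factors are nonnegative, so the $L^1$ norm equals the integral, and the bound holds with $C = \mc H(n,\mu)$. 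The same bound shows that $I_\mu[f]$ is finite almost everywhere, so the integrand is well defined as a measurable function.

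For the second assertion, the right-hand side $(n-1)!\,I_\mu[e^{\varrho u}]e^{\varrho u}$ is now known to lie in $L^1(\bb R^n)$, so $\int f\varphi$ converges for every $\varphi\in\mc S(\bb R^n)$. On the left-hand side, $u\in L_{n/2}(\bb R^n)$ makes $\int u(-\lap)^{n/2}\varphi$ meaningful by the cited Proposition 2.1 of \cite{Hyder2019}. Hence \eqref{eq:distributional_solution} is well posed, which is exactly the definition of a distributional solution with $f\in L^1$.

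There is no real obstacle here. The only points needing care are the exponent bookkeeping above and checking that $p = 2n/(2n-\mu)$ lies in $(1,\infty)$. This holds because $\mu\in(0,n)$ gives $2n-\mu\in(n,2n)$, so $p\in(1,2)$.
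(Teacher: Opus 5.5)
Your proposal is correct and follows the paper's (omitted) argument: the HLS inequality with $p = n/\varrho$, $q = 2n/\mu$ bounds $\|I_\mu[e^{\varrho u}]\|_{L^{2n/\mu}(\bb R^n)}$ by $\mc H(n,\mu)\|e^u\|_{L^n(\bb R^n)}^{\varrho}$, and H\"older's inequality with the conjugate pair $(n/\varrho,\, 2n/\mu)$ then gives the estimate with $C = \mc H(n,\mu)$. Your justification that \eqref{eq:entire_nonlocal} has a distributional meaning also matches what the statement intends: $u\in L_{n/2}(\bb R^n)$ handles the left-hand side, and the right-hand side lies in $L^1(\bb R^n)$.
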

\ifdetails{\color{gray}
In the detailed version of the manuscript we provide a proof of Lemma \ref{lemma:RHS_L1_integrable}. 
\begin{proof}[Proof of Lemma \ref{lemma:RHS_L1_integrable}]
The HLS inequality guarantees that $I_\mu[e^{\varrho u}]\in L^{2n/\mu}(\bb R^n)$ with 
\begin{equation*}
	\|I_\mu[e^{\varrho u}]\|_{L^{2n/\mu}(\bb R^n)}
	\leq C \|e^u\|_{L^n(\bb R^n)}^\varrho. 
\end{equation*}
Therefore, H\"older's inequality gives
\begin{equation*}
\begin{split}
	0 
	& \leq \|I_\mu[e^{\varrho u}]e^{\varrho u}\|_{L^1(\bb R^n)}\\
	& \leq \|I_\mu[e^{\varrho u}]\|_{L^{2n/\mu}(\bb R^n)}\|e^u\|_{L^n(\bb R^n)}^\varrho\\
	& \leq C\|e^u\|_{L^n(\bb R^n)}^{2\varrho}. 
\end{split}
\end{equation*}
\end{proof}
}\fi
\subsection{A one-sided inverse for $(-\lap)^{n/2}$}
In this subsection we define and state properties of a one-sided inverse for $(-\lap)^{n/2}$. Accordingly for $f\in L^1(\bb R^n)$ we define 
\begin{equation}
\label{eq:KernelOperator}
	Kf(x)
	= c_n\int_{\bb R^n}\log\left(\frac{1 + |y|}{|x - y|}\right)f(y)\; \d y, 
\end{equation}
where 
\begin{equation}
\label{eq:cn}
	c_n = \frac{2}{(n - 1)!|\bb S^n|}
\end{equation} 
is the constant for which $-c_n\log|x|$ is the fundamental solution for $(-\lap)^{n/2}$ in $\bb R^n$ in the sense that 
\begin{equation*}
	(-\lap)^{n/2}(-c_n\log|x|) = \delta_0(x). 
\end{equation*}
As we will see in Lemma \ref{lemma:NonhomogeneousSolution}, $K$ is a one-sided inverse for $(-\lap)^{n/2}$. 
For any multiindex $\alpha$ of length $n$ and for which $\abs\alpha\geq 1$ we use the notation 
\begin{equation}
\label{eq:KernelDerivatives}
	K_\alpha(x)
	= 
	- c_n\partial^\alpha\log\abs x. 
\end{equation}
Evidently $K_\alpha\in C^\infty(\bb R^n\setminus\{0\})$ satisfies $\partial^\beta K_\alpha = K_{\alpha + \beta}$ for any multiindex $\beta$ of length $n$ and the estimate $\abs{K_\alpha(x)}\leq C(\abs \alpha)\abs x^{-\abs \alpha}$. The proofs of Lemmata \ref{lemma:MappingProps}, \ref{lemma:NonhomogeneousSolution} and \ref{lemma:KernelClassification} below can be found in \cite{Gluck2020classification}. 
\begin{lemma}
\label{lemma:MappingProps}
Let $K$ be the operator defined in \eqref{eq:KernelOperator}. 
\begin{enumerate}
	\item If $f\in L^1(\bb R^n)$ then $Kf\in W^{n - 1, 1}_{\rm loc}(\bb R^n)\cap L_{s}(\bb R^n)$ for any $s>0$ and 
	\begin{eqnarray}
	\label{eq:DerivativesInside}
	\partial^\alpha Kf(x)
		& = & 
		\ifdetails
		c_n\int_{\bb R^n}\partial_x^\alpha \log\left(\frac{1 + \abs y}{\abs{x - y}}\right) f(y)\; \d y
		\notag \\
		& = & 
		\fi
		\int_{\bb R^n} K_\alpha(x - y)f(y)\; \d y
	\end{eqnarray}
	for any multiindex $\alpha$ satisfying $1\leq \abs \alpha \leq n - 1$, where the equality holds in the sense of $L^1_{\rm loc}(\bb R^n)$. Moreover, for any such $\alpha$ and $s$ we have $\partial^\alpha Kf\in L_s(\bb R^n)$. 
	\item If $f\in L^1\cap L_{\rm loc}^p(\bb R^n)$ for some $p>n$ then $Kf\in W^{n - 1, \infty}_{\rm loc}(\bb R^n)$. 
	\item If $f\in L^1\cap L^\infty(\bb R^n)$ then $Kf\in C^{n - 1}(\bb R^n)$. 
\end{enumerate}
\end{lemma}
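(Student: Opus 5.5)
The plan is to treat the logarithmic kernel by splitting it and differentiating under the integral sign via dominated convergence. Throughout, the basic tool is the pointwise bound $|K_\alpha(z)|\le C|z|^{-|\alpha|}$, which makes $K_\alpha$ locally integrable exactly when $|\alpha|\le n-1$.

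\textbf{Part (1).} I would first show that $Kf\in L^1_{\rm loc}$ and in fact in $L_s(\bb R^n)$. Write
\begin{equation*}
	\log\frac{1+|y|}{|x-y|}=\log(1+|y|)-\log|x-y|.
\end{equation*}
For $|y|\ge 2|x|+2$ the difference is bounded by $C\log(2+|x|)$. For $|y|\le 2|x|+2$ one bounds $|\log|x-y||$ and $\log(1+|y|)$ separately. This gives, for every $R>0$,
\begin{equation*}
	\int_{B_R}|Kf|\le C(R)\,\|f\|_{L^1},
\end{equation*}
using Fubini and the fact that $\int_{B_R}|\log|x-y||\,\d x\le C(R)\log(2+|y|)$ together with cancellation for large $|y|$. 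I then refine this into a global bound of the form $\int |Kf(x)|(1+|x|)^{-n-2s}\,\d x\le C\|f\|_{L^1}$. Since $\log(2+|x|)$ is integrable against $(1+|x|)^{-n-2s}$, this yields $Kf\in L_s$ for every $s>0$.

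For the derivatives, I test against $\varphi\in C_c^\infty$ and integrate by parts on the kernel: by Fubini, $\partial^\alpha\int\log|x-y|\,\varphi(x)\,\d x$ equals $\int\partial^\alpha\log|x-y|\,\varphi(x)\,\d x$. Repeated integration by parts is legitimate for $|\alpha|\le n-1$, because boundary terms on small spheres around $y$ vanish, as $|z|^{-(|\alpha|-1)}|z|^{n-1}\to0$. The term $\log(1+|y|)$ is constant in $x$ and so disappears. This identifies the weak derivative as $\int K_\alpha(x-y)f(y)\,\d y$, which lies in $L^1_{\rm loc}$ since $|K_\alpha(z)|\le C|z|^{-|\alpha|}$ is locally integrable. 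Membership in $L_s$ then follows from
\begin{equation*}
	\int\frac{|x-y|^{-|\alpha|}}{1+|x|^{n+2s}}\,\d x\le C
\end{equation*}
uniformly in $y$, which holds because $|\alpha|<n$. Hence $Kf\in W^{n-1,1}_{\rm loc}$.

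\textbf{Part (2).} Fix a ball $B_R$ and split $f=f\chi_{B_{2R}}+f\chi_{B_{2R}^c}$.
\begin{itemize}
\item For the far part and $x\in B_R$, the kernels $K_\alpha(x-y)$ are bounded by $R^{-|\alpha|}$, so the derivatives are bounded by $C\|f\|_{L^1}$. The function itself is bounded by $C(R)\|f\|_{L^1}$, since $|\log((1+|y|)/|x-y|)|$ is bounded for $|y|\ge 2R$.
\item For the near part, apply H\"older with $f\in L^p(B_{2R})$ and $K_\alpha\in L^{p'}(B_{3R})$, where $p'<n/(n-1)$. This requires $|\alpha|p'<n$, i.e. $|\alpha|\le n-1$ with $p>n$. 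The $\log$ kernel is treated in the same way.
\end{itemize}
This yields $Kf\in W^{n-1,\infty}_{\rm loc}$.

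\textbf{Part (3).} With $f\in L^\infty$ the near-part estimate of part (2) holds with $p=\infty$. Continuity of $x\mapsto\int K_\alpha(x-y)f(y)\,\d y$ then follows from continuity of translation in $L^1_{\rm loc}$ of $K_\alpha$ for the near part, and from dominated convergence for the far part. Since the weak derivatives up to order $n-1$ are continuous, we get $Kf\in C^{n-1}$.

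The main obstacle is the integration by parts establishing the derivative formula at the top order $|\alpha|=n-1$. There the kernel is only barely integrable, so one must check carefully that the boundary terms vanish and that Fubini applies, using the excision of a small ball $B_\varepsilon(y)$ and letting $\varepsilon\to0$.
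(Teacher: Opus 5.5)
Your proposal is correct. The paper does not prove this lemma itself; it defers to \cite{Gluck2020classification}, so there is no in-paper argument to compare against. What you give is the standard self-contained proof, and it has the right ingredients:
\begin{itemize}
\item cancellation of $\log(1+|y|)$ against $\log|x-y|$ when $|y|\ge 2|x|+2$, which gives the $L^1_{\rm loc}$ and $L_s$ bounds for $f\in L^1$ alone;
\item Fubini together with the fact that the distributional derivatives of $\log|z|$ of order at most $n-1$ coincide with the pointwise ones;
\item the uniform-in-$y$ bound on $\int |x-y|^{-|\alpha|}(1+|x|^{n+2s})^{-1}\,\d x$ for the derivatives;
\item a near/far splitting with H\"older (using $|\alpha|p'<n$) for parts (2) and (3).
\end{itemize}

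Two minor points should be made explicit. First, the boundary term in the first integration by parts (from $\log|z|$ itself) is $O(\varepsilon^{n-1}|\log\varepsilon|)$ rather than $O(\varepsilon^{n-1})$; it still tends to zero for $n\ge 2$. Second, in part (3) you should also check that $Kf$ itself, defined pointwise by the integral, is continuous. The argument is the same, using the logarithmic kernel. This ensures that the $C^{n-1}$ representative produced by the continuous weak derivatives agrees with $Kf$ everywhere.
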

\begin{lemma}
\label{lemma:NonhomogeneousSolution}
If $f\in L^1(\bb R^n)$ then $Kf$ as defined in \eqref{eq:KernelOperator} is a distributional solution to 
\begin{equation}
\label{eq:Nonhomogeneous}
	(-\lap)^{n/2} u = f
	\qquad
	\text{ in }\bb R^n. 
\end{equation}
\end{lemma}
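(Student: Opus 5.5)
We must show $Kf\in L_{n/2}(\bb R^n)$ and that $\int_{\bb R^n} Kf\,(-\lap)^{n/2}\varphi = \int_{\bb R^n} f\varphi$ for every $\varphi\in\mc S(\bb R^n)$. The first requirement is part (1) of Lemma \ref{lemma:MappingProps}, which gives $Kf\in L_s(\bb R^n)$ for every $s>0$. So only the identity needs proof. The plan is to insert the definition \eqref{eq:KernelOperator}, interchange the order of integration with Fubini, and use the fundamental solution property of $-c_n\log|x|$.

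Concretely, write
\begin{equation*}
	\int_{\bb R^n} Kf(x)(-\lap)^{n/2}\varphi(x)\,\d x = c_n\int_{\bb R^n}\int_{\bb R^n}\log\left(\frac{1+|y|}{|x-y|}\right)f(y)(-\lap)^{n/2}\varphi(x)\,\d y\,\d x.
\end{equation*}
To justify Fubini we need the integrability of $|\log(1+|y|)-\log|x-y||\,|f(y)|\,|(-\lap)^{n/2}\varphi(x)|$ on $\bb R^n\times\bb R^n$. By Proposition 2.1 of \cite{Hyder2019}, $\psi:=(-\lap)^{n/2}\varphi$ is smooth and satisfies $|\psi(x)|\le C(1+|x|)^{-2n}$ when $n$ is odd. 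When $n$ is even, $\psi$ is itself a Schwartz function. Using $|\log|x-y||\le \log(1+|x|)+\log(1+|y|) + |\log|x-y||\chi_{\{|x-y|<1\}}$, the integral in $x$ of the kernel against $|\psi|$ is bounded by $C(1+\log(1+|y|))$. The local singularity is handled by $\sup_y\int_{|x-y|<1}|\log|x-y||\,\d x<\infty$. This bound does not give integrability against $f\in L^1$ alone, because of the $\log(1+|y|)$ growth. The key observation is that $\int_{\bb R^n}\psi=0$: $\widehat\psi(\xi)=|\xi|^n\widehat\varphi(\xi)$ vanishes at $\xi=0$. So for each fixed $y$ we first rewrite
\begin{equation*}
	\int_{\bb R^n}\log\left(\frac{1+|y|}{|x-y|}\right)\psi(x)\,\d x=-\int_{\bb R^n}\log|x-y|\,\psi(x)\,\d x ,
\end{equation*}
which removes the $y$-dependent constant. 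We then need the uniform bound $\sup_y\big|\int(\log|x-y|-\log(1+|y|))\psi(x)\,\d x\big|<\infty$. This follows by splitting into $|x|\le |y|/2$ and its complement, and using $|\log|x-y|-\log(1+|y|)|\le C(1+\log(1+|x|))$ on $|x|\le|y|/2$ together with the decay of $\psi$ on the rest. To avoid conditional integrability issues, I would instead apply Fubini to truncations $f\chi_{B_R}$, where everything is absolutely integrable since $\log(1+|y|)$ is bounded on $B_R$. Then I pass $R\to\infty$ on both sides: $K(f\chi_{B_R})\to Kf$ with the uniform bound above supplying dominated convergence on the right.

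Finally, for each $y$, $-c_n\int\log|x-y|\,\psi(x)\,\d x = \varphi(y)$ by the fundamental solution identity $(-\lap)^{n/2}(-c_n\log|\cdot-y|)=\delta_y$, which is translation invariant. This gives $\int Kf\,\psi=\int f\varphi$. The main obstacle is the interchange of integrals, specifically the uniform-in-$y$ bound of the kernel against $\psi$ for odd $n$, where $\psi$ decays only like $|x|^{-2n}$. I would handle it with the cancellation $\int\psi=0$ and the splitting described above.
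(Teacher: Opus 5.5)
The paper does not prove this lemma itself; it defers to \cite{Gluck2020classification}. Your skeleton is the standard argument: insert \eqref{eq:KernelOperator}, apply Fubini, then evaluate the inner integral using $\int\psi=0$ and the translated identity $-c_n\int\log|x-y|\,\psi(x)\,\d x=\varphi(y)$. That final evaluation is correct. The problem is how you justify the interchange of integrals.

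\textbf{The truncation argument does not close as written.} Fubini for $f\chi_{B_R}$ is fine, and the right side $\int f\chi_{B_R}\varphi\to\int f\varphi$ is trivial. What actually needs proof is the left side:
\[
\int K(f\chi_{\bb R^n\setminus B_R})\,\psi\to 0 .
\]
Your signed bound $\sup_y\bigl|\int(\log|x-y|-\log(1+|y|))\psi(x)\,\d x\bigr|$ cannot give this. It is just $\|\varphi\|_\infty/c_n$ by your own last step. More importantly, using it here would require exchanging the integrals for $f\chi_{\bb R^n\setminus B_R}$, which is exactly what you set out to avoid. Pointwise a.e.\ convergence $K(f\chi_{B_R})\to Kf$ is also not enough without a dominating function in $x$.

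\textbf{The fix shows the obstruction you named does not exist.} The $x$-integral of the kernel against $|\psi|$ is bounded uniformly in $y$, not merely by $C(1+\log(1+|y|))$:
\[
M:=\sup_{y\in\bb R^n}\int_{\bb R^n}\Bigl|\log\frac{1+|y|}{|x-y|}\Bigr|\,|\psi(x)|\,\d x<\infty .
\]
Take $|y|\ge 2$ and use $|\psi(x)|\le C(1+|x|)^{-2n}$ on three regions.
\begin{enumerate}
\item On $\{|x|\le|y|/2\}$ you have $\tfrac23\le\frac{1+|y|}{|x-y|}\le 3$, so the kernel is at most $\log 3$.
\item On $\{|x|>|y|/2,\ |x-y|\ge 1\}$ the kernel is at most $\log(1+|y|)+\log(|x|+|y|)\le C\log(2+|x|)$, which is integrable against $(1+|x|)^{-2n}$.
\item On $B_1(y)$ you have $|\psi|\le C|y|^{-2n}$, so this piece contributes $O(|y|^{-2n}\log|y|)$.
\end{enumerate}
For $|y|<2$, the kernel is at most $\log 3+|\log|x-y||$, which is handled directly.

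The normalization $1+|y|$ in the kernel is designed precisely for this cancellation. Consequently
\[
\int\!\!\int\Bigl|\log\frac{1+|y|}{|x-y|}\Bigr||f(y)||\psi(x)|\,\d x\,\d y\le M\|f\|_{L^1(\bb R^n)},
\]
so Fubini applies directly to $f\in L^1$ and truncation is unnecessary. The condition $\int\psi=0$ enters only to kill the term $\log(1+|y|)\int\psi$ when you evaluate the inner integral, not for integrability.

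Separately, your pointwise bound $|\log|x-y|-\log(1+|y|)|\le C(1+\log(1+|x|))$ on $\{|x|\le|y|/2\}$ is false for small $|y|$: at $x=0$ the left side is about $|\log|y||$. Restrict it to $|y|\ge 2$ as above.
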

\begin{lemma}
\label{lemma:KernelClassification}
If $p$ is a distributional solution to $(-\lap)^{n/2}p =0$ in $\bb R^n$ then $p$ is a polynomial whose degree does not exceed $n - 1$. 
\end{lemma}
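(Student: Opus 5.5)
The lemma says that a distributional solution $p$ of $(-\lap)^{n/2}p = 0$ in $\bb R^n$, with $p\in L_{n/2}(\bb R^n)$, is a polynomial of degree at most $n-1$. The plan is to work on the Fourier side and show that $\hat p$ is supported at the origin. A tempered distribution with Fourier transform supported at $\{0\}$ is a polynomial, and the weighted integrability in $L_{n/2}$ then caps its degree.

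First, $p$ is a tempered distribution. Indeed, $p\in L_{n/2}(\bb R^n)$ means $\int |p|(1+|x|)^{-2n}\,\d x<\infty$, so $\varphi\mapsto\int p\varphi$ is continuous on $\mc S(\bb R^n)$. Let $\psi\in C_c^\infty(\bb R^n\setminus\{0\})$ be arbitrary. Since $|\xi|^{-n}$ is smooth on the support of $\psi$, the function $\psi(\xi)|\xi|^{-n}$ lies in $C_c^\infty(\bb R^n\setminus\{0\})$. Define $\varphi$ by $\hat\varphi(\xi) = \psi(\xi)|\xi|^{-n}$. Then $\varphi\in\mc S(\bb R^n)$, and $(-\lap)^{n/2}\varphi$ is the Schwartz function with Fourier transform $|\xi|^n\hat\varphi = \psi$. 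This matches the symbolic definition of $(-\lap)^{n/2}\varphi$ on $\mc S$ used through \cite{Hyder2019}. Applying \eqref{eq:distributional_solution} with $f=0$ to this $\varphi$ gives $\lb \hat p,\psi\rb = 0$, up to the reflection and normalization conventions of the Fourier transform. Because $\psi$ was arbitrary, $\mathrm{supp}\,\hat p\subset\{0\}$. Hence $\hat p$ is a finite linear combination of derivatives of $\delta_0$, and $p$ equals a polynomial almost everywhere.

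Next, the degree. A polynomial $P$ of degree $k$ satisfies $|P(x)|\geq c|x|^k$ on a cone of directions near a direction where its top-degree homogeneous part is nonzero, for $|x|$ large. Integrating $|x|^{k}(1+|x|)^{-2n}$ over such a cone converges only if $k-2n<-n$, that is $k<n$. So $\deg p\leq n-1$.

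The main technical point is the second paragraph: we must check that the $\varphi$ built from $\hat\varphi=\psi|\xi|^{-n}$ really is admissible in \eqref{eq:distributional_solution} and that $(-\lap)^{n/2}\varphi$ is exactly $\mc F^{-1}(\psi)$. This holds because $\psi$ vanishes near the origin, so the symbol $|\xi|^n$, which is non-smooth at $0$ when $n$ is odd, causes no trouble. Everything else is standard.
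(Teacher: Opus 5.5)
Your proof is correct. It is essentially the standard argument behind the result the paper cites here; the paper gives no proof of its own and defers to \cite{Gluck2020classification}, in the spirit of \cite{Jin2015, Hyder2019}. You test against $\varphi$ with $\hat\varphi=\psi|\xi|^{-n}$, $\psi\in C_c^\infty(\bb R^n\setminus\{0\})$, to get $\mathrm{supp}\,\hat p\subset\{0\}$, so $p$ is a polynomial, and then use $p\in L_{n/2}(\bb R^n)$ to force $\deg p\leq n-1$.
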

The following lemma shows that if $e^u\in L^n(\bb R^n)$ then $I_\mu[e^{\varrho u}]e^{\varrho u}$ inherits the same degree of smoothness possessed by $u$. Since its proof is routine, the details are omitted. 
\begin{lemma}
\label{lemma:Imu_Ck}
\ifdetails{\color{gray}
Let $\mu\in (0, n)$, let $\varrho$ be as in \eqref{eq:rho}. 
}\fi
For any $k\in \bb N$, if $u\in C^k(\bb R^n)$ and $e^u\in L^n(\bb R^n)$ then $I_\mu[e^{\varrho u}]e^{\varrho u}\in C^k(\bb R^n)$. 
\end{lemma}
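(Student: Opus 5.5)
Set $g=e^{\varrho u}$, so that $g\in L^{n/\varrho}(\bb R^n)$ with $n/\varrho=2n/(2n-\mu)\in(1,2)$ because $e^u\in L^n(\bb R^n)$. The plan is to show that $I_\mu[g]$ is $C^k$, and then to use the product rule; $e^{\varrho u}$ is already $C^k$ since $u\in C^k(\bb R^n)$. Fix a ball $B_R(x_0)$. With a cutoff $\chi\in C_c^\infty(B_{4R}(x_0))$ satisfying $\chi\equiv1$ on $B_{2R}(x_0)$, split
\begin{equation*}
	I_\mu[g]=I_\mu[\chi g]+I_\mu[(1-\chi)g].
\end{equation*}
It suffices to show that each piece is $C^k$ on $B_R(x_0)$.

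\emph{Far piece.} For $x\in B_R(x_0)$ and $y\notin B_{2R}(x_0)$ we have $|x-y|\geq |y-x_0|/2\geq R$. On this region every $x$-derivative of order $\leq k$ of $|x-y|^{-\mu}$ is bounded by $C|y-x_0|^{-\mu-|\beta|}\le C(1+|y|)^{-\mu}$, uniformly in $x$. Since $(1+|y|)^{-\mu}\in L^{2n/\mu}$ is the Hölder dual of $L^{n/\varrho}$ (the exponents $\varrho/n+\mu/(2n)=1$ check out), the function $(1+|y|)^{-\mu}|g(y)|$ is integrable. Dominated convergence then lets us differentiate under the integral up to order $k$, and also gives continuity of the derivatives. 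So $I_\mu[(1-\chi)g]\in C^\infty(B_R(x_0))$.

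\emph{Near piece.} Here $\chi g\in C^k_c(\bb R^n)$. After the change of variables $z=x-y$,
\begin{equation*}
	I_\mu[\chi g](x)=\int_{\bb R^n}\frac{(\chi g)(x-z)}{|z|^\mu}\,\d z.
\end{equation*}
The kernel $|z|^{-\mu}$ is locally integrable because $\mu<n$, and $\chi g$ together with its derivatives of order $\leq k$ is bounded and supported in a fixed compact set. For $x$ in $B_R(x_0)$ the integration in $z$ is confined to a fixed ball, so the kernel is integrable there and the derivatives fall on $\chi g$. Dominated convergence again gives $\partial^\beta I_\mu[\chi g]=I_\mu[\partial^\beta(\chi g)]$, continuous, for $|\beta|\le k$.

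Combining the two pieces gives $I_\mu[g]\in C^k(\bb R^n)$, and the product with $e^{\varrho u}\in C^k$ is then $C^k$. The only point requiring care, and the main obstacle, is the far piece. There one has no pointwise control of $g$ at infinity, only $g\in L^{n/\varrho}$. The difficulty is resolved by the Hölder pairing with $(1+|y|)^{-\mu}\in L^{2n/\mu}$, which is exactly why the exponents match.
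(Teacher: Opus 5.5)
Your proposal is correct and follows essentially the same route as the paper: a cutoff splits $I_\mu[e^{\varrho u}]$ into a near piece, where after translation the derivatives fall on the compactly supported $C^k$ function $\chi e^{\varrho u}$, and a far piece, where the kernel is differentiated under the integral with a dominating function made integrable by H\"older's inequality and $e^u\in L^n(\bb R^n)$. The remaining differences are cosmetic. You localize on a ball $B_R(x_0)$ rather than at a fixed point with $R>1+2|x|$, and you dominate by $(1+|y|)^{-\mu}$ rather than the paper's $|y|^{-\mu-1}\chi_{\bb R^n\setminus B_1}$.
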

\ifdetails{\color{gray} 
For convenience we include a sketch of the proof of Lemma \ref{lemma:Imu_Ck} in the detailed version of the manuscript. 
\begin{proof}[Proof of Lemma \ref{lemma:Imu_Ck}]
For any multiindex $\beta$ satisfying $|\beta|\leq k$, we have
\begin{equation*}
\begin{split}
	\partial^\beta\left(I_\mu[e^{\varrho u}]e^{\varrho u}\right)
	& = \sum_{\gamma\leq \beta}{\beta\choose \gamma}\partial^\gamma(I_\mu[e^{\varrho u}])\partial^{\beta - \gamma}(e^{\varrho u}), 
\end{split}
\end{equation*}
so in view of the assumption $u\in C^k(\bb R^n)$, it suffices to show that $\partial^\gamma(I_\mu[e^{\varrho u}])\in C^0(\bb R^n)$ for all $|\gamma|\leq k$. To do so, let $\zeta\in C_c^\infty(\bb R^n)$ satisfy $0\leq \zeta \leq 1$, $\zeta\equiv 1$ on $B_1$ and $\zeta\equiv 0$ on $\bb R^n\setminus B_2$. Fix $x\in \bb R^n$, let $R> 1 + 2|x|$, and write
\begin{equation*}
	I_\mu[e^{\varrho u}](x)
	= J_1(x) + J_2(x), 
\end{equation*}
where
\begin{equation*}
\begin{split}
	J_1(x) & = \int_{\bb R^n}\zeta\left(\frac{x - y}R\right)\frac{e^{\varrho u(x- y)}}{|y|^\mu}\; \d y\\
	J_2(x) & = \int_{\bb R^n}\left(1 - \zeta\left(\frac yR\right)\right)\frac{e^{\varrho u(y)}}{|x - y|^\mu}\; \d y. 
\end{split}
\end{equation*}
For $x\in \bb R^n$, $i\in \{1, \ldots, n\}$, and $|h|>0$ we have
\begin{equation*}
\begin{split}
	\frac 1h& (J_1(x +he_i) - J_1(x))\\
	= & \; \int_{\bb R^n}|y|^{-\mu}\frac 1h\left(\zeta\left(\frac{x + he_i - y}R\right)e^{\varrho u(x + he_i - y)} - \zeta\left(\frac{x- y}R\right)e^{\varrho u(x - y)}\right)\; \d y\\
	= & \; \frac 1h\int_{\bb R^n}\left(\zeta\left(\frac{x+ he_i - y}R\right) - \zeta\left(\frac{x- y}R\right)\right) \frac{e^{\varrho u(x + he_i - y)}}{|y|^\mu}\; \d y\\
	&  + \int_{\bb R^n}\frac 1{|y|^\mu}\zeta\left(\frac{x- y}R\right)\frac{e^{\varrho u(x + he_i - y)} - e^{\varrho u(x- y)}}h\; \d y. 
\end{split}
\end{equation*}
Since both of $\zeta$ and $u$ are of class $C^1(\bb R^n)$ the Dominated Convergence Theorem implies that 
\begin{equation*}
	\partial_iJ_1(x)
	= \int_{\bb R^n}|y|^{-\mu}\left(\frac 1R\zeta\left(\frac{x - y}R\right)e^{\varrho u(x- y)} + \varrho \zeta\left(\frac{x - y}R\right)e^{\varrho u(x - y)}\partial_iu(x- y)\right)\; \d y. 
\end{equation*}
Using this expression and another application of the Dominated Convergence Theorem we find that $\partial_iJ_1$ is continuous on $\bb R^n$. A similar argument shows that if $u\in C^k(\bb R^n)$ then for every $|\gamma|\leq k$ we have $\partial^\gamma J_1\in C^0(\bb R^n)$. Next we verify the smoothness of $J_2$. For any $i\in \{1, \ldots, n\}$ and any $|h|>0$ we have
\begin{equation}
\label{eq:J2_difference_quotient}
\begin{split}
	\frac 1h& (J_2(x+ he_i) - J_2(x))\\
	& = \frac 1h\int_{\bb R^n}\left(1 - \zeta\left(\frac yR\right)\right)e^{\varrho u(y)}\left(\frac 1{|x + he_i - y|^\mu} - \frac 1{|x- y|^\mu}\right)\; \d y. 
\end{split}
\end{equation}
For any $x\in \bb R^n$ and any $y\in \bb R^n$ for which $|y|> 1 + 2|x|$ we have
\begin{equation*}
	|x - y|\geq |y| - |x| \geq |y| - \frac{|y| - 1}2 = \frac{1 + |y|}2. 
\end{equation*}
If in addition $|h|\in (0, \frac 14)$ then 
\begin{equation*}
	|x + he_i - y|
	\geq |y| - |x| - |h|
	\geq \frac{1 + |y|}2 - |h|
	\geq \frac 14. 
\end{equation*}
For any such $x$, $y$ and $h$, the map $t\mapsto|x + the_i - y|^{-\mu}$ is in $C^1([0, 1])$ so 
\begin{equation*}
\begin{split}
	\frac{1}{|x+ the_i - y|^\mu} - \frac{1}{|x- y|^\mu}
	& = \int_0^1\frac{\d}{\d t}\left(|x+ the_i -y|^{-\mu}\right)\; \d t\\
	& = -\mu h\int_0^1\frac{x_i +th - y_i}{|x + the_i - y|^{\mu + 2}}\; \d t. 
\end{split}
\end{equation*}
Bringing this back to \eqref{eq:J2_difference_quotient} gives
\begin{equation*}
\begin{split}
	\frac 1h& (J_2(x+ he_i) - J_2(x))\\
	& = -\mu\int_{\bb R^n}\left(1 - \zeta\left(\frac yR\right)\right)e^{\varrho u(y)}\int_0^1\frac{x_i +th - y_i}{|x + the_i - y|^{\mu + 2}}\; \d t\; \d y. 
\end{split}
\end{equation*}
For each $x\in \bb R^n$ and each $y\in \bb R^n$ satisfying $|y|> R> 1 + 2|x|$, and for a.e. $t\in [0, 1]$ we have
\begin{equation*}
	\lim_{h\to 0}\frac{x_i + th - y_i}{|x + the_i - y|^{\mu + 2}}
	= \frac{x_i- y_i}{|x - y|^{\mu + 2}}.
\end{equation*}
Moreover, for any such $x$, $y$, and $t$ we have
\begin{equation*}
	|x + the_i - y|
	\geq |y| - |x| - t|h|
	\geq |y| -\frac{|y| -1}2 - |h|
	\geq \frac {1 +|y|}4, 
\end{equation*}
so
\begin{equation}
\label{eq:shut_eyes}
\begin{split}
	\abs{\frac{x_i + th - y_i}{|x + the_i - y|^{\mu + 2}}}
	& \leq \frac{1}{|x+ the_i - y|^{\mu + 1}}\\
	& \leq \frac{4^{\mu + 1}}{(1 + |y|)^{\mu + 1}}\in L^1([0, 1];\d t). 
\end{split}
\end{equation}
Therefore the Dominated Convergence Theorem implies that 
\begin{equation}
\label{eq:first_DCT}
	\lim_{h\to 0}\int_0^1\frac{x_i + th - y_i}{|x + the_i - y|^{\mu + 2}}\; \d t
	= \int_0^1\frac{x_i - y_i}{|x - y|^{\mu + 2}}\; \d t
	= \frac{x_i - y_i}{|x - y|^{\mu + 2}}. 
\end{equation}
Now we apply the Dominated Convergence Theorem a second time to compute $\partial_i J_2(x)$. For $x\in \bb R^n$, $R> 1 + 2|x|$ and a.e. $y\in \bb R^n$, by \eqref{eq:first_DCT} we have
\begin{equation*}
	\lim_{h\to 0}\left(1 - \zeta\left(\frac yR\right)\right)e^{\varrho u(y)}\int_0^1\frac{x_i + th - y_i}{|x + the_i - y|^{\mu + 2}}\; \d t
	= \left(1 - \zeta\left(\frac yR\right)\right)e^{\varrho u(y)}\frac{x_i - y_i}{|x- y|^{\mu + 2}}. 
\end{equation*}
Moreover, for a.e. $y\in \bb R^n\setminus B_R$ (still with $R> 1 + 2|x|$), every $|h|\in (0, \frac 14)$, and every $t\in [0, 1]$ as in \eqref{eq:shut_eyes} we have
\begin{equation*}
	\abs{\frac{x_i + th - y_i}{|x + the_i - y|^{\mu + 2}}}
	\leq \frac{4^{\mu + 1}}{(1 + |y|)^{\mu+ 1}}. 
\end{equation*}
Therefore, for a.e. $y\in \bb R^n$ and all $|h|\in (0, \frac 14)$, we have
\begin{equation*}
\begin{split}
	0
	& \leq \left(1 - \zeta\left(\frac yR\right)\right)e^{\varrho u(y)}\abs{\int_0^1\frac{x_i + th - y_i}{|x + the_i - y|^{\mu+ 2}}\; \d t}\\
	& \leq \left(1 - \zeta\left(\frac yR\right)\right)e^{\varrho u(y)}\cdot \frac{4^{\mu+ 1}}{(1 + |y|^{\mu + 1})}\\
	& \leq \frac{4^{\mu + 1}e^{\varrho u(y)}}{|y|^{\mu + 1}}\chi_{\bb R^n\setminus B_1}(y)\in L^1(\bb R^n), 
\end{split}
\end{equation*}
where the integrability in the final line can be verified using the assumption $e^u\in L^n(\bb R^n)$ and H\"older's inequality. Another application of the Dominated Convergence Theorem gives
\begin{equation*}
\begin{split}
	\lim_{h\to 0}
	& \int_{\bb R^n}\left(1 - \zeta\left(\frac yR\right)\right)e^{\varrho u(y)}\int_0^1\frac{x_i + th - y_i}{|x + the_i - y|^{\mu+ 2}}\; \d t\; \d y\\
	& = \int_{\bb R^n}\left(1- \zeta\left(\frac yR\right)\right)e^{\varrho u(y)}\frac{x_i - y_i}{|x - y|^{\mu + 2}}\; \d y. 
\end{split}
\end{equation*}
In particular, 
\begin{equation*}
\begin{split}
	\partial_i J_2(x)
	& = - \mu\int_{\bb R^n}\left(1 - \zeta\left(\frac yR\right)\right)e^{\varrho u(y)}\frac{x_i - y_i}{|x- y|^{\mu + 2}}\; \d y\\
	& = \int_{\bb R^n}\left(1 - \zeta\left(\frac yR\right)\right)e^{\varrho u(y)}\left(\partial_i(|\cdot|^{-\mu})\right)(x - y)\; \d y. 
\end{split}
\end{equation*}
Note that this computation of $\partial_iJ_2$ requires no smoothness assumption on $u$, it only requires $e^u\in L^n(\bb R^n)$. A similar computation shows that for every multi-index $\gamma$ (not necessarily satisfying $|\gamma|\leq k$) there holds
\begin{equation*}
\begin{split}
	\partial^\gamma J_2(x)
	& = \int_{\bb R^n}\left(1 - \zeta\left(\frac yR\right)\right)e^{\varrho u(y)}\left(\partial^\gamma(|\cdot|^{-\mu})\right)(x - y)\; \d y 
\end{split}
\end{equation*}
for all $x\in \bb R^n$, whenever $R$ satisfies $R> 1 + 2|x|$. In particular this expression implies the differentiability (and hence continuity) of $\partial^\gamma J_2$ for all $\gamma\in \bb N_0^n$. 
\end{proof}
}\fi 
\section{Asymptotic Behavior of Solutions at Infinity}
\label{s:regularity_decay}
This section is devoted to the proof of Theorem \ref{theorem:asymptotic}. Along the way we will prove a variety of intermediate results for solutions to \eqref{eq:entire_nonlocal} including smoothness and upper boundedness. We remind the reader that in the lemmata and propositions that follow, we always assume that $\mu\in (0, n)$ and $\varrho$ is as in \eqref{eq:rho}, even when these assumptions are not explicitly stated. 
\begin{prop}
\label{prop:distributional_solutions_smooth}
If $u$ is a distributional solution to \eqref{eq:entire_nonlocal} then $u\in C^\infty(\bb R^n)$. 
\end{prop}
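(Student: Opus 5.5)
Write $f = (n-1)!\,I_\mu[e^{\varrho u}]e^{\varrho u}$. By Lemma \ref{lemma:RHS_L1_integrable}, $f\in L^1(\bb R^n)$, so Lemma \ref{lemma:NonhomogeneousSolution} shows that $Kf$ is a distributional solution of $(-\lap)^{n/2}(Kf) = f$. Lemma \ref{lemma:MappingProps} gives $Kf\in L_{n/2}(\bb R^n)$. Hence $p = u - Kf$ is a distributional solution to $(-\lap)^{n/2}p = 0$, and by Lemma \ref{lemma:KernelClassification} it is a polynomial of degree at most $n-1$. So $u = Kf + p$, and smoothness of $u$ reduces to smoothness of $Kf$. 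The plan is a bootstrap: first show $f\in L^r_{\rm loc}$ for some $r>n$, then $u\in W^{n-1,\infty}_{\rm loc}$, then $u\in C^{n-1}$, then iterate.

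\textbf{Step 1 (local exponential integrability).} This is the main obstacle. I would use a Brezis--Merle type argument adapted to $K$. Fix $x_0$ and a small radius $r_0$ with $\int_{B_{2r_0}(x_0)}|f| <\varepsilon$, and split $f = f\chi_{B_{2r_0}(x_0)} + f\chi_{B_{2r_0}(x_0)^c}$. The far part gives a function $K(f\chi_{B_{2r_0}^c})$ that is bounded on $B_{r_0}(x_0)$, up to a harmless constant from the normalization $\log(1+|y|)$. For the near part, Jensen's inequality with the probability measure $|f|\,\d y/\|f\|_{L^1(B_{2r_0})}$ gives
\begin{equation*}
\int_{B_{r_0}}\exp\bigl(q\,|K(f\chi_{B_{2r_0}})|\bigr)
\leq C\int_{B_{r_0}}\int\frac{|f(y)|}{\|f\|}\,|x-y|^{-q c_n\|f\|}\,\d y\,\d x<\infty
\end{equation*}
as soon as $q c_n\varepsilon<n$. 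Taking $\varepsilon$ small therefore gives $e^{u}\in L^{q}_{\rm loc}$ for every $q<\infty$, since $p$ is locally bounded.

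\textbf{Step 2 (local bound on the Riesz factor).} Next I would show $I_\mu[e^{\varrho u}]\in L^\infty_{\rm loc}$. Split the integral into the part over $B_{R}(x_0)$ and its complement. The far part is bounded because $e^{\varrho u}\in L^{n/\varrho}$ and $|x-y|^{-\mu}$ lies in the dual space $L^{n/(n-\varrho)}$ away from the origin; here $\mu n/(n-\varrho) = 2n>n$ is exactly what is needed. The near part is bounded by Hölder's inequality, using $e^{\varrho u}\in L^q_{\rm loc}$ with $q$ large and $|y|^{-\mu}\in L^{q'}_{\rm loc}$. It follows that $f\in L^r_{\rm loc}$ for all $r<\infty$, in particular for some $r>n$. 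Lemma \ref{lemma:MappingProps}(2) then gives $Kf\in W^{n-1,\infty}_{\rm loc}$, so $u$ is continuous and locally bounded. Consequently $e^{\varrho u}\in L^\infty_{\rm loc}$, and the same splitting shows $I_\mu[e^{\varrho u}]$ is continuous. Thus $f$ is locally bounded.

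\textbf{Step 3 (bootstrap).} Lemma \ref{lemma:MappingProps}(3) requires $f\in L^\infty$ globally, so I localize. Write $Kf = K(\eta f)+K((1-\eta)f)$ with a cutoff $\eta$ equal to $1$ near $x_0$. The second term is $C^\infty$ near $x_0$ by differentiating under the integral, exactly as for $J_2$ in the proof of Lemma \ref{lemma:Imu_Ck}. The first term is $C^{n-1}$ by Lemma \ref{lemma:MappingProps}(3). Hence $u\in C^{n-1}$, and Lemma \ref{lemma:Imu_Ck} gives $f\in C^{n-1}$.

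To go further, apply $\partial^\beta$ and use the derivative formula \eqref{eq:DerivativesInside}, moving derivatives onto the localized density $\eta f$. If $f\in C^k$, then $\partial^\beta K(\eta f) = K(\partial^\beta(\eta f))$ for $|\beta|\leq k$, with $\partial^\beta(\eta f)\in L^1\cap L^\infty$. So $K(\eta f)\in C^{n-1+k}$, hence $u\in C^{k+n-1}$ and, by Lemma \ref{lemma:Imu_Ck}, $f\in C^{k+n-1}$. This requires $n\geq 2$ to gain regularity at each step. For $n=1$, I would instead use a Hölder (Schauder) gain for the logarithmic potential of a $C^{k}$ density. Induction then yields $u\in C^\infty(\bb R^n)$.
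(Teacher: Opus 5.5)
Your proposal is correct and is essentially the paper's argument. The paper uses the same chain: a Jensen (Brezis--Merle) estimate on the part of $f$ with small $L^1$ mass to get $e^{u}\in L^q_{\rm loc}$, then $I_\mu[e^{\varrho u}]\in L^\infty_{\rm loc}$ by a H\"older split, then Lemma \ref{lemma:MappingProps}(2), then a bootstrap that moves derivatives off the singular kernel onto $f$ via Lemma \ref{lemma:Imu_Ck}. The differences are cosmetic: the paper splits globally as $f=f_1+f_2$ with $f_1\in L^1\cap L^\infty$ and $\|f_2\|_{L^1}$ small, where you localize to a small ball, and it cuts off $K_\alpha$ near the diagonal instead of cutting off the density. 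Your identity $\partial^\beta K(\eta f)=K(\partial^\beta(\eta f))$ holds only up to an additive constant, which is harmless.

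Your remark that the bootstrap gains only $n-1$ derivatives per step, so that $n=1$ needs a separate H\"older/Schauder argument, is well taken. The paper's proof asserts $u\in C^{n-2}$ and iterates with the same gain, so it tacitly assumes $n\geq 2$ as well. Your sketch for $n=1$ would still have to be carried out.
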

\begin{proof}
The proof will be carried out in two steps. In Step 1 we will show that $u\in C^{n - 2}(\bb R^n)$ and in Step 2 we will improve the smoothness of $u$ to $u\in C^\infty(\bb R^n)$. 
\begin{enumerate}[label = {\bf Step \arabic*.}, ref = {Step \arabic*}, wide = 0pt]
	\item \label{item:u_Cn-2} For ease of notation we set 
	\begin{equation*}
		f = (n - 1)!I_\mu[e^{\varrho u}]e^{\varrho u}. 
	\end{equation*}
	Fix 
	\begin{equation}
	\label{eq:q_largeness}
		q> \max\left\{n \varrho, \frac{n\varrho}{n - \mu}\right\}
	\end{equation}
	and decompose $f$ as $f = f_1 + f_2$ for some $0\leq f_1\in L^1\cap L^\infty(\bb R^n)$ and some $0\leq f_2$ satisfying
	\begin{equation}
	\label{eq:f2_norm_smallness}
		\|f_2\|_1:= \|f_2\|_{L^1(\bb R^n)}< \frac{n}{qc_n}, 
	\end{equation}
	where $c_n$ is as in \eqref{eq:cn}. For $i = 1, 2$ define $v_i = Kf_i$. Lemma \ref{lemma:MappingProps} guarantees that $v_1\in C^{n - 1}(\bb R^n)$ and $v_2\in W^{n - 1, 1}_{\loc}\cap L_{n/2}(\bb R^n)$. Lemma \ref{lemma:NonhomogeneousSolution} guarantees that $p:= u - (v_1 + v_2)$ satisfies $(-\lap)^{n/2}p = 0$ in the distributional sense on $\bb R^n$, so Lemma \ref{lemma:KernelClassification} guarantees that $p$ is a polynomial. In particular $p + v_1\in C^{n - 1}(\bb R^n)$ so to show that $u\in C^{n - 2}(\bb R^n)$ it suffices to show that $v_2\in C^{n - 2}(\bb R^n)$. To do so we first claim that $e^{v_2}\in L^q_{\loc}(\bb R^n)$ and that there is a constant $C(n)>0$ such that the estimate
	\begin{equation}
	\label{eq:ev2_Lq_loc}
		\|e^{v_2}\|_{L^q(B_R)}\leq C(n)R^{n/q}
	\end{equation}
	holds for all $R\gg 1$. To verify this claim fix $R\gg 1$ and observe first that an application of Jensen's inequality gives
	\begin{equation}
	\label{eq:eqv2_split}
	\begin{split}
		0
		& \leq \int_{B_R}e^{qv_2}\\
		& \leq \int_{B_R}\left(\int_{B_{2R}} + \int_{\bb R^n\setminus B_{2R}}\right)\left(\frac{1 + |y|}{|x - y|}\right)^{qc_n\|f_2\|_1}\frac{f_2(y)}{\|f_2\|_1}\; \d y\; \d x. 
	\end{split}
	\end{equation}
	\ifdetails{\color{gray}
	Here is a more detailed view of estimate \eqref{eq:eqv2_split}: 
	\begin{equation*}
	\begin{split}
		0
		& \leq \int_{B_R}e^{qv_2}\\
		& = \int_{B_R}\exp\left(qc_n\|f_2\|_1\int_{\bb R^n}\log\left(\frac{1 + |y|}{|x - y|}\right)\frac{f_2(y)}{\|f_2\|_1}\; \d y\right)\; \d x\\
		& \leq \int_{B_R}\int_{\bb R^n}\left(\frac{1 + |y|}{|x - y|}\right)^{qc_n\|f_2\|_1}\frac{f_2(y)}{\|f_2\|_1}\; \d y\; \d x\\
		&  = 
		\int_{B_R}\left(\int_{B_{2R}} + \int_{\bb R^n\setminus B_{2R}}\right)\left(\frac{1 + |y|}{|x - y|}\right)^{qc_n\|f_2\|_1}\frac{f_2(y)}{\|f_2\|_1}\; \d y\; \d x. 
	\end{split}
	\end{equation*}
	}\fi
	Since $R\gg 1$ and by the smallness condition on $\|f_2\|_1$ in \eqref{eq:f2_norm_smallness}, we obtain 
	\begin{equation}
	\label{eq:eqv2_split_term1}
	\begin{split}
		\int_{B_R}\int_{B_{2R}}& \left(\frac{1 + |y|}{|x - y|}\right)^{qc_n\|f_2\|_1}\frac{f_2(y)}{\|f_2\|_1}\; \d y\; \d x\\
		& \ifdetails{\color{gray}
		\; \leq (3R)^{qc_n\|f_2\|_1}\int_{B_{2R}}\frac{f_2(y)}{\|f_2\|_1}\int_{B_R}|x - y|^{-qc_n\|f_2\|_1}\; \d x\; \d y
		}
		\\
		& \fi
		\leq (3R)^{qc_n\|f_2\|_1}\int_{B_{2R}}\frac{f_2(y)}{\|f_2\|_1}\int_{B_{4R}(y)}|x - y|^{-qc_n\|f_2\|_1}\; \d x\; \d y\\
		& \leq C(n) R^n. 
	\end{split}
	\end{equation}
	Moreover, since $\frac{1 + |y|}{|x- y|}\leq 4$ whenever $x\in B_R$ and $y\in \bb R^n\setminus B_{2R}$ we have
	\begin{equation}
	\label{eq:eqv2_split_term2}
	\begin{split}
		\int_{B_R}\int_{\bb R^n\setminus B_{2R}}& \left(\frac{1 + |y|}{|x - y|}\right)^{qc_n\|f_2\|_1}\frac{f_2(y)}{\|f_2\|_1}\; \d y\; \d x\\
		& \leq 4^{qc_n\|f_2\|_1}|B_R|\\
		& \leq 4^n|B_R|\\
		& \leq C(n) R^n. 
	\end{split}
	\end{equation}
	Bringing estimates \eqref{eq:eqv2_split_term1} and \eqref{eq:eqv2_split_term2} back to \eqref{eq:eqv2_split} establishes \eqref{eq:ev2_Lq_loc}. 
	\ifdetails{\color{gray}
	For $q$ as in \eqref{eq:q_largeness} we have $q>1$. Indeed, if $n\geq 2$ then $\varrho = n - \frac\mu 2> \frac n2\geq 1$ so 
	\begin{equation*}
		q> \frac{n\varrho}{n - \mu}> \frac{n}{n - \mu}> 1. 
	\end{equation*}
	If $n = 1$ then 
	\begin{equation*}
		\frac{n\varrho}{n - \mu}
		= \frac{1}{1 - \mu}\left(1 - \frac \mu 2\right)> 1. 
	\end{equation*}
	To see that the constant $C(n)$ can be assumed independent of $q$, note that using estimates \eqref{eq:eqv2_split_term1} and \eqref{eq:eqv2_split_term2} in \eqref{eq:eqv2_split} gives
	\begin{equation*}
		\|e^{v_2}\|_{L^q(B_R)}
		\leq C(n)^{1/q}R^{n/q}.
	\end{equation*}
	Since $q>1$ and since we may assume $C(n)\geq 1$ then we have $C(n)^{1/q}\leq C(n)$.
	}\fi
	Next we claim that 
	\begin{equation}
	\label{eq:nonlocality_locally_bounded}
		I_\mu[e^{\varrho u}]\in L^\infty_{\loc}(\bb R^n). 
	\end{equation}
	To see this, fix $R\gg 1$ and let $x\in B_R$. Setting $A = \|e^{\varrho(v_1 + p)}\|_{L^\infty(B_{2R})}$ and with two applications of H\"older's inequality we have 
	\begin{equation*}
	\begin{split}
		I_\mu[e^{\varrho u}](x)
		\leq & \; A\int_{B_{2R}}\frac{e^{\varrho v_2(y)}}{|x- y|^\mu}\; \d y + 2^\mu\int_{\bb R^n\setminus B_{2R}}\frac{e^{\varrho u(y)}}{|y|^\mu}\; \d y\\
		\leq & \; A\|e^{v_2}\|_{L^q(B_{2R})}^\varrho\left(\int_{B_{4R}(x)}|x - y|^{-\frac{\mu q}{q - \varrho}}\; \d y\right)^{1 - \varrho/q}\\
		& + 2^\mu\|e^u\|_{L^n(\bb R^n)}^\varrho\left(\int_{\bb R^n\setminus B_R}|y|^{-2n}\; \d y\right)^{1 - \varrho/n}\\
		\leq & \; C\left(R^{n - \mu} + R^{-\mu/2}\right),  
	\end{split}
	\end{equation*}
	where estimate \eqref{eq:ev2_Lq_loc} was used in the final inequality and $C = C(n, \mu, A, \|e^u\|_{L^n(\bb R^n)})$. Since $R\gg 1$ is arbitrary, \eqref{eq:nonlocality_locally_bounded} is established. Now for any $R\gg 1$ using \eqref{eq:nonlocality_locally_bounded} and \eqref{eq:ev2_Lq_loc} we have
	\begin{equation*}
	\begin{split}
		\int_{B_R}\left|I_\mu[e^{\varrho u}]e^{\varrho u}\right|^{q/\varrho}
		& \leq \|I_\mu[e^{\varrho u}]\|_{L^\infty(B_R)}^{q/\varrho}\int_{B_R}e^{qu}\\
		& \leq A^{q/\varrho}\|I_\mu[e^{\varrho u}]\|_{L^\infty(B_R)}^{q/\varrho}\int_{B_R}e^{qv_2}\\
		& < \infty. 
	\end{split}
	\end{equation*}
	Since $q>n\varrho$ Lemma \ref{lemma:MappingProps} guarantees that $v_1 + v_2 = Kf\in W^{n -1, \infty}_{\loc}(\bb R^n)\subset C^{n - 2}(\bb R^n)$. Combining this containment with the containment $v_1\in C^{n - 1}(\bb R^n)$ shows that $v_2\in C^{n - 2}(\bb R^n)$ thereby completing the proof of \ref{item:u_Cn-2}. 
	\item To show that $u\in C^{\infty}(\bb R^n)$ it is sufficient to show that $v_1 + v_2= Kf$ is smooth. By Lemma \ref{lemma:MappingProps}, for any $1\leq \abs{\alpha}\leq n-1$ 
	\begin{equation*}
		\partial^\alpha(v_1 + v_2)(x)
		= \int_{\bb R^n}K_\alpha(x - y)f(y)\; \d y,
	\end{equation*}
	where $K_\alpha$ is as in \eqref{eq:KernelDerivatives}. Let $\eta\in C^\infty(\bb R^n)$ satisfy $0\leq \eta\leq 1$, $\eta (x) = 0$ for $\abs x\leq 1$, and $\eta(x) = 1$ for $\abs x\geq 2$. Then $\eta K_\alpha\in C^\infty\cap L^\infty(\bb R^n)$ and 
	\begin{equation*}
	\begin{split}
		\partial^\alpha& (v_1 + v_2)(x)\\
		&= \int_{\bb R^n} \eta(x - y)K_\alpha(x - y)f(y)\; \d y +\int_{\bb R^n}(1 - \eta(y)) K_\alpha(y)f(x-y)\; \d y.
	\end{split}
	\end{equation*}
	Given $u\in C^k(\bb R^n)$ for some $k\in \bb N$, if $\beta$ is a multiindex for which $\abs\beta = k$ then 
	\begin{equation*}
	\begin{split}
		\partial^{\alpha + \beta}(v_1 + v_2)(x)
		= & \;  \int_{\bb R^n}\partial_x^\beta(\eta(x - y)K_\alpha(x - y))f(y)\; \d y\\
		&+ \int_{\bb R^n}(1 - \eta(y))K_\alpha(y) \partial_ x^\beta(f(x- y))\; \d y.\\
	\end{split}
	\end{equation*}
	The continuity of each of the two terms on the right-hand side is routinely verified (one may use Lemma \ref{lemma:Imu_Ck} to verify the continuity of the second term) and thus we have $v_1 + v_2\in C^{n - 1+ k}(\bb R^n)$. Starting with $k = n - 2$ we iterate this argument to get $v_1 + v_2\in C^\infty(\bb R^n)$. 
\end{enumerate}
\end{proof}
The proof of the following lemma is similar to that of Lemma 2.1 of \cite{Lin1998} and is therefore omitted. 
\begin{lemma}
\label{lemma:Kf_lower_decay}
There is a universal constant $C>0$ such that for all $0\leq f \in L^1(\bb R^n)$ the estimate
\begin{equation*}
	(n - 1)!Kf(x)
	\geq -\frac{2\|f\|_{L^1(\bb R^n)}}{|\bb S^n|}\left(\log|x| + C\right)
\end{equation*}
holds for all $x\in \bb R^n\setminus B_4$. 
\end{lemma}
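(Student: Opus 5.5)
The plan is to bound the logarithm in the kernel of $K$ from below pointwise in $y$, in a way that depends only on $|x|$ plus a universal error. Since $f\geq 0$, any such pointwise lower bound integrates directly against $f$. Recall $(n-1)!c_n = 2/|\bb S^n|$, so the claimed estimate is equivalent to
\begin{equation*}
	\int_{\bb R^n}\log\left(\frac{1+|y|}{|x-y|}\right)f(y)\;\d y \geq -\|f\|_{L^1(\bb R^n)}\left(\log|x| + C\right)
	\qquad\text{for } |x|\geq 4.
\end{equation*}
It therefore suffices to show that there is a universal $C$ with
\begin{equation*}
	\log\frac{|x-y|}{1+|y|}\leq \log|x| + C
	\qquad\text{for all } y\in\bb R^n \text{ and all } |x|\geq 4.
\end{equation*}
Multiplying this by $f(y)\geq 0$ and integrating then gives the claim.

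For the pointwise bound, the triangle inequality gives $|x-y|\leq |x| + |y|$. Using $|x|\geq 4\geq 1$, we get
\begin{equation*}
	|x| + |y| \leq |x|(1+|y|),
\end{equation*}
and hence $\frac{|x-y|}{1+|y|}\leq |x|$. So in fact $C=0$ works for all $|x|\geq 1$. The restriction to $\bb R^n\setminus B_4$ is harmless: it only keeps $\log|x|$ positive and bounded away from zero, so that an arbitrary $C\geq 0$ may be absorbed. There is no need to split the region of integration into $|y|\leq |x|/2$, $|x-y|\leq 1$, and so on, as in Lemma 2.1 of \cite{Lin1998}. Note that the singularity of $\log|x-y|$ at $y=x$ has the favorable sign here, since it makes the integrand large and positive.

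The only point needing care is measurability and integrability. The quantity $Kf(x)$ is a priori defined as an integral that may diverge to $+\infty$ at a given point, but it is finite a.e. by Lemma \ref{lemma:MappingProps}. Its negative part is dominated by $\log(|x|)f(y)\in L^1$, so the integral is well defined in $(-\infty,+\infty]$ for every $|x|\geq 4$, and the inequality holds there. I do not expect a genuine obstacle. If anything, the subtle step is only noticing that the one-sided estimate requires nothing beyond $f\geq 0$ and the triangle inequality. This is why no decay or regularity of $f$ is needed, in contrast with the matching upper estimate, which would require a finer analysis near $y=x$.
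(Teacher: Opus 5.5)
Your proof is correct, and it takes a more direct route than the one the paper points to. The paper omits the proof and refers to Lemma 2.1 of \cite{Lin1998}. There the potential is written with the kernel $\log\frac{|y|}{|x-y|}$, which is not bounded below by $-\log|x|-C$ uniformly in $y$, since it tends to $-\infty$ as $y\to 0$. A Lin-style argument therefore has to split the integral. Near the origin one uses $\log|x-y|\le\log|x|+C$ together with some control of $f\log\frac{1}{|y|}$; away from the origin one uses $|x-y|\le|x|+|y|\le|x||y|$.

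The operator in \eqref{eq:KernelOperator} is normalized with $\log(1+|y|)$ instead. So your single inequality $|x-y|\le|x|+|y|\le|x|(1+|y|)$ for $|x|\ge 1$ gives the uniform pointwise bound at once. No decomposition is needed, and no assumption on $f$ beyond $0\le f\in L^1(\bb R^n)$. You even get $C=0$ on $\bb R^n\setminus B_1$, hence any $C>0$, since $\|f\|_{L^1(\bb R^n)}\ge 0$. Your remark that the negative part of the integrand is dominated by $f(y)\log|x|$ correctly handles points where $Kf(x)=+\infty$.

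The splitting approach is still the right tool for the unregularized kernel. It also matches the structure of the companion upper estimate \eqref{eq:as_in_Lin} (Lemma 2.4 of \cite{Lin1998}), where the region near $y=x$ genuinely has to be isolated. For the one-sided bound stated here, though, your argument is the cleaner one.
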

\begin{lemma}
\label{lemma:polynomial_part_bounded_above}
\ifdetails{\color{gray}
Let $\mu\in (0, n)$ and let $\varrho$ be as in \eqref{eq:rho}.
}\fi
If $u$ is a distributional solution to \eqref{eq:entire_nonlocal} then there is a polynomial $p$ that is bounded above and for which $\deg p\leq n - 1$ such that
\begin{equation}
\label{eq:particular+harmonic}
	u = (n - 1)!K\left(I_\mu[e^{\varrho u}]e^{\varrho u}\right) + p, 
\end{equation}
where $K$ is as in \eqref{eq:KernelOperator}. 
\end{lemma}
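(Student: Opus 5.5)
The plan has three steps: produce the decomposition, show the polynomial is bounded above, and check the degree and integrability conditions.

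\textbf{Step 1: the decomposition.} Set $f = (n-1)!I_\mu[e^{\varrho u}]e^{\varrho u}$. By Lemma~\ref{lemma:RHS_L1_integrable} we have $0\le f\in L^1(\bb R^n)$, so Lemma~\ref{lemma:NonhomogeneousSolution} shows that $Kf$ is a distributional solution of $(-\lap)^{n/2}v=f$. By Lemma~\ref{lemma:MappingProps}, $Kf\in L_{n/2}(\bb R^n)$. Hence $p:=u-Kf\in L_{n/2}(\bb R^n)$ is a distributional solution of $(-\lap)^{n/2}p=0$. Lemma~\ref{lemma:KernelClassification} then says $p$ is a polynomial with $\deg p\le n-1$. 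This gives \eqref{eq:particular+harmonic}, since $(n-1)!K(I_\mu[e^{\varrho u}]e^{\varrho u}) = Kf$ by linearity.

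\textbf{Step 2: $p$ is bounded above.} We combine the integrability $e^u\in L^n$ with the lower bound of Lemma~\ref{lemma:Kf_lower_decay}, applied to $g=I_\mu[e^{\varrho u}]e^{\varrho u}\ge0$. For $|x|\ge4$ that lemma gives
\[
Kf(x)=(n-1)!Kg(x)\ge -\tfrac{2V}{|\bb S^n|}(\log|x|+C).
\]
Consequently, for $|x|\ge 4$,
\[
e^{nu(x)}\ge e^{np(x)}\,(C|x|)^{-2nV/|\bb S^n|},
\]
and therefore
\[
\int_{\bb R^n\setminus B_4} e^{np(x)}|x|^{-\beta}\,\d x<\infty, \qquad \beta=\tfrac{2nV}{|\bb S^n|}.
\]
It remains to show that a polynomial $p$ with this weighted integrability is bounded above. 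Suppose it is not. I would use the standard argument from \cite{Lin1998, Martinazzi2009}: if $p$ is not bounded above, there is a direction along which $p\to+\infty$ at a polynomial rate. More robustly, one argues by contradiction with sets $\{p>\log|x|^{k}\}$. The cleanest version I expect to use: if $\sup p=\infty$, pick $x_j$ with $|x_j|\to\infty$ and $p(x_j)\ge (\beta+n+1)\log|x_j|+j$. On the ball $B_{r_j}(x_j)$ with $r_j=|x_j|^{-N}$ for a large fixed $N$, the derivative bound $|\nabla p(y)|\le C|y|^{n-2}$ (valid for $|y|\ge1$) gives $p\ge p(x_j)-1$ once $N\ge n$. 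The integral over that ball is then at least $c\,|x_j|^{-nN}|x_j|^{-\beta}e^{n(\beta+n+1)\log|x_j|+nj}$. The exponent growth is wrong unless the factor in the choice of $x_j$ is tuned, so I would instead choose $x_j$ with $p(x_j)\ge M\log|x_j|$ for $M$ large relative to $N$, $n$ and $\beta$. This is possible: a polynomial unbounded above grows at least like $|x|^{\delta}$ along a curve, by a Łojasiewicz/curve selection type fact, or more simply by the fact that $\sup_{|x|=r}p$ is eventually comparable to $r^{a}$ with $a>0$ rational. That makes the ball integrals diverge, which is a contradiction.

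\textbf{Main obstacle.} The key difficulty is the elementary but fiddly claim that a polynomial unbounded above satisfies $\sup_{|x|=r}p\ge c r^{a}$ for some $a>0$ and all large $r$. I would justify this by the Tarski–Seidenberg theorem: $r\mapsto\sup_{|x|=r}p$ is semialgebraic, hence has a Puiseux expansion at infinity, and it is unbounded. Alternatively, I would cite the corresponding step in \cite{Martinazzi2009} or \cite{Hyder2019}, where exactly this weighted-integrability argument is carried out.
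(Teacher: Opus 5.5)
Your proposal is correct, but Step 2 takes a different route from the paper.

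\textbf{What is shared.} Both arguments use the same decomposition. Both use the gradient bound $|\nabla p|\le C(1+|x|^{n-2})$ on a ball of polynomially small radius around a near-maximizer of $p$ on $\partial B_r$. Both rely on a Gorin-type growth fact: the paper cites Gorin's Theorem 3.1, which gives $r^{-s}\sup_{\partial B_r}p\to\infty$. Your Tarski--Seidenberg/Puiseux argument gives the same, and you only need $\sup_{\partial B_r}p\ge M\log r$ along a sequence.

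\textbf{Where you differ.} The difference is which finite quantity you contradict. You combine Lemma~\ref{lemma:Kf_lower_decay} with the hypothesis $e^{nu}\in L^1(\mathbb R^n)$ to get $\int_{\mathbb R^n\setminus B_4}e^{np}|x|^{-\beta}<\infty$. This reduces everything to a purely local statement about polynomials, exactly as in the local $Q$-curvature problem. The paper instead shows $u\ge r^s$ on $B_{r^{2-n}}(x_r)$. It then has to bound the nonlocal factor $I_\mu[e^{\varrho u}]$ from below on that ball, which forces $V\ge Cr^{-(n-2)(n-\mu)}e^{2\varrho r^s}$.

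\textbf{What each buys.} Your version is shorter and avoids the lower estimate on the potential. The paper's version uses only $V<\infty$ and never uses $\int e^{nu}<\infty$ directly. So it would survive under integrability hypotheses that control $V$ but not $e^{nu}$.

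\textbf{Minor points.} Your first attempted choice of $x_j$ is left unresolved, but the final version is sound: take $p(x_j)\ge M\log|x_j|$ with $M$ large relative to $N$, $n$, $\beta$, and radius $|x_j|^{-N}$ with $N\ge n-1$. You also correctly check $p\in L_{n/2}(\mathbb R^n)$ before invoking Lemma~\ref{lemma:KernelClassification}, which the paper leaves implicit.
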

\begin{proof}
Defining $p$ by 
\begin{equation*}
	p = u - (n - 1)!K\left(I_\mu[e^{\varrho u}]e^{\varrho u}\right), 
\end{equation*}
Lemma \ref{lemma:NonhomogeneousSolution} guarantees that $p$ satisfies $(-\lap)^{n/2} p= 0$ in $\bb R^n$ in the distributional sense so Lemma \ref{lemma:KernelClassification} implies that $p$ is a polynomial for which $\deg p\leq n -1$. It remains to show that $p$ is bounded above. Proceeding by way of contradiction, suppose $\sup_{\bb R^n}p = +\infty$. Theorem 3.1 of \cite{Gorin1961} guarantees the existence of $s>0$ for which $\lim_{r\to\infty}r^{-s}\sup_{\bdy B_r}p = +\infty$. For $r$ large using Lemma \ref{lemma:Kf_lower_decay} we have
\begin{equation*}
\begin{split}
	\sup_{\bdy B_r}p
	& \ifdetails{\color{gray}
	\; = \sup_{\bdy B_r}\left(u - (n - 1)!K\left(I_\mu[e^{\varrho u}]e^{\varrho u}\right)\right)
	}
	\\ 
	& {\color{gray}
	\; \leq \sup_{\bdy B_r}\left(u + \frac{2V}{|\bb S^n|}(\log r + C)\right)
	}
	\\
	& \fi
	\leq \sup_{\bdy B_r}u + \frac{2V}{|\bb S^n|}\log r + C,  
\end{split}
\end{equation*}
where $V$ is as in \eqref{eq:nonlocal_volume}. Therefore, 
\begin{equation*}
\begin{split}
	+\infty
	& = \liminf_{r\to\infty}r^{-s}\sup_{\bdy B_r}p\\
	& \leq \liminf_{r\to\infty}r^{-s}\left(\sup_{\bdy B_r}u + \frac{2V}{|\bb S^n|}\log r + C\right)\\
	& = \liminf_{r\to\infty}r^{-s}\sup_{\bdy B_r}u. 
\end{split}
\end{equation*}
For $r$ large, let $x_r\in \bdy B_r$ satisfy $p(x_r) = \sup_{\bdy B_r}p$.
\ifdetails{\color{gray}
(thus $r^{-s}p(x_r)\to\infty$ by assumption) 
}\fi
Since $\deg p\leq n- 1$ there is $C>1$ such that $|\Grad p(x)|\leq C(1 + |x|^{n - 2})$ for all $x\in \bb R^n$. Therefore, for any $y\in B_{r^{2-n}}(x_r)$ we have
\begin{equation*}
\begin{split}
	|p(x_r) -p(y)|
	& \ifdetails{\color{gray}
	\; = \abs{\int_0^1\frac{\d}{\d t}p(t x_r + (1 - t)y)\; \d t}
	}
	\\
	& {\color{gray}
	\; = \abs{\int_0^1\Grad p(t x_r + (1 - t)y)\cdot(x_r - y)\; \d t}
	}
	\\
	& \fi
	\leq \|\Grad p\|_{L^\infty(B_{r^{2-n}}(x_r))}|x_r - y|\\
	& \leq \|\Grad p\|_{L^\infty(B_{2r}\setminus B_{r/2})}r^{2-n}\\
	& \ifdetails{\color{gray}
	\; \leq C(1 + r^{n - 2})r^{2- n}
	}
	\\
	& \fi 
	\leq C 
\end{split}
\end{equation*}
for some positive constant $C$ that is independent of $r$. 
\ifdetails{\color{gray}
(For any such $C$ and any $y\in B_{r^{2- n}}(x_r)$ we have $p(y)\geq p(x_r) - C$.)
}\fi
Using this estimate together with Lemma \ref{lemma:Kf_lower_decay} we find that for every $y\in B_{r^{2- n}}(x_r)$ there holds
\begin{equation*}
\begin{split}
	r^{-s}u(y)
	& \ifdetails{\color{gray}
	\; = r^{-s}\left((n - 1)!K(I_\mu[e^{\varrho u}]e^{\varrho u})(y) + p(y)\right)
	}
	\\
	& {\color{gray}
	\; \geq r^{-s}\left(-\frac{2V}{|\bb S^n|}(\log|y| + C) + p(x_r) - C\right)
	}
	\\
	& \fi
	\geq r^{-s}p(x_r) - \frac{2V}{|\bb S^n|}r^{-s}\log r - Cr^{-s}\\
	& = r^{-s}p(x_r)+ \circ(1)
\end{split}
\end{equation*}
as $r\to\infty$. Since $r^{-s}p(x_r)\to\infty$, this estimate guarantees the existence of $R\gg 1$ such that the inequality $u(y)\geq r^s$ holds whenever $r\geq R$ and $y\in B_{r^{2-n}}(x_r)$. Now for $r$ large, 
\begin{equation}
\label{eq:V_lower}
	V
	\geq \int_{B_{\frac{r^{2 - n}}4}(x_r)}I_\mu[e^{\varrho u}]e^{\varrho u}
	\geq e^{\varrho r^s}\int_{B_{\frac{r^{2 - n}}4}(x_r)}I_\mu[e^{\varrho u}]. 
\end{equation}
Moreover, for any $x\in B_{r^{2-n}/4}(x_r)$ we have
\begin{equation*}
\begin{split}
	I_\mu[e^{\varrho u}](x)
	& \geq \int_{B_{\frac{r^{2 - n}}2}(x)}\frac{e^{\varrho u(y)}}{|x- y|^\mu}\; \d y\\
	& \geq e^{\varrho r^s}\int_{B_{\frac{r^{2 - n}}2}(x)}|x- y|^{-\mu}\; \d y\\
	& = \frac{|\bb S^n|}{2^{n - \mu}(n - \mu)}r^{-(n - 2)(n-\mu)}e^{\varrho r^s}. 
\end{split}
\end{equation*}
Bringing this back to \eqref{eq:V_lower} we find that there is an $r$-independent constant $C>0$ such that for all $r$ sufficiently large, 
\begin{equation*}
	V
	\geq C r^{-(n - 2)(n - \mu)}e^{2\varrho r^s}
\end{equation*}
Choosing $r$ sufficiently large 
\ifdetails{\color{gray}
(so that the right-hand side of the above inequality exceeds $V + 1$)
}\fi
 we obtain $V\geq V+1$ which is a contradiction. 
\end{proof}
\begin{lemma}
\label{lemma:Kf_upper_decay_estimate}
\ifdetails{\color{gray}
Let $\mu\in (0, n)$ and $\varrho$ be as in \eqref{eq:rho}. 
}\fi
If $u$ is a distributional solution to \eqref{eq:entire_nonlocal} then for every $\epsilon>0$ there is $R>0$ such that the estimate 
\begin{equation}
\label{eq:Kf_upper_decay_estimate}
	(n - 1)!K\left(I_\mu[e^{\varrho u}]e^{\varrho u}\right)(x)
	\leq -\left(\frac{2V}{|\bb S^n|} - \epsilon\right)\log|x|
\end{equation}
holds for all $x\in \bb R^n\setminus B_R$. 
\end{lemma}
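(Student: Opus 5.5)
Set $f = (n - 1)!I_\mu[e^{\varrho u}]e^{\varrho u}$, so that $\|f\|_{L^1(\bb R^n)} = (n - 1)!V$ and $(n - 1)!c_n = 2/|\bb S^n|$. I will follow the standard argument of \cite{Lin1998}. For $|x|$ large, write
\begin{equation*}
	(n - 1)!Kf(x) + \frac{2V}{|\bb S^n|}\log|x|
	= \frac{2}{|\bb S^n|}\int_{\bb R^n}\log\left(\frac{|x|(1 + |y|)}{|x - y|}\right)I_\mu[e^{\varrho u}]e^{\varrho u}(y)\; \d y.
\end{equation*}
The goal is to show that the right-hand side is at most $\epsilon\log|x|$ for $|x|\geq R$. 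I split the $y$-integral into three regions: $A_1 = \{|y|\leq R_0\}$, $A_2 = \{|x - y|\leq |x|/2\}$, and $A_3$, the remainder.

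On $A_1$, the kernel satisfies $\log\frac{|x|(1 + |y|)}{|x - y|}\leq \log(2(1 + R_0))$ once $|x|\geq 2R_0$. That contribution is therefore $O(1)$, hence $\leq \frac\epsilon3\log|x|$ for $|x|$ large. On $A_3$ with $|y|> R_0$, we have $|x - y|\geq |x|/2$, so the kernel is bounded by $\log(2(1 + |y|))\leq \log 4 + \log|x| + \log(|y|/|x|)^{+}$. Splitting further according to whether $|y|\leq |x|^2$ or $|y|> |x|^2$, the kernel is at most $C + 2\log|x|$ on the first part. On the second part, $|x - y|\geq |y|/2$, so the kernel is at most $\log(2|x|(1 + |y|)/|y|)\leq C + \log|x|$. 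In both cases the contribution is at most $(C + 2\log|x|)\int_{|y|> R_0}f$. Choosing $R_0$ so that the tail $\int_{|y|> R_0}f$ is small makes this $\leq \frac\epsilon3\log|x| + C$.

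The main obstacle is $A_2$, where $\log\frac{|x|}{|x - y|}$ is unbounded. There $1 + |y|\leq 2|x|$, so the kernel is at most $\log 2 + 2\log|x| + \log\frac{1}{|x - y|}$. The terms $2\log|x|\int_{A_2}f$ are again small relative to $\log|x|$, since $A_2\subset\{|y|\geq |x|/2\}$ lies in the tail. It remains to bound $\int_{B_1(x)}\log\frac1{|x - y|}f(y)\; \d y = o(\log|x|)$, which needs a pointwise integrability gain for $f$ near $x$. I would use H\"older with some exponent $t>1$: $\|\log|x - \cdot|\|_{L^{t'}(B_1)}\|f\|_{L^t(B_1(x))}$. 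To control $\|f\|_{L^t(B_1(x))}$ uniformly in $x$, I would use the upper bound $u\leq (n - 1)!Kf + p$ with $p$ bounded above by Lemma \ref{lemma:polynomial_part_bounded_above}, and reuse the Jensen/small-$L^1$-norm decomposition from the proof of Proposition \ref{prop:distributional_solutions_smooth}. Decompose $f = f_1 + f_2$ with $f_2$ supported far away and of small mass, and $f_1$ handled by the already-proved lower-order estimate. This gives $e^{u}\in L^q(B_2(x))$ with $\|e^u\|_{L^q(B_2(x))}\leq C|x|^{\delta}$ for small $\delta$. Then $I_\mu[e^{\varrho u}]$ is bounded on $B_1(x)$ by a similar power, as in \eqref{eq:nonlocality_locally_bounded}, and $\|f\|_{L^t(B_1(x))}\leq C|x|^{\delta'}$.

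That bound alone is too crude. Instead, I would interpolate $f$ by H\"older between its small $L^1$ mass on $B_1(x)$ and the $L^q$ bound, giving $\int_{B_1(x)}\log\frac1{|x - y|}f\leq C\epsilon'\log|x|$ with $\delta'$ chosen in terms of $\epsilon$. Alternatively, following Lin, I would restrict to a set of $x$ of full density and then use the lower bound from Lemma \ref{lemma:Kf_lower_decay} together with the mean-value structure. I expect getting this local term under control with only $e^{u}\in L^n$ (rather than $e^{\varrho u}\in L^1$) to be the delicate part, because it requires care with the $|x|$-dependence of all constants.
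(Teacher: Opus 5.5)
Your splitting into $A_1$, $A_2$, $A_3$ is correct. It is the Lin-type step the paper also uses, namely inequality \eqref{eq:as_in_Lin}. The whole lemma then hinges on showing
$\int_{B_1(x)}\log\frac{1}{|x-y|}I_\mu[e^{\varrho u}](y)e^{\varrho u(y)}\,\d y = \circ(\log|x|)$, and you do not establish this.

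\textbf{Where the argument fails.}
\begin{itemize}
\item In your splitting $f = f_1 + f_2$ you bound the contribution of $f_1$ above only by a constant. So you get the growing bound $\|e^u\|_{L^q(B_2(x))}\le C|x|^{\delta}$.
\item The interpolation you propose cannot repair this. With $m(x) = \int_{B_1(x)}f$, H\"older gives at best $C\,m(x)^{\theta}|x|^{\delta'(1-\theta)}$ with $\theta\in(0,1)$.
\item All you know about $m(x)$ is that it tends to zero (finiteness of $V$). That allows $m(x_k)\to 0$ arbitrarily slowly along a sparse sequence.
\item Since $\delta'>0$ is fixed once $R_0$ is fixed, $|x|^{\delta'(1-\theta)}$ eventually beats $\log|x|$, however $\delta'$ is chosen relative to $\epsilon$.
\item The ``full density'' alternative would only give \eqref{eq:Kf_upper_decay_estimate} off an exceptional set. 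The lemma is needed at every $|x|\ge R$, for instance for the pointwise bound \eqref{eq:eu_decay} and for Corollary \ref{coro:bounded_above}.
\end{itemize}

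\textbf{The missing idea} is to let the decay coming from the bulk of the mass enter the exponent. The paper does this in Lemma \ref{lemma:for_Kf_upper_decay}.
\begin{itemize}
\item It exponentiates \eqref{eq:as_in_Lin} itself, which already carries the term $(V-\epsilon_1)\log|x|$.
\item It then applies Jensen's inequality with respect to the normalized tail measure $\d\eta$ on $\bb R^n\setminus B_R$. Here $R$ is chosen so large that $c_nq\|I_\mu[e^{\varrho u}]e^{\varrho u}\|_{L^1(\bb R^n\setminus B_R)}<n$.
\item This yields $\int_{B_1(x)}e^{qv}\le C|x|^{-c_nq(V-\epsilon_1)}$ for $v = K(I_\mu[e^{\varrho u}]e^{\varrho u})$.
\item Since $e^{\varrho u}\le e^{\varrho\sup p}e^{\varrho(n-1)!v}$, a three-factor H\"older inequality finishes the step. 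It uses $\log|x-\cdot|\in L^{a}(B_1(x))$ and the global HLS bound $I_\mu[e^{\varrho u}]\in L^{2n/\mu}(\bb R^n)$.
\item The local term is therefore bounded (in fact it decays), hence $<\epsilon\log|x|$. No local $L^\infty$ bound on $I_\mu[e^{\varrho u}]$ is needed.
\end{itemize}

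The same repair works inside your own framework. Take $f_1 = f\chi_{B_{R_0}}$ and use $Kf_1(y)\le c_n\|f_1\|_1\log\frac{2(1+R_0)}{|y|}$ on $B_2(x)$ instead of $Kf_1\le C$. Combined with your Jensen bound for $f_2$ (requiring $qc_n\|f_2\|_1<n$), this gives $\|e^{Kf}\|_{L^q(B_2(x))}\le C|x|^{-c_n(\|f_1\|_1-\|f_2\|_1)}$. That bound decays, and the H\"older step then closes for any $q>n$.
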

Before proving Lemma \ref{lemma:Kf_upper_decay_estimate} we state and prove a lemma that will be used in the proof of Lemma \ref{lemma:Kf_upper_decay_estimate}. 
\begin{lemma}
\label{lemma:for_Kf_upper_decay}
\ifdetails{\color{gray}
Let $\mu\in (0, n)$ and let $\varrho$ be as in \eqref{eq:rho}. 
}\fi
Suppose $u$ is a distributional solution to \eqref{eq:entire_nonlocal}. For every $q\geq 1$ and for every small $\epsilon_1, \epsilon_2>0$ there is a positive constant $C = C(q, \epsilon_1, \epsilon_2)$ and there is $R = R(q, \epsilon_1, \epsilon_2)\gg 1$ such that the estimate 
\begin{equation*}
	\frac{1}{s^{n - \epsilon_2}}\int_{B_s(x)}\exp\left(qK(I_\mu[e^{\varrho u}]e^{\varrho u})\right)
	\leq \frac{C}{|x|^{c_nq(V - \epsilon_1)}}
\end{equation*}
holds for all $s\in (0, 1]$ and all $x\in \bb R^n\setminus B_R$, where $c_n$ is as in \eqref{eq:cn}.
\end{lemma}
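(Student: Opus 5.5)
Write $f = I_\mu[e^{\varrho u}]e^{\varrho u}\geq 0$, so $\|f\|_{L^1(\bb R^n)} = V$. We follow the standard Lin/Martinazzi argument and split $f$ into a near part and a far part.

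\textbf{Splitting.} Fix $x$ with $|x|$ large and $s\in(0,1]$. Decompose
\begin{equation*}
	f = f\chi_{B_{|x|/2}(x)} + f\chi_{\bb R^n\setminus B_{|x|/2}(x)} =: f_{1} + f_{2},
\end{equation*}
and then $K f = Kf_1 + Kf_2$. Since $f\in L^1$, we may choose $R_0$ so large that $\int_{\bb R^n\setminus B_{R_0}} f<\delta$, where $\delta>0$ is small (depending on $q,\epsilon_1,\epsilon_2$). For $|x|\geq 4R_0$ we have $B_{R_0}\subset \bb R^n\setminus B_{|x|/2}(x)$, hence $\|f_1\|_{L^1}<\delta$. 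We also need $\int_{B_{R_0}} f > V-\epsilon_1/2$.

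\textbf{Far part.} Take $z\in B_s(x)$.
\begin{itemize}
	\item For $y\in B_{R_0}$ we have $\log\frac{1+|y|}{|z-y|}\leq \log(1+R_0) - \log(|x|/2)$.
	\item For $y\notin B_{|x|/2}(x)$ with $|y|\geq R_0$, the ratio $\frac{1+|y|}{|z-y|}$ is bounded by a universal constant. Indeed, $|z-y|\geq |x|/2 - 1\geq c|x|$, and when $|y|\leq 3|x|$ this gives the bound; when $|y|\geq 3|x|$ we have $|z-y|\geq |y|/2$.
\end{itemize}
Because $f\geq 0$, these give
\begin{equation*}
	Kf_2(z)\leq -c_n\left(V - \tfrac{\epsilon_1}{2}\right)\log|x| + C(R_0)\qquad\text{uniformly for }z\in B_s(x).
\end{equation*}

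\textbf{Near part (main obstacle).} For $z\in B_s(x)$ and $y\in B_{|x|/2}(x)$ we have $1+|y|\leq 2|x|$, so
\begin{equation*}
	Kf_1(z)\leq c_n\delta\log(2|x|) + c_n\int\log\frac{1}{|z-y|}f_1(y)\,\d y.
\end{equation*}
We handle the second term by Jensen with the probability measure $f_1/\|f_1\|_1$:
\begin{equation*}
	\exp\left(qc_n\int\log\frac{1}{|z-y|}f_1\right)\leq \int |z-y|^{-qc_n\|f_1\|_1}\frac{f_1(y)}{\|f_1\|_1}\,\d y.
\end{equation*}
One subtlety: $\log\frac{1}{|z-y|}$ can be negative when $|z-y|>1$. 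In that range we simply bound it above by $0$, or equivalently note $|z-y|^{-a}\leq 1$ there, so there is no harm. Integrating over $z\in B_s(x)$ and using Fubini,
\begin{equation*}
	\int_{B_s(x)} |z-y|^{-a}\,\d z\leq C s^{n-a},\qquad a = qc_n\delta<n.
\end{equation*}
Choosing $\delta$ so that $qc_n\delta\leq \epsilon_2$ ensures $s^{n-a}\leq s^{n-\epsilon_2}$ for $s\leq 1$. This is the main point: the exponent loss $\epsilon_2$ absorbs the singular mass of $f$ near $x$, which is small precisely because $f\in L^1$.

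\textbf{Combining.} Multiply the pointwise bound $\exp(qKf_2)\leq C|x|^{-qc_n(V-\epsilon_1/2)}$ by the near-part bound, with its extra factor $(2|x|)^{qc_n\delta}$. Requiring also $\delta\leq \epsilon_1/2$ gives
\begin{equation*}
	\frac{1}{s^{n-\epsilon_2}}\int_{B_s(x)}e^{qKf}\leq C|x|^{-qc_n(V-\epsilon_1)}.
\end{equation*}
The lemma then follows with $R = 4R_0$ (enlarged if necessary so that $|x|/2-1\geq |x|/4$).
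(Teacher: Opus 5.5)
Your argument is correct and follows essentially the same route as the paper. Both proofs use a Lin-type far-field bound to produce the factor $|x|^{-c_nq(V-\epsilon_1)}$, then Jensen's inequality on the small-mass near part to replace $\exp\bigl(qc_n\int\log\frac{1}{|z-y|}f_1\bigr)$ by an average of $|z-y|^{-a}$ with $a\le\epsilon_2$, and finally Tonelli together with $\int_{B_s}|z-y|^{-a}\,dz\le Cs^{n-a}$. The differences are only organizational: you prove the Lin-type estimate inline (near region $B_{|x|/2}(x)$, paying $\log(2|x|)$ on mass $<\delta\le\epsilon_1/2$) and keep the far bound uniform over $z\in B_s(x)$, whereas the paper cites Lemma 2.4 of \cite{Lin1998} with near region $B_1(x)$, applies Jensen to the whole tail measure, and recenters when integrating over $B_s(z)$.
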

\begin{proof}
For ease of notation we set
\begin{equation}
\label{eq:temp_v}
	v= K\left(I_\mu[e^{\varrho u}]e^{\varrho u}\right). 
\end{equation}
Arguing as in Lemma 2.4 of \cite{Lin1998} we find that for every $\epsilon>0$ there is $R\gg 1$ such that 
\begin{equation}
\label{eq:as_in_Lin}
	c_n^{-1}v(x) + (V - \epsilon)\log|x|
	\leq \int_{B_1(x)}\log\frac{1}{|x - y|} I_\mu[e^{\varrho u}](y)e^{\varrho u(y)}\; \d y
\end{equation}
whenever $x\in \bb R^n\setminus B_R$. Fix $\epsilon_1\in (0, V)$ and choose $R\gg 1$ such that \eqref{eq:as_in_Lin} holds with $\epsilon = \epsilon_1$. Fix $q\geq 1$ and $\epsilon_2\in (0, n)$. After increasing $R$ if necessary we may assume in addition that 
\begin{equation*}
	r:= 
	c_nq\|I_\mu[e^{\varrho u}]e^{\varrho u}\|_{L^1(\bb R^n\setminus B_R)}
	< \epsilon_2. 
\end{equation*}
Define 
\begin{equation*}
	\d \eta(y)
	= \frac{I_\mu[e^{\varrho u}](y)e^{\varrho u(y)}}{\|I_\mu[e^{\varrho u}]e^{\varrho u}\|_{L^1(\bb R^n\setminus B_R)}}\; \d y
\end{equation*}
and apply both \eqref{eq:as_in_Lin} with $\epsilon= \epsilon_1$ and Jensen's inequality to find that if $|x|> R + 1$ then 
\begin{equation*}
\begin{split}
	|x|^{c_nq(V - \epsilon_1)}e^{qv(x)}
	& \leq \exp\left(\int_{\bb R^n\setminus B_R}\log(|x - y|^{-r})\chi_{B_1(x)}(y)\; \d \eta(y)\right)\\
	& \leq \int_{\bb R^n\setminus B_R}\exp\left(\log(|x - y|^{-r})\chi_{B_1(x)}(y)\right)\; \d \eta(y)\\
	& \ifdetails{\color{gray}
	\; = \int_{B_1(x)}|x - y|^{-r}\; \d \eta(y) + \int_{\bb R^n\setminus (B_R\cup B_1(x))}\; \d \eta(y)
	}
	\\
	& \fi
	\leq 1 + \int_{B_1(x)}|x- y|^{-r}\; \d \eta(y). 
\end{split}
\end{equation*}
Using this estimate we find that for any $z\in \bb R^n\setminus B_{R + 2}$ and any $s\in (0, 1]$ there holds
\begin{equation}
\label{eq:zpower_integral}
\begin{split}
	|z|^{c_nq(V - \epsilon_1)}& \int_{B_s(z)}e^{qv(x)}\; \d x\\
	& \leq |z|^{c_nq(V - \epsilon_1)}\int_{B_s(z)}|x|^{-c_nq(V - \epsilon_1)}\left(1 + \int_{B_1(x)}|x - y|^{-r}\; \d \eta(y)\right)\; \d x\\
	& \leq C\int_{B_s(z)}\left(1 + \int_{B_1(x)}|x - y|^{-r}\; \d \eta(y)\right)\; \d x\\
	& \leq C\left(s^n + \int_{B_s(z)}\int_{B_1(x)}|x - y|^{-r}\; \d \eta(y)\; \d x\right)\\
	& = C\left(s^n + \int_{\bb R^n\setminus B_R}\int_{B_s(z)}|x- y|^{-r}\chi_{B_1(x)}(y)\; \d x\; \d \eta(y)\right). 
\end{split}
\end{equation}
For every $y\in \bb R^n\setminus B_R$, every $z\in \bb R^n\setminus B_{R + 2}$ and every $s\in (0, 1]$ we have
\begin{equation}
\label{eq:Brhoz_integral}
	\int_{B_s(z)}|x- y|^{-r}\chi_{B_1(x)}(y)\; \d x
	= \int_{B_s(z)\cap B_1(y)}|x- y|^{-r}\; \d x. 
\end{equation}
Setting $D(y, z) = \{x\in \bb R^n: |x- y|\geq 2|x - z|\}$ and in view of the assumption $r< n$ we have 
\begin{equation*}
	\int_{B_s(z)\cap B_1(y)\cap D(y, z)}|x- y|^{-r}\; \d x
	\leq \int_{B_s(z)}|x- z|^{-r}\; \d x
	\leq C(n)s^{n - r}. 
\end{equation*}
Moreover, $B_s(z)\setminus D(y, z)\subset B_{2s}(y)$ so 
\begin{equation*}
	\int_{(B_s(z)\cap B_1(y))\setminus D(y, z)}|x- y|^{-r}\; \d x
	\leq \int_{B_{2s}(y)}|x- y|^{-r}\; \d x
	\leq C(n)s^{n - r}. 
\end{equation*}
Bringing the previous two estimates back to \eqref{eq:Brhoz_integral} we find that
\begin{equation*}
	\int_{B_s(z)}|x- y|^{-r}\chi_{B_1(x)}(y)\; \d x
	\leq Cs^{n - r}
\end{equation*}
whenever $y\in \bb R^n\setminus B_R$, $z\in \bb R^n\setminus B_{R + 2}$ and $s\in (0, 1]$. Now returning to \eqref{eq:zpower_integral} we find that for all $z\in \bb R^n\setminus B_{R + 2}$ and all $s\in (0, 1]$, 
\begin{equation*}
\begin{split}
	|z|^{c_nq(V - \epsilon_1)}\int_{B_s(z)}e^{qv(x)}\; \d x
	& \leq C\left(s^n + \int_{\bb R^n\setminus B_R}s^{n - r}\; \d \eta(y)\right)\\
	& \leq Cs^{n - \epsilon_2}, 
\end{split}
\end{equation*}
where, in the final estimate we used $s\in [0, 1)$ and and $\epsilon_2< r< n$. The asserted estimate follows. 
\end{proof}
With Lemma \ref{lemma:for_Kf_upper_decay} in hand we are ready to prove Lemma \ref{lemma:Kf_upper_decay_estimate}. 
\begin{proof}[Proof of Lemma \ref{lemma:Kf_upper_decay_estimate}]
\ifdetails{\color{gray}
(Modified from the proof of Lemma 3.3 of \cite{Gluck2020classification}.)
}\fi
Let $\epsilon>0$. Arguing as in Lemma 2.4 of \cite{Lin1998} we find that there is $R>0$ such that \eqref{eq:as_in_Lin} holds for all $x\in \bb R^n\setminus B_R$. Setting $f = (n - 1)!I_\mu[e^{\varrho u}]e^{\varrho u}$, setting $p = u - Kf$, and with $v$ as in \eqref{eq:temp_v}, for any $x\in \bb R^n$ Lemma \ref{lemma:polynomial_part_bounded_above} and H\"older's inequality give
\begin{equation}
\label{eq:fundamal_solution_centered_unit_ball_estimate}
\begin{split}
	\int_{B_1(x)}& \log\frac{1}{|x- y|}I_\mu[e^{\varrho u}](y)e^{\varrho u(y)}\; \d y\\
	\leq & \; \|e^{\varrho p}\|_{L^\infty(\bb R^n)}\int_{B_1(x)} \log\frac{1}{|x- y|}I_\mu[e^{\varrho u}](y)e^{\varrho Kf(y)}\; \d y\\
	\leq & \; \|e^{\varrho p}\|_{L^\infty(\bb R^n)}\|\log|x - \cdot|\|_{L^r(B_1(x))}\|I_\mu[e^{\varrho u}]\|_{L^{2n/\mu}(\bb R^n)}\left(\int_{B_1(x)}e^{\varrho tKf(y)}\; \d y\right)^{1/t}\\
	\leq & \ifdetails{\color{gray}
	\; C\left(\int_{B_1(x)}e^{\varrho tKf(y)}\; \d y\right)^{1/t}
	}
	\\
	{\color{gray}=} & \fi 
	\; C\left(\int_{B_1(x)}e^{\varrho t (n - 1)!v}\right)^{1/t}
\end{split}
\end{equation}
whenever $r, t\in (1, \infty)$ satisfy $\frac 1r + \frac 1t = 1 - \frac\mu{2n}$. In particular, if $|x|> R + 1$ then Lemma \ref{lemma:for_Kf_upper_decay} (applied with $s = 1$) guarantees that 
\begin{equation*}
	\int_{B_1(x)}e^{\varrho t(n - 1)!v}
	\leq \frac{C}{|x|^{c_n\varrho t(n - 1)!(V - \epsilon_1)}},
\end{equation*}
so estimate \eqref{eq:fundamal_solution_centered_unit_ball_estimate} gives
\begin{equation*}
\begin{split}
	\int_{B_1(x)}& \log\frac{1}{|x- y|}I_\mu[e^{\varrho u}](y)e^{\varrho u(y)}\; \d y\\
	\leq & \; \frac C{|x|^{c_n\varrho t(n -1)!(V - \epsilon_1)}}\\
	< & \; \epsilon\log|x|
\end{split} 
\end{equation*}
whenever $|x|$ is sufficiently large. Using this estimate in \eqref{eq:as_in_Lin} gives the asserted estimate. 
\end{proof}
\begin{coro}
\label{coro:bounded_above}
\ifdetails{\color{gray}
Let $\mu\in (0, n)$ and let $\varrho$ be as in \eqref{eq:rho}. 
}\fi
If $u$ is a distributional solution to \eqref{eq:entire_nonlocal} then $u$ is bounded above. 
\end{coro}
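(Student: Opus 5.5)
We use the decomposition from Lemma \ref{lemma:polynomial_part_bounded_above}: $u = (n-1)!v + p$, where $v = K(I_\mu[e^{\varrho u}]e^{\varrho u})$ and $p$ is a polynomial bounded above. Because $\sup p<\infty$, it suffices to show that $v$ is bounded above on $\bb R^n$. We handle the regions near infinity and on compact sets separately.

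\textbf{Near infinity.} Lemma \ref{lemma:Kf_upper_decay_estimate}, applied with some $\epsilon\in(0, 2V/|\bb S^n|)$, gives $(n-1)!v(x)\le -(2V/|\bb S^n| - \epsilon)\log|x|\le 0$ for $|x|\ge R$ with $R$ large. If $V=0$ then $I_\mu[e^{\varrho u}]e^{\varrho u}\equiv 0$, which is impossible since $e^{\varrho u}>0$. So $V>0$ and such an $\epsilon$ exists, and $u\le \sup p$ outside $B_R$.

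\textbf{On $\overline{B_R}$.} By Proposition \ref{prop:distributional_solutions_smooth}, $u\in C^\infty(\bb R^n)$, so $u$ is continuous on $\overline{B_R}$ and hence bounded above there.

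Combining the two regions shows that $u$ is bounded above on $\bb R^n$. The only point that needs care is checking $V>0$, so that Lemma \ref{lemma:Kf_upper_decay_estimate} yields a nonpositive upper bound; everything else is a direct combination of earlier results.
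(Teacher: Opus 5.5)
Your proof is correct and follows the paper's route. Both use the decomposition $u=(n-1)!K(I_\mu[e^{\varrho u}]e^{\varrho u})+p$ from Lemma \ref{lemma:polynomial_part_bounded_above}, apply Lemma \ref{lemma:Kf_upper_decay_estimate} outside a large ball, and invoke Proposition \ref{prop:distributional_solutions_smooth} inside it. The paper also bounds $Kf$ on $B_{2R}$ by an explicit splitting, which your appeal to the continuity of $u$ makes unnecessary, and your check that $V>0$ makes explicit what the paper's claim $Kf\le 1$ outside $B_R$ tacitly assumes.
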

\begin{proof}
Setting $f = (n - 1)!I_\mu[e^{\varrho u}]e^{\varrho u}$, Lemma \ref{lemma:polynomial_part_bounded_above} guarantees that $u = Kf + p$, where $p$ is bounded above. Lemma \ref{lemma:Kf_upper_decay_estimate} guarantees the existence of $R\gg 1$ such that $Kf(x)\leq 1$ for all $x\in \bb R^n\setminus B_R$. For any such $R$, Proposition \ref{prop:distributional_solutions_smooth} guarantees that $u\in L^\infty(B_{4R})$. For $x\in B_{2R}$ consider the inequality
\begin{equation}
\label{eq:Kf_IJ_split}
	\frac{|Kf(x)|}{c_n(n - 1)!}
	\leq I(x) + J(x), 
\end{equation}
where 
\begin{equation*}
\begin{split}
	I(x) & = \int_{\bb R^n\setminus B_{4R}}\abs{\log\left(\frac{1 + |y|}{|x- y|}\right)}I_\mu[e^{\varrho u}](y)e^{\varrho u(y)}\; \d y\\
	J(x) & = \int_{B_{4R}}\abs{\log\left(\frac{1 + |y|}{|x- y|}\right)}I_\mu[e^{\varrho u}](y)e^{\varrho u(y)}\; \d y. 
\end{split}
\end{equation*}
Since 
\begin{equation*}
	\frac 14\leq \log\frac{1 + |y|}{|x- y|}\leq 4
\end{equation*}
whenever $x\in B_{2R}$ and $y\in \bb R^n\setminus B_{4R}$ we find that 
\begin{equation*}
	|I(x)|
	\leq 4\int_{\bb R^n\setminus B_{4R}}I_\mu[e^{\varrho u}](y)e^{\varrho u(y)}\; \d y
	\leq C\|I_\mu[e^{\varrho u}]e^{\varrho u}\|_{L^1(\bb R^n)}. 
\end{equation*}
To show $J\in L^\infty(B_{2R})$ we observe that Proposition \ref{prop:distributional_solutions_smooth} implies $u$ is bounded above on $B_{4R}$ and we estimate as follows: 
\begin{equation*}
\begin{split}
	J(x)
	\leq & \; \int_{B_{4R}}\left(\log(5R) + |\log|x- y||\right)I_\mu[e^{\varrho u}](y)e^{\varrho u(y)}\; \d y\\
	\leq & \; \log(5R)\|I_\mu[e^{\varrho u}]e^{\varrho u}\|_{L^1(\bb R^n)}\\
	&  + \|e^{\varrho u}\|_{L^\infty(B_{4R})}\|I_\mu[e^{\varrho u}]\|_{L^{2n/\mu}(\bb R^n)}\left(\int_{B_{8R}(x)}|\log|x- y||^{\frac{2n}{2n - \mu}}\; \d y\right)^{1 - \mu/(2n)}\\
	< & \;\infty.  
\end{split}
\end{equation*}
Bringing the estimates for $I(x)$ and $J(x)$ back to \eqref{eq:Kf_IJ_split} we find that $Kf$ is bounded on $B_{2R}$ and thereby conclude the proof.
\end{proof}
\begin{proof}[Proof of Theorem \ref{theorem:asymptotic}]
Lemma \ref{lemma:Kf_lower_decay} (applied with $f = I_\mu[e^{\varrho u}]e^{\varrho u}$) and Lemma \ref{lemma:Kf_upper_decay_estimate} guarantee that 
\begin{equation*}
	(n -1)!K\left(I_\mu[e^{\varrho u}]e^{\varrho u}\right)(x)
	= -\frac{2V}{|\bb S^n|}\log|x| + \circ(\log|x|)
\end{equation*}
as $|x|\to \infty$. Combining this equality with Lemma \ref{lemma:polynomial_part_bounded_above} completes the proof.
\end{proof}
\section{Integral Representations of Solutions}
\label{s:integral_representations}
The following lemma gives a preliminary decay estimate for $I_\mu[e^{\varrho u}]$ whenever $u$ satisfies both \eqref{eq:entire_nonlocal} and \eqref{eq:V_lower_bound_assumption}. 
\begin{lemma}
\label{lemma:Imu_initial_decay}
\ifdetails{\color{gray}
Let $\mu\in (0, n)$ and let $\varrho$ be as in \eqref{eq:rho}. 
}\fi
If $u$ is a distributional solution to \eqref{eq:entire_nonlocal} for which \eqref{eq:V_lower_bound_assumption} holds then for every $\epsilon>0$ satisfying
\begin{equation}
\label{eq:Imu_decay_epsilon_choice}
	\frac \epsilon 2< 
	\begin{cases}
	\frac V{|\bb S^n|} - \frac{n - \mu}{2n - \mu} & \text{ if } \frac{n - \mu}{2n - \mu} < \frac V{|\bb S^n|}\leq \frac{n}{2n - \mu}\\
	\frac V{|\bb S^n|} - \frac{n}{2n - \mu} & \text{ if } \frac{n}{2n - \mu} < \frac V{|\bb S^n|}\\
	\end{cases}
\end{equation}
there is $C = C(\epsilon)>0$ and there is $R = R(\epsilon)\gg 1$ such that 
\begin{equation}
\label{eq:Imu_initial_decay}
	I_\mu[e^{\varrho u}](x)
	\leq C\left(|x|^{-\mu} + |x|^{n - \mu - \varrho\left(\frac{2V}{|\bb S^n|} - \epsilon\right)}\right)
\end{equation}
whenever $x\in \bb R^n\setminus B_R$. In particular, $I_\mu[e^{\varrho u}]\in L^\infty(\bb R^n)$. 
\end{lemma}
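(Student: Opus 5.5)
The plan is to estimate $I_\mu[e^{\varrho u}](x)$ for $|x|$ large by splitting the domain of integration into three regions. The input is a pointwise upper bound for $u$ at infinity. By Lemma \ref{lemma:polynomial_part_bounded_above}, $u = (n-1)!K(I_\mu[e^{\varrho u}]e^{\varrho u}) + p$ with $p$ bounded above. Lemma \ref{lemma:Kf_upper_decay_estimate} then gives, for every $\epsilon>0$, a radius $R_0$ such that
\begin{equation*}
	e^{\varrho u(y)}\leq C|y|^{-\varrho\left(\frac{2V}{|\bb S^n|} - \epsilon\right)}
	\qquad\text{for } |y|\geq R_0 .
\end{equation*}
Set $\beta = \varrho(\frac{2V}{|\bb S^n|}-\epsilon)$. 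Fix $x$ with $|x|\geq 2R_0$ and split $\bb R^n$ into three regions: $A_1 = B_{|x|/2}$, $A_2 = B_{|x|/2}(x)$, and $A_3$ the remainder.

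\textbf{Region $A_1$.} Here $|x-y|\geq |x|/2$, so the contribution is at most $2^\mu|x|^{-\mu}\int_{B_{|x|/2}}e^{\varrho u}$. Split this integral into $B_{R_0}$, which contributes a constant since $u$ is bounded above by Corollary \ref{coro:bounded_above}, and the annulus $R_0\le|y|\le|x|/2$, where the pointwise bound gives at most $C\int_{R_0}^{|x|}t^{n-1-\beta}\,dt$. If $\beta>n$ this is bounded, and the contribution is $C|x|^{-\mu}$. If $\beta<n$ it is $C|x|^{n-\beta}$, giving $C|x|^{n-\mu-\beta}$. Both are covered by \eqref{eq:Imu_initial_decay}. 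The case split in \eqref{eq:Imu_decay_epsilon_choice} is designed to keep $\beta$ on the correct side of $n$: when $\frac{V}{|\bb S^n|}>\frac{n}{2n-\mu}$ we get $\beta>n$, and otherwise $\beta$ stays above $n-\mu$. The borderline $\beta=n$ would only produce a logarithm, and it is avoided, or absorbed by shrinking $\epsilon$.

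\textbf{Region $A_3$.} Here $|y|\geq|x|/2$ and $|x-y|\geq|x|/2$. Using H\"older with $e^u\in L^n$, as in the estimate \eqref{eq:nonlocality_locally_bounded}, the contribution is at most $C|x|^{-\mu}\int e^{\varrho u}$ restricted to the large-$|y|$ region weighted by $(|y|/|x-y|)^{\mu}$. A cleaner route is to split further into $|y|\le 2|x|$ and $|y|>2|x|$. On $|y|>2|x|$ we have $|x-y|\sim|y|$, and H\"older gives $\|e^u\|_{L^n}^\varrho(\int_{|y|>|x|}|y|^{-\mu n/(n-\varrho)})^{(n-\varrho)/n}\le C|x|^{-\mu/2}\cdot o(1)$. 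This is not good enough by itself, so instead I will use the pointwise bound: $\int_{|y|>2|x|}|y|^{-\mu-\beta}\,dy \le C|x|^{n-\mu-\beta}$, which requires $\beta>n-\mu$. That inequality is exactly what \eqref{eq:V_lower_bound_assumption} with small $\epsilon$ guarantees, since $\varrho\cdot\frac{2(n-\mu)}{2n-\mu} = n-\mu$. On $|x|/2\le|y|\le 2|x|$ with $|x-y|\ge|x|/2$ the bound is $|x|^{-\mu}|x|^{n-\beta}$, which is of the same form.

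\textbf{Region $A_2$.} This local piece near $x$ is the main obstacle, because the pointwise bound on $e^{\varrho u}$ there is $C|x|^{-\beta}$ while the kernel is singular. Integrating $|x-y|^{-\mu}$ over $B_{|x|/2}(x)$ gives $|x|^{n-\mu}$, so the contribution is $C|x|^{n-\mu-\beta}$, matching the second term. This works because Lemma \ref{lemma:Kf_upper_decay_estimate} is a genuine pointwise bound valid on all of $\bb R^n\setminus B_R$, combined with $p$ bounded above, so no averaged estimate such as Lemma \ref{lemma:for_Kf_upper_decay} is needed.

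The one point needing care is checking that the exponents in all cases reduce to the two stated terms; in particular $\beta>n-\mu$ is essential in region $A_3$. Finally, $I_\mu[e^{\varrho u}]\in L^\infty(\bb R^n)$ follows because both exponents in \eqref{eq:Imu_initial_decay} are negative for $|x|\ge R$, while on $B_R$ the function is bounded by \eqref{eq:nonlocality_locally_bounded}, or directly since $u$ is bounded above and $e^u\in L^n$.
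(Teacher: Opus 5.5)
Your proposal is correct and is essentially the paper's proof. Both use the pointwise bound $e^{\varrho u(y)}\leq C|y|^{-\beta}$ obtained from Lemma \ref{lemma:Kf_upper_decay_estimate} together with $\sup p<\infty$. Both then split the integral into four pieces: a core near the origin, the ball $B_{|x|/2}(x)$, an intermediate annulus where the dichotomy $\beta<n$ versus $\beta>n$ produces the two terms, and the far field $|y|>2|x|$ where $\beta>n-\mu$ is needed. The differences are cosmetic, for instance using upper-boundedness of $u$ instead of H\"older on the compact core.

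One small remark: the borderline $\beta=n$ is strictly excluded by \eqref{eq:Imu_decay_epsilon_choice} in both cases, so there is never a need (nor any freedom, since $\epsilon$ is given) to shrink $\epsilon$.
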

\begin{proof}
Let $u$, $V$ and $\epsilon$ satisfy the hypotheses of the lemma. Choose $R = R(\epsilon)\gg 1$ such that inequality \eqref{eq:Kf_upper_decay_estimate} holds for all $x\in \bb R^n\setminus B_{R/2}$. Thus, for any such $x$ Lemma \ref{lemma:polynomial_part_bounded_above} gives 
\begin{equation}
\label{eq:eu_decay}
\begin{split}
	e^{u(x)}
	& = \exp\left((n - 1)!K(I_\mu[e^{\varrho u}]e^{\varrho u})(x) + p(x)\right)\\
	& \leq \exp\left(\sup_{\bb R^n}p\right)|x|^{-\left(\frac{2V}{|\bb S^n|} - \epsilon\right)}\\
	& \leq C|x|^{-\left(\frac{2V}{|\bb S^n|} - \epsilon\right)}. 
\end{split}
\end{equation}
Fix $y\in \bb R^n\setminus B_R$ and partition $\bb R^n$ as $\bb R^n= \bigcup_{j = 1}^4 D_j(y)$, where
\begin{equation*}
\begin{split}
	D_1 & = B_{R/2}\\
	D_2 & = B_{|y|/2}(y)\\
	D_3 & = B_{2|y|}\setminus(D_1\cup D_2)\\
	D_4 & = \bb R^n\setminus B_{2|y|}. 
\end{split}
\end{equation*}
For convenience we do not indicate the $y$-dependence in the notation for the sets $D_j$. For $j= 1, \ldots, 4$ set $J_j = I_\mu[e^{\varrho u}\chi_{D_j}]$ so that 
\begin{equation}
\label{eq:Imu_decay_decompose}
	I_\mu[e^{\varrho u}] = \sum_{j = 1}^4 J_j. 
\end{equation}
We proceed to separately estimate each $J_j(y)$. If $z\in D_1$ then $|y - z|> |y|/2$ so H\"older's inequality gives
\begin{equation*}
\begin{split}
	J_1(y)
	& \leq \left(\frac 2{|y|}\right)^\mu\int_{B_R} e^{\varrho u(z)}\; \d z\\
	& \leq \left(\frac 2{|y|}\right)^\mu|B_R|^{1 - \varrho/n}\|e^u\|_{L^n(\bb R^n)}^{\varrho}\\
	& \leq C|y|^{-\mu}. 
\end{split}
\end{equation*}
If $z\in D_2$ then $2|z|\geq|y|\geq R$ so using \eqref{eq:eu_decay} we have
\begin{equation*}
\begin{split}
	J_2(y)
	& \leq C\int_{D_2}\frac{|z|^{-\varrho\left(\frac{2V}{|\bb S^n|} - \epsilon\right)}}{|y - z|^\mu}\; \d z\\
	& \leq C|y|^{-\varrho\left(\frac{2V}{|\bb S^n|} - \epsilon\right)}\int_{D_2}|y - z|^{-\mu}\; \d z\\
	& \leq C|y|^{n-\mu - \varrho\left(\frac{2V}{|\bb S^n|} - \epsilon\right)}. 
\end{split}
\end{equation*}
If $z\in D_3$ then $2|y- z|> |y|$ so using \eqref{eq:eu_decay} we have
\begin{equation*}
\begin{split}
	J_3(y)
	& \leq C\left(\frac 2{|y|}\right)^\mu\int_{B_{2|y|}\setminus B_{R/2}}|z|^{-\varrho\left(\frac{2V}{|\bb S^n|} - \epsilon\right)}\; \d z\\
	& \leq C\begin{cases}
		|y|^{n - \mu - \varrho\left(\frac{2V}{|\bb S^n|} - \epsilon\right)} & \text{ if }\frac{n -\mu}{2n - \mu}< \frac{V}{|\bb S^n|}\leq \frac{n}{2n - \mu}\\
		|y|^{-\mu} & \text{ if }\frac{V}{|\bb S^n|}>  \frac{n}{2n - \mu}.
	\end{cases}
\end{split}
\end{equation*}
\ifdetails{\color{gray}
By the choice of $\epsilon$ we have $n\neq \varrho\left(\frac{2V}{|\bb S^n|} - \epsilon\right)$ so the following integral is not logarithmic.  By a direct computation we have
\begin{equation}
\label{eq:z_power_integral_cases}
\begin{split}
	\int_{B_{2|y|}\setminus B_{R/2}}& |z|^{-\varrho\left(\frac{2V}{|\bb S^n|} - \epsilon\right)}\; \d z\\
	& = |\bb S^n|\int_{R/2}^{2|y|}r^{n- 1 - \varrho\left(\frac{2V}{|\bb S^n|} - \epsilon\right)}\; \d r\\
	& = \frac{|\bb S^n|}{n - \varrho\left(\frac{2V}{|\bb S^n|} - \epsilon\right)}\left((2|y|)^{n - \varrho\left(\frac{2V}{|\bb S^n|} - \epsilon\right)} - \left(\frac R2\right)^{n - \varrho\left(\frac{2V}{|\bb S^n|} - \epsilon\right)}\right). 
\end{split}
\end{equation}
Moreover, 
\begin{equation*}
	n\geq \varrho\left(\frac{2V}{|\bb S^n|} - \epsilon\right)
	\iff \frac{V}{|\bb S^n|} - \frac\epsilon2\leq \frac{n}{2n - \mu}. 
\end{equation*}
In this case (since we also have $n\neq \varrho\left(\frac{2V}{|\bb S^n|} - \epsilon\right)$) we throw out the term $R^{n - \varrho\left(\frac{2V}{|\bb S^n|} - \epsilon\right)}$ in \eqref{eq:z_power_integral_cases} to obtain the estimate for $J_3$ corresponding to $\frac V{|\bb S^n|}\leq \frac{n}{2n - \mu}$. If $\frac{V}{|\bb S^n|}> \frac{n}{2n - \mu}$ then our choice of $\epsilon$ guarantees that $\frac{V}{|\bb S^n|} - \frac \epsilon 2> \frac{n}{2n - \mu}$ so we throw out the term $|y|^{n - \varrho\left(\frac{2V}{|\bb S^n|} -\epsilon\right)}$ in \eqref{eq:z_power_integral_cases} to obtain the estimate for $J_3$ corresponding to $\frac{V}{|\bb S^n|}> \frac{n}{2n - \mu}$. 
}\fi 
If $z\in D_4$ then $2|y - z|\geq |z|$ so using \eqref{eq:eu_decay} we have
\begin{equation*}
\begin{split}
	J_4(y)
	& \leq C\int_{D_4}\frac{|z|^{-\varrho\left(\frac{2V}{|\bb S^n|} - \epsilon\right)}}{|y - z|^\mu}\; \d z\\
	& \leq C\int_{\bb R^n\setminus B_{|y|}}|z|^{-\mu -\varrho\left(\frac{2V}{|\bb S^n|} - \epsilon\right)}\; \d z\\
	& \leq C|y|^{n-\mu -\varrho\left(\frac{2V}{|\bb S^n|} - \epsilon\right)}.
\end{split}
\end{equation*}
Bringing the estimates of $J_1, \ldots, J_4$ back to \eqref{eq:Imu_decay_decompose} establishes estimate \eqref{eq:Imu_initial_decay}. To show that $I_\mu[e^{\varrho u}]\in L^\infty(\bb R^n)$, fix $\epsilon>0$ small such that \eqref{eq:Imu_decay_epsilon_choice} holds and choose $R$ and $C$ as in estimate \eqref{eq:Imu_initial_decay} so that $0\leq I_\mu[e^{\varrho u}](y)\leq C$ whenever $y\in \bb R^n\setminus B_R$. For $y\in B_R$, using Corollary \ref{coro:bounded_above} we have 
\begin{equation*}
\begin{split}
	I_\mu[e^{\varrho u}](y)
	= &\; \int_{B_{2R}}\frac{e^{\varrho u(z)}}{|y - z|^\mu}\; \d z + \int_{\bb R^n\setminus B_{2R}}\frac{e^{\varrho u(z)}}{|y - z|^\mu}\; \d z\\
	\leq &\;  \int_{B_{3R}(y)}\frac{e^{\varrho u(z)}}{|y - z|^\mu}\; \d z + \int_{\bb R^n\setminus B_{R}(y)}\frac{e^{\varrho u(z)}}{|y - z|^\mu}\; \d z\\
	\leq &\; \|e^{\varrho u}\|_{L^\infty(\bb R^n)}\int_{B_{3R}(y)}|y - z|^{-\mu}\; \d z\\
	& + \|e^u\|_{L^n(\bb R^n)}^\varrho\left(\int_{\bb R^n\setminus B_R(y)}|y - z|^{-2n}\; \d z\right)^{1 - \varrho/n}\\
	\leq &\; C\|e^u\|_{L^\infty(\bb R^n)}^\varrho R^{n - \mu} + \|e^u\|_{L^n(\bb R^n)}^\varrho R^{-\mu/2}. 
\end{split}
\end{equation*}
\end{proof}
\begin{lemma}
\label{lemma:remove_log_1_y}
\ifdetails{\color{gray}
Let $\mu\in (0, n)$ and let $\varrho$ be as in \eqref{eq:rho}. 
}\fi
If $u$ is a distributional solution to \eqref{eq:entire_nonlocal} satisfying \eqref{eq:V_lower_bound_assumption} then 
\begin{equation*}
	\int_{\bb R^n}\log(1 + |y|)I_\mu[e^{\varrho u}](y)e^{\varrho u(y)}\; \d y
	< \infty. 
\end{equation*}
\end{lemma}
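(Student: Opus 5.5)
The plan is to combine the $L^\infty$ bound on $I_\mu[e^{\varrho u}]$ from Lemma \ref{lemma:Imu_initial_decay} with the pointwise decay of $e^u$ obtained in its proof, estimate \eqref{eq:eu_decay}. We split the integral over $B_R$ and over $\bb R^n\setminus B_R$ for some $R\gg 1$.

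On $B_R$ the integrand is bounded by $\log(1+R)\,I_\mu[e^{\varrho u}]e^{\varrho u}$. This is integrable by Lemma \ref{lemma:RHS_L1_integrable}, or more simply because $V<\infty$.

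On $\bb R^n\setminus B_R$ we first use Lemma \ref{lemma:Imu_initial_decay} to replace $I_\mu[e^{\varrho u}]$ by the constant $\|I_\mu[e^{\varrho u}]\|_{L^\infty(\bb R^n)}$. Then, fixing a small $\epsilon>0$ as allowed by \eqref{eq:Imu_decay_epsilon_choice} and taking $R=R(\epsilon)$ large, estimate \eqref{eq:eu_decay} gives $e^{\varrho u(y)}\le C|y|^{-\varrho(2V/|\bb S^n|-\epsilon)}$. It therefore suffices to have
\begin{equation*}
\int_{\bb R^n\setminus B_R}\log(1+|y|)\,|y|^{-\varrho\left(\frac{2V}{|\bb S^n|}-\epsilon\right)}\,\d y<\infty ,
\end{equation*}
which holds exactly when $\varrho(\frac{2V}{|\bb S^n|}-\epsilon)>n$, that is, when $\frac V{|\bb S^n|}-\frac\epsilon2>\frac n{2n-\mu}$.

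This pure-decay argument settles the case $\frac V{|\bb S^n|}>\frac n{2n-\mu}$. It fails in the range $\frac{n-\mu}{2n-\mu}<\frac V{|\bb S^n|}\le\frac n{2n-\mu}$, and that range is the main obstacle. There we must use the decay of both factors rather than bounding $I_\mu[e^{\varrho u}]$ by a constant.

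In the harder range we keep the decay from \eqref{eq:Imu_initial_decay}, namely $I_\mu[e^{\varrho u}](y)\le C(|y|^{-\mu}+|y|^{n-\mu-\varrho a})$ with $a=\frac{2V}{|\bb S^n|}-\epsilon$. We multiply this by $e^{\varrho u}\le C|y|^{-\varrho a}$ and by $\log(1+|y|)$, and check the two terms separately.

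The second term gives the exponent $n-\mu-2\varrho a$, so the product is integrable at infinity when $2\varrho a>2n-\mu$, i.e. when $a>1$. That is, we need $\frac V{|\bb S^n|}-\frac\epsilon2>\frac12$.

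The first term gives the exponent $-\mu-\varrho a$, which is integrable at infinity when $\varrho a>n-\mu$, i.e. when $\frac V{|\bb S^n|}-\frac\epsilon2>\frac{n-\mu}{2n-\mu}$. This follows from \eqref{eq:V_lower_bound_assumption} once $\epsilon$ is small.

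The second term is the delicate one, because \eqref{eq:V_lower_bound_assumption} does not by itself give $V/|\bb S^n|>\frac12$. To handle it I would not use the crude bound from \eqref{eq:Imu_initial_decay}. Instead I would estimate directly the double integral
\begin{equation*}
\int_{|y|>R}\int_{\bb R^n}\frac{\log(1+|y|)\,e^{\varrho u(z)}e^{\varrho u(y)}}{|y-z|^\mu}\,\d z\,\d y ,
\end{equation*}
splitting the $z$-integration into the regions $D_1,\dots,D_4$ used in the proof of Lemma \ref{lemma:Imu_initial_decay}.

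In the near-diagonal region $D_2$ the inner integral is $\int_{B_{|y|/2}(y)}|y-z|^{-\mu}\,\d z\sim |y|^{n-\mu}$. This leaves the exponent $n-\mu-2\varrho a$ in $y$. Its integrability requires $2\varrho a>2n-\mu$, the same condition $V>\frac12|\bb S^n|$ as before.

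If that condition fails, I would fall back on exploiting the symmetry of the double integral in $y$ and $z$ together with H\"older/HLS on dyadic annuli. This means bounding $\int_{A_k}I_\mu[e^{\varrho u}\chi_{\text{near }A_k}]e^{\varrho u}$ by $C\|e^{u}\|_{L^n(\text{near }A_k)}^{2\varrho}$, where $A_k=B_{2^{k+1}}\setminus B_{2^k}$. Combined with \eqref{eq:eu_decay}, this gives $\|e^u\|^n_{L^n(A_k)}\lesssim 2^{k(n-na)}$.

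The resulting series $\sum_k k\,2^{k\cdot 2\varrho(1-a)}$ again converges only when $a>1$. If the endpoint issue persists, I expect to need the decay $u(x)=-\frac{2V}{|\bb S^n|}\log|x|+o(\log|x|)$ from Theorem \ref{theorem:asymptotic} together with a tighter bookkeeping of $\epsilon$. This bookkeeping of the near-diagonal interaction is the step I expect to be the real difficulty.
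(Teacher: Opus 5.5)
Your main route is exactly the paper's argument: split off $B_R$, then on $\bb R^n\setminus B_R$ multiply $e^{\varrho u}\le C|y|^{-\varrho a}$ from \eqref{eq:eu_decay} by \eqref{eq:Imu_initial_decay}, with $a=\frac{2V}{|\bb S^n|}-\epsilon$, and count exponents. Your bookkeeping is right. The $|y|^{-\mu-\varrho a}$ term needs $\varrho a>n-\mu$, which follows from \eqref{eq:V_lower_bound_assumption} for small $\epsilon$. The $|y|^{n-\mu-2\varrho a}$ term needs $2\varrho a>2n-\mu$, i.e.\ $a>1$.

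So you have proved the lemma only when $V>|\bb S^n|/2$. For $\frac{n-\mu}{2n-\mu}<\frac{V}{|\bb S^n|}\le\frac12$ there is a genuine gap; this range is nonempty because $\frac{n-\mu}{2n-\mu}<\frac12$. No choice of $\epsilon$ closes it, since even $\epsilon=0$ gives $a\le 1$. Your HLS/dyadic fallbacks cannot close it either, because their only inputs, $e^u\le C|x|^{-a}$ and $e^u\in L^n$, are insufficient when $a\le1$. Take $w=|x|^{-a}\chi_{\bigcup_k B_{r_k}(x_k)}$ with $|x_k|=2^k$, $r_k^n=2^{kna}t_k$ and $t_k=c\,k^{-1}(\log k)^{-2}$ ($k\ge 2$, $c$ small). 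Then $\int w^n\approx\sum_k t_k<\infty$. But the self-interaction of each ball alone gives $\int\log(1+|y|)I_\mu[w^\varrho]w^\varrho\gtrsim\sum_k k\,t_k^{(2n-\mu)/n}=\infty$, since $(2n-\mu)/n<2$.

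Theorem \ref{theorem:asymptotic} does not rescue you. The lemma does not assume \eqref{eq:asymtpotic_growth_assumption}, so $p$ may be a nonconstant polynomial that is bounded above, and \eqref{eq:eu_decay} discards it. For $V<|\bb S^n|/2$, a constant $p$ would give $e^{nu}\gtrsim|x|^{-n+\delta}$ for some $\delta>0$, so $e^u\in L^n$ holds there only because of $p$. Closing the gap needs genuinely new structural input from the equation, and your proposal does not supply it.

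The paper does not supply it either. Its proof asserts that \eqref{eq:V_lower_bound_assumption} and the choice of $\epsilon$ give $n<\varrho a+\min\{\mu,-n+\mu+\varrho a\}$. The second entry of that minimum is exactly your condition $a>1$, so the assertion fails in the range you flagged. For example, $n=3$, $\mu=1$, $V/|\bb S^3|=0.45$ satisfies \eqref{eq:V_lower_bound_assumption}, yet gives $\varrho a+\min\{\mu,-n+\mu+\varrho a\}\approx 2.5<3$.

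Your diagnosis is therefore sharper than the paper's, and the part you completed is the valid part of its proof. Where the lemma is actually used (Corollary \ref{coro:best_representation} onward, where \eqref{eq:asymtpotic_growth_assumption} also holds), $p$ is constant. Then $e^u\in L^n$ forces $V\ge|\bb S^n|/2$, and only the borderline case $V=|\bb S^n|/2$ is left uncovered by both arguments.
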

\begin{proof}
Let $\epsilon>0$ satisfy \eqref{eq:Imu_decay_epsilon_choice}. Lemmata \ref{lemma:Kf_upper_decay_estimate} and \ref{lemma:Imu_initial_decay} guarantee the existence of $R = R(\epsilon)$ for which that both \eqref{eq:Kf_upper_decay_estimate} and \eqref{eq:Imu_initial_decay} hold on $\bb R^n\setminus B_R$. Fix any such $R$. Evidently, 
\begin{equation*}
	\int_{B_R}\log(1 + |y|)I_\mu[e^{\varrho u}](y)e^{\varrho u(y)}\; \d y
	\leq 2\log R \|I_\mu[e^{\varrho u}]e^{\varrho u}\|_{L^1(\bb R^n)},
\end{equation*}
so to complete the proof it suffices to show that
\begin{equation}
\label{eq:log_Imu_integral}
	\int_{\bb R^n\setminus B_R}\log(1 + |y|)I_\mu[e^{\varrho u}](y)e^{\varrho u(y)}\; \d y
	< \infty. 
\end{equation}
To do so, observe that assumption \eqref{eq:V_lower_bound_assumption} and the choice of $\epsilon$ guarantee that
\begin{equation*}
	n 
	< \varrho\left(\frac{2V}{|\bb S^n|} - \epsilon\right)
	+ \min\left\{\mu, -n + \mu + \varrho\left(\frac{2V}{|\bb S^n|} - \epsilon\right)\right\}.
\end{equation*}
In view of this inequality and the fact that \eqref{eq:Kf_upper_decay_estimate} and \eqref{eq:Imu_initial_decay} combine to give the estimate
\begin{equation*}
	I_\mu[e^{\varrho u}](y)e^{\varrho u(y)}
	\leq C|y|^{-\varrho\left(\frac{2V}{|\bb S^n|} - \epsilon\right)}\left(|y|^{-\mu} + |y|^{n - \mu - \varrho\left(\frac{2V}{|\bb S^n|} - \epsilon\right)}\right)
\end{equation*}
for $y\in \bb R^n\setminus B_R$, we find that \eqref{eq:log_Imu_integral} is satisfied.
\end{proof}
Now we have a new (better) integral representation for solutions to \eqref{eq:entire_nonlocal}. 
\begin{lemma}
\label{lemma:improved_integral_rep}
\ifdetails{\color{gray}
 Let $\mu\in(0, n)$ and let $\varrho$ be as in \eqref{eq:rho}. 
 }\fi
 If $u$ is a distributional solution to \eqref{eq:entire_nonlocal} for which \eqref{eq:V_lower_bound_assumption} holds then there is a polynomial $p$ of degree not exceeding $n-1$ that is bounded above for which
\begin{equation*}
	u(x)
	= (n - 1)!c_n\int_{\bb R^n}\log\left(\frac{1}{|x- y|}\right)I_\mu[e^{\varrho u}](y)e^{\varrho u(y)}\; \d y +p(x). 
\end{equation*}
\end{lemma}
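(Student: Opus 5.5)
Combine Lemma \ref{lemma:polynomial_part_bounded_above} with Lemma \ref{lemma:remove_log_1_y}. By Lemma \ref{lemma:polynomial_part_bounded_above} there is a polynomial $\tilde p$ of degree at most $n-1$ that is bounded above and satisfies
\begin{equation*}
	u(x) = (n-1)!c_n\int_{\bb R^n}\log\left(\frac{1+|y|}{|x-y|}\right)I_\mu[e^{\varrho u}](y)e^{\varrho u(y)}\; \d y + \tilde p(x).
\end{equation*}
We then split the logarithm as $\log(1+|y|) - \log|x-y|$. This splits the integral into the sum of the constant
\begin{equation*}
	A := (n-1)!c_n\int_{\bb R^n}\log(1+|y|)I_\mu[e^{\varrho u}](y)e^{\varrho u(y)}\; \d y
\end{equation*}
and the integral $(n-1)!c_n\int_{\bb R^n}\log\frac{1}{|x-y|}I_\mu[e^{\varrho u}](y)e^{\varrho u(y)}\,\d y$. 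Setting $p = \tilde p + A$ then gives the asserted representation. Adding a finite constant preserves both the degree bound and upper boundedness, so $p$ has the required properties.

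The only real work is justifying the splitting, namely that both pieces converge absolutely for every $x$. Lemma \ref{lemma:remove_log_1_y} gives $0\le A<\infty$, since the integrand is nonnegative. For the second piece, fix $x$ and split the domain into $B_1(x)$ and its complement. On $B_1(x)$ we use that $u$ is bounded above (Corollary \ref{coro:bounded_above}) and that $I_\mu[e^{\varrho u}]\in L^\infty(\bb R^n)$ (Lemma \ref{lemma:Imu_initial_decay}); this bounds the integrand by $C\log\frac1{|x-y|}$, which is integrable on $B_1(x)$. Off $B_1(x)$ we have $0\le \log|x-y|\le \log(1+|x|)+\log(1+|y|)$. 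Hence this part is controlled by
\begin{equation*}
	\log(1+|x|)\,\|I_\mu[e^{\varrho u}]e^{\varrho u}\|_{L^1(\bb R^n)} + \int_{\bb R^n}\log(1+|y|)I_\mu[e^{\varrho u}](y)e^{\varrho u(y)}\; \d y,
\end{equation*}
which is finite by Lemma \ref{lemma:RHS_L1_integrable} and Lemma \ref{lemma:remove_log_1_y}.

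Once absolute convergence is known, linearity of the integral legitimizes the splitting pointwise for every $x$. Since $u$ is smooth by Proposition \ref{prop:distributional_solutions_smooth}, the identity holds everywhere. The expected main obstacle is exactly this finiteness of $\int\log(1+|y|)\,I_\mu[e^{\varrho u}]e^{\varrho u}$, which is where assumption \eqref{eq:V_lower_bound_assumption} enters. That difficulty has already been handled by Lemma \ref{lemma:remove_log_1_y}, so the remaining argument is bookkeeping.
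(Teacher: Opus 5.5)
Your proposal is correct and follows the paper's own proof. Like the paper, you take the representation from Lemma \ref{lemma:polynomial_part_bounded_above}, use Lemma \ref{lemma:remove_log_1_y} to split off the finite constant $(n-1)!c_n\int\log(1+|y|)I_\mu[e^{\varrho u}]e^{\varrho u}\,\d y$, and absorb that constant into the polynomial. The paper leaves implicit the absolute convergence of $\int\log\frac{1}{|x-y|}I_\mu[e^{\varrho u}](y)e^{\varrho u(y)}\,\d y$ for each $x$, which you supply. For the identity to hold at every point rather than only almost everywhere, you also need the right-hand side to be continuous, not just $u$ smooth. This holds because $I_\mu[e^{\varrho u}]e^{\varrho u}\in L^1\cap L^\infty(\bb R^n)$, so item 3 of Lemma \ref{lemma:MappingProps} applies.
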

\begin{proof}
Set $F = I_\mu[e^{\varrho u}]e^{\varrho u}$ and observe that Lemma \ref{lemma:remove_log_1_y} guarantees $\log(1 + |\cdot|)F\in L^1(\bb R^n)$. Therefore, using the integral representation for $u$ in Lemma \ref{lemma:polynomial_part_bounded_above}, there is a polynomial $p$ of degree not exceeding $n - 1$ that is bounded above for which
\begin{equation*}
\begin{split}
	\frac{u(x)}{(n -1)!c_n}
	& = \int_{\bb R^n}\log\left(\frac{1 + |y|}{|x - y|}\right)F(y)\; \d y + p(y)\\
	& = \int_{\bb R^n}\log\left(\frac{1}{|x - y|}\right)F(y)\; \d y + \tilde p(y), 
\end{split}
\end{equation*}
where $\tilde p = p + \|\log(1 + |\cdot|)F\|_{L^1(\bb R^n)}$. Since $p$ is bounded above and has degree not exceeding $n - 1$, we find that $\tilde p$ enjoys these same properties. 
\end{proof}
The following corollary follows immediately from Lemma \ref{lemma:improved_integral_rep}. 
\begin{coro}
\label{coro:best_representation}
If in addition to the hypotheses of Lemma \ref{lemma:improved_integral_rep} it is assumed that $u(x) = \circ(|x|^2)$ as $|x|\to\infty$, then there is a real constant $b$ for which 
\begin{equation}
\label{eq:best_representation}
	u(x)
	= (n - 1)!c_n\int_{\bb R^n}\log\left(\frac{1}{|x- y|}\right)I_\mu[e^{\varrho u}](y)e^{\varrho u(y)}\; \d y +b. 
\end{equation}
\end{coro}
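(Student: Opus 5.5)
The plan is to start from Lemma~\ref{lemma:improved_integral_rep}, which gives
\begin{equation*}
	u = w + p, \qquad w(x) := (n - 1)!c_n\int_{\bb R^n}\log\left(\frac{1}{|x- y|}\right)F(y)\; \d y, \qquad F = I_\mu[e^{\varrho u}]e^{\varrho u},
\end{equation*}
where $p$ is a polynomial of degree at most $n-1$ that is bounded above. The goal is to show that the growth assumption $u(x) = \circ(|x|^2)$ forces $p$ to be constant; setting $b = p$ then gives \eqref{eq:best_representation}.

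\textbf{Step 1: $w$ is sublinear-logarithmic.} I would show $w(x) = \circ(|x|^2)$, in fact $w(x) = O(\log|x|)$ along most directions. The upper bound $w(x) \leq -(\frac{2V}{|\bb S^n|}-\epsilon)\log|x| + C$ for large $|x|$ follows from Lemma~\ref{lemma:Kf_upper_decay_estimate}, since $w$ differs from $(n-1)!Kf$ by the constant $(n-1)!c_n\|\log(1+|\cdot|)F\|_{L^1}$ (Lemma~\ref{lemma:remove_log_1_y}). The lower bound $w(x) \geq -\frac{2V}{|\bb S^n|}(\log|x| + C)$ follows from Lemma~\ref{lemma:Kf_lower_decay} for the same reason. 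Hence $w(x) = O(\log|x|)$ as $|x|\to\infty$, and therefore $p = u - w = \circ(|x|^2)$.

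\textbf{Step 2: $p$ has degree at most one.} A polynomial $p$ with $p(x) = \circ(|x|^2)$ has degree at most $1$. To see this, suppose $\deg p = k \geq 2$ and let $p_k$ be its top homogeneous part, with $p_k(\xi)\neq 0$ for some unit vector $\xi$. Then $p(t\xi) \sim p_k(\xi)t^k$, which is not $\circ(t^2)$.

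\textbf{Step 3: $p$ is constant.} An affine function that is bounded above on $\bb R^n$ must be constant, so $p \equiv b$.

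The only point requiring care is Step~1, namely that the two-sided logarithmic bounds apply to $w$ rather than only to $Kf$. This reduces to the observation above that the two differ by a finite constant.
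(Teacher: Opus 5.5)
Your proof is correct and follows the route the paper intends when it says the corollary is immediate from Lemma \ref{lemma:improved_integral_rep} (compare the remark after Theorem \ref{theorem:asymptotic}). The logarithmic bounds on $w$ differ from those on $(n-1)!K(I_\mu[e^{\varrho u}]e^{\varrho u})$ only by the finite constant from Lemma \ref{lemma:remove_log_1_y}; they force $p=\circ(|x|^2)$, so $p$ is affine and, being bounded above, constant. Strictly only the upper bound on $w$ is needed, since $p$ is already bounded above.
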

Having the integral representation \eqref{eq:best_representation} for solutions to \eqref{eq:entire_nonlocal} satisfying both \eqref{eq:V_lower_bound_assumption} and \eqref{eq:asymtpotic_growth_assumption} allows us to deduce further decay and symmetry properties of such functions. For $(x_0, \sigma)\in \bb R^n\times(0, \infty)$, we define the logarithmic Kelvin transform of a function $u:\bb R^n\to \bb R$ by 
\begin{equation*}
	u_{x_0, \sigma}(x)
	= u(x^{x_0, \sigma}) + 2\log\frac{\sigma}{|x - x_0|}, 
\end{equation*}
where
\begin{equation*}
	x^{x_0, \sigma}
	= x_0 + \frac{\sigma^2(x - x_0)}{|x- x_0|^2}
\end{equation*}
is the inversion of $x\in \bb R^n$ about $\bdy B_\sigma(x_0)$. Deduction of decay and symmetry properties of solutions to \eqref{eq:entire_nonlocal} relies both on  comparisons of $u$ with $u_{x_0, \sigma}$ and on comparisons of $I_\mu[e^{\varrho u}]$ with $I_\mu[e^{\varrho u_{x_0, \sigma}}]$. Lemmata \ref{lemma:Imu_difference} and \ref{lemma:u_difference} below provide integral representations that facilitate these comparisons. 
\begin{lemma}
\label{lemma:Imu_difference}
\ifdetails{\color{gray}
Let $\mu\in (0, n)$ and let $\varrho$ be as in \eqref{eq:rho}. 
}\fi
If $u: \bb R^n\to \bb R$ satisfies $e^u\in L^n(\bb R^n)$ then for any $(x_0, \sigma)\in \bb R^n\times (0, \infty)$ there holds
\begin{equation}
\label{eq:Imu_difference_integral}
	I_\mu[e^{\varrho u_{x_0, \sigma}}](x) - I_\mu[e^{\varrho u}](x)
	= \int_{\bb R^n\setminus B_\sigma(x_0)}\mc K(x_0, \sigma; x, y)\left(e^{\varrho u_{x_0, \sigma}(y)} - e^{\varrho u(y)}\right)\; \d y, 
\end{equation}
where equality is understood in the sense of $L^{2n/\mu}(\bb R^n)$ and 
\begin{equation}
\label{eq:Imu_difference_integral_kernel}
	\mc K(x_0, \sigma; x, y)
	= \frac{1}{|x- y|^\mu} - \frac{1}{|x - y^{x_0, \sigma}|^\mu}\left(\frac\sigma{|y - x_0|}\right)^\mu
\end{equation}
satisfies $K(x_0, \sigma; x, y)>0$ whenever $x, y\in \bb R^n\setminus \overline B_\sigma(x_0)$. 
\end{lemma}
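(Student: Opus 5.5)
The plan is to split $\bb R^n$ into $B_\sigma(x_0)$ and its complement, and to use the inversion $y\mapsto y^{x_0,\sigma}$ to move the integral over $B_\sigma(x_0)$ onto $\bb R^n\setminus B_\sigma(x_0)$. Write $\bar y = y^{x_0,\sigma}$. The identities needed are:
\begin{equation*}
	\d \bar y = \left(\frac{\sigma}{|y - x_0|}\right)^{2n}\d y,
	\qquad
	e^{\varrho u_{x_0,\sigma}(y)} = \left(\frac{\sigma}{|y-x_0|}\right)^{2\varrho}e^{\varrho u(\bar y)},
	\qquad
	|x-\bar y| = \frac{\sigma\,|x^{x_0,\sigma}-y|}{|x - x_0|} .
\end{equation*}
The last identity holds for the symmetric pair; here I will only need $|x - \bar y|$ directly. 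Since $2\varrho = 2n-\mu$, the substitution $z=\bar y$ gives
\begin{equation*}
	\int_{B_\sigma(x_0)}\frac{e^{\varrho u(z)}}{|x-z|^\mu}\,\d z
	= \int_{\bb R^n\setminus B_\sigma(x_0)}\frac{e^{\varrho u(\bar y)}}{|x-\bar y|^\mu}\left(\frac{\sigma}{|y-x_0|}\right)^{2n}\d y
	= \int_{\bb R^n\setminus B_\sigma(x_0)}\frac{e^{\varrho u_{x_0,\sigma}(y)}}{|x-\bar y|^\mu}\left(\frac{\sigma}{|y-x_0|}\right)^{\mu}\d y .
\end{equation*}
Applying the same computation to $u_{x_0,\sigma}$ and using $(u_{x_0,\sigma})_{x_0,\sigma}=u$ (the transform is an involution), I get
\begin{equation*}
	\int_{B_\sigma(x_0)}\frac{e^{\varrho u_{x_0,\sigma}(z)}}{|x-z|^\mu}\,\d z = \int_{\bb R^n\setminus B_\sigma(x_0)}\frac{e^{\varrho u(y)}}{|x-\bar y|^\mu}\left(\frac{\sigma}{|y-x_0|}\right)^\mu\d y .
\end{equation*}
Adding the exterior parts $\int_{\bb R^n\setminus B_\sigma(x_0)}|x-y|^{-\mu}e^{\varrho u_{x_0,\sigma}(y)}\,\d y$ and $\int_{\bb R^n\setminus B_\sigma(x_0)}|x-y|^{-\mu}e^{\varrho u(y)}\,\d y$ and subtracting then yields \eqref{eq:Imu_difference_integral} with the kernel \eqref{eq:Imu_difference_integral_kernel}.

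For the meaning of the equality, I would first check that $e^{u_{x_0,\sigma}}\in L^n(\bb R^n)$. By the change of variables, $\int e^{nu_{x_0,\sigma}} = \int e^{nu}$, so $e^{\varrho u_{x_0,\sigma}}\in L^{n/\varrho}$. The HLS inequality then puts both $I_\mu$ terms in $L^{2n/\mu}(\bb R^n)$, so they are finite a.e. The manipulations above are Tonelli computations with nonnegative integrands, so each of the four pieces is finite for a.e. $x$, and the difference is legitimate pointwise a.e. It therefore holds as an identity in $L^{2n/\mu}(\bb R^n)$.

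The main point is the positivity of $\mc K$. For $x,y\notin\overline B_\sigma(x_0)$, positivity is equivalent to
\begin{equation*}
	|x-\bar y|\,|y-x_0| > \sigma\,|x-y| .
\end{equation*}
Translate so that $x_0=0$. The standard identity is
\begin{equation*}
	|y|^2|x-\bar y|^2 = |y|^2|x|^2 - 2\sigma^2 x\cdot y + \sigma^4,
\end{equation*}
and therefore
\begin{equation*}
	|y|^2|x-\bar y|^2 - \sigma^2|x-y|^2 = (|x|^2-\sigma^2)(|y|^2-\sigma^2) > 0
\end{equation*}
whenever $|x|,|y|>\sigma$. This gives $\mc K(x_0,\sigma;x,y)>0$. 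I expect no real obstacle: the only care needed is bookkeeping of the exponents, where $2\varrho=2n-\mu$ is exactly what converts the Jacobian $(\sigma/|y-x_0|)^{2n}$ into the factor $(\sigma/|y-x_0|)^{\mu}$.
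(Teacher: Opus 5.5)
Your proof is correct and follows the paper's route: split off $B_\sigma(x_0)$, move it to the exterior by the inversion (where $2\varrho=2n-\mu$ turns the Jacobian into the factor $(\sigma/|y-x_0|)^\mu$), subtract, and get positivity from $|y-x_0|^2|x-\bar y|^2-\sigma^2|x-y|^2=(|x-x_0|^2-\sigma^2)(|y-x_0|^2-\sigma^2)$. The only variation is how you decompose $I_\mu[e^{\varrho u_{x_0,\sigma}}]$: you use the involution $(u_{x_0,\sigma})_{x_0,\sigma}=u$, whereas the paper evaluates the first decomposition at $x^{x_0,\sigma}$ and uses $I_\mu[e^{\varrho u_{x_0,\sigma}}](x)=(\sigma/|x-x_0|)^\mu I_\mu[e^{\varrho u}](x^{x_0,\sigma})$; that is slightly longer here, but the paper needs this identity later anyway. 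One slip, harmless because you never use it: your third displayed identity is false in general, and the correct relation is $|x-\bar y|=|x-x_0|\,|x^{x_0,\sigma}-y|/|y-x_0|$.
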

\begin{remark}
If in addition to the hypotheses of Lemma \ref{lemma:Imu_difference} $u$ is assumed to be locally bounded above (which in view of Proposition \ref{prop:distributional_solutions_smooth} is guaranteed whenever $u$ is a solution to \eqref{eq:entire_nonlocal}) then equation \eqref{eq:Imu_difference_integral} may be understood in the pointwise sense. 
\end{remark}
\begin{proof}[Proof of Lemma \ref{lemma:Imu_difference}]
Using the change of variable $y = z^{x_0, \sigma}$, $\d y = \left(\frac{\sigma}{|z- x_0|}\right)^{2n}\; \d z$ we have 
\begin{equation*}
\begin{split}
	\int_{B_\sigma(x_0)}\frac{e^{\varrho u(y)}}{|x- y|^\mu}\; \d y
	& \ifdetails{\color{gray}
	\; = \int_{\bb R^n\setminus B_\sigma(x_0)}\frac{e^{\varrho u(z^{x_0, \sigma})}}{|x - z^{x_0, \sigma}|^\mu}\left(\frac{\sigma}{|z- x_0|}\right)^{2n}\; \d z
	}
	\\
	& \fi
	= \int_{\bb R^n\setminus B_\sigma(x_0)}\frac{e^{\varrho u_{x_0, \sigma}(z)}}{|x - z^{x_0, \sigma}|^\mu}\left(\frac{\sigma}{|z- x_0|}\right)^{\mu}\; \d z. 
\end{split}
\end{equation*}
Therefore, 
\begin{equation}
\label{eq:Imu_Bsigma_split}
\begin{split}
	I_\mu[e^{\varrho u}](x)
	\ifdetails{\color{gray}\; =} \fi
	& \ifdetails{\color{gray}
	\; \left(\int_{B_\sigma(x_0)} + \int_{\bb R^n\setminus B_\sigma(x_0)}\right)\frac{e^{\varrho u(y)}}{|x- y|^\mu}\; \d y
	}
	\\
	= & \fi
	\; \int_{\bb R^n\setminus B_\sigma(x_0)}\frac{e^{\varrho u_{x_0, \sigma}(y)}}{|x - y^{x_0, \sigma}|^\mu}\left(\frac{\sigma}{|y- x_0|}\right)^{\mu}\; \d y\\
	& + \int_{\bb R^n\setminus B_\sigma(x_0)}\frac{e^{\varrho u(y)}}{|x- y|^\mu}\; \d y. 
\end{split}
\end{equation}
For any $x, y\in \bb R^n \setminus\{x_0\}$ we have both
\begin{equation}
\label{eq:inversion_equalities}
	|x^{x_0, \sigma} - y^{x_0, \sigma}|
	= \frac{\sigma^2|x - y|}{|x- x_0||y - x_0|}
	\quad \text{ and }\quad
	\frac{|x^{x_0, \sigma} - y|}{|x - y^{x_0, \sigma}|}
	= \frac{|y- x_0|}{|x - x_0|}.
\end{equation}
Using the change of variable $z\mapsto z^{x_0, \sigma}$ and the first of these equalities shows that
\begin{equation}
\label{eq:Imu_erhou_inversion}
	I_\mu[e^{\varrho u_{x_0, \sigma}}](x)
	= \left(\frac\sigma{|x - x_0|}\right)^\mu I_\mu[e^{\varrho u}](x^{x_0, \sigma}). 
\end{equation}
\ifdetails{\color{gray}
Indeed, for any $x\in \bb R^n$, using the change of variable $z\mapsto z^{x_0, \sigma}$, $\d z\mapsto \left(\frac\sigma{|z - x_0|}\right)^{2n}\; \d z$ and the first equality in \eqref{eq:inversion_equalities} gives 
\begin{equation*}
\begin{split}
	I_\mu[e^{\varrho u}](x^{x_0, \sigma})
	& = \int_{\bb R^n}\frac{e^{\varrho u(z)}}{|z - x^{x_0, \sigma}|^\mu}\; \d z\\
	& = \int_{\bb R^n}\frac{e^{\varrho u(z^{x_0, \sigma})}}{|z^{x_0, \sigma} - x^{x_0, \sigma}|^\mu}\; \left(\frac{\sigma}{|z - x_0|}\right)^{2n}\d z\\
	& = \int_{\bb R^n}\frac{e^{\varrho u_{x_0, \sigma}(z)}}{|z^{x_0, \sigma} - x^{x_0, \sigma}|^\mu}\; \left(\frac{\sigma}{|z - x_0|}\right)^{\mu}\d z\\
	& = \left(\frac\sigma{|x - x_0|}\right)^{-\mu}I_\mu[e^{\varrho u_{x_0, \sigma}}](x). 
\end{split}
\end{equation*}
}\fi
In view of \eqref{eq:Imu_erhou_inversion}, upon evaluating \eqref{eq:Imu_Bsigma_split} at $x^{x_0, \sigma}$  we obtain 
\begin{equation}
\label{eq:inverted_Imu_Bsigma_split}
\begin{split}
	I_\mu[e^{\varrho u_{x_0, \sigma}}](x)
	\ifdetails{\color{gray} \; =}\fi & \ifdetails{\color{gray}
	\; \left(\frac{\sigma}{|x - x_0|}\right)^\mu I_\mu[e^{\varrho u}](x^{x_0, \sigma})
	}
	\\
	\ifdetails{\color{gray}\; =}\fi & {\color{gray}
	\; \left(\frac{\sigma}{|x - x_0|}\right)^\mu\int_{\bb R^n\setminus B_\sigma(x_0)}\frac{e^{\varrho u_{x_0, \sigma}(y)}}{|x^{x_0,\sigma} - y^{x_0, \sigma}|^\mu}\left(\frac \sigma{|y - x_0|}\right)^\mu\; \d y 
	}
	\\
	& {\color{gray}
	+ \left(\frac{\sigma}{|x - x_0|}\right)^\mu\int_{\bb R^n\setminus B_\sigma(x_0)}\frac{e^{\varrho u(y)}}{|x^{x_0, \sigma} - y|^\mu}\; \d y
	}
	\\
	= & \fi
	\; \int_{\bb R^n\setminus B_\sigma(x_0)}\frac{e^{\varrho u_{x_0,\sigma}(y)}}{|x- y|^\mu}\; \d y\\
	& + \int_{\bb R^n\setminus B_\sigma(x_0)}\frac{e^{\varrho u(y)}}{|x - y^{x_0, \sigma}|^\mu}\left(\frac\sigma{|y - x_0|}\right)^\mu\; \d y. 
\end{split}
\end{equation}
Subtracting \eqref{eq:Imu_Bsigma_split} from \eqref{eq:inverted_Imu_Bsigma_split} yields \eqref{eq:Imu_difference_integral}. 
\ifdetails{\color{gray} 
Indeed, 
\begin{equation*}
\begin{split}
	I_\mu[e^{\varrho u_{x_0, \sigma}}](x)&  - I_\mu[e^{\varrho u}](x)\\
	= & \; \int_{\bb R^n\setminus B_\sigma(x_0)}e^{\varrho u_{x_0, \sigma}(y)}\left(\frac 1{|x- y|^\mu} - \frac{1}{|x- y^{x_0,\sigma}|^\mu}\left(\frac\sigma{|y - x_0|}\right)^\mu\right)\; \d y\\
	& + \int_{\bb R^n\setminus B_\sigma(x_0)}e^{\varrho u_{x_0, \sigma}(y)}\left(\frac{1}{|x- y^{x_0,\sigma}|^\mu}\left(\frac\sigma{|y - x_0|}\right)^\mu- \frac 1{|x- y|^\mu}\right)\; \d y\\
	= & \int_{\bb R^n\setminus B_\sigma(x_0)}\mc K(x_0, \sigma; x, y)\left(e^{\varrho u_{x_0, \sigma}(y)} - e^{\varrho u(y)}\right)\; \d y. 
\end{split}
\end{equation*}
}\fi
The inequality $\mc K(x_0, \sigma; x, y)> 0$ for $x, y\in \bb R^n\setminus \overline B_\sigma(x_0)$ follows from the equality
\begin{equation}
\label{eq:implies_positive_kernels}
	\left(\frac{|y - x_0|}{\sigma}\right)^2|x - y^{x_0, \sigma}|^2 - |x- y|^2
	= \frac{1}{\sigma^2}(|x- x_0|^2 - \sigma^2)(|y- x_0|^2 - \sigma^2). 
\end{equation}
\end{proof}
\begin{lemma}
\label{lemma:u_difference}
If $u$ is a distributional solution to \eqref{eq:entire_nonlocal} for which both  \eqref{eq:V_lower_bound_assumption} and \eqref{eq:asymtpotic_growth_assumption} are satisfied, then for every $(x_0, \sigma)\in \bb R^n\times (0, \infty)$ we have
\begin{equation}
\label{eq:w_x0_sigma_expression}
\begin{split}
	\lefteqn{\frac{u_{x_0, \sigma}(x) - u(x)}{(n - 1)!c_n} + (V - |\bb S^n|)\log\frac{\sigma}{|x- x_0|}}\\
	& = \int_{\bb R^n\setminus B_\sigma(x_0)}\mc L(x_0, \sigma; x, y)\left(I_\mu[e^{\varrho u_{x_0, \sigma}}](y)e^{\varrho u_{x_0, \sigma}(y)} - I_\mu[e^{\varrho u}](y) e^{\varrho u(y)}\right)\; \d y,  
\end{split}
\end{equation}
where $\mc L(x_0, \sigma; x, y)$ is defined by 
\begin{equation*}
	\mc L(x_0, \sigma; x, y)
	= \log\frac{|y - x_0||x - y^{x_0, \sigma}|}{\sigma|x - y|}. 
\end{equation*}
Moreover, $\mc L(x_0, \sigma; x, y)> 0$ for $x, y\in \bb R^n\setminus B_\sigma(x_0)$. 
\end{lemma}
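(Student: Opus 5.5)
The plan is to start from the representation \eqref{eq:best_representation} of Corollary \ref{coro:best_representation}. Write $F = I_\mu[e^{\varrho u}]e^{\varrho u}$ and $F_{x_0,\sigma} = I_\mu[e^{\varrho u_{x_0,\sigma}}]e^{\varrho u_{x_0,\sigma}}$. The proof then reduces to changing variables $y\mapsto y^{x_0,\sigma}$, using the inversion identities \eqref{eq:inversion_equalities}, and splitting integrals over $B_\sigma(x_0)$ and its complement, exactly as in the proof of Lemma \ref{lemma:Imu_difference}.

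\textbf{Step 1: transformation rule for the right-hand side.} By \eqref{eq:Imu_erhou_inversion},
\begin{equation*}
	F_{x_0,\sigma}(y) = \left(\frac{\sigma}{|y-x_0|}\right)^{2n}F(y^{x_0,\sigma}),
\end{equation*}
since $e^{\varrho u_{x_0,\sigma}(y)} = (\sigma/|y-x_0|)^{2\varrho}e^{\varrho u(y^{x_0,\sigma})}$ and $2\varrho + \mu = 2n$. Because the Jacobian of the inversion is $(\sigma/|y-x_0|)^{2n}$, this gives
\begin{equation*}
	\int_{B_\sigma(x_0)} g(y)F(y)\,\d y = \int_{\bb R^n\setminus B_\sigma(x_0)} g(y^{x_0,\sigma})F_{x_0,\sigma}(y)\,\d y
\end{equation*}
for any measurable $g$ for which either side converges.

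\textbf{Step 2: splitting the representations.} Set $A = (n-1)!c_n$. Split \eqref{eq:best_representation} for $u(x)$ into the integrals over $B_\sigma(x_0)$ and over its complement, and convert the inner piece with Step 1. Next evaluate \eqref{eq:best_representation} at $x^{x_0,\sigma}$ and add $2\log(\sigma/|x-x_0|)$ to obtain $u_{x_0,\sigma}(x)$; split this in the same way. In the integrals that now carry the kernel $\log\frac{1}{|x^{x_0,\sigma}-y^{x_0,\sigma}|}$, use the first identity in \eqref{eq:inversion_equalities} to write
\begin{equation*}
	\log\frac{1}{|x^{x_0,\sigma}-y^{x_0,\sigma}|} = \log\frac{1}{|x-y|} + \log\frac{|x-x_0||y-x_0|}{\sigma^2}.
\end{equation*}
In the integrals with kernel $\log\frac{1}{|x^{x_0,\sigma}-y|}$, use the second identity to write $|x^{x_0,\sigma}-y| = |x-y^{x_0,\sigma}|\,|y-x_0|/|x-x_0|$. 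Then subtract the two expressions. The constant $b$ cancels. The terms with kernels $\log\frac{1}{|x-y|}$ and $\log\frac{1}{|x-y^{x_0,\sigma}|}$ regroup into $\int_{\bb R^n\setminus B_\sigma(x_0)}\mc L\,(F_{x_0,\sigma}-F)\,\d y$, after adding and subtracting $\log\frac{|y-x_0|}{\sigma}$ in each integrand. What remains are $\log(|x-x_0|/\sigma)$ multiplied by total masses, and these collect into
\begin{equation*}
	A\log\frac{|x-x_0|}{\sigma}\int_{\bb R^n}F = AV\log\frac{|x-x_0|}{\sigma}.
\end{equation*}
This combines with the term $2\log(\sigma/|x-x_0|) = A|\bb S^n|\log(\sigma/|x-x_0|)$ coming from the definition of $u_{x_0,\sigma}$; note that $A|\bb S^n| = 2$ by \eqref{eq:cn}. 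The sum is exactly $-A(V-|\bb S^n|)\log\frac{\sigma}{|x-x_0|}$, which gives \eqref{eq:w_x0_sigma_expression}. I expect the bookkeeping of these mass terms, i.e.\ checking that all factors of $\log|y-x_0|/\sigma$ cancel, to be the fiddliest part, and I would verify it carefully.

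\textbf{Step 3: positivity and justification.} Positivity of $\mc L$ follows from \eqref{eq:implies_positive_kernels}: for $x,y$ outside $\overline B_\sigma(x_0)$ the ratio $\frac{|y-x_0||x-y^{x_0,\sigma}|}{\sigma|x-y|}$ exceeds $1$, so its logarithm is positive. (When $x$ or $y$ lies on the sphere the ratio equals $1$ and $\mc L=0$, so the stated strict inequality should be read on the open exterior.) The genuine obstacle is justifying that every split integral converges, so that the rearrangements are legitimate. The integrals against $F$ with $\log|y-x_0|$ weights converge by Lemma \ref{lemma:remove_log_1_y}. The integrals against $F_{x_0,\sigma}$ over the exterior with $\log|y-x_0|$ weights transform back into integrals of $F$ over $B_\sigma(x_0)$ with weight $\log(\sigma^2/|y-x_0|)$. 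These are locally finite because $u$ is smooth (Proposition \ref{prop:distributional_solutions_smooth}) and $I_\mu[e^{\varrho u}]\in L^\infty$ (Lemma \ref{lemma:Imu_initial_decay}), so $F$ is bounded. The local logarithmic singularities at $y=x$ are integrable for the same reason.
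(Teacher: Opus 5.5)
Your proposal is correct and follows essentially the same route as the paper's proof. You split the representation of Corollary~\ref{coro:best_representation} over $B_\sigma(x_0)$ and its complement, move the inner piece outside via the inversion and \eqref{eq:Imu_erhou_inversion}, evaluate at $x^{x_0,\sigma}$ using \eqref{eq:inversion_equalities}, and absorb the leftover mass terms via the splitting of $V$ together with $(n-1)!c_n|\bb S^n| = 2$. Your observation that strict positivity of $\mc L$ holds only on the open exterior $\bb R^n\setminus\overline B_\sigma(x_0)$ is also right, since $\mc L$ vanishes when $x$ or $y$ lies on $\bdy B_\sigma(x_0)$.
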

\begin{proof}
Fix $(x_0, \sigma)\in \bb R^n\times (0, \infty)$. Using the change of variable $y\mapsto y^{x_0, \sigma}$ and equation \eqref{eq:Imu_erhou_inversion} we have
\begin{equation}
\label{eq:V_split}
\begin{split}
	V
	= & \; \left(\int_{B_\sigma(x_0)} + \int_{\bb R^n\setminus B_\sigma(x_0)}\right)I_\mu[e^{\varrho u}](y) e^{\varrho u(y)}\; \d y\\
	= & \; \int_{\bb R^n\setminus B_\sigma(x_0)}I_\mu[e^{\varrho u}](y^{x_0, \sigma})e^{\varrho u_{x_0, \sigma}(y)}\left(\frac{\sigma}{|y- x_0|}\right)^\mu\; \d y\\
	&  + \int_{\bb R^n\setminus B_\sigma(x_0)}I_\mu[e^{\varrho u}](y) e^{\varrho u(y)}\; \d y\\
	= & \; \int_{\bb R^n\setminus B_\sigma(x_0)}I_\mu[e^{\varrho u_{x_0, \sigma}}](y)e^{\varrho u_{x_0, \sigma}(y)}\; \d y + \int_{\bb R^n\setminus B_\sigma(x_0)}I_\mu[e^{\varrho u}](y) e^{\varrho u(y)}\; \d y. 
\end{split}
\end{equation}
Using the same change of variable and \eqref{eq:Imu_erhou_inversion} we have
\begin{equation*}
\begin{split}
	\int_{B_\sigma(x_0)}& \log\left(\frac{1}{|x - y|}\right) I_\mu[e^{\varrho u}](y)e^{\varrho u(y)}\; \d y\\
	& \ifdetails{\color{gray}
	\; = \int_{\bb R^n\setminus B_\sigma(x_0)}\log\left(\frac{1}{|x - y^{x_0, \sigma}|}\right) I_\mu[e^{\varrho u}](y^{x_0, \sigma})e^{\varrho u(y^{x_0, \sigma})}\; \left(\frac\sigma{|y - x_0|}\right)^{2n}\; \d y
	}
	\\
	& \fi 
	= \int_{\bb R^n\setminus B_\sigma(x_0)}\log\left(\frac{1}{|x - y^{x_0, \sigma}|}\right) I_\mu[e^{\varrho u}](y^{x_0, \sigma})e^{\varrho u_{x_0, \sigma}(y)}\; \left(\frac\sigma{|y - x_0|}\right)^{\mu}\; \d y\\
	& = \int_{\bb R^n\setminus B_\sigma(x_0)}\log\left(\frac{1}{|x - y^{x_0, \sigma}|}\right) I_\mu[e^{\varrho u_{x_0, \sigma}}](y)e^{\varrho u_{x_0, \sigma}(y)}\; \d y. 
\end{split}
\end{equation*}
In view of this equality and Corollary \ref{coro:best_representation}, for each $x\in \bb R^n$ we have
\begin{equation}
\label{eq:ux_split}
\begin{split}
	\frac{u(x) - b}{(n - 1)!c_n}
	= & \; \left(\int_{\bb R^n\setminus B_\sigma(x_0)}+\int_{B_\sigma(x_0)}\right)\log\left(\frac 1{|x- y|}\right)I_\mu[e^{\varrho u}](y)e^{\varrho u(y)}\; \d y\\
	= & \; \int_{\bb R^n\setminus B_\sigma(x_0)}\log\left(\frac 1{|x- y|}\right)I_\mu[e^{\varrho u}](y)e^{\varrho u(y)}\; \d y\\
	& + \int_{\bb R^n\setminus B_\sigma(x_0)}\log\left(\frac{1}{|x - y^{x_0, \sigma}|}\right) I_\mu[e^{\varrho u_{x_0, \sigma}}](y)e^{\varrho u_{x_0, \sigma}(y)}\; \d y.
\end{split}
\end{equation}
Evaluation of this equality at $x^{x_0, \sigma}$ and in view of equations \eqref{eq:inversion_equalities} we have
\begin{equation}
\label{eq:ux_inverted_split}
\begin{split}
	\frac{u(x^{x_0, \sigma}) - b}{(n - 1)!c_n}
	\ifdetails{\color{gray}\; = }\fi& \ifdetails{\color{gray}
	\; \int_{\bb R^n\setminus B_\sigma(x_0)}\log\left(\frac 1{|x^{x_0, \sigma}- y|}\right)I_\mu[e^{\varrho u}](y)e^{\varrho u(y)}\; \d y
	}
	\\
	& {\color{gray}
	+ \int_{\bb R^n\setminus B_\sigma(x_0)}\log\left(\frac{1}{|x^{x_0, \sigma} - y^{x_0, \sigma}|}\right) I_\mu[e^{\varrho u_{x_0, \sigma}}](y)e^{\varrho u_{x_0, \sigma}(y)}\; \d y
	}
	\\
	= & \fi
	\; \int_{\bb R^n\setminus B_\sigma(x_0)}\log\left(\frac{|x -x_0|}{|y - x_0||x - y^{x_0, \sigma}|}\right)I_\mu[e^{\varrho u}](y)e^{\varrho u(y)}\; \d y\\
	& + \int_{\bb R^n\setminus B_\sigma(x_0)}\log\left(\frac{|x - x_0||y - x_0|}{\sigma^2 |x - y|}\right) I_\mu[e^{\varrho u_{x_0, \sigma}}](y)e^{\varrho u_{x_0, \sigma}(y)}\; \d y.
\end{split}
\end{equation}
Now using \eqref{eq:V_split}, \eqref{eq:ux_split}, and \eqref{eq:ux_inverted_split}, for every $x\in \bb R^n\setminus \{x_0\}$ we have
\begin{equation}
\label{eq:w_x0sigma_initial_split}
\begin{split}
	\lefteqn{\frac{u_{x_0, \sigma}(x) - u(x)}{(n - 1)!c_n} - \frac{2}{(n - 1)!c_n}\log\frac\sigma{|x - x_0|}}\\
	& = \frac{u(x^{x_0, \sigma})-u(x)}{(n - 1)!c_n}\\
	& = \int_{\bb R^n\setminus B_\sigma(x_0)}\log\left(\frac{|x -x_0||x - y|}{|y - x_0||x - y^{x_0, \sigma}|}\right)I_\mu[e^{\varrho u}](y)e^{\varrho u(y)}\; \d y\\
	& \quad + \int_{\bb R^n\setminus B_\sigma(x_0)}\log\left(\frac{|x - x_0||y - x_0||x - y^{x_0, \sigma}|}{\sigma^2 |x - y|}\right) I_\mu[e^{\varrho u_{x_0, \sigma}}](y)e^{\varrho u_{x_0, \sigma}(y)}\; \d y.
\end{split}
\end{equation}
In view of \eqref{eq:cn} and \eqref{eq:V_split} adding $V\log\frac{\sigma}{|x - x_0|}$ to both sides of \eqref{eq:w_x0sigma_initial_split} yields \eqref{eq:w_x0_sigma_expression}.
\ifdetails{\color{gray}
Indeed, upon doing so we have
\begin{equation*}
\begin{split}
	\frac{u_{x_0, \sigma}(x) - u(x)}{(n - 1)!c_n}  & + (V- |\bb S^n|)\log\frac{\sigma}{|x - x_0|}\\
	= & \; \int_{\bb R^n\setminus B_\sigma(x_0)}\log\left(\frac{|x -x_0||x - y|}{|y - x_0||x - y^{x_0, \sigma}|}\right)I_\mu[e^{\varrho u}](y)e^{\varrho u(y)}\; \d y\\
	& + \int_{\bb R^n\setminus B_\sigma(x_0)}\log\left(\frac{|x - x_0||y - x_0||x - y^{x_0, \sigma}|}{\sigma^2 |x - y|}\right) I_\mu[e^{\varrho u_{x_0, \sigma}}](y)e^{\varrho u_{x_0, \sigma}(y)}\; \d y\\
	& + \log\left(\frac{\sigma}{|x - x_0|}\right)\int_{\bb R^n\setminus B_\sigma(x_0)}I_\mu[e^{\varrho u_{x_0, \sigma}}](y)e^{\varrho u_{x_0, \sigma}(y)}\; \d y \\
	& + \log\left(\frac{\sigma}{|x - x_0|}\right) \int_{\bb R^n\setminus B_\sigma(x_0)}I_\mu[e^{\varrho u}](y) e^{\varrho u(y)}\; \d y\\
	= & \; \int_{\bb R^n\setminus B_\sigma(x_0)} \log\left(\frac{|y - x_0||x - y^{x_0, \sigma}|}{\sigma|x - y|}\right)I_\mu[e^{\varrho u_{x_0, \sigma}}](y)e^{\varrho u_{x_0, \sigma}(y)}\; \d y\\
	& + \int_{\bb R^n\setminus B_\sigma(x_0)} \log\left(\frac{\sigma|x - y|}{|y - x_0||x - y^{x_0, \sigma}|}\right)I_\mu[e^{\varrho u}](y)e^{\varrho u(y)}\; \d y
\end{split}
\end{equation*}
which is \eqref{eq:w_x0_sigma_expression}. }\fi
The inequality $\mc L(x_0, \sigma; x, y)> 0$ for $x, y\in \bb R^n\setminus B_\sigma(x_0)$ follows from \eqref{eq:implies_positive_kernels}. 
\end{proof}
\section{The Precise Decay Rate of Solutions}
\label{s:precise_decay}
This section is devoted to the computing the decay rate of solutions $u$ to \eqref{eq:entire_nonlocal} that satisfy both \eqref{eq:V_lower_bound_assumption} and \eqref{eq:asymtpotic_growth_assumption}. In view of Theorem \ref{theorem:asymptotic}, these assumptions imply that the value of $V$ dictates the decay rate. The following proposition is the main result of this section. 
\begin{prop}
\label{prop:precise_decay}
If $u$ is a distributional solution to \eqref{eq:entire_nonlocal} for which \eqref{eq:V_lower_bound_assumption} and \eqref{eq:asymtpotic_growth_assumption} are satisfied, then $V=|\bb S^n|$.  
\end{prop}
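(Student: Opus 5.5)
The plan is to compare $u$ with its logarithmic Kelvin transforms $u_{x_0,\sigma}$ and run two moving-sphere arguments. The first rules out $V>|\bb S^n|$, i.e.\ decay that is too fast; the second rules out $V<|\bb S^n|$, i.e.\ decay that is too slow. The only tools are the identities of Lemmata \ref{lemma:Imu_difference} and \ref{lemma:u_difference}. In \eqref{eq:w_x0_sigma_expression} the term $(V-|\bb S^n|)\log\frac{\sigma}{|x-x_0|}$ is exactly what distinguishes the cases, and its sign on $\bb R^n\setminus B_\sigma(x_0)$ is $\operatorname{sgn}(|\bb S^n|-V)$.

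\textbf{Preliminary asymptotics.} First I sharpen Theorem \ref{theorem:asymptotic} using Corollary \ref{coro:best_representation} and the log-moment bound of Lemma \ref{lemma:remove_log_1_y}. The aim is
\begin{equation*}
	u(x)=-\frac{2V}{|\bb S^n|}\log|x|+O(1)
	\quad\text{as }|x|\to\infty,
\end{equation*}
or at least an $o(\log|x|)$ error that is uniform enough to compare. Since $u$ is smooth (Proposition \ref{prop:distributional_solutions_smooth}), $u_{x_0,\sigma}(x)=u(x_0)+2\log\sigma-2\log|x|+o(1)$. Hence $w_{x_0,\sigma}:=u_{x_0,\sigma}-u$ tends to $+\infty$ at infinity when $V>|\bb S^n|$ and to $-\infty$ when $V<|\bb S^n|$.

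\textbf{Case $V>|\bb S^n|$.} Fix $x_0$. For small $\sigma$ I show $w_{x_0,\sigma}\geq 0$ on $\bb R^n\setminus B_\sigma(x_0)$. Near the sphere this follows from the gradient bound on $u$ and the boundary behaviour of $2\log\frac{\sigma}{|x-x_0|}$. Far away it follows from $w\to+\infty$, which is uniform in small $\sigma$ by the asymptotics above. Let $\bar\sigma(x_0)$ be the supremum of $\sigma$ for which $w_{x_0,\lambda}\geq0$ for all $\lambda\in(0,\sigma]$.

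I then claim $\bar\sigma=\infty$. At a finite $\bar\sigma$, the inequality $w\geq 0$ feeds into Lemma \ref{lemma:Imu_difference} and gives $I_\mu[e^{\varrho u_{x_0,\bar\sigma}}]\geq I_\mu[e^{\varrho u}]$ outside $B_{\bar\sigma}(x_0)$. So the integrand in \eqref{eq:w_x0_sigma_expression} is nonnegative, and $\mc L>0$. The log term $-(V-|\bb S^n|)\log\frac{\bar\sigma}{|x-x_0|}$, moved to the right-hand side, is strictly positive. Together these give strict positivity of $w$ with a quantitative margin: $w$ is bounded below by a positive multiple of $\log\frac{|x-x_0|}{\bar\sigma}$ near the sphere, and $w\to\infty$ at infinity. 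That margin persists for $\sigma$ slightly larger than $\bar\sigma$, contradicting maximality.

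So $u_{x_0,\sigma}\geq u$ outside $B_\sigma(x_0)$ for every $x_0$ and every $\sigma$. The standard calculus lemma of Li--Zhang then forces $u$ to be constant, which contradicts $e^u\in L^n(\bb R^n)$.

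\textbf{Case $V<|\bb S^n|$.} The roles are reversed: now $w\to-\infty$ and the log term favours $w\leq 0$. I would run the same scheme for the reverse inequality $u_{x_0,\sigma}\leq u$ on $\bb R^n\setminus B_\sigma(x_0)$. The set of admissible $\sigma$ is open and closed via the same two kernel positivity statements, now applied to $-w$. The conclusion is again that this holds for all $x_0,\sigma$, and the calculus lemma again forces $u$ to be constant.

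\textbf{Main obstacle.} The hard part is starting the second procedure, which I expect to be the crux. Near $\partial B_\sigma(x_0)$ one needs the reversed comparison for small $\sigma$, and one has to use $V<|\bb S^n|$ to beat the boundary behaviour. If that fails, I would instead start from large $\sigma$ and shrink. A secondary obstacle, in both cases, is making the "$w\to\pm\infty$" step uniform in $x_0$ and $\sigma$ on compact sets. The weak decay \eqref{eq:Imu_initial_decay} of $I_\mu[e^{\varrho u}]$ is only barely integrable against $\log$, so Lemma \ref{lemma:remove_log_1_y} must be used carefully to control the tails of \eqref{eq:w_x0_sigma_expression}.
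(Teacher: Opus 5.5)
\textbf{There is a genuine gap: the start of the moving-sphere procedure in the case $V>|\bb S^n|$.} You cannot have $w_{x_0,\sigma}\geq 0$ outside $B_\sigma(x_0)$ for small $\sigma$.

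Indeed, $w_{x_0,\sigma}$ vanishes on $\partial B_\sigma(x_0)$. Differentiating $w_{x_0,\sigma}(x_0+r\theta)=u(x_0+\frac{\sigma^2}{r}\theta)+2\log\frac{\sigma}{r}-u(x_0+r\theta)$ at $r=\sigma$ gives $-2\,\partial_r u(x_0+\sigma\theta)-2/\sigma$. This is negative for every small $\sigma$, because $\Grad u$ is bounded near $x_0$. So the gradient bound you invoke gives $w_{x_0,\sigma}<0$ just outside every small sphere, i.e.\ $u>u_{x_0,\sigma}$ there. This holds for any $C^1$ function and regardless of $V$, so it is the opposite of your claim.

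Since $V>|\bb S^n|$ forces $w_{x_0,\sigma}\to+\infty$ at infinity, $w_{x_0,\sigma}$ changes sign for every small $\sigma$. Neither comparison holds, and your $\bar\sigma(x_0)$ is undefined. The inequality $u\leq u_{x_0,\sigma}$ for all small $\sigma$ cannot be a starting point; it can only be the endpoint of a contradiction argument.

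This is why the paper, in Subsection \ref{ss:rule_out_fast_decay}, starts with large radii and shrinks them.
\begin{itemize}
\item In Lemma \ref{lemma:own_way}, the fast decay $e^{nu(x)}\lesssim |x|^{-n(2V/|\bb S^n|-\epsilon)}$, with exponent exceeding $n$, makes $\|e^{\varrho u}(1+|\log|\cdot-x_0||)\|_{L^{n/\varrho}(\bb R^n\setminus B_\sigma(x_0))}\to 0$ as $\sigma\to\infty$.
\item The contraction estimate of Lemma \ref{lemma:MS_start_main_estimate_2} then forces $|\tilde{\mc P}(x_0,\sigma)|=0$ for large $\sigma$.
\item The critical radius $\tilde\Sigma(x_0)$ is shown to be $0$.
\item Lemma \ref{lemma:implies_const_or_infty} is applied to $f=e^{-u}$ with $\gamma=-2$.
\end{itemize}

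Your diagnosis of where the difficulty lies is therefore reversed. For $V<|\bb S^n|$, the small-radius start of $u\geq u_{x_0,\sigma}$ is the standard one and poses no problem.
\begin{itemize}
\item On $\sigma<|x-x_0|<r_0$ it follows from the monotonicity of $r\mapsto u(x_0+r\theta)+\log r$ for small $r$.
\item On $|x-x_0|\geq r_0$ it follows because $u(x)+2\log|x-x_0|$ is bounded below when $V<|\bb S^n|$, while $\max_{\bar B_{r_0}(x_0)}u+2\log\sigma\to-\infty$.
\end{itemize}
The paper obtains this via Lemma \ref{lemma:moving_spheres_can_start}. Your fallback of starting from large radii belongs to the other case.

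With the directions corrected, your continuation step is sound and genuinely differs from the paper's. At a critical radius $\Sigma$, Lemma \ref{lemma:Imu_difference} and \eqref{eq:w_x0_sigma_expression} give the explicit margin $|w_{x_0,\Sigma}(x)|\geq (n-1)!c_n\abs{V-|\bb S^n|}\log\frac{|x-x_0|}{\Sigma}$. This yields a strictly signed normal derivative on $\partial B_\Sigma(x_0)$ and growth at infinity. A pointwise perturbation argument using the smoothness of $u$ can then replace the paper's measure-theoretic Lemmata \ref{lemma:bad_set_vanishing_measure} and \ref{lemma:tilde_approach_critial_radius}. The paper's $L^{n/\varrho}$ machinery is built so that it also covers $V=|\bb S^n|$ in Section \ref{s:classification}, where no such margin exists.
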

The proof Proposition \ref{prop:precise_decay} will be accomplished by separately establishing the inequalities $V\geq |\bb S^n|$ and $V\leq |\bb S^n|$. These inequalities correspond to ruling out slow decay and ruling out fast decay respectively. They will be proven in Subsections \ref{ss:rule_out_slow_decay} and \ref{ss:rule_out_fast_decay} respectively via suitable applications of the method of moving spheres. For $S\subset \bb R^n$ and $(x_0, \sigma)\in \bb R^n\times (0, \infty)$ we use the notation 
\begin{equation*}
	S^{x_0, \sigma} = \{x^{x_0, \sigma}: x\in S\} 
\end{equation*}
for the inversion of $S$ about $\bdy B_\sigma(x_0)$. 
\subsection{Ruling out Slow Decay}
\label{ss:rule_out_slow_decay}
The purpose of this subsection is to establish the following lemma. 
\begin{lemma}
\label{lemma:rule_out_slow_decay}
If $u$ is a distributional solution to \eqref{eq:entire_nonlocal} for which \eqref{eq:V_lower_bound_assumption} and \eqref{eq:asymtpotic_growth_assumption} are satisfied, then $V\geq |\bb S^n|$.  
\end{lemma}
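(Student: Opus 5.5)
Suppose toward a contradiction that $V<|\bb S^n|$. The plan is to run the method of moving spheres on $w_{x_0,\sigma}=u_{x_0,\sigma}-u$ in $\bb R^n\setminus \overline B_\sigma(x_0)$, using the representation of Lemma \ref{lemma:u_difference}. Under $V<|\bb S^n|$ the extra term $(V-|\bb S^n|)\log\frac{\sigma}{|x-x_0|}$ is nonnegative for $|x-x_0|\ge\sigma$. Moving the spheres should show that for every $x_0$ the inequality $w_{x_0,\sigma}\ge 0$ on $\bb R^n\setminus B_\sigma(x_0)$ persists for all $\sigma>0$. This amounts to the critical parameter being $\bar\sigma(x_0)=\infty$. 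We then contradict this with the asymptotics of Theorem \ref{theorem:asymptotic}, or derive that $u$ is constant.

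\textbf{Step 1 (start).} For each $x_0$, show there is $\sigma_0(x_0)>0$ such that $w_{x_0,\sigma}\ge0$ on $|x-x_0|\ge\sigma$ for all $\sigma\in(0,\sigma_0)$. Near the sphere this uses $u\in C^1$ (Proposition \ref{prop:distributional_solutions_smooth}) and $\partial_r\big(u(x_0+r\theta)+\log r\big)>0$ for small $r$. Far away it uses the decay $u(x)\le -(\frac{2V}{|\bb S^n|}-\epsilon)\log|x|+C$ together with $u_{x_0,\sigma}(x)=u(x^{x_0,\sigma})+2\log\frac{\sigma}{|x-x_0|}\ge \min_{\overline B_{\sigma}(x_0)}u+2\log\sigma-2\log|x-x_0|$. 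The far region is the delicate part, since $u$ only decays like $-\frac{2V}{|\bb S^n|}\log|x|$ with $\frac{2V}{|\bb S^n|}<2$. That means $u$ decays more slowly than the $-2\log|x|$ rate of $u_{x_0,\sigma}$, so for $V<|\bb S^n|$ the sign $w\ge0$ at infinity is not automatic.

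\textbf{Step 1, the route I would actually take.} I would instead argue directly through the integral identity \eqref{eq:w_x0_sigma_expression}, as in the Chen–Li–Ou / Li–Zhu framework. Define $\bar\sigma(x_0)=\sup\{\sigma: w_{x_0,s}\ge 0 \text{ on }\bb R^n\setminus B_s(x_0)\ \forall s\in(0,\sigma)\}$. For $x$ far away the term $(|\bb S^n|-V)\log\frac{|x-x_0|}{\sigma}$ pushes $w$ positive. Combining this with the positive kernels $\mc L$ and $\mc K$ (Lemmata \ref{lemma:Imu_difference}, \ref{lemma:u_difference}), and with $I_\mu[e^{\varrho u}]\in L^\infty$ (Lemma \ref{lemma:Imu_initial_decay}), a standard narrow-region/small-$\sigma$ estimate gives $\bar\sigma(x_0)>0$.

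\textbf{Step 2 (the spheres never stop).} Assume $\bar\sigma(x_0)<\infty$. At $\sigma=\bar\sigma$ we have $w\ge0$ outside the ball. The identity of Lemma \ref{lemma:Imu_difference} then gives $I_\mu[e^{\varrho u_{x_0,\bar\sigma}}]\ge I_\mu[e^{\varrho u}]$ there, strictly unless $w\equiv0$, so the integrand in \eqref{eq:w_x0_sigma_expression} is nonnegative. Strict positivity of $\mc L$ then forces $w>0$ in the exterior, unless the integrand vanishes identically. Vanishing would give $w_{x_0,\bar\sigma}\equiv0$, and then the left-hand side forces $(V-|\bb S^n|)\log\frac{\bar\sigma}{|x-x_0|}\equiv0$, impossible when $V\ne|\bb S^n|$. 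With strict positivity in hand, the usual argument pushes $\sigma$ slightly beyond $\bar\sigma$, contradicting maximality. This requires a uniform positive lower bound of the left side of \eqref{eq:w_x0_sigma_expression} near the sphere (via a Hopf-type estimate from differentiating the integral representation) and at infinity (from the $(|\bb S^n|-V)\log$ term). Hence $\bar\sigma(x_0)=\infty$ for every $x_0$.

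\textbf{Step 3 (contradiction).} Since $u_{x_0,\sigma}\ge u$ on $|x-x_0|\ge\sigma$ for all $\sigma$ and all $x_0$, a calculus lemma of Li–Zhu type applies and yields that $u$ is constant. But a constant $u$ violates $e^u\in L^n(\bb R^n)$, which is the contradiction. Alternatively, fix $x$ with $|x-x_0|=1$, let $\sigma\to\infty$, and use the asymptotics of Theorem \ref{theorem:asymptotic} with $p\equiv b$ constant.

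I expect the main obstacle to be Step 2. The difficulty is getting the uniform lower bounds near infinity and near the sphere needed to move past $\bar\sigma$, because the weak decay of $I_\mu[e^{\varrho u}]$ prevents the naive estimates from closing.
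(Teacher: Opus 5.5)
\textbf{Genuine gap: the comparison inequality points the wrong way.} Under the contradiction hypothesis $V<|\bb S^n|$, the inequality $u_{x_0,\sigma}\ge u$ on the exterior that you propagate is false for every radius. So Step 1 cannot be carried out, and $\bar\sigma(x_0)$ is not even defined, since the set in its definition is empty. To see this, note that Theorem \ref{theorem:asymptotic} together with \eqref{eq:asymtpotic_growth_assumption} gives $u(x)=b-\frac{2V}{|\bb S^n|}\log|x|+\circ(\log|x|)$, while $u_{x_0,\sigma}(x)=u(x_0)+2\log\sigma-2\log|x|+\circ(1)$. Hence $w_{x_0,\sigma}(x)=-\bigl(2-\frac{2V}{|\bb S^n|}\bigr)\log|x|+\circ(\log|x|)\to-\infty$ for every fixed $(x_0,\sigma)$. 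The far region is therefore not merely ``delicate''; it refutes the claim outright.

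The same sign slip appears in your reading of \eqref{eq:w_x0_sigma_expression}. Solving for $w$ gives $\frac{w_{x_0,\sigma}(x)}{(n-1)!c_n}=\int\mc L(\cdots)-(|\bb S^n|-V)\log\frac{|x-x_0|}{\sigma}$, so the logarithmic term pushes $w$ negative far out, not positive. Your own near-sphere ingredient also gives the opposite sign. With $g(r)=u(x_0+r\theta)+\log r$ one has $w_{x_0,\sigma}(x_0+r\theta)=g(\sigma^2/r)-g(r)$, which is $<0$ for $\sigma<r$ small because $g$ is increasing near $0$. For the same reason, your Step 2 deduction (nonnegative integrand plus $\mc L>0$ forces $w>0$) fails: the identity only yields $\frac{w}{(n-1)!c_n}\geq-(|\bb S^n|-V)\log\frac{|x-x_0|}{\bar\sigma}$, which is negative.

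\textbf{What works is the mirror image, which is what the paper does.} Prove that $u\geq u_{x_0,\sigma}$ and $I_\mu[e^{\varrho u}]\geq I_\mu[e^{\varrho u_{x_0,\sigma}}]$ outside $B_\sigma(x_0)$, i.e.\ that $\mc P(x_0,\sigma)$ and $\mc Q(x_0,\sigma)$ are null, tracking both sets together via Lemma \ref{lemma:Qnonempty}. In this direction the hypothesis $V\le|\bb S^n|$ is exactly what helps. On $\mc P(x_0,\sigma)$ the nonnegative term $(V-|\bb S^n|)\log\frac{\sigma}{|x-x_0|}$ can be added to the left side of \eqref{eq:w_x0_sigma_expression}. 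This bounds $0<\frac{w}{(n-1)!c_n}$ by integrals over $\mc Q(x_0,\sigma)$ and $\mc P(x_0,\sigma)$ only, which leads to the $L^{n/\varrho}$ narrow-region estimate of Lemma \ref{lemma:MS_start_main_estimate}. Its coefficient $\|1+|\log|\cdot-x_0||\|_{L^{n/\varrho}(\mc P(x_0,\sigma)^{x_0,\sigma})}$ is small both for small $\sigma$ and for $\sigma$ slightly above a finite critical radius. No Hopf-type or derivative bounds are needed.

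At a finite critical radius $\Sigma$, strictness is then immediate: $\frac{w}{(n-1)!c_n}<\frac{w}{(n-1)!c_n}+(V-|\bb S^n|)\log\frac{\Sigma}{|x-x_0|}=\int\mc L(\cdots)\leq0$. So $\Sigma(x_0)=\infty$ for all $x_0$, and Lemma \ref{lemma:implies_const_or_infty} with $f=e^u$, $\gamma=2$ contradicts $e^u\in L^n(\bb R^n)$. The inequality $u_{x_0,\sigma}\ge u$ for all $\sigma$ that you aimed for is what the paper proves in the opposite case $V>|\bb S^n|$ (Subsection \ref{ss:rule_out_fast_decay}), by starting with large spheres and shrinking them. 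Separately, your alternative ending (fix $|x-x_0|=1$ and let $\sigma\to\infty$) is not admissible, since the comparison is only available for $|x-x_0|\geq\sigma$.
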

Lemma \ref{lemma:rule_out_slow_decay} will be established with the aid of a series of lemmata. For $(x_0, \sigma)\in \bb R^n\times (0, \infty)$ we define the sets
\begin{equation*}
\begin{split}
	\mc P(x_0, \sigma)& = \{x\in \bb R^n\setminus \overline B_\sigma(x_0): u(x)< u_{x_0, \sigma}(x)\}\\
	\mc Q(x_0, \sigma)& = \{x\in \bb R^n\setminus \overline B_\sigma(x_0): I_\mu[e^{\varrho u}](x)< I_\mu[e^{\varrho u_{x_0, \sigma}}](x)\}. 
\end{split}
\end{equation*}
\begin{lemma}
\label{lemma:Qnonempty}
Let $u: \bb R^n\to \bb R$ satisfy $e^u\in L^n(\bb R^n)$. If $(x_0, \sigma)\in \bb R^n\times (0, \infty)$ with $\mc Q(x_0, \sigma)\neq \emptyset$ then $|\mc P(x_0, \sigma)|> 0$. Consequently, if $|\mc P(x_0, \sigma)| + |\mc Q(x_0, \sigma)|> 0$ then $|\mc P(x_0, \sigma)|> 0$. 
\end{lemma}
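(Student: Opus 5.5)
The plan is to prove the first assertion by contraposition, using the integral identity \eqref{eq:Imu_difference_integral} from Lemma \ref{lemma:Imu_difference} together with the positivity of the kernel $\mc K$. The second assertion then follows at once. Indeed, if $|\mc P(x_0,\sigma)|+|\mc Q(x_0,\sigma)|>0$, then either $|\mc P(x_0,\sigma)|>0$ already, or $|\mc Q(x_0,\sigma)|>0$. In the latter case $\mc Q(x_0,\sigma)\neq\emptyset$, and the first assertion applies.

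So suppose $|\mc P(x_0,\sigma)|=0$. Then $u_{x_0,\sigma}\le u$ almost everywhere on $\bb R^n\setminus\overline B_\sigma(x_0)$. Since $\bdy B_\sigma(x_0)$ is a null set, the function $e^{\varrho u_{x_0,\sigma}}-e^{\varrho u}$ is $\le 0$ almost everywhere on $\bb R^n\setminus B_\sigma(x_0)$. Take any $x\in\bb R^n\setminus\overline B_\sigma(x_0)$. For almost every $y$ in the domain of integration we then have $\mc K(x_0,\sigma;x,y)>0$, so the integrand on the right-hand side of \eqref{eq:Imu_difference_integral} is $\le 0$ almost everywhere. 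This gives
\[
I_\mu[e^{\varrho u_{x_0,\sigma}}](x)\le I_\mu[e^{\varrho u}](x),
\]
that is, $x\notin\mc Q(x_0,\sigma)$. Hence $\mc Q(x_0,\sigma)=\emptyset$, which is the contrapositive of the claim.

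The main obstacle is a technical one: the sense in which \eqref{eq:Imu_difference_integral} holds. Under the bare hypothesis $e^u\in L^n(\bb R^n)$, Lemma \ref{lemma:Imu_difference} only gives equality in $L^{2n/\mu}(\bb R^n)$, whereas $\mc Q(x_0,\sigma)$ is defined pointwise. I would handle this as follows.

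First, read the pointwise values of $I_\mu[e^{\varrho u}]$ and $I_\mu[e^{\varrho u_{x_0,\sigma}}]$ as the defining integrals. These are well-defined elements of $[0,\infty]$ for every $x$ because the integrands are nonnegative.

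Second, split the right-hand side of \eqref{eq:Imu_difference_integral} into two nonnegative pieces,
\[
\int_{\bb R^n\setminus B_\sigma(x_0)}\mc K\,e^{\varrho u_{x_0,\sigma}}\,\d y
\quad\text{and}\quad
\int_{\bb R^n\setminus B_\sigma(x_0)}\mc K\,e^{\varrho u}\,\d y .
\]
The derivation in the proof of Lemma \ref{lemma:Imu_difference} rests on the identities \eqref{eq:Imu_Bsigma_split} and \eqref{eq:inverted_Imu_Bsigma_split}, which hold pointwise as equalities of nonnegative integrals. From them one gets the pointwise identity
\[
I_\mu[e^{\varrho u_{x_0,\sigma}}](x)+\int_{\bb R^n\setminus B_\sigma(x_0)}\mc K\,e^{\varrho u}\,\d y
= I_\mu[e^{\varrho u}](x)+\int_{\bb R^n\setminus B_\sigma(x_0)}\mc K\,e^{\varrho u_{x_0,\sigma}}\,\d y ,
\]
valid for every $x\in\bb R^n\setminus\overline B_\sigma(x_0)$, with values in $[0,\infty]$. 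Written this way there are no subtractions, so no $\infty-\infty$ issue can arise.

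Third, use this form to conclude. Since $\mc K>0$ and $e^{\varrho u_{x_0,\sigma}}\le e^{\varrho u}$ almost everywhere on the region, monotonicity of the integral gives
\[
\int\mc K\,e^{\varrho u_{x_0,\sigma}}\le\int\mc K\,e^{\varrho u}.
\]
If $I_\mu[e^{\varrho u_{x_0,\sigma}}](x)<\infty$, the identity above then yields $I_\mu[e^{\varrho u_{x_0,\sigma}}](x)\le I_\mu[e^{\varrho u}](x)$. If instead $I_\mu[e^{\varrho u_{x_0,\sigma}}](x)=\infty$, then the strict inequality defining $\mc Q(x_0,\sigma)$ cannot hold at $x$. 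In either case $x\notin\mc Q(x_0,\sigma)$, so $\mc Q(x_0,\sigma)=\emptyset$.

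If $u$ is in addition locally bounded above, as for solutions of \eqref{eq:entire_nonlocal} (see the Remark after Lemma \ref{lemma:Imu_difference}), all these quantities are finite and the argument reduces to the direct sign argument given first.
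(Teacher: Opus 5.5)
Your core argument is the paper's, run contrapositively. The paper takes $x\in\mc Q(x_0,\sigma)$ and discards the part of the integral in \eqref{eq:Imu_difference_integral} lying off $\mc P(x_0,\sigma)$, where the integrand is $\le 0$. From $0<\int_{\mc P(x_0,\sigma)}\mc K\,(e^{\varrho u_{x_0,\sigma}}-e^{\varrho u})\,\d y$ it then concludes $|\mc P(x_0,\sigma)|>0$. Its further bound by $I_\mu[(e^{\varrho u_{x_0,\sigma}}-e^{\varrho u})\chi_{\mc P(x_0,\sigma)}](x)$ via $\mc K\le|x-y|^{-\mu}$ is not actually needed. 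The paper also uses \eqref{eq:Imu_difference_integral} pointwise without comment. So your concern about its meaning under the bare hypothesis $e^u\in L^n(\bb R^n)$ is legitimate, and going back to the nonnegative pieces in \eqref{eq:Imu_Bsigma_split} and \eqref{eq:inverted_Imu_Bsigma_split} is the right idea.

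The way you finish that refinement, however, contains a false step. In the case $I_\mu[e^{\varrho u_{x_0,\sigma}}](x)=\infty$ you say the inequality defining $\mc Q(x_0,\sigma)$ cannot hold. But that inequality is $I_\mu[e^{\varrho u}](x)<I_\mu[e^{\varrho u_{x_0,\sigma}}](x)$, so it \emph{does} hold as soon as $I_\mu[e^{\varrho u}](x)<\infty$, which is precisely the situation you must exclude. Your merged identity cannot exclude it: when $\int\mc K e^{\varrho u_{x_0,\sigma}}\,\d y=\infty$ it degenerates to $\infty=\infty$. (In the finite case you also need $\int\mc K e^{\varrho u_{x_0,\sigma}}\,\d y<\infty$ before subtracting; this holds because it is at most $I_\mu[e^{\varrho u_{x_0,\sigma}}](x)$.)

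The remedy is not to merge the two splittings. Let $T_1$ and $T_2$ be the integrals over $\bb R^n\setminus B_\sigma(x_0)$ of $e^{\varrho u_{x_0,\sigma}}$ and of $e^{\varrho u}$, respectively, against the weight $|x-y^{x_0,\sigma}|^{-\mu}(\sigma/|y-x_0|)^{\mu}$. Combine \eqref{eq:Imu_Bsigma_split} and \eqref{eq:inverted_Imu_Bsigma_split} with the splitting $|x-y|^{-\mu}=\mc K+|x-y^{x_0,\sigma}|^{-\mu}(\sigma/|y-x_0|)^{\mu}$ into nonnegative terms. This gives, in $[0,\infty]$,
\begin{gather*}
I_\mu[e^{\varrho u_{x_0,\sigma}}](x)=T_1+T_2+\int_{\bb R^n\setminus B_\sigma(x_0)}\mc K\,e^{\varrho u_{x_0,\sigma}}\,\d y,\\
I_\mu[e^{\varrho u}](x)=T_1+T_2+\int_{\bb R^n\setminus B_\sigma(x_0)}\mc K\,e^{\varrho u}\,\d y.
\end{gather*}
Hence $|\mc P(x_0,\sigma)|=0$ yields $I_\mu[e^{\varrho u_{x_0,\sigma}}](x)\le I_\mu[e^{\varrho u}](x)$ for every $x\in\bb R^n\setminus\overline B_\sigma(x_0)$, with no case distinction, and so $\mc Q(x_0,\sigma)=\emptyset$. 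The rest of your proof, including the deduction of the second assertion, is fine.
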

\begin{proof}
For any $x\in \bb R^n\setminus B_\sigma(x_0)$, using Lemma \ref{lemma:Imu_difference} and the inequality $\mc K(x_0, \sigma; x, y)\leq |x- y|^{-\mu}$ we have
\begin{equation*}
\begin{split}
	I_\mu[e^{\varrho u_{x_0, \sigma}}](x) - I_\mu[e^{\varrho u}](x)
	& = \int_{\bb R^n\setminus B_\sigma(x_0)}\mc K(x_0, \sigma; x, y)\left(e^{\varrho u_{x_0, \sigma}(y)} - e^{\varrho u(y)}\right)\; \d y\\
	& \leq  \int_{\mc P(x_0, \sigma)}\mc K(x_0, \sigma; x, y)\left(e^{\varrho u_{x_0, \sigma}(y)} - e^{\varrho u(y)}\right)\; \d y\\
	& \leq \int_{\mc P(x_0, \sigma)}\frac{e^{\varrho u_{x_0, \sigma}(y)} - e^{\varrho u(y)}}{|x- y|^\mu}\; \d y\\
	& = I_\mu\left[\left(e^{\varrho u_{x_0, \sigma}} - e^{\varrho u}\right)\chi_{\mc P(x_0, \sigma)}\right](x). 
\end{split}
\end{equation*}
Therefore, if $x\in \mc Q(x_0, \sigma)$ the we obtain 
\begin{equation*}
\begin{split}
	0 
	& < I_\mu[e^{\varrho u_{x_0, \sigma}}](x) - I_\mu[e^{\varrho u}](x)\\
	&\leq I_\mu\left[\left(e^{\varrho u_{x_0, \sigma}} - e^{\varrho u}\right)\chi_{\mc P(x_0, \sigma)}\right](x)
\end{split}
\end{equation*}
and hence $|\mc P(x_0,\sigma)|> 0$. 
\end{proof}
Lemmata \ref{lemma:MS_start_main_estimate} and \ref{lemma:moving_spheres_can_start} that follow guarantee that if $V\leq |\bb S^n|$ then the moving sphere process centered at any $x_0\in \bb R^n$ can start. For the purposes of ruling out slow decay we only need the versions of these lemmata corresponding to $V< |\bb S^n|$. The versions of these lemmata corresponding to $V = |\bb S^n|$ will be used in Section \ref{s:classification} to prove Theorem \ref{theorem:classification}.

\begin{lemma}
\label{lemma:MS_start_main_estimate}
Let $u$ be a distributional solution to \eqref{eq:entire_nonlocal} satisfying both $\frac{n - \mu}{2n - \mu}< \frac{V}{|\bb S^n|}\leq 1$ and \eqref{eq:asymtpotic_growth_assumption}. For all $x_0\in \bb R^n$ and all $\Sigma\in (0, \infty)$ there is a constant $C>0$ such that for all $\sigma\in (0, \Sigma]$ there holds
\begin{equation}
\label{eq:MS_start_main_estimate}
\begin{split}
	\lefteqn{\|e^{\varrho u_{x_0, \sigma}} - e^{\varrho u}\|_{L^{n/\varrho}(\mc P(x_0, \sigma))}}\\
	& \leq C\|1 + |\log|\cdot - x_0||\|_{L^{n/\varrho}(\mc P(x_0, \sigma)^{x_0, \sigma})}\|e^{\varrho u_{x_0, \sigma}} - e^{\varrho u}\|_{L^{n/\varrho}(\mc P(x_0, \sigma))}. 
\end{split}
\end{equation}
We emphasize that $C$ is independent of $\sigma\in (0, \Sigma]$. 
\end{lemma}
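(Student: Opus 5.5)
The plan is to start from the integral identity of Lemma \ref{lemma:u_difference}, bound $u_{x_0,\sigma}-u$ pointwise on $\mc P(x_0,\sigma)$ by a log-weight times $\|e^{\varrho u_{x_0,\sigma}}-e^{\varrho u}\|_{L^{n/\varrho}(\mc P(x_0,\sigma))}$, and then take the $L^{n/\varrho}$ norm after inverting back into $B_\sigma(x_0)$.

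\textbf{Step 1: dropping the volume term and splitting.} Fix $x_0$, $\Sigma$ and $\sigma\in(0,\Sigma]$, and abbreviate $u_\sigma=u_{x_0,\sigma}$, $\mc P=\mc P(x_0,\sigma)$, $\mc Q=\mc Q(x_0,\sigma)$ and $g=e^{\varrho u_\sigma}-e^{\varrho u}$. For $x\in\mc P$ we have $|x-x_0|>\sigma$, so $\log\frac{\sigma}{|x-x_0|}<0$. Since $V\le|\bb S^n|$, the term $(V-|\bb S^n|)\log\frac\sigma{|x-x_0|}$ in \eqref{eq:w_x0_sigma_expression} is then nonnegative and can be discarded. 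This gives
\begin{equation*}
0<u_\sigma(x)-u(x)\le (n-1)!c_n\int_{\bb R^n\setminus B_\sigma(x_0)}\mc L(x_0,\sigma;x,y)\big(I_\mu[e^{\varrho u_\sigma}]e^{\varrho u_\sigma}-I_\mu[e^{\varrho u}]e^{\varrho u}\big)(y)\,\d y .
\end{equation*}
I would write the bracket as $I_\mu[e^{\varrho u_\sigma}]\,g+e^{\varrho u}\big(I_\mu[e^{\varrho u_\sigma}]-I_\mu[e^{\varrho u}]\big)$. Since $\mc L>0$, only the positive parts matter, and these live on $\mc P$ and on $\mc Q$ respectively.

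\textbf{Step 2: bounding the two positive parts by $g\chi_{\mc P}$.} On $\mc Q$, the computation in the proof of Lemma \ref{lemma:Qnonempty} gives $I_\mu[e^{\varrho u_\sigma}]-I_\mu[e^{\varrho u}]\le I_\mu[g\chi_{\mc P}]$. For the other coefficients I would use three facts:
\begin{itemize}
\item $I_\mu[e^{\varrho u}]$ is bounded by Lemma \ref{lemma:Imu_initial_decay}.
\item $I_\mu[e^{\varrho u_\sigma}]$ is bounded on the exterior of $B_\sigma(x_0)$ by \eqref{eq:Imu_erhou_inversion}.
\item $u$ is bounded above by Corollary \ref{coro:bounded_above}, and $u_\sigma\le \sup u$ outside $B_\sigma(x_0)$.
\end{itemize}
This yields, for $x\in\mc P$,
\begin{equation*}
u_\sigma(x)-u(x)\le C\int_{\bb R^n\setminus B_\sigma(x_0)}\mc L(x_0,\sigma;x,y)\big(g\chi_{\mc P}+e^{\varrho u}I_\mu[g\chi_{\mc P}]\big)(y)\,\d y .
\end{equation*}

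\textbf{Step 3: splitting the kernel $\mc L$.} Using $|x-y^{x_0,\sigma}|\le|x-x_0|+\sigma\le 2|x-x_0|$, I would bound
\begin{equation*}
\mc L(x_0,\sigma;x,y)\le \log 2+\log\frac{|x-x_0|}{\sigma}+\log\frac{|y-x_0|}{|x-y|}.
\end{equation*}
\begin{itemize}
\item The part $\log\frac{1}{|x-y|}$ on $|x-y|<1$ is handled by Hölder's inequality: $g\chi_{\mc P}\in L^{n/\varrho}$ with $n/\varrho>1$, $I_\mu[g\chi_{\mc P}]\in L^{2n/\mu}$ by HLS, and $e^{\varrho u}$ is bounded.
\item The remaining $y$-dependent logarithmic growth is absorbed by the decay of $e^{\varrho u}$ from \eqref{eq:eu_decay} and of $I_\mu[e^{\varrho u}]$, using \eqref{eq:V_lower_bound_assumption} exactly as in Lemma \ref{lemma:remove_log_1_y}.
\end{itemize}
The outcome should be a pointwise estimate on $\mc P$ of the form
\begin{equation*}
0<u_\sigma(x)-u(x)\le C\Big(1+\log\frac{|x-x_0|}{\sigma}\Big)\|g\|_{L^{n/\varrho}(\mc P)} .
\end{equation*}

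\textbf{Step 4: returning to the ball.} On $\mc P$ we have $0<g\le \varrho e^{\varrho u_\sigma}(u_\sigma-u)$, hence $g(x)\le C e^{\varrho u_\sigma(x)}\big(1+\log\frac{|x-x_0|}{\sigma}\big)\|g\|_{L^{n/\varrho}(\mc P)}$. I would raise this to the power $n/\varrho$, integrate over $\mc P$, and substitute $x=z^{x_0,\sigma}$. Using $e^{n u_\sigma(x)}\,\d x=e^{nu(z)}\,\d z$, $e^{nu}\le C$, and $\log\frac{|x-x_0|}{\sigma}=\log\sigma-\log|z-x_0|$, the weight becomes $C(1+|\log\Sigma|)(1+|\log|z-x_0||)$ on $\mc P^{x_0,\sigma}$. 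This produces \eqref{eq:MS_start_main_estimate}.

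The main obstacle is keeping every constant uniform in $\sigma\in(0,\Sigma]$ in Step 3. The far-field $y$-integrals must not pick up negative powers of $\sigma$, and $\sigma\to0$ is the delicate end. There I would track $\log(1/\sigma)$ factors and absorb them into the weight $1+|\log|z-x_0||$, which is at least $\log(1/\sigma)$ on $B_\sigma(x_0)$.
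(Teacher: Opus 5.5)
Your skeleton matches the paper's: drop the volume term using $V\le|\bb S^n|$, split the bracket into a part supported on $\mc P(x_0,\sigma)$ and a part supported on $\mc Q(x_0,\sigma)$, prove a pointwise bound with weight $1+\log\frac{|x-x_0|}{\sigma}$, then use the Mean Value Theorem and invert. Your Steps 1 and 4 are fine. Your splitting $I_\mu[e^{\varrho u_{x_0,\sigma}}]\,g+e^{\varrho u}(I_\mu[e^{\varrho u_{x_0,\sigma}}]-I_\mu[e^{\varrho u}])$ is a legitimate mirror of the paper's $I_\mu[e^{\varrho u}]\,g+e^{\varrho u_{x_0,\sigma}}(I_\mu[e^{\varrho u_{x_0,\sigma}}]-I_\mu[e^{\varrho u}])$.

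The gap is in Steps 2--3. In Step 2 you replace the coefficient $I_\mu[e^{\varrho u_{x_0,\sigma}}]$ of $g$ by its supremum. That leaves $C\int_{\mc P}\mc L(x_0,\sigma;x,y)\,g(y)\,\d y$, which cannot be bounded by $C(1+\log\frac{|x-x_0|}{\sigma})\|g\|_{L^{n/\varrho}(\mc P)}$. As $|y|\to\infty$ one has $\mc L(x_0,\sigma;x,y)\to\log\frac{|x-x_0|}{\sigma}>0$, so the kernel does not decay. You would therefore need $\int_{\mc P}g\le C\|g\|_{L^{n/\varrho}(\mc P)}$ with $n/\varrho>1$, and that requires a bound on $|\mc P(x_0,\sigma)|$, uniform in $\sigma$, which you do not have. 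The remedy you name in Step 3 does not reach this term, since neither $e^{\varrho u}$ nor $I_\mu[e^{\varrho u}]$ is a factor in it. The difficulty is also misdiagnosed: the kernel has no logarithmic growth in $y$.

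The paper supplies two missing ingredients.

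First, a kernel estimate. With $\bar{\mc L}(x_0,\sigma;x,y)=\log\frac{2|x-x_0||y-x_0|}{\sigma|x-y|}\ge\mc L$, one has:
\begin{itemize}
\item $\bar{\mc L}\le\log\frac{4|x-x_0|}{\sigma}$ when $|y-x_0|\ge 2|x-x_0|$, because then $|x-y|\ge\frac12|y-x_0|$;
\item $\bar{\mc L}\le\log\frac{4\Sigma|x-x_0|^2}{\sigma^2}\le C(1+\log\frac{|x-x_0|}{\sigma})$ when $|y-x_0|<2|x-x_0|$ and $|x-y|\ge 1$ (this is where $\sigma\le\Sigma$ enters, and the growth is in $x$, not $y$);
\item only an extra $\log\frac{1}{|x-y|}$ on $B_1(x)$.
\end{itemize}

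Second, each piece is closed by H\"older and HLS against a coefficient in the dual Lebesgue space whose norm is independent of $\sigma$:
\begin{itemize}
\item $e^{\varrho u_{x_0,\sigma}}$, with $\|e^{\varrho u_{x_0,\sigma}}\|_{L^{n/\varrho}(\bb R^n)}=\|e^u\|_{L^n(\bb R^n)}^\varrho$, is paired with $\|I_\mu[g\chi_{\mc P}]\|_{L^{2n/\mu}}\le\mc H\|g\|_{L^{n/\varrho}(\mc P)}$;
\item $I_\mu[e^{\varrho u}]\in L^{2n/\mu}(\bb R^n)$ is paired with $g$;
\item the $L^\infty$ bounds are used only for the $\log\frac1{|x-y|}$ piece on $B_1(x)$.
\end{itemize}

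Your decomposition works the same way if you keep $\|I_\mu[e^{\varrho u_{x_0,\sigma}}]\|_{L^{2n/\mu}}\le\mc H\|e^u\|_{L^n}^\varrho$ instead of taking its supremum, and pair $e^{\varrho u}\in L^{n/\varrho}$ with $I_\mu[g\chi_{\mc P}]$. No decay of $u$ or of $I_\mu[e^{\varrho u}]$ is needed, and nothing from Lemma \ref{lemma:remove_log_1_y}. The uniformity in $\sigma\in(0,\Sigma]$ that worries you is then automatic, because all $\sigma$-dependence sits in the weight $\log\frac{|x-x_0|}{\sigma}$.
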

\begin{proof}
Fix $(x_0, \Sigma)\in \bb R^n\times(0, \infty)$. Define 
\begin{equation}
\label{eq:L_bar}
	\bar{\mc L}(x_0, \sigma; x, y)
	= \log\frac{2|x - x_0||y - x_0|}{\sigma|x- y|}
\end{equation}	
so that $\mc L(x_0, \sigma; x, y)\leq \bar{\mc L}(x_0, \sigma; x, y)$ whenever $x, y\in \bb R^n\setminus B_\sigma(x_0)$. 
\ifdetails{\color{gray}
This inequality follows from the fact that for any $x, y\in \bb R^n\setminus B_\sigma(x_0)$ we have
\begin{equation*}
\begin{split}
	|x - y^{x_0, \sigma}|
	& = \left|x - x_0 - \frac{\sigma^2(y - x_0)}{|y - x_0|^2}\right|\\
	& \leq |x- x_0| + \frac{\sigma^2}{|y - x_0|}\\
	& \leq |x- x_0| + \frac{|y - x_0||x - x_0|}{|y - x_0|}\\
	& \leq 2|x - x_0|. 
\end{split}
\end{equation*}
}\fi
Setting 
\begin{equation*}
\begin{split}
	H_1(x_0, \sigma; x) 
	& = \int_{\mc Q(x_0, \sigma)}\bar{\mc L}(x_0, \sigma; x,y)I_\mu[(e^{\varrho u_{x_0,\sigma}}- e^{\varrho u})\chi_{\mc P(x_0, \sigma)}](y)e^{\varrho u_{x_0, \sigma}(y)}\; \d y\\
	H_2(x_0, \sigma; x)
	& = \int_{\mc P(x_0, \sigma)}\bar {\mc L}(x_0, \sigma; x,y)I_\mu[e^{\varrho u}](y)\left(e^{\varrho u_{x_0, \sigma}(y)} - e^{\varrho u(y)}\right)\; \d y, 
\end{split}
\end{equation*}
for any $x\in \mc P(x_0, \sigma)$ the assumption $V\leq |\bb S^n|$, the Mean Value Theorem, and Lemma \ref{lemma:u_difference} give
\begin{equation}
\label{eq:get_H1_H2}
\begin{split}
	0
	< & \; \frac{e^{\varrho u_{x_0, \sigma}(x)} - e^{\varrho u(x)}}{\varrho (n - 1)!c_ne^{\varrho u_{x_0, \sigma}(x)}}\\
	\leq &\; \frac{e^{\varrho u_{x_0, \sigma}(x)} - e^{\varrho u(x)}}{\varrho (n - 1)!c_ne^{\varrho u_{x_0, \sigma}(x)}} + (V - |\bb S^n|)\log\frac{\sigma}{|x - x_0|}\\
	\leq & \;  \int_{\bb R^n\setminus B_\sigma(x_0)}\mc L(x_0, \sigma; x,y)\left(I_\mu[e^{\varrho u_{x_0,\sigma}}](y)e^{\varrho u_{x_0, \sigma}(y)} - I_\mu[e^{\varrho u}](y)e^{\varrho u(y)}\right)\; \d y\\
	\leq & \;  \int_{\mc Q(x_0, \sigma)}\mc L(x_0, \sigma; x,y)\left(I_\mu[e^{\varrho u_{x_0,\sigma}}](y) - I_\mu[e^{\varrho u}](y)\right)e^{\varrho u_{x_0, \sigma}(y)}\; \d y\\
	& + \int_{\mc P(x_0, \sigma)}\mc L(x_0, \sigma; x,y)I_\mu[e^{\varrho u}](y)\left(e^{\varrho u_{x_0, \sigma}(y)} - e^{\varrho u(y)}\right)\; \d y\\
	\leq &\; H_1(x_0, \sigma; x) + H_2(x_0, \sigma; x). 
\end{split}
\end{equation}
According to this estimate and Minkowski's inequality we have
\begin{equation}
\label{eq:erhou_difference_first_bound}
	\|e^{\varrho u_{x_0, \sigma}(x)} - e^{\varrho u(x)}\|_{L^{n/\varrho}(\mc P(x_0,\sigma)}
	\leq C\sum_{i = 1}^2\|e^{\varrho u_{x_0, \sigma}}H_i(x_0, \sigma; \cdot)\|_{L^{n/\varrho}(\mc P(x_0, \sigma))}  
\end{equation}
for some constant $C = C(n)>0$. We proceed to estimate each summand on the right-hand side of this inequality. In the estimates that follows $C$ will denote various constants that may depend on $n$, $\mu$, $\varrho$, $\|e^u\|_{L^n(\bb R^n)}$, $\|e^u\|_{L^\infty(\bb R^n)}$, $\|I_\mu[e^{\varrho u}]\|_{L^\infty(\bb R^n)}$, and $\Sigma$ but are independent of $\sigma\in (0, \Sigma]$. We begin by establishing the pointwise estimates 
\begin{equation}
\label{eq:Hi_pointwise_estimates}
	H_i(x_0, \sigma; x)
	\leq C\left(1 + \log\frac{|x - x_0|}{\sigma}\right)\|e^{\varrho u_{x_0, \sigma}} - e^{\varrho u}\|_{L^{n/\varrho}(\mc P(x_0, \sigma))}
\end{equation}
for $i = 1, 2$ and $x\in \mc P(x_0, \sigma)$. To show that \eqref{eq:Hi_pointwise_estimates} holds for $i = 1$, for $x\in \mc P(x_0, \sigma)$ define
\begin{equation*}
\begin{split}
	E^1(x) & = \mc Q(x_0, \sigma)\setminus B_{2|x - x_0|}(x_0)\\
	E^2(x) & = \left(\mc Q(x_0, \sigma)\cap B_{2|x - x_0|}(x_0)\right)\setminus B_1(x)\\
	E^3(x) & = \mc Q(x_0, \sigma)\cap B_{2|x - x_0|}(x_0)\cap B_1(x)
\end{split}
\end{equation*}
and set 
\begin{equation*}
	H_1^j(x_0, \sigma; x)
	= \int_{E^j(x)}\bar{\mc L}(x_0, \sigma; x,y)I_\mu[(e^{\varrho u_{x_0,\sigma}}- e^{\varrho u})\chi_{\mc P(x_0, \sigma)}](y)e^{\varrho u_{x_0, \sigma}(y)}\; \d y
\end{equation*}
so that 
\begin{equation}
\label{eq:H_1_decompose}
	H_1(x_0, \sigma; x)
	= \sum_{j = 1}^3 H_1^j(x_0, \sigma; x). 
\end{equation}
If $y\in E^1(x)$ then $0<\bar{\mc L}(x_0, \sigma; x, y)\leq \log(\frac{4|x - x_0|}\sigma)$ so H\"older's inequality and the HLS inequality give
\begin{equation}
\label{eq:H11_estimate}
\begin{split}
	H_1^1(x_0, \sigma; x)
	& \leq\log\frac{4|x- x_0|}{\sigma}\int_{\bb R^n}I_\mu[(e^{\varrho u_{x_0, \sigma}} - e^{\varrho u})\chi_{\mc P(x_0, \sigma)}](y)e^{\varrho u_{x_0, \sigma}(y)}\; \d y\\
	& \leq \log\frac{4|x- x_0|}\sigma \|e^{\varrho u_{x_0, \sigma}}\|_{L^{n/\varrho}(\bb R^n)}\|I_\mu[(e^{\varrho u_{x_0, \sigma}}- e^{\varrho u})\chi_{\mc P(x_0, \sigma)}]\|_{L^{2n/\mu}(\bb R^n)}\\
	& \leq C\log\frac{4|x -x_0|}{\sigma}\|e^u\|_{L^n(\bb R^n)}^\varrho\|e^{\varrho u_{x_0,\sigma}} - e^{\varrho u}\|_{L^{n/\varrho}(\mc P(x_0, \sigma))}\\
	& \leq C\left(1 + \log\frac{|x - x_0|}\sigma\right)\|e^{\varrho u_{x_0, \sigma}} - e^{\varrho u}\|_{L^{n/\varrho}(\mc P(x_0, \sigma))}. 
\end{split}
\end{equation}
If $y\in E^2(x)$ then 
\begin{equation*}
	1< \frac{2|x - x_0||y - x_0|}{\sigma|x- y|}
	\leq \frac{4|x - x_0|^2}{\sigma}
	\leq \frac{4\Sigma|x - x_0|^2}{\sigma^2}
\end{equation*}
\ifdetails{\color{gray}
(note that the first inequality follows since $0< \mc L(x_0, \sigma; x, y)\leq \bar{\mc L}(x_0, \sigma; x, y)$ for $x, y\in \bb R^n\setminus \bar B_\sigma(x_0)$)
}\fi 
so 
\begin{equation}
\label{eq:y_in_E2_kernel_estimate}
	0
	< \bar{\mc L}(x_0, \sigma; x, y)
	\leq \log\frac{4\Sigma|x - x_0|^2}{\sigma^2}
	\leq C(\Sigma)\left(1 + \log\frac{|x - x_0|}{\sigma}\right). 
\end{equation}
Therefore, estimating similarly to \eqref{eq:H11_estimate} we have
\begin{equation*}
\begin{split}
	H_1^2(x_0, \sigma; x)
	& \leq C\left(1 + \log\frac{|x - x_0|}{\sigma}\right)\int_{\bb R^n}I_\mu[(e^{\varrho u_{x_0, \sigma}} - e^{\varrho u})\chi_{\mc P(x_0, \sigma)}](y)e^{\varrho u_{x_0, \sigma}(y)}\; \d y\\
	& \ifdetails{\color{gray}
	\; \leq C\left(1 + \log\frac{|x - x_0|}{\sigma}\right)\|e^{u}\|_{L^n(\bb R^n)}^{\varrho}\|I_\mu[(e^{\rho u_{x_0,\sigma}} - e^{\varrho u})\chi_{\mc P(x_0, \sigma)}]\|_{L^{2n/\mu}(\bb R^n)}
	} 
	\\
	& \fi
	\leq C\left(1 + \log\frac{|x - x_0|}{\sigma}\right)\|e^{\varrho u_{x_0, \sigma}} - e^{\varrho u}\|_{L^{n/\varrho}(\mc P(x_0, \sigma))}. 
\end{split}
\end{equation*}
If $y\in E^3(x)$ then 
\begin{equation*}
	1
	< \frac{2|x - x_0||y - x_0|}{\sigma|x- y|}
	\leq \frac{4|x -x_0|^2}{\sigma|x- y|}
	\leq \frac{4\Sigma|x - x_0|^2}{\sigma^2|x- y|}, 
\end{equation*}
so 
\begin{equation}
\label{eq:y_in_E3_kernel_estimate}
	0
	< \bar{\mc L}(x_0, \sigma; x, y)
	\leq C\left(1 + \log\frac{|x- x_0|}{\sigma} + \log\frac{1}{|x - y|}\right). 
\end{equation}
Therefore, 
\begin{equation*}
\begin{split}
	H_1^3(x_0, \sigma; x)
	\ifdetails{\color{gray}\; \leq }\fi& \ifdetails{\color{gray}
	\; C\int_{E^3(x)}\left(1 + \log\frac{|x- x_0|}{\sigma} + \log\frac{1}{|x - y|}\right)I_\mu[(e^{\varrho u_{x_0, \sigma}} - e^{\varrho u})\chi_{\mc P(x_0, \sigma)}](y)e^{\varrho u_{x_0, \sigma}(y)}\; \d y
	}
	\\
	\leq & \fi
	\; C\left(1 + \log\frac{|x- x_0|}{\sigma}\right)\int_{\bb R^n}I_\mu[(e^{\varrho u_{x_0, \sigma}}- e^{\varrho u})\chi_{\mc P(x_0, \sigma)}](y)e^{\varrho u_{x_0, \sigma}(y)}\; \d y\\
	& + C\int_{B_1(x)\setminus B_\sigma(x_0)}|\log|x - y||I_\mu[(e^{\varrho u_{x_0, \sigma}}-e^{\varrho u})\chi_{\mc P(x_0, \sigma)}](y)e^{\varrho u_{x_0, \sigma}(y)}\; \d y\\
	\leq & \; C\left(1+ \log\frac{|x- x_0|}\sigma\right)\|e^u\|_{L^n(\bb R^n)}^{\varrho} \|e^{\varrho u_{x_0, \sigma}} - e^{\varrho u}\|_{L^{n/\varrho}(\mc P(x_0, \sigma))}\\
	& + C\|e^u\|_{L^\infty(\bb R^n)}\|\log|x - \cdot|\|_{L^{n/\varrho}(B_1(x))}\|e^{\varrho u_{x_0,  \sigma}} - e^{\varrho u}\|_{L^{n/\varrho}(\mc P(x_0, \sigma))}\\
	\leq & \; C\left(1 + \log\frac{|x - x_0|}\sigma\right)\|e^{\varrho u_{x_0, \sigma}} - e^{\varrho u}\|_{L^{n/\varrho}(\mc P(x_0, \sigma))}. 
\end{split}
\end{equation*}
Bringing the estimates for $H_1^j(x_0, \sigma; x)$ ($j = 1, 2, 3$) back to equation \eqref{eq:H_1_decompose} gives \eqref{eq:Hi_pointwise_estimates} for $i = 1$. To show that \eqref{eq:Hi_pointwise_estimates} holds for $i = 2$, for $x\in \mc P(x_0, \sigma)$ define
\begin{equation*}
\begin{split}
	D^1(x) & = \mc P(x_0, \sigma)\setminus B_{2|x - x_0|}(x_0)\\
	D^2(x) & = \left(\mc P(x_0, \sigma)\cap B_{2|x - x_0|}(x_0)\right)\setminus B_1(x)\\
	D^3(x) & = \mc P(x_0, \sigma)\cap B_{2|x - x_0|}(x_0)\cap B_1(x)
\end{split}
\end{equation*}
and set
\begin{equation*}
	H_2^j(x_0, \sigma; x)
	= \int_{D^j(x)}\bar{\mc L}(x_0, \sigma; x, y)I_\mu[e^{\varrho u}](y)(e^{\varrho u_{x_0, \sigma}(y)} - e^{\varrho u(y)})\; \d y
\end{equation*}
so that 
\begin{equation*}
	H_2(x_0, \sigma; x)
	= \sum_{j = 1}^3H_2^j(x_0, \sigma; x). 
\end{equation*}
If $y\in D^1(x)$ then $\bar{\mc L}(x_0, \sigma; x)\leq \log\frac{4|x - x_0|}\sigma$ so H\"older's inequality and the HLS inequality give
\begin{equation*}
\begin{split}
	H_2^1(x_0, \sigma;x)
	& \leq \log\frac{4|x - x_0|}\sigma\int_{D^1(x)}I_\mu[e^{\varrho u}](y)(e^{\varrho u_{x_0, \sigma}(y)} - e^{\varrho u(y)})\; \d y\\
	& \leq \log\frac{4|x - x_0|}\sigma \|I_\mu[e^{\varrho u}]\|_{L^{2n/\mu}(\bb R^n)}\|e^{\varrho u_{x_0, \sigma}} - e^{\varrho u}\|_{L^{n/\rho}(\mc P(x_0, \sigma))}\\
	& \leq C\log\left(1 + \frac{|x - x_0|}{\sigma}\right)\|e^{\varrho u_{x_0, \sigma}} - e^{\varrho u}\|_{L^{n/\rho}(\mc P(x_0, \sigma))}.
\end{split}
\end{equation*}
If $y\in D^2(x)$ then estimate \eqref{eq:y_in_E2_kernel_estimate} holds so 
\begin{equation*}
\begin{split}
	H_2^2(x_0, \sigma; x)
	& \leq C\left(1 + \log\frac{|x - x_0|}\sigma\right)\int_{D^2(x)}I_\mu[e^{\varrho u}](y)(e^{\varrho u_{x_0, \sigma}(y)} - e^{\varrho u(y)})\; \d y\\
	& \leq C\left(1 + \log\frac{|x - x_0|}\sigma\right)\|e^{\varrho u_{x_0, \sigma}}- e^{\varrho u}\|_{L^{n/\varrho}(\mc P(x_0, \sigma))}. 
\end{split}
\end{equation*}
If $y\in D^3(x)$ then estimate \eqref{eq:y_in_E3_kernel_estimate} holds so 
\begin{equation*}
\begin{split}
	\lefteqn{H_2^3(x_0, \sigma; x)}\\
	 & \leq C\int_{D^3(x)}\left(1 + \log\frac{|x - x_0|}\sigma + \log\frac 1{|x- y|}\right)I_\mu[e^{\varrho u}](y)(e^{\varrho u_{x_0,\sigma}(y)} - e^{\varrho u(y)})\; \d y\\
	& \leq C\left(1 + \log\frac{|x - x_0|}\sigma\right)\|e^{\varrho u_{x_0, \sigma}} - e^{\varrho u}\|_{L^{n/\varrho}(\mc P(x_0, \sigma))}\\
	& \quad + C\int_{D^3(x)}|\log|x - y||I_\mu[e^{\varrho u}](y)(e^{\varrho u_{x_0, \sigma}(y)} - e^{\varrho u(y)})\; \d y\\
	& \leq C\left(1 + \log\frac{|x - x_0|}\sigma\right)\|e^{\varrho u_{x_0, \sigma}} - e^{\varrho u}\|_{L^{n/\varrho}(\mc P(x_0, \sigma))}\\
	& \quad + C\|I_\mu[e^{\varrho u}]\|_{L^\infty(\bb R^n)}\|\log|x - \cdot|\|_{L^{2n/\mu}(B_1(x))}\|e^{\varrho u_{x_0, \sigma}} - e^{\varrho u}\|_{L^{n/\varrho}(\mc P(x_0,\sigma))}\\
	& \leq C\left(1 + \log \frac{|x- x_0|}\sigma\right)\|e^{\varrho u_{x_0, \sigma}} - e^{\varrho u}\|_{L^{n/\varrho}(\mc P(x_0, \sigma))}. 
\end{split}
\end{equation*}
Combining the estimates for $H_2^j(x_0, \sigma; x)$ ($j = 1, 2, 3$) gives \eqref{eq:Hi_pointwise_estimates} for $i = 2$. Next, using the change of variable $x\mapsto x^{x_0,\sigma}$ we have
\begin{equation}
\label{eq:507_FE}
\begin{split}
	\lefteqn{\left\|e^{\varrho u_{x_0, \sigma}}(1 + \log(\sigma^{-1}|\cdot - x_0|))\right\|_{L^{n/\varrho}(\mc P(x_0, \sigma))}^{n/\varrho}}\\
	& \ifdetails{\color{gray}
	\; = \int_{\mc P(x_0, \sigma)}e^{nu_{x_0, \sigma}(x)}\left(1 +\log\frac{|x- x_0|}{\sigma}\right)^{n/\varrho}\; \d x
	}
	\\
	& {\color{gray}
	\; = \int_{\mc P(x_0, \sigma)}\left(\frac \sigma{|x - x_0|}\right)^{2n}e^{nu(x^{x_0, \sigma})}\left(1 +\log\frac{|x- x_0|}{\sigma}\right)^{n/\varrho}\; \d x
	}
	\\
	& \fi
	= \int_{\mc P(x_0, \sigma)^{x_0,  \sigma}}e^{nu(x)}\left(1 +\log\frac\sigma{|x- x_0|}\right)^{n/\varrho}\; \d x\\
	& \leq \|e^u\|_{L^\infty(\bb R^n)}^n\int_{\mc P(x_0, \sigma)^{x_0,  \sigma}}\left(1 +\log\Sigma + |\log|x- x_0||\right)^{n/\varrho}\; \d x\\
	& \leq C\int_{\mc P(x_0, \sigma)^{x_0,  \sigma}}\left(1 +|\log|x- x_0||\right)^{n/\varrho}\; \d x. 
\end{split}
\end{equation}
Therefore, using \eqref{eq:507_FE} and \eqref{eq:Hi_pointwise_estimates}, for each $i\in \{1, 2\}$ we have
\begin{equation*}
\begin{split}
	\lefteqn{\|e^{\varrho u_{x_0, \sigma}}H_i(x_0, \sigma; \cdot)\|_{L^{n/\varrho}(\mc P(x_0, \sigma))}}\\
	& \leq C\|e^{\varrho u_{x_0,\sigma}} - e^{\varrho u}\|_{L^{n/\rho}(\mc P(x_0, \sigma))}\left\|e^{\varrho u_{x_0, \sigma}}(1 + \log(\sigma^{-1}|\cdot - x_0|))\right\|_{L^{n/\varrho}(\mc P(x_0, \sigma))}\\
	& \leq C\|e^{\varrho u_{x_0,\sigma}} - e^{\varrho u}\|_{L^{n/\rho}(\mc P(x_0, \sigma))}\|1 + |\log|\cdot - x_0||\|_{L^{n/\varrho}(\mc P(x_0, \sigma)^{x_0, \sigma})}. 
\end{split}
\end{equation*}
Now returning to inequality \eqref{eq:erhou_difference_first_bound} we obtain the asserted estimate. 
\end{proof}
\begin{lemma}
\label{lemma:moving_spheres_can_start}
Let $u$ be a distributional solution to \eqref{eq:entire_nonlocal} that satisfies both $\frac{n - \mu}{2n - \mu}< \frac{V}{|\bb S^n|}\leq 1$ and \eqref{eq:asymtpotic_growth_assumption}. For every $x_0\in \bb R^n$ there is $\tilde \sigma(x_0)\in (0, 1)$ such that both 
\begin{equation}
\label{eq:uwins}
	u\geq u_{x_0, \sigma}
	\quad \text{ in } \bb R^n\setminus B_\sigma(x_0)
\end{equation}
and
\begin{equation}
\label{eq:Imu_uwins}
	I_\mu[e^{\varrho u}]\geq I_\mu[e^{\varrho u_{x_0, \sigma}}]
	\quad \text{ in } \bb R^n\setminus B_\sigma(x_0)
\end{equation}
whenever $\sigma\in (0, \tilde \sigma(x_0))$. 
\end{lemma}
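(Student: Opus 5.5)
The plan is to use Lemma \ref{lemma:MS_start_main_estimate} as a smallness device. By Lemma \ref{lemma:Qnonempty}, both \eqref{eq:uwins} and \eqref{eq:Imu_uwins} follow once we show $|\mc P(x_0, \sigma)| = 0$ for all small $\sigma$. Indeed, if $|\mc P(x_0,\sigma)|=0$ then $\mc Q(x_0,\sigma)=\emptyset$, and \eqref{eq:Imu_uwins} holds on $\bb R^n\setminus \overline B_\sigma(x_0)$. Since $u$ and $u_{x_0,\sigma}$ are continuous (Proposition \ref{prop:distributional_solutions_smooth}), the set $\mc P(x_0,\sigma)$ is open, so measure zero forces it to be empty. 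This gives \eqref{eq:uwins} off $\overline B_\sigma(x_0)$, and on $\bdy B_\sigma(x_0)$ we have equality trivially. For \eqref{eq:Imu_uwins}, continuity of both sides extends the inequality to the sphere; alternatively, use Lemma \ref{lemma:Imu_difference} directly, since the kernel is nonnegative and the integrand is nonpositive once $\mc P$ is null.

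The key step is to make the factor $C\|1+|\log|\cdot-x_0||\|_{L^{n/\varrho}(\mc P(x_0,\sigma)^{x_0,\sigma})}$ in \eqref{eq:MS_start_main_estimate} smaller than $1/2$, uniformly for small $\sigma$. Fix $\Sigma = 1$ and take the constant $C$ of Lemma \ref{lemma:MS_start_main_estimate}, valid for all $\sigma\in(0,1]$. The set $\mc P(x_0,\sigma)^{x_0,\sigma}$ lies in $B_\sigma(x_0)$, so its $L^{n/\varrho}$ norm is at most $\|1+|\log|\cdot-x_0||\|_{L^{n/\varrho}(B_\sigma(x_0))}$. This tends to $0$ as $\sigma\to0$ because the function is locally in every $L^p$. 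Choose $\tilde\sigma(x_0)\in(0,1)$ so that the factor is at most $1/2$ for $\sigma<\tilde\sigma$. Then \eqref{eq:MS_start_main_estimate} gives $\|e^{\varrho u_{x_0,\sigma}}-e^{\varrho u}\|_{L^{n/\varrho}(\mc P)} \le \frac12\|\cdot\|$, so the norm vanishes. This requires the norm to be finite, which holds because $e^{\varrho u_{x_0,\sigma}}\in L^{n/\varrho}$ (it equals $e^u\in L^n$ under the change of variables) and $e^{\varrho u}\in L^{n/\varrho}$. Since the integrand is strictly positive on $\mc P$, we conclude $|\mc P(x_0,\sigma)|=0$.

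The main obstacle is already absorbed into Lemma \ref{lemma:MS_start_main_estimate}, namely the uniformity of $C$ in $\sigma$. What remains to check is that the hypotheses of that lemma ($V\le|\bb S^n|$ and the lower bound) match ours, which they do. One should also confirm that $\|e^u\|_{L^\infty}$ and $\|I_\mu[e^{\varrho u}]\|_{L^\infty}$, which the constant depends on, are finite; this follows from Corollary \ref{coro:bounded_above} and Lemma \ref{lemma:Imu_initial_decay}.
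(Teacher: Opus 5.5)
Your proof is correct and follows the paper's argument. You apply Lemma \ref{lemma:MS_start_main_estimate} with $\Sigma = 1$, use $\mc P(x_0,\sigma)^{x_0,\sigma}\subset B_\sigma(x_0)$ to make the logarithmic factor at most $1/2$, absorb to get $|\mc P(x_0,\sigma)|=0$, and then invoke Lemma \ref{lemma:Qnonempty}. Your extra remarks fill in details the paper leaves implicit: that the absorbed norm is finite, that $\mc P(x_0,\sigma)$ is open so measure zero means empty, and how the inequalities behave on $\partial B_\sigma(x_0)$.
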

\begin{proof}
Fix $x_0\in \bb R^n$ and apply Lemma \ref{lemma:MS_start_main_estimate} with $\Sigma = 1$ to obtain a constant $C>0$ (independent of $\sigma\in (0, 1]$) such that estimate \eqref{eq:MS_start_main_estimate} holds for all $\sigma\in (0, 1]$. Since $\mc P(x_0, \sigma)^{x_0,\sigma}\subset B_\sigma(x_0)\subset B_1(x_0)$ and since $1 + |\log|\cdot - x_0||\in L^{n/\varrho}(B_1(x_0))$ we have 
\begin{equation*}
	\lim_{\sigma\to 0^+}\|1 + |\log|\cdot - x_0||\|_{L^{n/\varrho}(\mc P(x_0, \sigma)^{x_0, \sigma})}
	= 0. 
\end{equation*}
Combining this equality with estimate \eqref{eq:MS_start_main_estimate}, we deduce the existence of $\tilde\sigma\in (0, 1)$ such that 
\begin{equation*}
	\|e^{\varrho u_{x_0, \sigma}} - e^{\varrho u}\|_{L^{n/\varrho}(\mc P(x_0, \sigma))}
	\leq \frac 12\|e^{\varrho u_{x_0, \sigma}} - e^{\varrho u}\|_{L^{n/\varrho}(\mc P(x_0, \sigma))}
\end{equation*}
whenever $\sigma\in (0, \tilde \sigma)$. This inequality implies that for any such $\tilde \sigma$ and for any $\sigma\in (0, \tilde\sigma)$ we have $|\mc P(x_0, \sigma)| = 0$ and so Lemma \ref{lemma:Qnonempty} gives $|\mc Q(x_0, \sigma)| = 0$. 
\end{proof}
Under the hypotheses of Lemma \ref{lemma:moving_spheres_can_start}, for every $x_0\in \bb R^n$ the quantity
\begin{equation}
\label{eq:symmetry_radius}
	\Sigma(x_0)
	= \sup\{\beta> 0: \text{ both \eqref{eq:uwins} and \eqref{eq:Imu_uwins} hold whenever $\sigma\in (0, \beta)$}\}
\end{equation}
is well-defined in $(0, +\infty]$. 
\begin{lemma}
\label{lemma:critical_radius_soft_inequalities}
Let $u$ be a distributional solution of \eqref{eq:entire_nonlocal} that satisfies both $\frac{n - \mu}{2n - \mu}< \frac V{|\bb S^n|}\leq 1$ and \eqref{eq:asymtpotic_growth_assumption}. If $x_0\in \bb R^n$ with $\Sigma(x_0)< \infty$ then $|\mc P(x_0, \Sigma(x_0))| + |\mc Q(x_0, \Sigma(x_0))| = 0$. 
\end{lemma}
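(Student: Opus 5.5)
This is a continuity argument at the critical radius. Write $\sigma_0=\Sigma(x_0)<\infty$. By the definition \eqref{eq:symmetry_radius}, there is a sequence $\sigma_k\uparrow\sigma_0$ with $\sigma_k<\sigma_0$ along which both \eqref{eq:uwins} and \eqref{eq:Imu_uwins} hold. We pass to the limit $\sigma_k\to\sigma_0$ pointwise.

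\emph{Continuity of $u_{x_0,\sigma}$ in $\sigma$.} By Proposition \ref{prop:distributional_solutions_smooth}, $u\in C^\infty(\bb R^n)$. For each fixed $x\in \bb R^n\setminus\overline B_{\sigma_0}(x_0)$, the map
\begin{equation*}
\sigma\mapsto u_{x_0,\sigma}(x)=u(x^{x_0,\sigma})+2\log\frac{\sigma}{|x-x_0|}
\end{equation*}
is continuous. Each such $x$ lies outside $\overline B_{\sigma_k}(x_0)$ for every $k$, so $u(x)\geq u_{x_0,\sigma_k}(x)$. Letting $k\to\infty$ gives $u(x)\geq u_{x_0,\sigma_0}(x)$. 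Hence $\mc P(x_0,\sigma_0)=\emptyset$.

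\emph{Continuity of $I_\mu[e^{\varrho u_{x_0,\sigma}}]$ in $\sigma$.} For the nonlocal term we use identity \eqref{eq:Imu_erhou_inversion}:
\begin{equation*}
I_\mu[e^{\varrho u_{x_0,\sigma}}](x)=\Big(\frac{\sigma}{|x-x_0|}\Big)^\mu I_\mu[e^{\varrho u}](x^{x_0,\sigma}).
\end{equation*}
It therefore suffices that $I_\mu[e^{\varrho u}]$ is continuous on $\bb R^n$. This follows from Lemma \ref{lemma:Imu_Ck}, or more directly from the splitting used in its proof together with $e^u\in L^n\cap L^\infty$ (Corollary \ref{coro:bounded_above}). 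Indeed, the near part $\int_{B_1(x)}|x-y|^{-\mu}e^{\varrho u(y)}\,\d y$ is continuous by dominated convergence, since $u$ is bounded above, and the far part is continuous by dominated convergence using H\"older's inequality. So $\sigma\mapsto I_\mu[e^{\varrho u_{x_0,\sigma}}](x)$ is continuous for fixed $x$. The same limiting argument then gives $I_\mu[e^{\varrho u}]\geq I_\mu[e^{\varrho u_{x_0,\sigma_0}}]$ on $\bb R^n\setminus\overline B_{\sigma_0}(x_0)$, so $\mc Q(x_0,\sigma_0)=\emptyset$.

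Both sets are empty, so $|\mc P(x_0,\Sigma(x_0))|+|\mc Q(x_0,\Sigma(x_0))|=0$, as claimed. (The boundary sphere $\bdy B_{\sigma_0}(x_0)$ is excluded by the definition of $\mc P$ and $\mc Q$, and in any case has measure zero.)

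The only step needing care is justifying the continuity of $I_\mu[e^{\varrho u}]$. Alternatively, one can bypass it: the difference formula in Lemma \ref{lemma:Imu_difference}, with its positive kernel, yields $\mc Q(x_0,\sigma_0)=\emptyset$ directly from $\mc P(x_0,\sigma_0)=\emptyset$, since the integrand $\mc K\,(e^{\varrho u_{x_0,\sigma_0}}-e^{\varrho u})$ is then nonpositive. I would use this second route as the cleaner one: establish $\mc P(x_0,\sigma_0)=\emptyset$ by pointwise continuity, then invoke Lemma \ref{lemma:Imu_difference} (or the contrapositive of Lemma \ref{lemma:Qnonempty}) to conclude that $\mc Q(x_0,\sigma_0)$ is empty.
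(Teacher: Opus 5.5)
Your proof is correct. It uses the same two ingredients as the paper's proof: the inequalities \eqref{eq:uwins}--\eqref{eq:Imu_uwins}, valid for every $\sigma<\Sigma(x_0)$, combined with continuity of $u$; and then Lemma \ref{lemma:Qnonempty} (equivalently, the positive kernel in Lemma \ref{lemma:Imu_difference}) to pass from $\mc P$ to $\mc Q$. The difference is in how you execute the first step.

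The paper argues by contradiction. Assuming $|\mc P(x_0,\Sigma)|>0$, it chooses $\delta>0$ and $R$ so that a fixed fraction of $\mc P(x_0,\Sigma)$ lies in the annulus $A(\Sigma,R)$ where $u_{x_0,\Sigma}-u>\delta$. It then shows that $u(x^{x_0,\Sigma})-u(x^{x_0,\sigma})\geq\delta/2$ on that set for $\sigma<\Sigma$ close to $\Sigma$. This contradicts uniform continuity of $u$ on a compact annulus. You instead pass to the limit $\sigma_k\uparrow\Sigma(x_0)$ pointwise in $u(x)\geq u_{x_0,\sigma_k}(x)$.

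Your version buys three things:
\begin{itemize}
\item it is shorter;
\item it gives the stronger conclusion $\mc P(x_0,\Sigma(x_0))=\mc Q(x_0,\Sigma(x_0))=\emptyset$;
\item it avoids the finiteness of $|\mc P(x_0,\Sigma)|$, which the paper tacitly uses when it chooses $\delta,R$ with $3|\mc P(x_0,\Sigma)|\leq 4|\{\cdots\}|$, since the right-hand side is at most $4|A(\Sigma,R)|<\infty$.
\end{itemize}
The paper's contradiction-plus-uniform-continuity format mirrors Lemma \ref{lemma:bad_set_vanishing_measure}. There $\sigma\to\Sigma^+$, no inequality is available for $\sigma>\Sigma$, and one must exploit a strict inequality at $\Sigma$ uniformly on compact annuli. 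When approaching the critical radius from below, as here, the pointwise limit is all that is needed.

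Your second route for $\mc Q$ is exactly the paper's, so the continuity of $I_\mu[e^{\varrho u}]$ is not needed.
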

\begin{proof}
Suppose $x_0\in \bb R^n$ with $\Sigma(x_0)< \infty$ and for ease of notation write $\Sigma$ in place of $\Sigma(x_0)$. In view of Lemma \ref{lemma:Qnonempty}, to complete the proof it suffices to show that $|\mc P(x_0, \Sigma)| = 0$. Proceeding by way of contradiction, suppose $|\mc P(x_0, \Sigma)| > 0$. Choose $\delta>0$ and $R\gg 1$ such that
\begin{equation*}
	3|\mc P(x_0, \Sigma)|
	\leq 4|\{x\in A(\Sigma, R): u_{x_0, \Sigma}(x) - u(x)> \delta\}|. 
\end{equation*}
If $x\in A(\Sigma, R)$ with $u_{x_0, \Sigma} - u(x)> \delta$ then for any $\sigma\in (\Sigma/2, \Sigma)$ that is sufficiently close to $\Sigma$ 
\ifdetails{\color{gray}
(the closeness depending on $\delta$)
}\fi
we have 
\begin{equation*}
\begin{split}
	0
	& \geq u_{x_0, \sigma}(x) - u(x)\\
	& \geq u_{x_0, \sigma}(x) - u_{x_0, \Sigma}(x) + \delta\\
	& = u(x^{x_0,\sigma}) - u(x^{x_0, \Sigma}) + 2\log\frac \sigma\Sigma + \delta\\
	& \geq u(x^{x_0,\sigma}) - u(x^{x_0, \Sigma}) + \frac\delta2. 
\end{split}
\end{equation*}
Therefore, 
\begin{equation}
\label{eq:wait}
	3|\mc P(x_0, \Sigma)|
	\leq 4|\{x\in A(\Sigma, R): \frac \delta 2\leq u(x^{x_0, \Sigma}) - u(x^{x_0, \sigma})\}|. 
\end{equation}
On the other hand, since $u$ is uniformly continuous on $A((4R)^{-1}\Sigma^2, \Sigma)$, by choosing $\sigma\in (\Sigma/2, \Sigma)$ closer to $\Sigma$ if necessary we have
\begin{equation*}
	|\{x\in A(\Sigma, R): \frac \delta 2\leq u(x^{x_0, \Sigma}) - u(x^{x_0, \sigma})\}|
	< \frac 12|\mc P(x_0, \Sigma)|
\end{equation*}
which contradicts \eqref{eq:wait}. 
\end{proof}
\begin{lemma}
Let $u$ be a distributional solution of \eqref{eq:entire_nonlocal} that satisfies both $\frac{n - \mu}{2n - \mu}< \frac V{|\bb S^n|}< 1$ and \eqref{eq:asymtpotic_growth_assumption}. If $x_0\in \bb R^n$ with $\Sigma(x_0)< \infty$ then both of the following inequalities hold:  
\begin{equation}
\label{eq:critical_radius_strict_inequality}
	u(x)> u_{x_0, \Sigma(x_0)}(x)
	\quad \text{for every $x\in \bb R^n\setminus \overline B_{\Sigma(x_0)}$}
\end{equation}
and
\begin{equation}
\label{eq:critical_radius_Imu_strict_inequality}
	I_\mu[e^{\varrho u}](x)> I_\mu[e^{\varrho u_{x_0, \Sigma(x_0)}}](x)
	\quad \text{for every $x\in \bb R^n\setminus \overline B_{\Sigma(x_0)}$}.
\end{equation}
\end{lemma}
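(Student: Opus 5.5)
Write $\Sigma = \Sigma(x_0)$. By Lemma \ref{lemma:critical_radius_soft_inequalities} together with continuity of $u$ and of $I_\mu[e^{\varrho u}]$ (Proposition \ref{prop:distributional_solutions_smooth} and Lemma \ref{lemma:Imu_Ck}), the soft inequalities $u\geq u_{x_0,\Sigma}$ and $I_\mu[e^{\varrho u}]\geq I_\mu[e^{\varrho u_{x_0,\Sigma}}]$ hold everywhere on $\bb R^n\setminus \overline B_\Sigma(x_0)$. So for $y$ outside the ball, both $I_\mu[e^{\varrho u_{x_0,\Sigma}}](y)e^{\varrho u_{x_0,\Sigma}(y)} - I_\mu[e^{\varrho u}](y)e^{\varrho u(y)}\leq 0$ and $e^{\varrho u_{x_0,\Sigma}} - e^{\varrho u}\leq 0$. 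The plan is to upgrade these to strict inequalities using the integral identities of Lemmas \ref{lemma:u_difference} and \ref{lemma:Imu_difference}, whose kernels $\mc L$ and $\mc K$ are strictly positive off the closed ball.

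\textbf{Step 1: \eqref{eq:critical_radius_strict_inequality}.} Fix $x\in \bb R^n\setminus\overline B_\Sigma(x_0)$ and write \eqref{eq:w_x0_sigma_expression} with $\sigma = \Sigma$ as
\begin{equation*}
\frac{u_{x_0,\Sigma}(x) - u(x)}{(n-1)!c_n}
= (|\bb S^n| - V)\log\frac{\Sigma}{|x-x_0|} + \int_{\bb R^n\setminus B_\Sigma(x_0)}\mc L(x_0,\Sigma;x,y)\,G(y)\,\d y,
\end{equation*}
where $G = I_\mu[e^{\varrho u_{x_0,\Sigma}}]e^{\varrho u_{x_0,\Sigma}} - I_\mu[e^{\varrho u}]e^{\varrho u}\leq 0$. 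Since $V<|\bb S^n|$ and $|x - x_0|>\Sigma$, the first term is strictly negative. The integral is $\leq 0$ because $\mc L>0$ and $G\leq 0$. Hence $u_{x_0,\Sigma}(x)<u(x)$. This is exactly where the strict hypothesis $V<|\bb S^n|$ enters, and it makes Step 1 easy: there is no need to rule out $G\equiv 0$.

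\textbf{Step 2: \eqref{eq:critical_radius_Imu_strict_inequality}.} Fix $x$ outside the closed ball and use \eqref{eq:Imu_difference_integral} with $\sigma = \Sigma$:
\begin{equation*}
I_\mu[e^{\varrho u_{x_0,\Sigma}}](x) - I_\mu[e^{\varrho u}](x) = \int_{\bb R^n\setminus B_\Sigma(x_0)}\mc K(x_0,\Sigma;x,y)\bigl(e^{\varrho u_{x_0,\Sigma}(y)} - e^{\varrho u(y)}\bigr)\,\d y .
\end{equation*}
By Step 1 the integrand is strictly negative for a.e.\ $y$ in a set of positive measure (indeed for all $y$ with $|y-x_0|>\Sigma$), and $\mc K>0$ there. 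So the integral is strictly negative, which gives the claim. The pointwise validity of this identity is justified by the Remark following Lemma \ref{lemma:Imu_difference}, since $u$ is smooth and bounded above (Corollary \ref{coro:bounded_above}).

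\textbf{Main obstacle.} The only delicate point is the passage from the measure-zero statement of Lemma \ref{lemma:critical_radius_soft_inequalities} to the pointwise inequalities. For $u$ this follows from continuity of $u$ and $u_{x_0,\Sigma}$ away from $x_0$. For the $I_\mu$ terms it follows from continuity of $I_\mu[e^{\varrho u}]$ and of its inversion via \eqref{eq:Imu_erhou_inversion}. One must also check that the integrals in \eqref{eq:w_x0_sigma_expression} converge absolutely, so that the sign arguments are legitimate. This comes from Lemma \ref{lemma:remove_log_1_y} and the representation in Corollary \ref{coro:best_representation}, which were already used to derive the identity.
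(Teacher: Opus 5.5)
Your proposal is correct and is essentially the paper's proof. The strict inequality for $u$ comes from Lemma \ref{lemma:u_difference} at $\sigma=\Sigma(x_0)$: there the term $(|\bb S^n|-V)\log\frac{\Sigma(x_0)}{|x-x_0|}$ is strictly negative, and the $\mc L$-integral is nonpositive by the soft inequalities of Lemma \ref{lemma:critical_radius_soft_inequalities}; the strict inequality for $I_\mu$ then follows from Lemma \ref{lemma:Imu_difference}, exactly as you describe. Your continuity upgrade from measure zero to pointwise is harmless but not needed, since the soft inequalities enter only inside integrals, where an almost-everywhere statement suffices.
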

\begin{proof}
Suppose $x_0\in \bb R^n$ satisfies $\Sigma(x_0)< \infty$. For ease of notation, in the remainder of the proof we write $\Sigma$ in place of $\Sigma(x_0)$. Lemma \ref{lemma:critical_radius_soft_inequalities} guarantees that both $u\geq u_{x_0, \Sigma}$ and $I_\mu[e^{\varrho u}]\geq I_\mu[e^{\varrho u_{x_0, \Sigma}}]$ in $\bb R^n\setminus\overline B_{\Sigma}(x_0)$. Therefore, for any $x\in \bb R^n\setminus \overline B_{\Sigma}(x_0)$ Lemma \ref{lemma:u_difference} gives
\begin{equation*}
\begin{split}
	\lefteqn{\frac{u_{x_0, \Sigma}(x) - u(x)}{c_n(n - 1)!}}\\
	& < \frac{u_{x_0, \Sigma}(x) - u(x)}{c_n(n - 1)!}- (|\bb S^n| - V)\log\frac{\Sigma}{|x- x_0|}\\
	& = \int_{\bb R^n\setminus B_{\Sigma}(x_0)}\mc L(x_0, \Sigma; x, y)\left(I_\mu[e^{\varrho u_{x_0, \Sigma}}](y)e^{\varrho u_{x_0, \Sigma}(y)} - I_\mu[e^{\varrho u}](y)e^{\varrho u(y)}\right)\; \d y\\
	&\leq 0, 
\end{split}
\end{equation*}
thereby establishing \eqref{eq:critical_radius_strict_inequality}. The second of the asserted inequalities now follows from Lemma \ref{lemma:Imu_difference} and inequality \eqref{eq:critical_radius_strict_inequality}. 
\ifdetails{\color{gray}
Indeed, for $x\in \bb R^n\setminus \overline B_{\Sigma}(x_0)$, Lemma \ref{lemma:Imu_difference} and inequality \eqref{eq:critical_radius_strict_inequality} give
\begin{equation*}
\begin{split}
	I_\mu& [e^{\varrho u_{x_0, \Sigma}}](x)  - I_\mu[e^{\varrho u}](x)\\
	& = \int_{\bb R^n\setminus B_{\Sigma}(x_0)}\mc K(x_0, \Sigma; x, y)\left(e^{\varrho u_{x_0, \Sigma}(y)} - e^{\varrho u(y)}\right)\; \d y\\
	& < 0. 
\end{split}
\end{equation*}
}\fi 
\end{proof}
\begin{lemma}
\label{lemma:bad_set_vanishing_measure}
Let $u$ be a distributional solution of \eqref{eq:entire_nonlocal} that satisfies both $\frac{n - \mu}{2n - \mu}< \frac V{|\bb S^n|}< 1$ and \eqref{eq:asymtpotic_growth_assumption}. If $x_0\in \bb R^n$ with $\Sigma(x_0)< \infty$ then 
\begin{equation*}
	\lim_{\sigma\to \Sigma(x_0)^+}|\mc P(x_0, \sigma)^{x_0, \sigma}|
	= 0. 
\end{equation*}
\end{lemma}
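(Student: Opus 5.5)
The plan is to first rewrite the inverted bad set in terms of a comparison inside the ball, and then split the ball into a small ball around $x_0$, a thin annulus at the boundary, and a compact middle piece. Write $\Sigma=\Sigma(x_0)$.

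\emph{Rewriting the bad set.} Take $x\in\mc P(x_0,\sigma)$ and put $y=x^{x_0,\sigma}$. Then $|x-x_0|=\sigma^2/|y-x_0|$ and $x=y^{x_0,\sigma}$. Hence the inequality $u(x)<u(y)+2\log\frac{\sigma}{|x-x_0|}$ is equivalent to
\begin{equation*}
	u_{x_0,\sigma}(y)=u(y^{x_0,\sigma})+2\log\frac{\sigma}{|y-x_0|}<u(y).
\end{equation*}
Therefore
\begin{equation*}
	\mc P(x_0,\sigma)^{x_0,\sigma}=\{y\in B_\sigma(x_0)\setminus\{x_0\}: u_{x_0,\sigma}(y)<u(y)\}.
\end{equation*}
The same computation with $\sigma=\Sigma$ shows that the strict inequality \eqref{eq:critical_radius_strict_inequality} is equivalent to
\begin{equation*}
	u_{x_0,\Sigma}(y)>u(y)\quad\text{for every }y\in B_\Sigma(x_0)\setminus\{x_0\}.
\end{equation*}
This is the step where the hypothesis $V<|\bb S^n|$ enters, through the preceding lemma.

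\emph{Splitting the ball.} Fix $\epsilon>0$ and choose $r\in(0,\Sigma/4)$ with $|B_r(x_0)|<\epsilon/3$. Next choose $\eta\in(0,\Sigma/4)$ such that $|B_{\Sigma+\eta}(x_0)\setminus B_{\Sigma-\eta}(x_0)|<\epsilon/3$. For every $\sigma\in(\Sigma,\Sigma+\eta)$ we then have the inclusion
\begin{equation*}
	\mc P(x_0,\sigma)^{x_0,\sigma}\subset B_r(x_0)\cup\big(B_\sigma(x_0)\setminus B_{\Sigma-\eta}(x_0)\big)\cup\big(K\cap\mc P(x_0,\sigma)^{x_0,\sigma}\big),
\end{equation*}
where $K=\overline B_{\Sigma-\eta}(x_0)\setminus B_r(x_0)$ is compact and does not contain $x_0$. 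The first two pieces have total measure less than $2\epsilon/3$. It therefore suffices to show that the third piece is empty for all $\sigma>\Sigma$ sufficiently close to $\Sigma$.

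\emph{The compact piece.} The function $u_{x_0,\Sigma}-u$ is continuous on $K$ by Proposition \ref{prop:distributional_solutions_smooth}, and it is strictly positive there by the reformulation above. By compactness it is bounded below on $K$ by some $c>0$.

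Now let $\sigma\in[\Sigma,\Sigma+\eta]$ and $y\in K$. The points $y^{x_0,\sigma}$ stay in the compact annulus $\overline B_{(\Sigma+\eta)^2/r}(x_0)\setminus B_{\Sigma/2}(x_0)$, on which $u$ is uniformly continuous. Also, the map $(\sigma,y)\mapsto y^{x_0,\sigma}$ is uniformly continuous on $[\Sigma,\Sigma+\eta]\times K$, and $2\log\frac{\sigma}{|y-x_0|}$ depends continuously on $\sigma$ uniformly in $y\in K$. It follows that $u_{x_0,\sigma}\to u_{x_0,\Sigma}$ uniformly on $K$ as $\sigma\to\Sigma^+$.

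Consequently $u_{x_0,\sigma}-u\geq c/2>0$ on $K$ for all $\sigma>\Sigma$ close enough to $\Sigma$. For such $\sigma$, the set $K\cap\mc P(x_0,\sigma)^{x_0,\sigma}$ is empty, so $|\mc P(x_0,\sigma)^{x_0,\sigma}|<\epsilon$. Since $\epsilon>0$ was arbitrary, this proves the lemma.

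\emph{Expected difficulty.} The only delicate point is the neighbourhood of $x_0$, where inversion sends points to infinity and no uniform control of $u_{x_0,\sigma}$ is available. The plan avoids this by simply discarding the ball $B_r(x_0)$, which has small measure. The other subtle ingredient is converting the strict inequality outside $\overline B_\Sigma(x_0)$ into a strict inequality inside $B_\Sigma(x_0)$, which the rewriting in the first step handles.
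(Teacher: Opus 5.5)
Your proof is correct and follows essentially the paper's route. Both arguments split the bad set into three pieces: a thin shell at the sphere, a neighbourhood of $x_0$ (the paper's far region $A(R,\infty)$ before inversion), and a compact middle piece. On the middle piece the strict inequality from the preceding lemma gives a uniform gap, which uniform continuity of $u$ keeps alive for $\sigma$ slightly larger than $\Sigma$. The only difference is that you work on the inverted side inside $B_\sigma(x_0)$ rather than on $\bb R^n\setminus\overline B_\sigma(x_0)$. This spares you the Jacobian bound and shows the compact piece is empty directly rather than by contradiction.
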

\begin{remark}
\label{remark:on_lemma bad_set_vanishing_measure}
An inspection of the proof of Lemma \ref{lemma:bad_set_vanishing_measure} that follows shows that the conclusion of the lemma holds in the case $V = |\bb S^n|$ provided that, in addition to the stated hypotheses, inequalities \eqref{eq:critical_radius_strict_inequality} and \eqref{eq:critical_radius_Imu_strict_inequality} are assumed to hold. This fact will be used later in the proof of Lemma \ref{lemma:symmetric_about_symmetry_sphere}. 
\end{remark}
\begin{proof}[Proof of Lemma \ref{lemma:bad_set_vanishing_measure}]
Suppose $x_0\in \bb R^n$ with $\Sigma(x_0)< \infty$ and, for ease of notation, set $\Sigma = \Sigma(x_0)$. For $0< r< R$, let $A(r, R) = B_R(x_0)\setminus \bar B_r(x_0)$ denote the open annulus centered at $x_0$ with inner radius $r$ and outer radius $R$. For any $\epsilon\in (0, \Sigma]$, any $R\in (\Sigma + \epsilon, \infty)$ and any $\sigma\in [\Sigma, \Sigma + \epsilon]$ we have 
\begin{equation*}
	\mc P(x_0, \sigma)
	\subset A(\Sigma, \Sigma + \epsilon) \cup A(R, \infty)\cup\left(\bar A(\Sigma + \epsilon, R)\cap \mc P(x_0, \sigma)\right)
\end{equation*}
and thus
\begin{equation*}
	\mc P(x_0, \sigma)^{x_0, \sigma}
	\subset A(\Sigma, \Sigma + \epsilon)^{x_0, \sigma} 
	\cup A(R, \infty)^{x_0, \sigma}
	\cup \left(\bar A(\Sigma + \epsilon, R)\cap \mc P(x_0, \sigma)\right)^{x_0, \sigma}. 
\end{equation*}
Moreover we have both $|A(\Sigma, \Sigma + \epsilon)^{x_0, \sigma}|\leq C\epsilon$ and $|A(R, \infty)^{x_0, \sigma}|\leq C R^{-n}$ for some $\sigma$-independent positive constant $C$. 
\ifdetails{\color{gray}
To verify this bound on $|A(R, \infty)^{x_0, \sigma}|$, note that if $x\in A(R, \infty)$ then 
\begin{equation*}
	|x^{x_0, \sigma}- x_0|
	= \frac{\sigma^2}{|x - x_0|}
	< \frac{\sigma^2}R
	\leq\frac{4\Sigma^2}{R}, 
\end{equation*}
so $A(R, \infty)^{x_0, \sigma}\subset B_{4\Sigma^2/R}(x_0)$. 
}\fi
Using the change of variable $y= x^{x_0, \sigma}$ and since $|x - x_0|> \sigma$ whenever $\sigma\leq \Sigma + \epsilon$ and $x\in A(\Sigma + \epsilon, R)$ we have
\begin{equation*}
\begin{split}
	|\left(\bar A(\Sigma + \epsilon, R)\cap \mc P(x_0, \sigma)\right)^{x_0,\sigma}|
	& = \int_{\left(\bar A(\Sigma + \epsilon, R)\cap \mc P(x_0, \sigma)\right)^{x_0,\sigma}}\; \d y\\
	& = \int_{\bar A(\Sigma + \epsilon, R)\cap \mc P(x_0, \sigma)}\left(\frac{\sigma}{|x - x_0|}\right)^{2n}\; \d x\\
	& \leq |\bar A(\Sigma + \epsilon, R)\cap \mc P(x_0, \sigma)|. 
\end{split}
\end{equation*}
Since $\epsilon>0$ is small and arbitrary and $R\gg 1$ is large and arbitrary, to complete the proof it suffices to show that the following limit holds for every $\epsilon\in (0, \Sigma]$ and all $R\in (\Sigma, \infty)$: 
\begin{equation*}
	\lim_{\sigma\to \Sigma(x_0)^+}|\bar A(\Sigma(x_0) + \epsilon, R)\cap \mc P(x_0, \sigma)| = 0. 
\end{equation*}
Proceeding by way of contradiction, suppose $\epsilon\in (0, \Sigma]$, $R\in (\Sigma + \epsilon, \infty)$, $\ell>0$ and $(\sigma_k)_{k = 1}^\infty$ is a sequence for which both $\sigma_k\to \Sigma^+$ and 
\begin{equation*}
	\ell
	< |\bar A(\Sigma(x_0) + \epsilon, R)\cap \mc P(x_0, \sigma_k)|
	\quad \text{ for all }k. 
\end{equation*}
The assumption $u> u_{x_0, \Sigma}$ in $\bb R^n\setminus \bar B_{\Sigma}(x_0)$ guarantees the existence of $\delta>0$ such that $u - u_{x_0, \Sigma(x_0)}\geq \delta$ in $\bar A(\Sigma + \epsilon, R)$. For any $y\in \bar A(\Sigma+ \epsilon, R)\cap \mc P(x_0, \sigma_k)$ and for $k$ sufficiently large we have
\begin{equation*}
\begin{split}
	\delta
	& \leq u(y) - u_{x_0, \Sigma}(y)\\
	& < u_{x_0, \sigma_k}(y) - u_{x_0, \Sigma}(y)\\
	& = u(y^{x_0, \sigma_k}) - u(y^{x_0, \Sigma}) + 2\log \frac{\sigma_k}{\Sigma}\\
	& \leq u(y^{x_0, \sigma_k}) - u(y^{x_0, \Sigma}) + \frac \delta 2. 
\end{split}
\end{equation*}
Therefore, still for $k$ large, we have 
\begin{equation}
\label{eq:large_measure_for_contradiction}
\begin{split}
	\ell
	& < |\bar A(\Sigma + \epsilon, R)\cap \mc P(x_0, \sigma_k)|\\
	& \leq |\{y\in \bar A(\Sigma + \epsilon, R): \frac\delta 2 \leq u(y^{x_0, \sigma_k}) - u(y^{x_0, \Sigma})\}|. 
\end{split}
\end{equation}
On the other hand, by the uniform continuity of $u$ on $\bar A(\Sigma^2/ R, \Sigma + \epsilon)$ we have
\begin{equation*}
	\lim_{k\to\infty}|\{y\in \bar A(\Sigma + \epsilon, R): \frac \delta 2\leq u(y^{x_0, \sigma_k}) - u(y^{x_0, \Sigma(x_0)})\}|
	= 0, 
\end{equation*}
which contradicts estimate \eqref{eq:large_measure_for_contradiction}. 
\end{proof}
\begin{lemma}
\label{lemma:critical_radius_infinite}
If $u$ is a distributional solution of \eqref{eq:entire_nonlocal} that satisfies both $\frac{n - \mu}{2n - \mu}< \frac V{|\bb S^n|}< 1$ and \eqref{eq:asymtpotic_growth_assumption} then $\Sigma(x_0) = +\infty$ for every $x_0\in \bb R^n$. 
\end{lemma}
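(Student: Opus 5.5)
We argue by contradiction: suppose $x_0\in\bb R^n$ satisfies $\Sigma := \Sigma(x_0)<\infty$. We will show that the moving-sphere inequalities \eqref{eq:uwins} and \eqref{eq:Imu_uwins} persist for every $\sigma\in[\Sigma,\Sigma+\epsilon)$ for some $\epsilon>0$. This contradicts the definition \eqref{eq:symmetry_radius} of $\Sigma(x_0)$ as a supremum.

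\textbf{Step 1 (the critical sphere).} Since $V<|\bb S^n|$, the preceding lemma applies. It gives the strict inequalities \eqref{eq:critical_radius_strict_inequality} and \eqref{eq:critical_radius_Imu_strict_inequality} at $\sigma=\Sigma$. Lemma \ref{lemma:bad_set_vanishing_measure} then gives $|\mc P(x_0,\sigma)^{x_0,\sigma}|\to0$ as $\sigma\to\Sigma^+$.

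\textbf{Step 2 (absorption).} Apply Lemma \ref{lemma:MS_start_main_estimate} with a fixed upper bound, say $2\Sigma$ in place of $\Sigma$, to obtain a constant $C$ independent of $\sigma\in(0,2\Sigma]$ such that
\begin{equation*}
\|e^{\varrho u_{x_0,\sigma}}-e^{\varrho u}\|_{L^{n/\varrho}(\mc P(x_0,\sigma))}\leq C\|1+|\log|\cdot-x_0||\|_{L^{n/\varrho}(\mc P(x_0,\sigma)^{x_0,\sigma})}\|e^{\varrho u_{x_0,\sigma}}-e^{\varrho u}\|_{L^{n/\varrho}(\mc P(x_0,\sigma))}.
\end{equation*}
The sets $\mc P(x_0,\sigma)^{x_0,\sigma}$ all lie in $B_{2\Sigma}(x_0)$, and $(1+|\log|\cdot-x_0||)^{n/\varrho}$ is integrable on $B_{2\Sigma}(x_0)$. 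By absolute continuity of the integral and Step 1, the middle factor therefore tends to $0$ as $\sigma\to\Sigma^+$. Choose $\epsilon>0$ so that this factor is at most $1/(2C)$ for $\sigma\in[\Sigma,\Sigma+\epsilon)$. The estimate then forces $\|e^{\varrho u_{x_0,\sigma}}-e^{\varrho u}\|_{L^{n/\varrho}(\mc P(x_0,\sigma))}=0$.

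\textbf{Step 3 (recovering both inequalities).} We need the left-hand side in Step 2 to be finite. This holds because $e^{\varrho u}$ and $e^{\varrho u_{x_0,\sigma}}$ both lie in $L^{n/\varrho}$ on $\bb R^n\setminus B_\sigma(x_0)$, using $e^u\in L^n$ and the change of variables. Since the integrand is strictly positive on $\mc P(x_0,\sigma)$, Step 2 gives $|\mc P(x_0,\sigma)|=0$. Lemma \ref{lemma:Qnonempty} then gives $\mc Q(x_0,\sigma)=\emptyset$, so \eqref{eq:Imu_uwins} holds. By continuity of $u$ and $u_{x_0,\sigma}$ (Proposition \ref{prop:distributional_solutions_smooth}), the open set $\mc P(x_0,\sigma)$ has measure zero and is therefore empty, so \eqref{eq:uwins} holds. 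Thus both inequalities hold for all $\sigma<\Sigma+\epsilon$, contradicting the maximality of $\Sigma$.

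\textbf{Main obstacle.} The delicate point is the uniformity in Step 2: the constant in Lemma \ref{lemma:MS_start_main_estimate} must not depend on $\sigma$ near $\Sigma$. Fixing the upper bound $2\Sigma$ once, before letting $\sigma\to\Sigma^+$, handles this. The factor multiplying the norm is then small only through the vanishing measure supplied by Lemma \ref{lemma:bad_set_vanishing_measure}, which is where the strict inequality $V<|\bb S^n|$ enters.
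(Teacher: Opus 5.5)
Your proof is correct and uses the same ingredients as the paper's proof: the vanishing-measure Lemma \ref{lemma:bad_set_vanishing_measure}, the uniform absorption estimate of Lemma \ref{lemma:MS_start_main_estimate} with a fixed upper radius, and Lemma \ref{lemma:Qnonempty}. The difference is only organizational: you show directly that both inequalities hold on a whole right-neighborhood of $\Sigma(x_0)$, using continuity to pass from $|\mc P(x_0,\sigma)|=0$ to $\mc P(x_0,\sigma)=\emptyset$, whereas the paper takes violating radii $r_k\to\Sigma(x_0)^+$ and splits into two cases, handling the case $u(x_k)<u_{x_0,r_k}(x_k)$ through the integral identity of Lemma \ref{lemma:u_difference}; your version is slightly cleaner and also makes explicit that the absorbed norm is finite.
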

\begin{proof}
Proceeding by way of contradiction, suppose $x_0\in \bb R^n$ with $\Sigma(x_0)< \infty$. Let $(\sigma_k)_{k = 1}^\infty\subset (\Sigma(x_0), \infty)$ satisfy $\lim_k\sigma_k = \Sigma(x_0)$. In view of the definition of $\Sigma(x_0)$ (see \eqref{eq:symmetry_radius}), for every $k\in \bb N$ there is $r_k\in (\Sigma(x_0), \sigma_k]$ and $x_k\in \bb R^n\setminus \overline B_{r_k}(x_0)$ for which either 
\begin{equation*}
	I_\mu[e^{\varrho u}](x_k)
	< I_\mu[e^{\varrho u_{x_0, r_k}}](x_k). 
\end{equation*}
or
\begin{equation*}
	u(x_k)< u_{x_0, r_k}(x_k)
\end{equation*}
By passing to a suitable subsequence we may assume that one of these inequalities holds for every $k$. 
\begin{enumerate}[label = {\bf Case \arabic*.}, ref= {\bf Case \arabic*}, wide = 0pt]
	\item \label{case:Imu_violates} $I_\mu[e^{\varrho u}](x_k)< I_\mu[e^{\varrho u_{x_0, r_k}}](x_k)$ for all $k$. \\
	The \ref{case:Imu_violates} assumption guarantees that $x_k\in \mc Q(x_0, r_k)$ for all $k$, so Lemma \ref{lemma:Qnonempty} guarantees that 
	\begin{equation}
	\label{eq:P_always_positive_measure}
		|\mc P(x_0, r_k)|> 0
		\quad \text{ for all }k. 
	\end{equation}
	On the other hand, Lemma \ref{lemma:bad_set_vanishing_measure} guarantees that $\lim_k|\mc P(x_0, r_k)^{x_0, r_k}| = 0$, so we have
	\begin{equation*}
		\lim_{k\to\infty}\|1 + |\log|\cdot - x_0||\|_{L^{n/\varrho}(\mc P(x_0, r_k)^{x_0, r_k})} = 0. 
	\end{equation*}
	This equality, together with Lemma \ref{lemma:MS_start_main_estimate} (applied with $\Sigma = 1 + \Sigma(x_0)$) implies that 
	\begin{equation*}
		\|e^{\varrho u_{x_0, r_k}}- e^{\varrho u}\|_{L^{n/\varrho}(\mc P(x_0, r_k))}
		\leq \frac 12\|e^{\varrho u_{x_0, r_k}}- e^{\varrho u}\|_{L^{n/\varrho}(\mc P(x_0, r_k))}
	\end{equation*}
	whenever $k$ is sufficiently large. For any such $k$ we obtain $|\mc P(x_0, r_k)| = 0$ thereby contradicting inequality \eqref{eq:P_always_positive_measure}. 
	\item $u(x_k)< u_{x_0, r_k}(x_k)$ for all $k$. \\
	In this case, estimating as in \eqref{eq:get_H1_H2} (with $x = x_k$ and $\sigma = r_k$) we obtain 
	\begin{equation*}
	\begin{split}
		0
		< & \; \frac{e^{\varrho u_{x_0, r_k}(x_k)} - e^{\varrho u(x_k)}}{\varrho c_n(n - 1)!e^{\varrho u_{x_0, r_k}(x_k)}}\\
		\leq & \; \int_{\mc Q(x_0, r_k)}\mc L(x_0, r_k; x_k, y)\left(I_\mu[e^{\varrho u_{x_0, r_k}}](y) - I_\mu[e^{\varrho u}](y)\right)e^{\varrho u_{x_0, r_k}(y)}\; \d y\\
		& + \int_{\mc P(x_0, r_k)}\mc L(x_0, r_k; x_k, y)I_\mu[e^{\varrho u}](y)\left(e^{\varrho u_{x_0, r_k}(y)}- e^{\varrho u(y)}\right)\; \d y, 
	\end{split}
	\end{equation*}
	which implies that $|\mc Q(x_0, r_k)| + |\mc P(x_0, r_k)|> 0$ for all $k$. An application of Lemma \ref{lemma:Qnonempty} now guarantees that $|\mc P(x_0, r_k)|>0$ for all $k$, so the argument of \ref{case:Imu_violates} (starting from estimate \eqref{eq:P_always_positive_measure}) yields a contradiction. 
\end{enumerate}
\end{proof}
\begin{proof}[Proof of Lemma \ref{lemma:rule_out_slow_decay}]
Proceeding by way of contradiction, suppose $\frac{n - \mu}{2n - \mu}< \frac V{|\bb S^n|}< 1$. Lemma \ref{lemma:critical_radius_infinite} guarantees that for every $(x_0, \sigma)\in \bb R^n\times(0, \infty)$ there holds
\begin{equation*}
	\left(\frac\sigma{|x -x_0|}\right)^2 e^{u(x^{x_0, \sigma})}
	\leq e^{u(x)}
	\quad \text{ for all }x\in \bb R^n\setminus B_\sigma(x_0). 
\end{equation*}
Applying Lemma \ref{lemma:implies_const_or_infty} of the appendix with $\gamma = 2$ and $f = e^u$ implies that either $e^u\equiv \text{const}$ or $e^u\equiv +\infty$, both of which contradict the assumption $e^u\in L^n(\bb R^n)$. 
\end{proof}
\subsection{Ruling out Fast Decay}
\label{ss:rule_out_fast_decay}
The following lemma is the main result of this subsection. 
\begin{lemma}
\label{lemma:rule_out_fast_decay}
If $u$ is a distributional solution to \eqref{eq:entire_nonlocal} for which \eqref{eq:V_lower_bound_assumption} and \eqref{eq:asymtpotic_growth_assumption} are satisfied, then $V\leq |\bb S^n|$.  
\end{lemma}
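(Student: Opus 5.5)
The plan is a second moving spheres argument in which the roles of $u$ and $u_{x_0,\sigma}$ are reversed. The spheres start from large radii and shrink, instead of starting from small radii and growing. Suppose by contradiction that $V>|\bb S^n|$. By Theorem \ref{theorem:asymptotic} and Corollary \ref{coro:best_representation}, $u$ then decays like $-\frac{2V}{|\bb S^n|}\log|x|$ with $\frac{2V}{|\bb S^n|}>2$. Meanwhile $u_{x_0,\sigma}(x)=u(x_0)+2\log\sigma-2\log|x-x_0|+\circ(1)$, which heuristically suggests $u_{x_0,\sigma}\geq u$ outside $B_\sigma(x_0)$. For fixed $x_0$ I introduce the reversed sets
\begin{equation*}
	\mc P'(x_0,\sigma)=\{x\in \bb R^n\setminus\overline B_\sigma(x_0): u(x)>u_{x_0,\sigma}(x)\},
\end{equation*}
\begin{equation*}
	\mc Q'(x_0,\sigma)=\{x\in \bb R^n\setminus\overline B_\sigma(x_0): I_\mu[e^{\varrho u}](x)>I_\mu[e^{\varrho u_{x_0,\sigma}}](x)\}.
\end{equation*}
The analogue of Lemma \ref{lemma:Qnonempty} (if $\mc Q'\neq\emptyset$ then $|\mc P'|>0$) follows verbatim from Lemma \ref{lemma:Imu_difference}, with $u$ and $u_{x_0,\sigma}$ swapped in the integrand.

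The key observation is the sign in Lemma \ref{lemma:u_difference}. It gives
\begin{equation*}
	\frac{u-u_{x_0,\sigma}}{(n-1)!c_n}(x)=(V-|\bb S^n|)\log\frac{\sigma}{|x-x_0|}+\int_{\bb R^n\setminus B_\sigma(x_0)}\mc L\,(F-F_\sigma)\,\d y ,
\end{equation*}
where $F=I_\mu[e^{\varrho u}]e^{\varrho u}$ and $F_\sigma$ is the same expression with $u$ replaced by $u_{x_0,\sigma}$. When $V>|\bb S^n|$ the first term is $\leq 0$ outside $B_\sigma(x_0)$. So for $x\in\mc P'$ we obtain, exactly as in \eqref{eq:get_H1_H2},
\begin{equation*}
	0<\frac{e^{\varrho u}-e^{\varrho u_{x_0,\sigma}}}{\varrho(n-1)!c_ne^{\varrho u}}\leq H_1'+H_2',
\end{equation*}
where $H_1',H_2'$ are $H_1,H_2$ with the roles of $u$ and $u_{x_0,\sigma}$ exchanged and $\mc P',\mc Q'$ in place of $\mc P,\mc Q$.

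I then rerun the proof of Lemma \ref{lemma:MS_start_main_estimate} in the regime $\sigma\geq\Sigma_0$, with $\Sigma_0$ large. The kernel bounds on $E^2,E^3$ used $\sigma\leq\Sigma$; for $\sigma\geq 1$ they become simpler, since then $\bar{\mc L}\leq C(1+\log\frac{|x-x_0|}{\sigma}+\log\frac1{|x-y|})$ directly. The HLS and H\"older steps only use $\|e^u\|_{L^n}$, $\|e^u\|_{L^\infty}$ and $\|I_\mu[e^{\varrho u}]\|_{L^\infty}$, which are available from Corollary \ref{coro:bounded_above} and Lemma \ref{lemma:Imu_initial_decay}. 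This should give
\begin{equation*}
	\|e^{\varrho u}-e^{\varrho u_{x_0,\sigma}}\|_{L^{n/\varrho}(\mc P')}\leq C\,\Big\|e^{\varrho u}\big(1+\log\tfrac{|\cdot-x_0|}{\sigma}\big)\Big\|_{L^{n/\varrho}(\mc P')}\,\|e^{\varrho u}-e^{\varrho u_{x_0,\sigma}}\|_{L^{n/\varrho}(\mc P')}.
\end{equation*}
The middle factor is controlled by the tail integral $\int_{|x-x_0|>\sigma}e^{nu}(1+\log\frac{|x-x_0|}{\sigma})^{n/\varrho}$. By \eqref{eq:eu_decay} with $\epsilon$ small, $e^{nu}\leq C|x|^{-n(2V/|\bb S^n|-\epsilon)}$ with exponent $>2n$. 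Substituting $x-x_0=\sigma z$ shows the integral is $O(\sigma^{2n-n(2V/|\bb S^n|-\epsilon)})\to0$ as $\sigma\to\infty$. Hence $|\mc P'|=|\mc Q'|=0$ for all $\sigma\geq\Sigma_0$, so the spheres can start from infinity. This starting step is where I expect the main work: all constants must be rechecked to be uniform in $\sigma\geq\Sigma_0$ rather than in $\sigma\leq\Sigma$.

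Next define $\sigma_0(x_0)=\inf\{\beta>0: |\mc P'(x_0,\sigma)|=|\mc Q'(x_0,\sigma)|=0\text{ for all }\sigma\geq\beta\}$, and suppose $\sigma_0>0$. The continuity argument of Lemma \ref{lemma:critical_radius_soft_inequalities} gives the weak inequalities at $\sigma_0$. Because $V>|\bb S^n|$ is strict, the term $(V-|\bb S^n|)\log\frac{\sigma_0}{|x-x_0|}$ is strictly negative outside $\overline B_{\sigma_0}(x_0)$, so the identity above upgrades these to strict inequalities, as in the proof of \eqref{eq:critical_radius_strict_inequality}. The analogues of Lemma \ref{lemma:bad_set_vanishing_measure} and Lemma \ref{lemma:critical_radius_infinite}, with $\sigma\to\sigma_0^-$, then show the spheres can be pushed below $\sigma_0$. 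The smallness now comes from $\|e^{\varrho u}(1+\log\frac{|\cdot-x_0|}{\sigma})\|_{L^{n/\varrho}(\mc P')}\to0$ as $|\mc P'\cap\{\text{bounded region}\}|\to0$, with the tail handled as above. This contradiction shows $\sigma_0=0$, i.e. $u\leq u_{x_0,\sigma}$ on $\bb R^n\setminus B_\sigma(x_0)$ for every $\sigma>0$.

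Finally, fix $x\neq x_0$ and let $\sigma\to0^+$. Then $x^{x_0,\sigma}\to x_0$, so $u_{x_0,\sigma}(x)=u(x^{x_0,\sigma})+2\log\frac{\sigma}{|x-x_0|}\to-\infty$, forcing $u(x)=-\infty$. This contradicts the smoothness from Proposition \ref{prop:distributional_solutions_smooth}, and hence $V\leq|\bb S^n|$.
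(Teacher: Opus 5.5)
Your proposal follows the paper's proof essentially step for step. The steps are the reversed sets (the paper's $\tilde{\mc P},\tilde{\mc Q}$), the analogue of Lemma \ref{lemma:Qnonempty}, and a main estimate for $\sigma$ bounded below that lets the spheres start at infinity. After that come a critical radius, where strictness comes from $(V-|\bb S^n|)\log\frac{\sigma}{|x-x_0|}<0$, and a push below that radius.

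One intermediate claim is false as written. For $\sigma\ge 1$ the bound $\bar{\mc L}\le C(1+\log\frac{|x-x_0|}{\sigma}+\log\frac1{|x-y|})$ does not hold uniformly in $\sigma$ on $E^2\cup E^3$. Taking $|x-x_0|=2\sigma$ and $y=x+\frac{x-x_0}{|x-x_0|}$ gives $\bar{\mc L}=\log(8\sigma+4)$, while your right-hand side stays bounded. What is true on $E^2\cup E^3$, for $\sigma\ge\Sigma_0$, is $\bar{\mc L}\le\log\frac{4|x-x_0|^2}{\Sigma_0|x-y|}$. This is why the paper's Lemma \ref{lemma:MS_start_main_estimate_2} uses the weight $1+|\log|\cdot-x_0||$, with $C$ depending on $\Sigma_0$, rather than $1+\log\frac{|\cdot-x_0|}{\sigma}$.

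The correction costs you nothing. The integral $\int_{|x-x_0|>\sigma}e^{nu}(1+|\log|x-x_0||)^{n/\varrho}\,\d x$ still tends to $0$ as $\sigma\to\infty$, because $e^{nu}$ decays faster than $|x|^{-2n}$. Your rescaling actually gives the rate $\sigma^{n-n(2V/|\bb S^n|-\epsilon)}$ up to logarithms, not $\sigma^{2n-\cdots}$, but any negative power suffices. For $\sigma$ in a compact subinterval of $(0,\infty)$ the two weights are comparable, so the shrinking step is unaffected.

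Your ending is a small simplification over the paper's. The paper applies Lemma \ref{lemma:implies_const_or_infty} to $f=e^{-u}$ with $\gamma=-2$ and rules out both alternatives. You instead fix $x\neq x_0$, let $\sigma\to0^+$, and use continuity of $u$ at $x_0$ to get $u(x)\le u(x_0)+\circ(1)+2\log\frac{\sigma}{|x-x_0|}\to-\infty$. This is correct, needs only one center, and makes no appeal to the appendix.
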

Similarly to the proof of Lemma \ref{lemma:rule_out_slow_decay} given in Subsection \ref{ss:rule_out_slow_decay}, the proof of Lemma \ref{lemma:rule_out_fast_decay} proceeds by way of contradiction and uses the method of moving spheres to obtain a contradiction. However, in this setting, the method of moving spheres initiates with spheres having large radii and then shrinks the spheres until the radii are zero.
For $(x_0, \sigma)\in \bb R^n\times (0, \infty)$ define the sets
\begin{equation*}
\begin{split}
	\tilde {\mc P}(x_0, \sigma)
	& = \{x\in \bb R^n\setminus \overline B_\sigma(x_0): u_{x_0, \sigma}(x)< u(x)\}\\
	\tilde{\mc Q}(x_0, \sigma)
	& = \{x\in \bb R^n\setminus \overline B_\sigma(x_0): I_\mu[e^{\varrho u_{x_0, \sigma}}](x)< I_\mu[e^{\varrho u}](x)\}. 
\end{split}
\end{equation*}
\begin{lemma}
\label{lemma:one_implies_both}
Let $u$ be a distributional solution to \eqref{eq:entire_nonlocal} satisfying both \eqref{eq:V_lower_bound_assumption} and \eqref{eq:asymtpotic_growth_assumption}. If $(x_0, \sigma)\in \bb R^n\times(0, \infty)$ and if $\tilde{\mc Q}(x_0, \sigma)\neq\emptyset$ then $|\tilde{\mc P}(x_0, \sigma)|> 0$. In particular, if $(x_0, \sigma)\in \bb R^n\times(0, \infty)$ and if $|\tilde{\mc P}(x_0, \sigma)|+ |\tilde{\mc Q}(x_0, \sigma)|> 0$, then $|\tilde{\mc P}(x_0, \sigma)|> 0$. 
\end{lemma}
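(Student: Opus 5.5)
This is the mirror image of Lemma \ref{lemma:Qnonempty}, with the roles of $u$ and $u_{x_0,\sigma}$ interchanged. The plan is to start from the integral identity of Lemma \ref{lemma:Imu_difference}, with its sign reversed:
\begin{equation*}
	I_\mu[e^{\varrho u}](x) - I_\mu[e^{\varrho u_{x_0, \sigma}}](x)
	= \int_{\bb R^n\setminus B_\sigma(x_0)}\mc K(x_0, \sigma; x, y)\left(e^{\varrho u(y)} - e^{\varrho u_{x_0, \sigma}(y)}\right)\; \d y.
\end{equation*}
By the remark following Lemma \ref{lemma:Imu_difference}, this holds pointwise for every $x\in \bb R^n\setminus \overline B_\sigma(x_0)$, because Proposition \ref{prop:distributional_solutions_smooth} makes $u$ smooth and Corollary \ref{coro:bounded_above} makes it bounded above.

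Now fix $x\in \tilde{\mc Q}(x_0,\sigma)$, so the left-hand side is strictly positive. On $\bb R^n\setminus \overline B_\sigma(x_0)$ we have $\mc K(x_0,\sigma;x,y)>0$. On the complement of $\tilde{\mc P}(x_0,\sigma)$ in that region, $e^{\varrho u}-e^{\varrho u_{x_0,\sigma}}\le 0$. The sphere $\partial B_\sigma(x_0)$ has measure zero and can be ignored. Discarding the nonpositive part of the integrand therefore gives
\begin{equation*}
	0< I_\mu[e^{\varrho u}](x) - I_\mu[e^{\varrho u_{x_0, \sigma}}](x)
	\leq \int_{\tilde{\mc P}(x_0, \sigma)}\mc K(x_0, \sigma; x, y)\left(e^{\varrho u(y)} - e^{\varrho u_{x_0, \sigma}(y)}\right)\; \d y.
\end{equation*}
One may further bound $\mc K(x_0,\sigma;x,y)\le |x-y|^{-\mu}$, which turns the right-hand side into $I_\mu[(e^{\varrho u}-e^{\varrho u_{x_0,\sigma}})\chi_{\tilde{\mc P}(x_0,\sigma)}](x)$. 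Either way, an integral over $\tilde{\mc P}(x_0,\sigma)$ is strictly positive, which is impossible if $|\tilde{\mc P}(x_0,\sigma)|=0$. Hence $|\tilde{\mc P}(x_0,\sigma)|>0$.

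The \emph{in particular} statement then follows. If $|\tilde{\mc Q}(x_0,\sigma)|>0$, then $\tilde{\mc Q}(x_0,\sigma)$ is nonempty, so the first part applies. Otherwise $|\tilde{\mc P}(x_0,\sigma)|>0$ already.

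The only point that needs care, and it is mild, is that the right-hand side is finite and that the pointwise use of the identity is legitimate. Here $e^{\varrho u}\in L^{n/\varrho}$, and $e^{\varrho u_{x_0,\sigma}}$ is also in $L^{n/\varrho}$ because $\|e^{u_{x_0,\sigma}}\|_{L^n}=\|e^u\|_{L^n}$ by the change of variables. Local boundedness of both functions handles the singularity of $|x-y|^{-\mu}$ near $x$, and H\"older's inequality handles the tail. Note that hypotheses \eqref{eq:V_lower_bound_assumption} and \eqref{eq:asymtpotic_growth_assumption} are not actually needed: the argument uses only $e^u\in L^n(\bb R^n)$ and local boundedness of $u$.
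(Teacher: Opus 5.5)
Your proposal is correct and is essentially the paper's proof: it applies the identity of Lemma \ref{lemma:Imu_difference} at a point of $\tilde{\mc Q}(x_0,\sigma)$, discards the nonpositive part of the integrand, and bounds $\mc K(x_0,\sigma;x,y)\le |x-y|^{-\mu}$ to get $0<I_\mu[(e^{\varrho u}-e^{\varrho u_{x_0,\sigma}})\chi_{\tilde{\mc P}(x_0,\sigma)}](x)$, which forces $|\tilde{\mc P}(x_0,\sigma)|>0$. Your extra remarks are also correct: the pointwise validity of the identity, the equality $\|e^{u_{x_0,\sigma}}\|_{L^n}=\|e^u\|_{L^n}$, and the fact that \eqref{eq:V_lower_bound_assumption} and \eqref{eq:asymtpotic_growth_assumption} are not needed.
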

\begin{proof}
For every $x\in \tilde{\mc Q}(x_0, \sigma)$, using Lemma \ref{lemma:Imu_difference} and the fact that $\mc K(x_0, \sigma; x, y)$ as defined in \eqref{eq:Imu_difference_integral_kernel} satisfies $\mc K(x_0, \sigma; x, y)\leq |x- y|^{-\mu}$ we have
\begin{equation*}
\begin{split}
	0
	< & \; I_\mu[e^{\varrho u}](x) - I_\mu[e^{\varrho u_{x_0,  \sigma}}](x)\\
	\ifdetails{\color{gray}\; =}\fi & \ifdetails{\color{gray}
	\; \int_{\bb R^n\setminus B_\sigma(x_0)}\mc K(x_0, \sigma; x, y)\left(e^{\varrho u(y)} - e^{\varrho u_{x_0, \sigma}(y)}\right)\; \d y
	}
	\\
	\ifdetails{\color{gray}\; \leq}\fi & {\color{gray}
	\; \int_{\tilde{\mc P}(x_0, \sigma)}\mc K(x_0, \sigma; x, y)\left(e^{\varrho u(y)} - e^{\varrho u_{x_0, \sigma}(y)}\right)\; \d y
	}
	\\
	\leq &\fi
	\; \int_{\tilde{\mc P}(x_0, \sigma)}\frac 1{|x- y|^\mu}\left(e^{\varrho u(y)} - e^{\varrho u_{x_0, \sigma}(y)}\right)\; \d y\\
	= & \; I_\mu[(e^{\varrho u} - e^{\varrho u_{x_0, \sigma}})\chi_{\tilde{\mc P}(x_0, \sigma)}](x), 
\end{split}
\end{equation*}
from which we deduce that $|\tilde{\mc P}(x_0, \sigma)|> 0$. 
\end{proof}
\begin{lemma}
\label{lemma:MS_start_main_estimate_2}
Let $u$ be a distributional solution to \eqref{eq:entire_nonlocal} for which \eqref{eq:asymtpotic_growth_assumption} holds. If $V> |\bb S^n|$ then for every $\Sigma>0$ there is a positive constant $C$ depending on $\Sigma$, $\mu$, $\|e^u\|_{L^n(\bb R^n)}$, $\|e^u\|_{L^\infty(\bb R^n)}$ and $\|I_\mu[e^{\varrho u}]\|_{L^\infty(\bb R^n)}$ such that for every $(x_0, \sigma)\in \bb R^n\times[\Sigma, \infty)$ there holds
\begin{equation}
\label{eq:see_you_again}
\begin{split}
	\lefteqn{\|e^{\varrho u} - e^{\varrho u_{x_0, \sigma}}\|_{L^{n/\varrho}(\tilde{\mc P}(x_0, \sigma))}}\\
	& \leq C\|e^{\varrho u}(1 + |\log|\cdot - x_0||)\|_{L^{n/\varrho}(\tilde{\mc P}(x_0,\sigma))} \|e^{\varrho u} - e^{\varrho u_{x_0, \sigma}}\|_{L^{n/\varrho}(\tilde{\mc P}(x_0, \sigma))}. 
\end{split}
\end{equation}
We emphasize that $C$ is independent of $\sigma\in [\Sigma, \infty)$. 
\end{lemma}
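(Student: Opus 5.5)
The plan is to mirror the proof of Lemma \ref{lemma:MS_start_main_estimate}, interchanging the roles of $u$ and $u_{x_0,\sigma}$ and using the sign of $V-|\bb S^n|$ in the opposite direction.

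\textbf{Step 1 (pointwise inequality).} Fix $x\in\tilde{\mc P}(x_0,\sigma)$. Rewrite the identity of Lemma \ref{lemma:u_difference} as
\begin{equation*}
\frac{u(x)-u_{x_0,\sigma}(x)}{(n-1)!c_n}+(|\bb S^n|-V)\log\frac{\sigma}{|x-x_0|}
=\int_{\bb R^n\setminus B_\sigma(x_0)}\mc L\left(I_\mu[e^{\varrho u}]e^{\varrho u}-I_\mu[e^{\varrho u_{x_0,\sigma}}]e^{\varrho u_{x_0,\sigma}}\right).
\end{equation*}
Since $V>|\bb S^n|$ and $|x-x_0|>\sigma$, the term $(|\bb S^n|-V)\log\frac{\sigma}{|x-x_0|}$ is positive, so it may be dropped. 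By the Mean Value Theorem,
\begin{equation*}
u-u_{x_0,\sigma}\geq \frac{e^{\varrho u}-e^{\varrho u_{x_0,\sigma}}}{\varrho e^{\varrho u}}
\end{equation*}
on $\tilde{\mc P}$. Split the integrand as
\begin{equation*}
\left(I_\mu[e^{\varrho u}]-I_\mu[e^{\varrho u_{x_0,\sigma}}]\right)e^{\varrho u}+I_\mu[e^{\varrho u_{x_0,\sigma}}]\left(e^{\varrho u}-e^{\varrho u_{x_0,\sigma}}\right).
\end{equation*}
Keep only the positive parts, namely the first term on $\tilde{\mc Q}$ and the second on $\tilde{\mc P}$. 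Use $\mc L\le\bar{\mc L}$ from \eqref{eq:L_bar}, and use the bound from the proof of Lemma \ref{lemma:one_implies_both}:
\begin{equation*}
I_\mu[e^{\varrho u}]-I_\mu[e^{\varrho u_{x_0,\sigma}}]\le I_\mu\left[(e^{\varrho u}-e^{\varrho u_{x_0,\sigma}})\chi_{\tilde{\mc P}}\right].
\end{equation*}
This gives
\begin{equation*}
e^{\varrho u(x)}-e^{\varrho u_{x_0,\sigma}(x)}\le Ce^{\varrho u(x)}\left(\tilde H_1(x)+\tilde H_2(x)\right),
\end{equation*}
with $\tilde H_1,\tilde H_2$ the analogues of $H_1,H_2$.

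\textbf{Step 2 (kernel bounds with $\sigma\ge\Sigma$).} I will prove
\begin{equation*}
\tilde H_i(x)\le C\left(1+|\log|x-x_0||\right)\|e^{\varrho u}-e^{\varrho u_{x_0,\sigma}}\|_{L^{n/\varrho}(\tilde{\mc P})}.
\end{equation*}
This is where the main change from Lemma \ref{lemma:MS_start_main_estimate} occurs. There, the factor $\log(|x-x_0|/\sigma)$ was harmless because $\sigma$ was bounded above. Here $\sigma\geq\Sigma$, so $\log(1/\sigma)\le\log(1/\Sigma)$ and
\begin{equation*}
\bar{\mc L}\le \log\frac{2|x-x_0||y-x_0|}{\Sigma|x-y|}.
\end{equation*}
Split $y$ into the same three regions: outside $B_{2|x-x_0|}(x_0)$, inside it but away from $B_1(x)$, and inside $B_1(x)$.
\begin{itemize}
\item In the first two regions, $\bar{\mc L}\le C(1+\log|x-x_0|+\log|y-x_0|)$. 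The factor $\log|y-x_0|$ for large $y$ is the new nuisance; in the region near $x$ it is controlled by $\log|x-x_0|$.
\item For $|y-x_0|\ge2|x-x_0|$, instead use $\frac{|y-x_0|}{|x-y|}\le2$. Then $\bar{\mc L}\le\log\frac{4|x-x_0|}{\Sigma}$, which is again independent of $y$.
\item In the third region, the additional $\log\frac1{|x-y|}$ singularity is handled by H\"older's inequality, using $\|e^u\|_{L^\infty}$ (Corollary \ref{coro:bounded_above}) and $\|I_\mu[e^{\varrho u}]\|_{L^\infty}$ (Lemma \ref{lemma:Imu_initial_decay}). For the second term one also needs a bound on $\|I_\mu[e^{\varrho u_{x_0,\sigma}}]\|_{L^\infty}$ on $\tilde{\mc P}$; there it is at most $I_\mu[e^{\varrho u}]$ on the complement of $\tilde{\mc Q}$, and otherwise it is handled by \eqref{eq:Imu_erhou_inversion} together with $\sigma/|x-x_0|<1$.
\end{itemize}
Throughout, HLS with exponents $n/\varrho$ and $2n/\mu$ bounds $\int I_\mu[\cdot]e^{\varrho u}$ by $\|e^u\|_{L^n}^\varrho$ times the difference norm. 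Also $\|e^{\varrho u_{x_0,\sigma}}\|_{L^{n/\varrho}}=\|e^{\varrho u}\|_{L^{n/\varrho}}$ by conformal invariance.

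\textbf{Step 3 (conclusion).} Take the $L^{n/\varrho}(\tilde{\mc P})$ norm of $e^{\varrho u}(\tilde H_1+\tilde H_2)$ using the pointwise bound from Step 2. This yields \eqref{eq:see_you_again} directly, with no inversion change of variables needed, since the weight $e^{\varrho u}(1+|\log|\cdot-x_0||)$ already appears on the right-hand side.

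The main obstacle is Step 2, namely the uniformity in $\sigma\to\infty$. This includes controlling $\log|y-x_0|$ for $y$ far away in $\bar{\mc L}$ via the region split, and obtaining an $L^\infty$ bound on $I_\mu[e^{\varrho u_{x_0,\sigma}}]$ over $\tilde{\mc P}$ that is independent of $\sigma$.
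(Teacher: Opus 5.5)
Your proposal is correct and is essentially the paper's own proof. It uses the same pointwise bound by $\tilde H_1+\tilde H_2$ via Lemma~\ref{lemma:u_difference}, the Mean Value Theorem and the favorable sign of $(V-|\bb S^n|)\log\frac{\sigma}{|x-x_0|}$, then the same three-region kernel estimates with $\sigma\ge\Sigma$ to get $\tilde H_i\le C(1+|\log|x-x_0||)\|e^{\varrho u}-e^{\varrho u_{x_0,\sigma}}\|_{L^{n/\varrho}(\tilde{\mc P}(x_0,\sigma))}$, with no inversion at the end.

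One small slip: $I_\mu[e^{\varrho u_{x_0,\sigma}}]\le I_\mu[e^{\varrho u}]$ holds on $\tilde{\mc Q}(x_0,\sigma)$, not on its complement. This is harmless, since \eqref{eq:Imu_erhou_inversion} already gives $I_\mu[e^{\varrho u_{x_0,\sigma}}]\le\|I_\mu[e^{\varrho u}]\|_{L^\infty(\bb R^n)}$ on all of $\bb R^n\setminus B_\sigma(x_0)$, uniformly in $\sigma$.
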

\begin{proof}
The proof is similar to the proof of Lemma \ref{lemma:MS_start_main_estimate}, so only an outline of the proof is provided. Fix $\Sigma>0$ and $(x_0, \sigma)\in \bb R^n\times[\Sigma, \infty)$. For $x\in \tilde{\mc P}(x_0, \sigma)$ set
\begin{equation*}
\begin{split}
	\tilde H_1(x_0, \sigma; x)
	& = \int_{\tilde{\mc Q}(x_0, \sigma)}\bar {\mc L}(x_0, \sigma; x,y)I_\mu[(e^{\varrho u} - e^{\varrho u_{x_0, \sigma}})\chi_{\tilde{\mc P}(x_0, \sigma)}](y)e^{\varrho u(y)}\; \d y\\
	\tilde H_2(x_0, \sigma; x)
	& = \int_{\tilde{\mc P}(x_0, \sigma)}\bar {\mc L}(x_0, \sigma; x,y)I_\mu[e^{\varrho u_{x_0, \sigma}}](y)\left(e^{\varrho u(y)} - e^{\varrho u_{x_0, \sigma}(y)}\right)\; \d y. 
\end{split}
\end{equation*}
where $\bar {\mc L}(x_0, \sigma; x,y)$ is as in \eqref{eq:L_bar}. For any $x\in \tilde{\mc P}(x_0, \sigma)$, estimating similarly to \eqref{eq:get_H1_H2} (using the Mean Value Theorem, Lemma \ref{lemma:u_difference} and Lemma \ref{lemma:Imu_difference}) we have
\begin{equation*}
\begin{split}
	0
	< & \; \frac{e^{\varrho u(x)} - e^{\varrho u_{x_0, \sigma}(x)}}{c_n(n - 1)!\varrho e^{\varrho u(x)}}\\
	\leq & \; \frac{u(x) - u_{x_0, \sigma}(x)}{c_n(n - 1)!}\\
	= &\; (V- |\bb S^n|)\log\frac\sigma{|x- x_0|}\\
	& + \int_{\bb R^n\setminus B_\sigma(x_0)}\mc L(x_0, \sigma; x, y)\left(I_\mu[e^{\varrho u}](y)e^{\varrho u(y)} - I_\mu[e^{\varrho u_{x_0, \sigma}}](y)e^{\varrho u_{x_0, \sigma}(y)}\right)\; \d y\\
	\leq & \; \int_{\tilde{\mc Q}(x_0, \sigma)}\mc L(x_0, \sigma; x, y)\left(I_\mu[e^{\varrho u}](y) - I_\mu[e^{\varrho u_{x_0, \sigma}}](y)\right)e^{\varrho u(y)}\; \d y\\
	& + \int_{\tilde{\mc P}(x_0, \sigma)}I_\mu[e^{\varrho u_{x_0,\sigma}}](y)\left(e^{\varrho u(y)} - e^{\varrho u_{x_0, \sigma}(y)}\right)\; \d y\\
	& \leq \tilde H_1(x_0, \sigma; x) + \tilde H_2(x_0, \sigma; x). 
\end{split}
\end{equation*}
Consequently, 
\begin{equation}
\label{eq:belong}
	\|e^{\varrho u} - e^{\varrho u_{x_0, \sigma}}\|_{L^{n/\varrho}(\tilde{\mc P}(x_0, \sigma))}
	\leq C\sum_{i = 1}^2\|e^{\varrho u}\tilde H_i(x_0, \sigma; \cdot)\|_{L^{n/\varrho}(\tilde{\mc P}(x_0, \sigma))}
\end{equation}
for some constant $C>0$ that depends only on $n$. Estimating similarly to the pointwise estimates of $H_i(x_0, \sigma; x)$ in the proof of Lemma \ref{lemma:MS_start_main_estimate} we obtain $C>0$ independent of $\sigma\in [\Sigma, \infty)$ such that for each $i\in \{1, 2\}$ and every $x\in \tilde{\mc P}(x_0, \sigma)$ there holds 
\begin{equation}
\label{eq:FJM}
\begin{split}
	0
	& \leq \tilde H_i(x_0, \sigma; x)\\
	& \leq C\left(1 + \log\frac{|x- x_0|}{\Sigma}\right)\|e^{\varrho u} - e^{\varrho u_{x_0, \sigma}}\|_{L^{n/\varrho}(\tilde{\mc P}(x_0, \sigma))}\\
	& \leq C\left(1 + |\log|x- x_0||\right)\|e^{\varrho u} - e^{\varrho u_{x_0, \sigma}}\|_{L^{n/\varrho}(\tilde{\mc P}(x_0, \sigma))}. 
\end{split}
\end{equation}
Bringing estimate \eqref{eq:FJM} back to \eqref{eq:belong} yields the asserted estimate. 
\end{proof}
\begin{lemma}
\label{lemma:own_way}
Let $u$ be a distributional solution to \eqref{eq:entire_nonlocal} for which $V> |\bb S^n|$ and for which \eqref{eq:asymtpotic_growth_assumption} holds. For any $x_0\in \bb R^n$ there is $\tilde \sigma(x_0)>1$ such that both 
\begin{equation}
\label{eq:inverted_dominates}
	u\leq u_{x_0, \sigma} \quad \text{ in }\bb R^n\setminus B_\sigma(x_0)
\end{equation}
and
\begin{equation}
\label{eq:Imu_inverted_dominates}
	I_\mu[e^{\varrho u}]\leq I_\mu[e^{\varrho u_{x_0, \sigma}}]
	\quad \text{ in }\bb R^n\setminus B_\sigma(x_0)
\end{equation}
whenever $\sigma\in (\tilde \sigma(x_0), \infty)$. 
\end{lemma}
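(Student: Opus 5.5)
We mirror the proof of Lemma \ref{lemma:moving_spheres_can_start}, with the roles of $u$ and $u_{x_0,\sigma}$ exchanged and the limit $\sigma\to0^+$ replaced by $\sigma\to\infty$. Fix $x_0\in\bb R^n$ and apply Lemma \ref{lemma:MS_start_main_estimate_2} with $\Sigma=1$. This gives a constant $C>0$, independent of $\sigma\in[1,\infty)$, such that \eqref{eq:see_you_again} holds for every $\sigma\geq 1$.

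Lemma \ref{lemma:MS_start_main_estimate_2} needs $\|e^u\|_{L^\infty(\bb R^n)}<\infty$ and $\|I_\mu[e^{\varrho u}]\|_{L^\infty(\bb R^n)}<\infty$. The first is Corollary \ref{coro:bounded_above}. The second is Lemma \ref{lemma:Imu_initial_decay}, whose hypothesis \eqref{eq:V_lower_bound_assumption} holds trivially since $V>|\bb S^n|$.

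The key step is to show that the prefactor
\begin{equation*}
	\|e^{\varrho u}(1 + |\log|\cdot - x_0||)\|_{L^{n/\varrho}(\tilde{\mc P}(x_0,\sigma))}
\end{equation*}
tends to $0$ as $\sigma\to\infty$. Since $\tilde{\mc P}(x_0,\sigma)\subset\bb R^n\setminus B_\sigma(x_0)$, it suffices to show that
\begin{equation*}
	\int_{\bb R^n}e^{nu(x)}\left(1+|\log|x-x_0||\right)^{n/\varrho}\,\d x<\infty,
\end{equation*}
because the tail of a convergent integral vanishes. For this we use the decay estimate \eqref{eq:eu_decay}: for any small $\epsilon>0$ we have $e^{u(x)}\leq C|x|^{-(2V/|\bb S^n|-\epsilon)}$ for $|x|$ large. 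Since $V>|\bb S^n|$, we may choose $\epsilon$ so that $2V/|\bb S^n|-\epsilon>2>1$. Then $e^{nu}\leq C|x|^{-n(1+\delta)}$ for some $\delta>0$ at infinity, which absorbs the logarithmic weight. On bounded sets the integrand is integrable because $u$ is bounded above and $|\log|x-x_0||^{n/\varrho}$ is locally integrable.

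Once the prefactor is small, choose $\tilde\sigma(x_0)>1$ so that $C$ times the prefactor is at most $\frac12$ for all $\sigma>\tilde\sigma(x_0)$. Then \eqref{eq:see_you_again} gives
\begin{equation*}
	\|e^{\varrho u}-e^{\varrho u_{x_0,\sigma}}\|_{L^{n/\varrho}(\tilde{\mc P}(x_0,\sigma))}\leq\tfrac12\|e^{\varrho u}-e^{\varrho u_{x_0,\sigma}}\|_{L^{n/\varrho}(\tilde{\mc P}(x_0,\sigma))}.
\end{equation*}
The left side is finite because $e^{\varrho u}\in L^{n/\varrho}(\bb R^n)$, and by the change of variables $x\mapsto x^{x_0,\sigma}$ also $e^{\varrho u_{x_0,\sigma}}\in L^{n/\varrho}(\bb R^n)$. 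Hence the norm vanishes, and since the integrand is positive on $\tilde{\mc P}(x_0,\sigma)$ we get $|\tilde{\mc P}(x_0,\sigma)|=0$.

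Lemma \ref{lemma:one_implies_both} then gives $\tilde{\mc Q}(x_0,\sigma)=\emptyset$, which is \eqref{eq:Imu_inverted_dominates}. Finally, $u$ and $u_{x_0,\sigma}$ are continuous on $\bb R^n\setminus\overline B_\sigma(x_0)$ (smoothness from Proposition \ref{prop:distributional_solutions_smooth}), so $\tilde{\mc P}(x_0,\sigma)$ is open. An open set of measure zero is empty, so \eqref{eq:inverted_dominates} holds on $\bb R^n\setminus\overline B_\sigma(x_0)$, and on $\partial B_\sigma(x_0)$ it holds with equality.

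The main obstacle is checking that Lemma \ref{lemma:MS_start_main_estimate_2} really gives a constant that is uniform as $\sigma\to\infty$; the statement asserts this. The decay argument for the prefactor is the place where $V>|\bb S^n|$ is genuinely used, and it goes through with the margin to spare.
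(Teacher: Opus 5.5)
Your proof is correct and follows the paper's argument. Like the paper, you apply Lemma \ref{lemma:MS_start_main_estimate_2} with $\Sigma=1$, show that $\|e^{\varrho u}(1+|\log|\cdot-x_0||)\|_{L^{n/\varrho}(\bb R^n\setminus B_\sigma(x_0))}\to 0$ as $\sigma\to\infty$ from the decay of $e^u$ at infinity, absorb the right-hand side, and conclude with Lemma \ref{lemma:one_implies_both}. The differences are minor: you take the decay from \eqref{eq:eu_decay}, which only uses that $p$ is bounded above, instead of Theorem \ref{theorem:asymptotic} with \eqref{eq:asymtpotic_growth_assumption}, and you choose the decay exponent strictly larger than $1$, which is what the logarithmic weight actually needs (the paper's $\frac{2V}{|\bb S^n|}-\epsilon\geq 1$ should be read as strict).
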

\begin{proof}
Let $x_0\in \bb R^n$ and apply Lemma \ref{lemma:MS_start_main_estimate_2} with $\Sigma = 1$ to obtain a positive constant $C$ for which \eqref{eq:see_you_again} holds whenever $\sigma\geq 1$. Theorem \ref{theorem:asymptotic} and assumption \eqref{eq:asymtpotic_growth_assumption} imply the existence of $b\in \bb R$ for which 
\begin{equation}
\label{eq:way_back}
	u(x)
	= b - \frac{2V}{|\bb S^n|}\log|x| + \circ(\log|x|)
\end{equation}
as $|x|\to\infty$. Since $V> |\bb S^n|> |\bb S^n|/2$ we fix $\epsilon>0$ such that $\frac{2V}{|\bb S^n|} - \epsilon \geq 1$. For any such $\epsilon$ \eqref{eq:way_back} guarantees the existence of $R = R(\epsilon)\gg 1 + |x_0|$ such that 
\begin{equation*}
	e^{nu(x)}
	\leq e^{nb}|x|^{-n\left(\frac{2V}{|\bb S^n|} - \epsilon\right)}
	\quad \text{ for }x\in \bb R^n\setminus B_R(x_0). 
\end{equation*}
For any such $R$ we have 
\begin{equation}
\label{eq:integrable_at_infinity}
	e^{\varrho u}(1 + |\log|\cdot - x_0||)\in L^{n/\varrho}(\bb R^n\setminus B_R(x_0)), 
\end{equation}
so 
\begin{equation*}
	\|e^{\varrho u}(1 + |\log|\cdot - x_0||)\|_{L^{n/\varrho}(\tilde{\mc P}(x_0, \sigma))}
	\leq \|e^{\varrho u}(1 + |\log|\cdot - x_0||)\|_{L^{n/\varrho}(\bb R^n\setminus B_\sigma(x_0))}
	\to 0
\end{equation*}
as $\sigma\to\infty$. Therefore, for $\tilde \sigma(x_0)$ sufficiently large and for $\sigma\geq \tilde \sigma(x_0)$ estimate \eqref{eq:see_you_again} gives
\begin{equation*}
	\|e^{\varrho u}-e^{\varrho u_{x_0, \sigma}}\|_{L^{n/\varrho}(\tilde{\mc P}(x_0, \sigma))}
	\leq \frac 12 \|e^{\varrho u}-e^{\varrho u_{x_0, \sigma}}\|_{L^{n/\varrho}(\tilde{\mc P}(x_0, \sigma))}
\end{equation*}
and hence $|\tilde{\mc P}(x_0, \sigma)| = 0$. For any such $\sigma$ Lemma \ref{lemma:one_implies_both} implies that $|\tilde{\mc Q}(x_0, \sigma)|= 0$. 
\end{proof}
For any $x_0\in \bb R^n$, Lemma \ref{lemma:own_way} guarantees that
\begin{equation}
\label{eq:Sigma_tilde_x0}
	\tilde \Sigma(x_0)
	= \inf\{\beta> 0: \text{ both \eqref{eq:inverted_dominates} and \eqref{eq:Imu_inverted_dominates} hold for all $\sigma> \beta$}\}
\end{equation}
is well-defined in $[0, \infty)$. 
\begin{lemma}
\label{lemma:tilde_critical_sphere_all_good}
Let $u$ be a distributional solution to \eqref{eq:entire_nonlocal} for which $V > |\bb S^n|$ and \eqref{eq:asymtpotic_growth_assumption} holds. If $x_0\in \bb R^n$ with $\tilde\Sigma(x_0)> 0$ then $|\tilde{\mc P}(x_0, \tilde\Sigma(x_0))| + |\tilde{\mc Q}(x_0, \tilde\Sigma(x_0))| = 0$. 
\end{lemma}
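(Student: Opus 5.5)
The argument mirrors the proof of Lemma \ref{lemma:critical_radius_soft_inequalities}, with the roles of $u$ and $u_{x_0,\sigma}$ interchanged and the spheres approaching $\tilde\Sigma := \tilde\Sigma(x_0)$ from above. By Lemma \ref{lemma:one_implies_both}, it suffices to show that $|\tilde{\mc P}(x_0, \tilde\Sigma)| = 0$. We argue by contradiction and suppose $|\tilde{\mc P}(x_0, \tilde\Sigma)| > 0$. By the definition \eqref{eq:Sigma_tilde_x0} of $\tilde\Sigma$, for every $\sigma > \tilde\Sigma$ we have $u \leq u_{x_0,\sigma}$ on $\bb R^n\setminus B_\sigma(x_0)$. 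The aim is to contradict this for some $\sigma$ slightly larger than $\tilde\Sigma$.

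\textbf{Step 1.} Since $\tilde{\mc P}(x_0, \tilde\Sigma) = \bigcup_{\delta>0,\,R>\tilde\Sigma}\{x\in A(\tilde\Sigma, R): u(x) - u_{x_0,\tilde\Sigma}(x) > \delta\}$ is an increasing union, we can choose $\delta > 0$ and $R \gg 1$ such that the set $E = \{x\in A(\tilde\Sigma + \eta, R): u(x) - u_{x_0,\tilde\Sigma}(x) > \delta\}$ satisfies $|E| \geq \frac12 |\tilde{\mc P}(x_0,\tilde\Sigma)| > 0$ for some small $\eta > 0$. We stay away from the inner sphere so that $E \subset \bb R^n\setminus \overline B_\sigma(x_0)$ for all $\sigma \in (\tilde\Sigma, \tilde\Sigma+\eta)$.

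\textbf{Step 2.} For $x\in E$ and $\sigma \in (\tilde\Sigma, \tilde\Sigma+\eta)$ we write
\begin{equation*}
u(x) - u_{x_0,\sigma}(x) > \delta + u(x^{x_0,\tilde\Sigma}) - u(x^{x_0,\sigma}) + 2\log\frac{\tilde\Sigma}{\sigma}.
\end{equation*}
The inverted points $x^{x_0,\sigma}$ and $x^{x_0,\tilde\Sigma}$, for $x\in \overline{A}(\tilde\Sigma+\eta, R)$, lie in the compact annulus $\overline{A}(\tilde\Sigma^2/(2R), \tilde\Sigma+\eta)$, which avoids $x_0$ because $\tilde\Sigma > 0$; the hypothesis $\tilde\Sigma > 0$ is exactly what is needed here. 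On this annulus $u$ is uniformly continuous by Proposition \ref{prop:distributional_solutions_smooth}. Moreover $|x^{x_0,\sigma} - x^{x_0,\tilde\Sigma}| = (\sigma^2 - \tilde\Sigma^2)/|x - x_0| \to 0$ uniformly for $x\in E$ as $\sigma\to\tilde\Sigma^+$. Hence for $\sigma$ close enough to $\tilde\Sigma$ we get $|u(x^{x_0,\tilde\Sigma}) - u(x^{x_0,\sigma})| < \delta/4$ and $2\log(\sigma/\tilde\Sigma) < \delta/4$ on $E$, which gives $u(x) - u_{x_0,\sigma}(x) > \delta/2 > 0$ for all $x\in E$.

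\textbf{Step 3.} Step 2 yields $E \subset \tilde{\mc P}(x_0,\sigma)$ with $|E| > 0$ for some $\sigma > \tilde\Sigma$, contradicting \eqref{eq:inverted_dominates} for that $\sigma$. Therefore $|\tilde{\mc P}(x_0,\tilde\Sigma)| = 0$, and Lemma \ref{lemma:one_implies_both} then gives $|\tilde{\mc Q}(x_0,\tilde\Sigma)| = 0$ as well.

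The only delicate point is making the uniform-continuity estimate in Step 2 uniform in $x\in E$. Keeping $E$ inside a bounded annulus that stays a fixed distance $\eta$ outside $\partial B_{\tilde\Sigma}(x_0)$ handles this, and the positivity of $\tilde\Sigma$ keeps the inverted annulus compact and away from $x_0$. Unlike the measure-based argument of Lemma \ref{lemma:critical_radius_soft_inequalities}, this gives a pointwise strict inequality on all of $E$ rather than only a measure estimate.
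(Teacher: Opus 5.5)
Your proof is correct and follows essentially the same route as the paper: reduce to $\tilde{\mc P}$ via Lemma \ref{lemma:one_implies_both}, choose $\delta$ and $R$, and use uniform continuity of $u$ on an inverted annulus to carry the strict inequality $u>u_{x_0,\sigma}$ to radii $\sigma$ slightly above $\tilde\Sigma(x_0)$, contradicting the definition \eqref{eq:Sigma_tilde_x0}. Your version is slightly cleaner: the buffer $\eta$ ensures $E\subset\bb R^n\setminus\overline B_\sigma(x_0)$, a point the paper's measure-based argument on $A(\tilde\Sigma,R)$ leaves implicit, and you get a pointwise rather than a measure estimate.
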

\begin{proof}
Proceeding by way of contradiction, suppose $x_0\in \bb R^n$ with $\tilde \sigma(x_0)> 0$ and $|\tilde{\mc P}(x_0, \sigma)| + |\tilde{\mc Q}(x_0, \sigma)|> 0$. For ease of notation we write $\tilde \Sigma$ in place of $\tilde \Sigma(x_0)$. Choose $\delta>0$ and $R\gg 1$ such that
\begin{equation*}
	3|\tilde{\mc P}(x_0, \tilde \Sigma)|
	\leq 4|\{x\in A(\tilde \Sigma, R): \delta< u(x) - u_{x_0,  \tilde \Sigma}(x)\}|. 
\end{equation*}
If $x\in A(\tilde \Sigma, R)$ with $\delta< u(x)- u_{x_0, \tilde \Sigma}(x)$ then for any $\sigma\in (\tilde \Sigma, 2\tilde\Sigma)$ sufficiently close to $\tilde \Sigma$ 
\ifdetails{\color{gray} 
(the closeness threshold depending on $\delta$)
}\fi 
we have 
\begin{equation*}
\begin{split}
	0
	& \geq u(x) - u_{x_0, \sigma}(x)\\
	& \geq \delta + u_{x_0, \tilde \Sigma}(x) - u_{x_0, \sigma}(x)\\
	& = \delta + u(x^{x_0, \tilde \Sigma}) - u(x^{x_0, \sigma}) + 2\log\frac{\tilde \Sigma}{\sigma}\\
	& > \frac\delta 2 + u(x^{x_0, \tilde \Sigma}) - u(x^{x_0, \sigma}). 
\end{split}
\end{equation*}
Therefore, 
\begin{equation}
\label{eq:it_goes_on}
	3|\tilde{\mc P}(x_0, \tilde \Sigma)|
	\leq 4|\{x\in A(\tilde \Sigma, R): \frac\delta2< u(x^{x_0, \sigma}) - u(x^{x_0, \tilde \Sigma})\}|. 
\end{equation}
On the other hand, since $u$ in uniformly continuous on $A(R^{-1}\tilde \Sigma^2, 4\tilde \Sigma^2)$, by choosing $\sigma\in (\tilde \Sigma, 2\tilde \Sigma)$ closer to $\tilde \Sigma$ if necessary we have 
\begin{equation*}
	|\{x\in A(\tilde \Sigma, R): \frac\delta2< u(x^{x_0, \sigma}) - u(x^{x_0, \tilde \Sigma})\}|
	< \frac 12|\tilde{\mc P}(x_0, \tilde \Sigma)|, 
\end{equation*}
which contradicts inequality \eqref{eq:it_goes_on}. 
\end{proof}
\begin{lemma}
\label{lemma:into_the_night}
Let $u$ be a distributional solution to \eqref{eq:entire_nonlocal} for which $V > |\bb S^n|$ and \eqref{eq:asymtpotic_growth_assumption} holds. If $x_0\in \bb R^n$ with $\tilde \Sigma(x_0)> 0$ then 
\begin{equation*}
	u(x)< u_{x_0, \tilde\Sigma(x_0)}(x)
	\quad \text{ for all }x\in \bb R^n\setminus\overline B_{\tilde\Sigma(x_0)}(x_0)
\end{equation*}
and
\begin{equation*}
	I_\mu[e^{\varrho u}](x)< I_\mu[e^{\varrho u_{x_0, \tilde\Sigma(x_0)}}](x)
	\quad \text{ for all }x\in \bb R^n\setminus\overline B_{\tilde\Sigma(x_0)}(x_0)
\end{equation*}
\end{lemma}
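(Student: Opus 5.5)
We follow the proof of the analogous statement in Subsection \ref{ss:rule_out_slow_decay} (the lemma giving \eqref{eq:critical_radius_strict_inequality} and \eqref{eq:critical_radius_Imu_strict_inequality}), with the roles of $u$ and $u_{x_0, \sigma}$ interchanged. Write $\tilde\Sigma = \tilde\Sigma(x_0)>0$.

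\textbf{Step 1: weak inequalities.} By Lemma \ref{lemma:tilde_critical_sphere_all_good} we have $|\tilde{\mc P}(x_0, \tilde\Sigma)| = 0$ and $|\tilde{\mc Q}(x_0, \tilde\Sigma)| = 0$. Both $u$ and $u_{x_0, \tilde\Sigma}$ are continuous on $\bb R^n\setminus \overline B_{\tilde\Sigma}(x_0)$, by Proposition \ref{prop:distributional_solutions_smooth}. The functions $I_\mu[e^{\varrho u}]$ and $I_\mu[e^{\varrho u_{x_0,\tilde\Sigma}}]$ are also continuous there, by Lemma \ref{lemma:Imu_Ck} together with \eqref{eq:Imu_erhou_inversion}. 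Since $\tilde{\mc P}$ and $\tilde{\mc Q}$ are open sets of measure zero, they are empty. Hence $u\le u_{x_0,\tilde\Sigma}$ and $I_\mu[e^{\varrho u}]\le I_\mu[e^{\varrho u_{x_0,\tilde\Sigma}}]$ on $\bb R^n\setminus\overline B_{\tilde\Sigma}(x_0)$.

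\textbf{Step 2: strict inequality for $u$.} Fix $x\in\bb R^n\setminus\overline B_{\tilde\Sigma}(x_0)$ and apply Lemma \ref{lemma:u_difference} with $\sigma = \tilde\Sigma$. Rearranged, it reads
\begin{equation*}
\frac{u(x)-u_{x_0,\tilde\Sigma}(x)}{(n-1)!c_n} = (V-|\bb S^n|)\log\frac{\tilde\Sigma}{|x-x_0|} - \int_{\bb R^n\setminus B_{\tilde\Sigma}(x_0)}\mc L\,\Big(I_\mu[e^{\varrho u_{x_0,\tilde\Sigma}}]e^{\varrho u_{x_0,\tilde\Sigma}} - I_\mu[e^{\varrho u}]e^{\varrho u}\Big)\,\d y.
\end{equation*}
By Step 1 the integrand is nonnegative, because $\mc L>0$ and both factors of the product are dominated by their Kelvin-transformed counterparts. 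So the integral term is $\le 0$ after the minus sign. The first term is strictly negative, because $V>|\bb S^n|$ and $|x-x_0|>\tilde\Sigma$. Therefore $u(x)<u_{x_0,\tilde\Sigma}(x)$. This is where the hypothesis $V>|\bb S^n|$ enters, exactly as $V<|\bb S^n|$ did in the slow-decay case.

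\textbf{Step 3: strict inequality for $I_\mu$.} By Lemma \ref{lemma:Imu_difference},
\begin{equation*}
I_\mu[e^{\varrho u_{x_0,\tilde\Sigma}}](x)-I_\mu[e^{\varrho u}](x)=\int_{\bb R^n\setminus B_{\tilde\Sigma}(x_0)}\mc K(x_0,\tilde\Sigma;x,y)\big(e^{\varrho u_{x_0,\tilde\Sigma}(y)}-e^{\varrho u(y)}\big)\,\d y .
\end{equation*}
The kernel satisfies $\mc K>0$, and by Step 2 the difference $e^{\varrho u_{x_0,\tilde\Sigma}}-e^{\varrho u}$ is strictly positive on the exterior set, which has positive measure. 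So the integral is strictly positive, giving $I_\mu[e^{\varrho u}](x)< I_\mu[e^{\varrho u_{x_0,\tilde\Sigma}}](x)$.

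The only delicate point is the continuity needed in Step 1 to upgrade "measure zero" to "empty". It also requires checking that the remark after Lemma \ref{lemma:Imu_difference} allows the identities to be used pointwise; this holds because $u$ is bounded above by Corollary \ref{coro:bounded_above}. Everything else is a sign check.
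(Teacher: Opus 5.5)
Your proposal is correct and follows the paper's proof. The paper likewise takes the weak inequalities from Lemma \ref{lemma:tilde_critical_sphere_all_good}, obtains strictness for $u$ from the negative sign of $(V-|\bb S^n|)\log\frac{\tilde\Sigma}{|x-x_0|}$ in Lemma \ref{lemma:u_difference}, and then deduces strictness for $I_\mu$ from Lemma \ref{lemma:Imu_difference}; your continuity argument, which upgrades ``measure zero'' to ``empty'', simply makes explicit a step the paper uses without comment.
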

\begin{proof}
Let $x_0\in \bb R^n$ with $\tilde \Sigma(x_0)> 0$ and, for ease of notation, write $\tilde \Sigma$ in place of $\tilde \Sigma(x_0)$. Lemma \ref{lemma:tilde_critical_sphere_all_good} guarantees that both $u\leq u_{x_0, \tilde \Sigma}$ and $I_\mu[e^{\varrho u}]\leq I_\mu[e^{\varrho u_{x_0, \tilde \Sigma}}]$ in $\bb R^n\setminus B_{\tilde \Sigma}(x_0)$. Therefore, for any $x\in \bb R^n\setminus \bar B_{\tilde \Sigma}(x_0)$ Lemma \ref{lemma:u_difference} gives
\begin{equation*}
\begin{split}
	u_{x_0, \tilde \Sigma}(x)&  - u(x)\\
	& > u_{x_0, \tilde \Sigma}(x) - u(x) + c_n(n - 1)!(V- |\bb S^n|)\log\frac{\tilde \Sigma}{|x - x_0|}\\
	& = \int_{\bb R^n\setminus B_{\tilde \Sigma}(x_0)}\mc L(x_0, \tilde \Sigma; x, y)\left(I_\mu[e^{\varrho u_{x_0, \tilde \Sigma}}(y)e^{\varrho u_{x_0, \tilde\Sigma}(y)} - I_\mu[e^{\varrho u}](y)e^{\varrho u(y)}\right)\; \d y\\
	& \geq 0, 
\end{split}
\end{equation*}
thereby establishing the first of the asserted equalities. The second of the asserted equalities now follows immediately from Lemma \ref{lemma:Imu_difference}. 
\ifdetails{\color{gray} 
Indeed, 
\begin{equation*}
\begin{split}
	I_\mu[e^{\varrho u_{x_0, \tilde \Sigma}}](x)&  - I_\mu[e^{\varrho u}](x)\\
	& = \int_{\bb R^n\setminus B_{\tilde \Sigma}(x_0)}\mc K(x_0, \tilde \Sigma; x, y)\left(e^{\varrho u_{x_0, \tilde \Sigma}(y)} - e^{\varrho u(y)}\right)\; \d y\\
	& > 0. 
\end{split}
\end{equation*}
}\fi
\end{proof}
\begin{lemma}
\label{lemma:tilde_approach_critial_radius}
If $x_0\in \bb R^n$ with $\tilde \Sigma(x_0)>0$ then 
\begin{equation*}
	\lim_{\sigma\to \tilde \Sigma(x_0)^-}\|e^{\varrho u}(1 + |\log|\cdot - x_0||)\|_{L^{n/\varrho}(\tilde{\mc P}(x_0, \sigma))}
	= 0. 
\end{equation*}
\end{lemma}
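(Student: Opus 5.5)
The plan is to imitate the proof of Lemma \ref{lemma:bad_set_vanishing_measure}, now approaching $\tilde\Sigma=\tilde\Sigma(x_0)$ from below, and to use the strict inequalities of Lemma \ref{lemma:into_the_night} in place of \eqref{eq:critical_radius_strict_inequality}. Set
\begin{equation*}
	g = e^{\varrho u}(1 + |\log|\cdot - x_0||).
\end{equation*}
Restrict to $\sigma\in(\tilde\Sigma/2,\tilde\Sigma)$. Then $\tilde{\mc P}(x_0,\sigma)\subset\bb R^n\setminus B_{\tilde\Sigma/2}(x_0)$, so the logarithmic singularity at $x_0$ plays no role. On any bounded annulus $\bar A(\tilde\Sigma/2,R)$ the function $g$ is bounded by Corollary \ref{coro:bounded_above}. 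At infinity, $g\in L^{n/\varrho}(\bb R^n\setminus B_R(x_0))$ by \eqref{eq:integrable_at_infinity}, which was derived from \eqref{eq:way_back} using $V>|\bb S^n|$. Hence $g\in L^{n/\varrho}(\bb R^n\setminus B_{\tilde\Sigma/2}(x_0))$, and it suffices to show that $\tilde{\mc P}(x_0,\sigma)$ is eventually contained in a set on which the $L^{n/\varrho}$ norm of $g$ is as small as we like.

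Fix $\epsilon\in(0,\tilde\Sigma]$ and $R>\tilde\Sigma+\epsilon$. For $\sigma\in(\tilde\Sigma/2,\tilde\Sigma)$ split
\begin{equation*}
	\tilde{\mc P}(x_0,\sigma)\subset A(\sigma,\tilde\Sigma+\epsilon)\cup A(R,\infty)\cup\bigl(\bar A(\tilde\Sigma+\epsilon,R)\cap\tilde{\mc P}(x_0,\sigma)\bigr).
\end{equation*}
On the first piece $g\le C$ with $C$ independent of $\sigma$ and $\epsilon$, and $|A(\sigma,\tilde\Sigma+\epsilon)|\to|A(\tilde\Sigma,\tilde\Sigma+\epsilon)|\le C\epsilon$ as $\sigma\to\tilde\Sigma^-$. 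So its contribution to $\|g\|_{L^{n/\varrho}}^{n/\varrho}$ is at most $C\epsilon$ in the limit. The second piece contributes $\|g\|_{L^{n/\varrho}(A(R,\infty))}$, which tends to $0$ as $R\to\infty$ uniformly in $\sigma$.

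The main step is to show that the third piece is empty for $\sigma$ close to $\tilde\Sigma$. By Lemma \ref{lemma:into_the_night}, $u<u_{x_0,\tilde\Sigma}$ on the compact set $\bar A(\tilde\Sigma+\epsilon,R)$, and both functions are continuous there, so there is $\delta>0$ with $u_{x_0,\tilde\Sigma}-u\ge\delta$ on it. If $y\in\bar A(\tilde\Sigma+\epsilon,R)\cap\tilde{\mc P}(x_0,\sigma)$, then $u(y)>u_{x_0,\sigma}(y)$, and therefore
\begin{equation*}
	\delta< u_{x_0,\tilde\Sigma}(y)-u_{x_0,\sigma}(y) = u(y^{x_0,\tilde\Sigma})-u(y^{x_0,\sigma})+2\log\frac{\tilde\Sigma}{\sigma}.
\end{equation*}
For such $y$ and $\sigma$, the points $y^{x_0,\tilde\Sigma}$ and $y^{x_0,\sigma}$ lie in the fixed compact annulus $\bar A(\tilde\Sigma^2/(4R),\tilde\Sigma)$. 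Moreover $|y^{x_0,\tilde\Sigma}-y^{x_0,\sigma}|\le(\tilde\Sigma^2-\sigma^2)/(\tilde\Sigma+\epsilon)\to0$ uniformly in $y$. Uniform continuity of $u$ on that annulus (by Proposition \ref{prop:distributional_solutions_smooth}) then makes the right-hand side smaller than $\delta$ for all $\sigma$ sufficiently close to $\tilde\Sigma$, uniformly in $y$. This is a contradiction, so the third piece is empty. This is even stronger than the measure estimate used in Lemma \ref{lemma:bad_set_vanishing_measure}.

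Combining the three pieces gives
\begin{equation*}
	\limsup_{\sigma\to\tilde\Sigma^-}\|g\|_{L^{n/\varrho}(\tilde{\mc P}(x_0,\sigma))}^{n/\varrho}\le C\epsilon+\|g\|_{L^{n/\varrho}(A(R,\infty))}^{n/\varrho}.
\end{equation*}
Letting $\epsilon\to0$ and $R\to\infty$ yields the claim. I expect the only delicate point to be checking that all constants are uniform in $\sigma\in(\tilde\Sigma/2,\tilde\Sigma)$. In particular, the global integrability of $g$ near infinity genuinely requires $V>|\bb S^n|$ through \eqref{eq:integrable_at_infinity}, so I would make explicit that this lemma is used under the standing hypotheses of Lemma \ref{lemma:into_the_night}.
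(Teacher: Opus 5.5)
Your proposal is correct and follows essentially the paper's argument: the same three-piece splitting of $\tilde{\mc P}(x_0,\sigma)$ into a thin shell near the sphere, the exterior of a large ball, and a compact middle annulus, with the middle piece handled by the strict inequality of Lemma \ref{lemma:into_the_night} together with uniform continuity of $u$ on a compact annulus around $x_0$. Your middle annulus $\bar A(\tilde\Sigma+\epsilon,R)$ is the right choice: the paper writes $A(\tilde\Sigma-\epsilon,R)$, which reaches $\partial B_{\tilde\Sigma}(x_0)$ where $u=u_{x_0,\tilde\Sigma}$, so the claimed uniform gap $\delta$ does not exist there, and your showing that the middle piece is eventually empty (rather than of vanishing measure via a sequence argument) is a harmless strengthening.
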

\begin{proof}
Let $x_0\in \bb R^n$ with $\tilde\Sigma(x_0)> 0$ and for ease of notation write $\tilde \Sigma$ in place of $\tilde \Sigma(x_0)$. As in the proof of Lemma \ref{lemma:own_way} (see \eqref{eq:integrable_at_infinity}) we have $e^{\varrho u}(1 + |\log|\cdot - x_0||)\in L^{n/\varrho}(\bb R^n\setminus B_R)$ whenever $R$ is sufficiently large. Using this together with Proposition \ref{prop:distributional_solutions_smooth} one can verify that $e^{\varrho u}(1 + |\log|\cdot - x_0||)\in L^{n/\varrho}(\bb R^n)$. Now, for any $\epsilon\in (0, \tilde \Sigma/2]$ and any $\sigma\in [\tilde \Sigma - \epsilon, \tilde \Sigma]$ we have
\begin{equation*}
	\tilde{\mc P}(x_0, \sigma)
	\subset A(\tilde \Sigma - \epsilon, \tilde \Sigma)\cup A(R, \infty)\cup\left(\tilde {\mc P}(x_0, \sigma)\cap A(\tilde \Sigma - \epsilon, R)\right). 
\end{equation*}
Moreover we have $|A(\tilde \Sigma - \epsilon, \Tilde \sigma)|\leq C\epsilon$ for some $\sigma$-independent positive constant $C$ and we have 
\begin{equation*}
	\lim_{R\to 0}\|e^{\varrho u}(1 + |\log|\cdot - x_0||)\|_{L^{n/\varrho}(A(R, \infty))}
	= 0. 
\end{equation*}
Therefore, to complete the proof it suffices to show that the following limit holds for every $\epsilon\in (0, \tilde \Sigma/2]$ and every $R\gg 1$: 
\begin{equation*}
	\lim_{\sigma \to \Tilde \Sigma^-}|\tilde{\mc P}(x_0, \sigma)\cap A(\tilde \Sigma - \epsilon, R)| = 0. 
\end{equation*}
We do so by way of contradiction. Accordingly, suppose $\epsilon\in (0, \tilde \Sigma/2]$, $R\gg 1$, $\ell> 0$ and $(\sigma_k)_{k = 1}^\infty\subset (\tilde \Sigma/2, \tilde \Sigma)$ satisfy both $\sigma_k\to \tilde \Sigma^-$ and
\begin{equation*}
	\ell
	< |\tilde{\mc P}(x_0, \sigma_k)\cap A(\tilde \Sigma - \epsilon, R)|
	\quad \text{ for all $k\in \bb N$}. 
\end{equation*}
In view of Lemma \ref{lemma:into_the_night} there is $\delta> 0$ such that 
\begin{equation*}
	u_{x_0, \tilde \Sigma}(x) - u(x)\geq \delta
	\quad \text{ in }A(\tilde \Sigma - \epsilon, R). 
\end{equation*}
For any $y\in A(\tilde \Sigma - \epsilon, R)\cap \tilde {\mc P}(x_0, \sigma_k)$ and for sufficiently large $k$ we have
\begin{equation*}
\begin{split}
	\delta 
	& \leq u_{x_0, \tilde \Sigma}(y) - u(y)\\
	& < u_{x_0, \tilde \Sigma}(y) - u_{x_0, \sigma_k}(y)\\
	& = u(y^{x_0, \tilde \Sigma}) - u(y^{x_0, \sigma_k}) + 2\log \frac{\tilde \Sigma}{\sigma_k}\\
	& < u(y^{x_0, \tilde \Sigma}) - u(y^{x_0, \sigma_k}) + \frac \delta 2. 
\end{split}
\end{equation*}
Therefore, still for $k$ large, we have
\begin{equation}
\label{eq:everyone_has_podcast}
\begin{split}
	\ell
	& < |\tilde{\mc P}(x_0, \sigma_k)\cap A(\tilde \Sigma -\epsilon, R)|\\
	& \leq |\{y\in \bar A(\tilde \Sigma - \epsilon, R): \frac \delta 2<  u(y^{x_0, \tilde \Sigma}) - u(y^{x_0, \sigma_k})\}|. 
\end{split}
\end{equation}
On the other hand, by the uniform continuity of $u$ on $A((4R)^{-1}\tilde \Sigma^2, 2\tilde \Sigma)$ we have
\begin{equation*}
	\lim_{k\to\infty}|\{y\in \bar A(\tilde \Sigma - \epsilon, R): \frac \delta 2<  u(y^{x_0, \tilde \Sigma}) - u(y^{x_0, \sigma_k})\}|
	= 0, 
\end{equation*}
which contradicts \eqref{eq:everyone_has_podcast}. 
\end{proof}
\begin{lemma}
\label{lemma:radius_fully_shrinks}
If $u$ is a distributional solution to \eqref{eq:entire_nonlocal} for which $V > |\bb S^n|$ and \eqref{eq:asymtpotic_growth_assumption} holds then for every $x_0\in \bb R^n$ there holds $\tilde \Sigma(x_0) = 0$. 
\end{lemma}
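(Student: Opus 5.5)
We argue by contradiction, following the proof of Lemma \ref{lemma:critical_radius_infinite} with the roles of $u$ and $u_{x_0,\sigma}$ interchanged and the sphere now shrinking instead of growing. Suppose $x_0\in \bb R^n$ satisfies $\tilde\Sigma := \tilde\Sigma(x_0)>0$. By the definition \eqref{eq:Sigma_tilde_x0}, we can choose a sequence $r_k\in [\tilde\Sigma/2,\tilde\Sigma)$ with $r_k\to\tilde\Sigma^-$ and points $x_k\in \bb R^n\setminus\overline B_{r_k}(x_0)$ at which either $u(x_k)>u_{x_0,r_k}(x_k)$ or $I_\mu[e^{\varrho u}](x_k)>I_\mu[e^{\varrho u_{x_0,r_k}}](x_k)$. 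Passing to a subsequence, one of the two alternatives holds for every $k$, so the argument splits into two cases.

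\textbf{Case 1: the nonlocal inequality fails for all $k$.} Then $x_k\in\tilde{\mc Q}(x_0,r_k)$, and Lemma \ref{lemma:one_implies_both} gives $|\tilde{\mc P}(x_0,r_k)|>0$ for all $k$.

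\textbf{Case 2: $u(x_k)>u_{x_0,r_k}(x_k)$ for all $k$.} The chain of inequalities in the proof of Lemma \ref{lemma:MS_start_main_estimate_2}, evaluated at $x=x_k$, $\sigma=r_k$, still holds. Here we use that $V>|\bb S^n|$ and $r_k<|x_k-x_0|$, so the term $(V-|\bb S^n|)\log\frac{r_k}{|x_k-x_0|}$ is negative and can be dropped. The left-hand side is strictly positive, while the right-hand side is an integral over $\tilde{\mc Q}(x_0,r_k)\cup\tilde{\mc P}(x_0,r_k)$ of nonnegative quantities. Hence $|\tilde{\mc P}(x_0,r_k)|+|\tilde{\mc Q}(x_0,r_k)|>0$, and Lemma \ref{lemma:one_implies_both} again yields $|\tilde{\mc P}(x_0,r_k)|>0$.

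So in both cases $|\tilde{\mc P}(x_0,r_k)|>0$ for every $k$. Now apply Lemma \ref{lemma:MS_start_main_estimate_2} with $\Sigma=\tilde\Sigma/2$, so that the constant $C$ is uniform over all $\sigma=r_k\ge\tilde\Sigma/2$. By Lemma \ref{lemma:tilde_approach_critial_radius},
\begin{equation*}
\|e^{\varrho u}(1+|\log|\cdot-x_0||)\|_{L^{n/\varrho}(\tilde{\mc P}(x_0,r_k))}\to 0 .
\end{equation*}
Hence for large $k$, \eqref{eq:see_you_again} gives
\begin{equation*}
\|e^{\varrho u}-e^{\varrho u_{x_0,r_k}}\|_{L^{n/\varrho}(\tilde{\mc P}(x_0,r_k))}\le \tfrac12\|e^{\varrho u}-e^{\varrho u_{x_0,r_k}}\|_{L^{n/\varrho}(\tilde{\mc P}(x_0,r_k))}.
\end{equation*}
This norm is finite, because $e^{\varrho u}\in L^{n/\varrho}$ and $e^{\varrho u_{x_0,r_k}}$ is bounded by $e^{\varrho u}$ on $\tilde{\mc P}(x_0,r_k)$. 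Therefore the norm vanishes. But $e^{\varrho u}-e^{\varrho u_{x_0,r_k}}>0$ on $\tilde{\mc P}(x_0,r_k)$, so this forces $|\tilde{\mc P}(x_0,r_k)|=0$, a contradiction. Hence $\tilde\Sigma(x_0)=0$.

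The main obstacle is making sure that Lemma \ref{lemma:tilde_approach_critial_radius} genuinely applies along the sequence $r_k\to\tilde\Sigma^-$. That lemma rests on the strict inequalities of Lemma \ref{lemma:into_the_night} at $\sigma=\tilde\Sigma$, which are available because $\tilde\Sigma>0$ and $V>|\bb S^n|$. A secondary point is checking that the constant in Lemma \ref{lemma:MS_start_main_estimate_2} is uniform in $\sigma\ge\tilde\Sigma/2$; this is exactly why we take $\Sigma=\tilde\Sigma/2$ rather than $\Sigma=1$.
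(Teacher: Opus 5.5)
Your proposal is correct and follows the paper's proof essentially step for step. It uses the same two-case split (failure of the nonlocal inequality versus failure of the pointwise inequality), the same appeal to Lemma \ref{lemma:one_implies_both} to get $|\tilde{\mc P}(x_0,r_k)|>0$, and the same contradiction from Lemma \ref{lemma:tilde_approach_critial_radius} combined with Lemma \ref{lemma:MS_start_main_estimate_2} with $\Sigma=\tilde\Sigma/2$; your check that the absorbed $L^{n/\varrho}$ norm is finite (since $0<e^{\varrho u}-e^{\varrho u_{x_0,r_k}}<e^{\varrho u}$ on $\tilde{\mc P}(x_0,r_k)$) is a detail the paper leaves implicit.
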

\begin{proof}
Proceeding by way of contradiction, suppose $x_0\in \bb R^n$ with $\tilde \Sigma(x_0)> 0$ and for ease of notation write $\tilde \Sigma$ in place of $\tilde \Sigma(x_0)$. Let $(\sigma_k)_{k = 1}^\infty\subset(0, \tilde \Sigma)$ with $\lim_k\sigma_k = \tilde \Sigma$. By the definition of $\tilde \Sigma$, for every $k\in \bb N$ there is $r_k\in[\sigma_k, \tilde \Sigma)$ and $x_k\in \bb R^n\setminus \bar B_{r_k}(x_0)$ for which either 
\begin{equation*}
	I_\mu[e^{\varrho u}](x_k)> I_\mu[e^{\varrho u_{x_0, r_k}}](x_k)
\end{equation*}
or 
\begin{equation*}
	u(x_k)> u_{x_0, r_k}(x_k). 
\end{equation*}
After passing to a suitable subsequence we may assume that one of these inequalities occurs for every $k$. 
\begin{enumerate}[label = {\bf Case \arabic*.}, ref = {\bf Case \arabic*}, wide = 0pt]
	\item \label{item:spine} $I_\mu[e^{\varrho u}](x_k)> I_\mu[e^{\varrho u_{x_0, r_k}}](x_k)$ for all $k$. \\
	The \ref{item:spine} assumption guarantees that $x_k\in \tilde {\mc Q}(x_0, r_k)$ for all $k$ so Lemma \ref{lemma:one_implies_both} implies
	\begin{equation}
	\label{eq:down_my_spine}
		|\tilde{\mc P}(x_0, r_k)|> 0
		\quad \text{ for all }k. 
	\end{equation}
	On the other hand, Lemma \ref{lemma:tilde_approach_critial_radius} implies 
	\begin{equation*}
		\lim_k\|e^{\varrho u}(1 + |\log|\cdot - x_0||)\|_{L^{n/\varrho}(\tilde{\mc P}(x_0, r_k))} 
		= 0. 
	\end{equation*}
	This equality, together with Lemma \ref{lemma:MS_start_main_estimate_2} (applied with $\Sigma = \tilde \Sigma/2$) gives
	\begin{equation*}
		\|e^{\varrho u} - e^{\varrho u_{x_0, r_k}}\|_{L^{n/\varrho}(\tilde{\mc P}(x_0, r_k))}
		\leq \frac 12\|e^{\varrho u} - e^{\varrho u_{x_0, r_k}}\|_{L^{n/\varrho}(\tilde{\mc P}(x_0, r_k))}
	\end{equation*}
	whenever $k$ is sufficiently large. For any such $k$ we have $|\tilde{\mc P}(x_0, r_k)| = 0$ thereby contradicting \eqref{eq:down_my_spine}. 
	\item $u(x_k)> u_{x_0, r_k}(x_k)$ for all $k$. \\
	In this case the Mean Value Theorem and Lemma \ref{lemma:u_difference} give
	\begin{equation*}
	\begin{split}
		0
		< & \; \frac{e^{\varrho u(x_k)} - e^{\varrho u_{x_0, r_k}(x_k)}}{c_n(n - 1)!\varrho e^{\varrho u(x_k)}}\\
		\leq& \; \frac{u(x_k) - u_{x_0, r_k}(x_k)}{c_n(n- 1)!}\\
		= & \; \int_{\bb R^n\setminus B_{r_k}(x_0)}\mc L(x_0, r_k; x, y)\left(I_\mu[e^{\varrho u}](y)e^{\varrho u(y)} - I_\mu[e^{\varrho u_{x_0, r_k}}](y)e^{\varrho u_{x_0, r_k}(y)}\right)\; \d y\\
		& + (V - |\bb S^n|)\log\frac{r_k}{|x_k - x_0|}\\
		\leq & \; \int_{\tilde{\mc Q}(x_0, r_k)}\mc L(x_0, r_k; x, y)\left(I_\mu[e^{\varrho u}](y) - I_\mu[e^{\varrho u_{x_0, r_k}}](y)\right)e^{\varrho u(y)}\; \d y\\
		& + \int_{\tilde{\mc P}(x_0, r_k)}\mc L(x_0, r_k; x, y)I_\mu[e^{\varrho u_{x_0, r_k}}](y)\left(e^{\varrho u(y)} - e^{\varrho u_{x_0, r_k}(y)}\right)\; \d y, 
	\end{split}
	\end{equation*}
	from which we deduce $|\tilde{\mc P}(x_0, r_k)| + |\tilde{\mc Q}(x_0, r_k)| > 0$ for all $k$. Lemma \ref{lemma:one_implies_both} now guarantees that $|\tilde{\mc P}(x_0, r_k)|> 0$ for all $k$, so arguing as in \ref{item:spine} (starting with \eqref{eq:down_my_spine}) yields a contradiction. 
\end{enumerate}
\end{proof}
\begin{proof}[Proof of Lemma \ref{lemma:rule_out_fast_decay}]
Proceeding by way of contradiction, suppose $u$ is a solution to \eqref{eq:entire_nonlocal} satisfying both \eqref{eq:asymtpotic_growth_assumption} and $V>|\bb S^n|$. Lemma \ref{lemma:radius_fully_shrinks} guarantees that 
\begin{equation*}
	e^{u(x)}\leq\left(\frac\sigma{|x - x_0|}\right)^2 e^{u(x^{x_0, \sigma})}
\end{equation*}
whenever $(x_0, \sigma)\in \bb R^n\times (0, \infty)$ and $x\in \bb R^n\setminus B_\sigma(x_0)$. Applying Lemma \ref{lemma:implies_const_or_infty} of the appendix with $f = e^{-u}$ and $\gamma = -2$ implies that either $e^{-u}\equiv +\infty$ or $e^{-u} \equiv \text{const}$. In the former case we have $e^{nu}\equiv 0$ and this contradicts the assumption $V> |\bb S^n|$. In the latter case we have $e^u\equiv \text{const}$ and this contradicts the assumption $e^u\in L^n(\bb R^n)$. 
\end{proof}
\section{Classification of Solutions}
\label{s:classification}
Proposition \ref{prop:precise_decay} guarantees that every solution to \eqref{eq:entire_nonlocal} that satisfies both \eqref{eq:V_lower_bound_assumption} and \eqref{eq:asymtpotic_growth_assumption} must also satisfy $V = |\bb S^n|$. In this section we use this equality and one final application of the method of moving spheres to prove Theorem \ref{theorem:classification}. Accordingly, suppose throughout this section that $u$ is a distributional solution to \eqref{eq:entire_nonlocal} that satisfies both \eqref{eq:V_lower_bound_assumption} and \eqref{eq:asymtpotic_growth_assumption}. Lemma \ref{lemma:moving_spheres_can_start} guarantees that for $x_0\in \bb R^n$, the quantity $\Sigma(x_0)$ given in \eqref{eq:symmetry_radius} is well-defined in $(0, \infty]$.  
\begin{lemma}
\label{lemma:symmetric_about_symmetry_sphere}
If $x_0\in \bb R^n$ satisfies $\Sigma(x_0)< \infty$ then both 
\begin{equation*}
	u\equiv u_{x_0, \Sigma(x_0)} \quad \text{ in }\bb R^n\setminus\{x_0\}
\end{equation*}
and 
\begin{equation*}
	I_\mu[e^{\varrho u}]\equiv I_\mu[e^{\varrho u_{x_0, \Sigma(x_0)}}]
	\quad \text{ in }\bb R^n\setminus \{x_0\}. 
\end{equation*}
\end{lemma}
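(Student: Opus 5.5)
We argue by contradiction, following the usual moving-spheres scheme with $V = |\bb S^n|$ supplied by Proposition \ref{prop:precise_decay}. Write $\Sigma = \Sigma(x_0)<\infty$. Lemma \ref{lemma:critical_radius_soft_inequalities} (which allows $V\leq |\bb S^n|$) gives $u\geq u_{x_0,\Sigma}$ and $I_\mu[e^{\varrho u}]\geq I_\mu[e^{\varrho u_{x_0,\Sigma}}]$ a.e. in $\bb R^n\setminus \overline B_\Sigma(x_0)$, hence everywhere by continuity. Suppose $u\not\equiv u_{x_0,\Sigma}$ in $\bb R^n\setminus \overline B_\Sigma(x_0)$. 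Since both sides are continuous, $u> u_{x_0,\Sigma}$ on a nonempty open subset of $\bb R^n\setminus \overline B_\Sigma(x_0)$.

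\textbf{Strict inequalities.} Now apply Lemma \ref{lemma:u_difference} with $V = |\bb S^n|$, so the logarithmic term vanishes. For $x\in\bb R^n\setminus\overline B_\Sigma(x_0)$ this gives
\begin{equation*}
\frac{u_{x_0,\Sigma}(x)-u(x)}{(n-1)!c_n}=\int_{\bb R^n\setminus B_\Sigma(x_0)}\mc L(x_0,\Sigma;x,y)\left(I_\mu[e^{\varrho u_{x_0,\Sigma}}]e^{\varrho u_{x_0,\Sigma}}-I_\mu[e^{\varrho u}]e^{\varrho u}\right)(y)\,\d y.
\end{equation*}
The integrand is $\leq 0$ everywhere and $<0$ on the open set above, and $\mc L>0$. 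Hence $u>u_{x_0,\Sigma}$ at every $x\in\bb R^n\setminus\overline B_\Sigma(x_0)$, which is \eqref{eq:critical_radius_strict_inequality}. Lemma \ref{lemma:Imu_difference}, using $\mc K>0$, then gives \eqref{eq:critical_radius_Imu_strict_inequality}. Once the strict inequalities hold, Remark \ref{remark:on_lemma bad_set_vanishing_measure} tells us that the conclusion of Lemma \ref{lemma:bad_set_vanishing_measure} is still available, namely $|\mc P(x_0,\sigma)^{x_0,\sigma}|\to0$ as $\sigma\to\Sigma^+$.

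\textbf{Contradiction with maximality of $\Sigma$.} We repeat the argument of Lemma \ref{lemma:critical_radius_infinite} verbatim. Choose $r_k\downarrow\Sigma$ and $x_k\notin \overline B_{r_k}(x_0)$ at which either the $u$-comparison or the $I_\mu$-comparison fails. In either case, via \eqref{eq:get_H1_H2} (valid because $V\leq|\bb S^n|$) and Lemma \ref{lemma:Qnonempty}, we obtain $|\mc P(x_0,r_k)|>0$. On the other hand, Lemma \ref{lemma:MS_start_main_estimate} with $\Sigma+1$ in place of $\Sigma$, combined with the vanishing of $\|1+|\log|\cdot-x_0||\|_{L^{n/\varrho}(\mc P(x_0,r_k)^{x_0,r_k})}$, forces $|\mc P(x_0,r_k)|=0$ for large $k$. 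This contradiction shows $u\equiv u_{x_0,\Sigma}$ in $\bb R^n\setminus\overline B_\Sigma(x_0)$.

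\textbf{Extension to all of $\bb R^n\setminus\{x_0\}$.} The Kelvin transform is an involution: $(u_{x_0,\Sigma})_{x_0,\Sigma}=u$. So the identity on the exterior of the sphere transfers to the interior minus $x_0$, and on the sphere itself it holds trivially. For the second identity, substitute $u = u_{x_0,\Sigma}$ into \eqref{eq:Imu_difference_integral}, or directly use \eqref{eq:Imu_erhou_inversion}.

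\textbf{Main obstacle.} The delicate point is the strict-inequality step. Because $V=|\bb S^n|$ removes the extra $\log$ term that was used earlier, strictness has to come from the positivity of $\mc L$ together with the assumed non-identity. One also has to make sure that nonidentity in $u$ alone (rather than in $I_\mu$) already produces a set of positive measure where the integrand is negative; this holds since $e^{\varrho u_{x_0,\Sigma}}<e^{\varrho u}$ there and $I_\mu[e^{\varrho u}]\geq I_\mu[e^{\varrho u_{x_0,\Sigma}}]>0$.
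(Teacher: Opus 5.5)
Your proposal is correct and follows essentially the same route as the paper. It uses the soft inequalities at $\Sigma(x_0)$ from Lemma~\ref{lemma:critical_radius_soft_inequalities}, gets strictness from Lemma~\ref{lemma:u_difference} with $V=|\bb S^n|$ and the positivity of $\mc L$ and $\mc K$ unless $u\equiv u_{x_0,\Sigma(x_0)}$ (the paper phrases this as the dichotomy ``strict everywhere or identical''), and then pushes the sphere past $\Sigma(x_0)$ via Lemma~\ref{lemma:MS_start_main_estimate} and Remark~\ref{remark:on_lemma bad_set_vanishing_measure}; your sequence of failure radii $r_k\downarrow\Sigma(x_0)$, taken from Lemma~\ref{lemma:critical_radius_infinite}, uses the definition of $\Sigma(x_0)$ more accurately than the paper's claim that failure occurs for every $\sigma\in(\Sigma,\Sigma+1)$.
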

\begin{proof}
Suppose $x_0\in \bb R^n$ with $\Sigma(x_0)< \infty$ and to ease the notation, set $\Sigma = \Sigma(x_0)$. In view of Lemmata \ref{lemma:Imu_difference} and \ref{lemma:u_difference} it suffices to show that both $u\equiv u_{x_0, \Sigma}$ and $I_\mu[e^{\varrho u}]\equiv I_\mu[e^{\varrho u_{x_0, \Sigma}}]$  in $\bb R^n\setminus B_{x_0, \Sigma}(x_0)$. From Lemma \ref{lemma:critical_radius_soft_inequalities} we have both 
\begin{equation}
\label{eq:both_soft_signs}
	u\geq u_{x_0, \Sigma}
	\quad \text{ and }\quad
	I_\mu[e^{\varrho u}]\geq I_\mu[e^{\varrho u_{x_0, \Sigma}}]
	\quad \text{ in }\bb R^n\setminus B_\Sigma(x_0).
\end{equation}
Next we show that  exactly one of the following holds: 
\begin{enumerate}[label = {\bf A\arabic*.}, ref = {\bf A\arabic*}]
	\item \label{item:both_always_bigger} $u(x)> u_{x_0, \Sigma}(x)$ and $I_\mu[e^{\varrho u}](x)> I_\mu[e^{\varrho u_{x_0, \Sigma}}](x)$ for all $x\in \bb R^n\setminus \overline B_{\Sigma}(x_0)$. 
	\item \label{item:both_always_same}$u(x)= u_{x_0, \Sigma}(x)$ and $I_\mu[e^{\varrho u}](x)= I_\mu[e^{\varrho u_{x_0, \Sigma}}](x)$ for all $x\in \bb R^n\setminus \overline B_{\Sigma}(x_0)$. 
\end{enumerate}
To verify this alternative, suppose alternative \ref{item:both_always_bigger} fails. We separately consider the case where $u(x) = u_{x_0, \Sigma}(x)$ for some $x\in \bb R^n\setminus \bar B_{\Sigma}(x_0)$ and the case where $I_\mu[e^{\varrho u}](x) = I_\mu[e^{\varrho u_{x_0, \Sigma}}](x)$ for some $x\in \bb R^n\setminus \bar B_{\Sigma}(x_0)$. If there is $x\in \bb R^n\setminus \overline B_{\Sigma}(x_0)$ for which $u(x) = u_{x_0, \Sigma}(x)$ then for any such $x$, from \eqref{eq:both_soft_signs} and Lemma \ref{lemma:u_difference} with $V = |\bb S^n|$ we have
\begin{equation*}
\begin{split}
	0
	& \geq \int_{\bb R^n\setminus B_\Sigma(x_0)}\mc L(x_0, \Sigma; x, y)\left(I_\mu[e^{\varrho u_{x_0, \Sigma}}](y)e^{\varrho u_{x_0, \Sigma}(y)} - I_\mu[e^{\varrho u}](y)e^{\varrho u(y)}\right)\; \d y\\
	& = \frac{u_{x_0, \Sigma}(x) - u(x)}{(n -1)!c_n}\\
	& = 0. 
\end{split}
\end{equation*}
Therefore, the following equality holds identically on $\bb R^n\setminus B_\Sigma(x_0)$: 
\begin{equation*}
\begin{split}
	0
	& = I_\mu[e^{\varrho u_{x_0, \Sigma}}]e^{\varrho u_{x_0, \Sigma}} - I_\mu[e^{\varrho u}]e^{\varrho u}\\
	& = I_\mu[e^{\varrho u_{x_0, \Sigma}}]\left(e^{\varrho u_{x_0, \Sigma}} - e^{\varrho u}\right) + \left(I_\mu[e^{\varrho u_{x_0, \Sigma}}] - I_\mu[e^{\varrho u}]\right)e^{\varrho u}. 
\end{split}
\end{equation*} 
Inequalities \eqref{eq:both_soft_signs} guarantee that both summands on the right-most side of this equality are nonpositive so we deduce that both $u\equiv u_{x_0, \Sigma}$ and $I_\mu[e^{\varrho u}]\equiv I_\mu[e^{\varrho u_{x_0, \Sigma}}]$ on $\bb R^n\setminus B_{\Sigma}(x_0)$. If there is $x\in \bb R^n\setminus \bar B_\Sigma(x_0)$ for which $I_\mu[e^{\varrho u}](x) = I_\mu[e^{\varrho u_{x_0, \Sigma}}](x)$ then for any such $x$ Lemma \ref{lemma:Imu_difference} and \eqref{eq:both_soft_signs} gives
\begin{equation*}
\begin{split}
	0
	& = \int_{\bb R^n\setminus B_\Sigma(x_0)}\mc K(x_0, \Sigma; x, y)\left(e^{\varrho u_{x_0, \Sigma}(y)} - e^{\varrho u(y)}\right)\; \d y\\
	& \leq 0, 
\end{split}
\end{equation*}
from which we find that $u_{x_0, \Sigma}(y) = u(y)$ for some (in fact all) $y\in \bb R^n\setminus \bar B_\Sigma(x_0)$. Arguing as above we conclude that both $u\equiv u_{x_0, \Sigma}$ and $I_\mu[e^{\varrho u}]\equiv I_\mu[e^{\varrho u_{x_0, \Sigma}}]$ on $\bb R^n\setminus B_{\Sigma}(x_0)$. This completes the verification of alternative \ref{item:both_always_bigger}, \ref{item:both_always_same}. 

To complete the proof of Lemma \ref{lemma:symmetric_about_symmetry_sphere} we assume, with the intent of obtaining a contradiction, that \ref{item:both_always_bigger} holds. By definition of $\Sigma$, for every $\sigma\in (\Sigma, \Sigma + 1)$ at least one of $\mc P(x_0, \sigma)$ or $\mc Q(x_0, \sigma)$ has positive measure so Lemma \ref{lemma:Qnonempty} guarantees that $\mc P(x_0, \sigma)$ has positive measure. Moreover, combining Lemma \ref{lemma:MS_start_main_estimate} with the fact that $1 + |\log|\cdot - x_0||\in L^{n/\varrho}(B_\sigma(x_0))$ and in view of Lemma \ref{lemma:bad_set_vanishing_measure} and Remark \ref{remark:on_lemma bad_set_vanishing_measure} we deduce the existence of $\hat\Sigma\in (\Sigma, \Sigma + 1)$ such that
\begin{equation*}
	\|e^{\varrho u_{x_0, \sigma}} - e^{\varrho u}\|_{L^{n/\varrho}(\mc P(x_0, \sigma))}
	\leq \frac 12 \|e^{\varrho u_{x_0, \sigma}} - e^{\varrho u}\|_{L^{n/\varrho}(\mc P(x_0, \sigma))}
\end{equation*}
whenever $\sigma\in [\Sigma, \hat \Sigma)$. This estimate implies that for any such $\hat \Sigma$ and any $\sigma\in [\Sigma, \hat\Sigma)$ there holds $|\mc P(x_0, \sigma)| = 0$, which is a contradiction. 
\end{proof}
\begin{lemma}
\label{lemma:all_radii_finite}
For every $x_0\in \bb R^n$ there holds $\Sigma(x_0)< \infty$. 
\end{lemma}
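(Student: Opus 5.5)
We argue by contradiction: suppose $x_0\in\bb R^n$ satisfies $\Sigma(x_0)=+\infty$. By the definition \eqref{eq:symmetry_radius}, for every $\sigma>0$ we then have $u\geq u_{x_0,\sigma}$ in $\bb R^n\setminus B_\sigma(x_0)$. Written out, this says
\begin{equation*}
	u(x)\geq u\left(x^{x_0,\sigma}\right)+2\log\frac{\sigma}{|x-x_0|}
	\qquad\text{for all } |x-x_0|\geq\sigma>0 .
\end{equation*}

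The first step is to test this along a ray. Fix a unit vector $\theta$ and put $x=x_0+r\theta$ with $r\geq\sigma$. Then $x^{x_0,\sigma}=x_0+(\sigma^2/r)\theta$. Now let $r\to\infty$ with $\sigma$ fixed, so that $x^{x_0,\sigma}\to x_0$. Since $u$ is continuous by Proposition \ref{prop:distributional_solutions_smooth}, we get
\begin{equation*}
	u(x_0+r\theta)+2\log r\geq u(x_0)+2\log\sigma+\circ(1)
	\qquad\text{as } r\to\infty .
\end{equation*}

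The second step compares this with the known decay. By Theorem \ref{theorem:asymptotic} and assumption \eqref{eq:asymtpotic_growth_assumption}, the polynomial part of $u$ is a constant $b$. Proposition \ref{prop:precise_decay} gives $V=|\bb S^n|$, so
\begin{equation*}
	u(x)=b-2\log|x|+\circ(\log|x|)\qquad\text{as } |x|\to\infty .
\end{equation*}
The error term here is only $\circ(\log|x|)$, which is too weak to contradict the previous inequality by itself. We therefore need the sharper statement that $u(x)+2\log|x|$ stays bounded above as $|x|\to\infty$. Once we have that, we let $\sigma\to\infty$ in the ray inequality: its right-hand side tends to $+\infty$ while its left-hand side stays bounded, which is the desired contradiction.

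The third step, and the main obstacle, is proving this upper bound on $u(x)+2\log|x|$. The plan is to use the representation \eqref{eq:best_representation} together with $V=|\bb S^n|$ and $(n-1)!c_n|\bb S^n|=2$, which give
\begin{equation*}
	u(x)+2\log|x|-b=(n-1)!c_n\int_{\bb R^n}\log\frac{|x|}{|x-y|}\,I_\mu[e^{\varrho u}](y)e^{\varrho u(y)}\,\d y .
\end{equation*}
We split the integral into three regions.
\begin{itemize}
\item On $|y|\leq|x|/2$ we have $\log\frac{|x|}{|x-y|}\leq\log 2$, so this part contributes at most a constant.
\item On $|y|\geq 2|x|$ we have $\log\frac{|x|}{|x-y|}\leq 0$, so this part is nonpositive and can be dropped.
\item On the annulus $|x|/2<|y|<2|x|$ there remains a local singular term. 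We bound it using $I_\mu[e^{\varrho u}]\in L^\infty(\bb R^n)$ from Lemma \ref{lemma:Imu_initial_decay} and the decay $e^{\varrho u(y)}\leq C|y|^{-\varrho(2-\epsilon)}$. Since $\varrho(2-\epsilon)>n$ for small $\epsilon$ (because $\varrho>n/2$), the weight is integrable and the local logarithmic singularity contributes $\circ(1)$.
\end{itemize}
This gives $u(x)+2\log|x|\leq C$ for large $|x|$. Since only an upper bound is needed, this is the whole estimate, and the contradiction of the second step shows $\Sigma(x_0)<\infty$ for every $x_0\in\bb R^n$.
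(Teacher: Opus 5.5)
Your argument is correct, and it takes a different route from the paper's.

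\textbf{The paper's route.} The paper works in two stages. First it shows that finiteness of $\Sigma$ at one center propagates to all centers. Lemma \ref{lemma:symmetric_about_symmetry_sphere} gives $u\equiv u_{x_0,\Sigma(x_0)}$, hence $|x|^2e^{u(x)}\to\Sigma(x_0)^2e^{u(x_0)}$. This is then compared with $\liminf_{|x|\to\infty}|x|^2e^{u(x)}\geq\sigma^2e^{u(z)}$ for $\sigma<\Sigma(z)$. Second, it produces that one center by contradiction. If every critical radius were infinite, Li's calculus lemma (Lemma \ref{lemma:implies_const_or_infty}) would force $e^u$ to be constant or identically $+\infty$.

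\textbf{Your route.} You use the same lower bound, $\liminf_{r\to\infty}\bigl(u(x_0+r\theta)+2\log r\bigr)\geq u(x_0)+2\log\sigma$. You then supply the matching upper bound $u(x)+2\log|x|\leq C$ analytically. It comes from \eqref{eq:best_representation}, the identity $(n-1)!c_nV=2$ when $V=|\bb S^n|$, the boundedness of $I_\mu[e^{\varrho u}]$, and $2\varrho=2n-\mu>n$.

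\textbf{Comparison.} Your approach handles each center separately. It needs neither Lemma \ref{lemma:symmetric_about_symmetry_sphere} nor the calculus lemma. The price is a quantitative potential estimate, whereas the paper uses only the qualitative outcomes of the moving-sphere procedure. Both routes depend on Proposition \ref{prop:precise_decay}, and this cannot be avoided: by Lemma \ref{lemma:critical_radius_infinite}, every critical radius is infinite when $V<|\bb S^n|$.

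\textbf{A point of precision.} On the annulus $|x|/2<|y|<2|x|$, the kernel $\log\frac{|x|}{|x-y|}$ is not merely a local singularity. It is of size about $\log|x|$ wherever $|x-y|=O(1)$. Your decay bound still controls it. On the annulus, $I_\mu[e^{\varrho u}](y)e^{\varrho u(y)}\leq C|x|^{-\varrho(2-\epsilon)}$. The scaling identity $\int_{B_{3|x|}(x)}\log^+\frac{|x|}{|x-y|}\,\d y=|x|^n\int_{B_3}\log^+\frac{1}{|z|}\,\d z$ then bounds the contribution by $O(|x|^{n-\varrho(2-\epsilon)})=\circ(1)$. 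It would be cleaner to state the estimate this way rather than attributing the whole term to the local singularity.
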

\begin{proof}
It suffices to show that there is $x_0\in \bb R^n$ for which $\Sigma(x_0)< \infty$. Indeed, suppose such an $x_0$ exists and fix any $z\in \bb R^n$. For any $\sigma\in (0, \Sigma(z))$ we have $u\geq u_{z, \sigma}$ in $\bb R^n\setminus B_\sigma(z)$ so 
\begin{equation*}
	\liminf_{|x|\to\infty}|x|^2e^{u(x)}
	\geq \liminf_{|x|\to\infty}|x|^2e^{u_{z, \sigma}(x)}
	= \sigma^2e^{u(z)}. 
\end{equation*}
On the other hand, from Lemma \ref{lemma:symmetric_about_symmetry_sphere} we have
\begin{equation*}
	\liminf_{|x|\to\infty}|x|^2e^{u(x)}
	= \liminf_{|x|\to\infty}|x|^2 e^{u_{x_0, \Sigma(x_0)}(x)}
	= \Sigma(x_0)^2e^{u(x_0)}. 
\end{equation*}
Combining the previous two equalities we obtain $\sigma^2 e^{u(z)}\leq \Sigma(x_0)^2e^{u(x_0)}$ for every $\sigma\in (0, \Sigma(z))$ from which we conclude that $\Sigma(z)< \infty$. It remains to establish the existence of $x_0\in \bb R^n$ for which $\Sigma(x_0)< \infty$. We do so by way of contradiction. If $\Sigma(x_0)= +\infty$ for all $x_0\in \bb R^n$ then for every $(x_0, \sigma)\in \bb R^n\times (0, \infty)$ we have 
\begin{equation*}
	\left(\frac \sigma{|x - x_0|}\right)^2e^{u(x^{x_0, \sigma})}
	\leq e^{u(x)}
	\quad \text{ for all }x\in \bb R^n\setminus B_\sigma(x_0). 
\end{equation*}
In this case an application of Lemma \ref{lemma:implies_const_or_infty} of the appendix guarantees that either $e^u\equiv \text{const}$ or $e^u\equiv+\infty$ both of which contradict the assumption $e^u\in L^n(\bb R^n)$. 
\end{proof}
\begin{proof}[Proof of Theorem \ref{theorem:classification}]
For any $x_0\in \bb R^n$ Lemma \ref{lemma:all_radii_finite} guarantees that $\Sigma(x_0)< \infty$ and Lemma \ref{lemma:symmetric_about_symmetry_sphere} guarantees that $u\equiv u_{x_0, \Sigma(x_0)}$ in $\bb R^n\setminus\{x_0\}$ so applying Lemma \ref{lemma:implies_bubble} with $\gamma = 2$ and $f = e^u$ gives 
\begin{equation}
\label{eq:eu_straggling_constants}
	e^{u(x)} = \frac{a}{d^2 + |x - \bar x|^2}
\end{equation}
for some $(\bar x, d)\in \bb R^n\times (0, \infty)$ and some $a>0$. 
\ifdetails{\color{gray}
(the positivity of $a$ follows since $a\geq 0$ and $e^u\in L^n(\bb R^n)$) 
}\fi
In particular, $e^{\varrho u}$ is an extremal function for the sharp HLS inequality, see \eqref{eq:HLS_extremal}. Therefore, with $\mc H = \mc H(n, \mu)$ as in equation \eqref{eq:sharp_HLS_constant}, H\"older's inequality and the HLS inequality give
\begin{equation*}
\begin{split}
	\mc H\|e^{\varrho u}\|_{L^{n/\varrho}(\bb R^n)}^2
	& = \int_{\bb R^n}I_\mu[e^{\varrho u}]e^{\varrho u}\\
	& \leq \|I_\mu[e^{\varrho u}]\|_{L^{2n/\mu}(\bb R^n)}\|e^{\varrho u}\|_{L^{n/\varrho}(\bb R^n)}\\
	&\leq \mc H\|e^{\varrho u}\|_{L^{n/\varrho}(\bb R^n)}^2
\end{split}
\end{equation*}
and we deduce that equality must hold throughout this series of inequalities. In particular equality must hold in the application of H\"older's inequality so 
\ifdetails{\color{gray}
(since both $I_\mu[e^{\varrho u}]$ and $e^{\varrho u}$ are nonnegative functions) 
}\fi
there is $c>0$ such that 
\begin{equation}
\label{eq:Imu_modulo_c}
	I_\mu[e^{\varrho u}](x)
	= c(e^{\varrho u(x)})^{\mu/(2n - \mu)}
	= c\left(\frac a{d^2 + |x - \bar x|^2}\right)^{\mu/2}. 
\end{equation}
In view of \eqref{eq:eu_straggling_constants}, \eqref{eq:Imu_modulo_c},  and Proposition \ref{prop:precise_decay} we have
\begin{equation}
\label{eq:compute_V}
\begin{split}
	|\bb S^n|
	& = \int_{\bb R^n}I_\mu[e^{\varrho u}]e^{\varrho u}\\
	& = c\int_{\bb R^n}\left(\frac a{d^2 + |x - \bar x|^2}\right)^n\; \d x\\
	& \ifdetails{\color{gray}
	\; = c\left(\frac a{d}\right)^n\int_{\bb R^n}(1 + |y|^2)^{-n}\; \d y
	}
	\\
	& \fi
	= c\left(\frac a{2d}\right)^n|\bb S^n|,  
\end{split}
\end{equation}
where in the final equality we used the identity $|\bb S^n| = 2^n\int_{\bb R^n}(1 + |y|^2)^{-n}\; \d y$. Equation \eqref{eq:compute_V} gives
\begin{equation}
\label{eq:the_Imu_c}
	c = \left(\frac {2d}a\right)^n.
\end{equation} 
Bringing this back to \eqref{eq:Imu_modulo_c} and performing standard computations gives
\begin{equation*}
\begin{split}
	\|I_\mu[e^{\varrho u}]\|_{L^{2n/\mu}(\bb R^n)}
	& = \left(\frac {2d}a\right)^n\left(\int_{\bb R^n}\left(\frac a{d^2 + |x - \bar x|^2}\right)^n\; \d x\right)^{\mu/(2n)}\\
	& \ifdetails{\color{gray}
	\; = \left(\frac {2d}a\right)^n\left(\left(\frac ad\right)^n\int_{\bb R^n}(1 + |y|^2)^{-n}\; \d y\right)^{\mu/(2n)}
	}
	\\
	& \fi
	=\left(\frac {2d}a\right)^{n - \mu/2}|\bb S^n|^{\mu/(2n)}. 
\end{split}
\end{equation*}
On the other hand, since $e^{\varrho u}$ is extremal for the sharp HLS inequality, 
\begin{equation*}
\begin{split}
	\|I_\mu[e^{\varrho u}]\|_{L^{2n/\mu}(\bb R^n)}
	& = \mc H\|e^{\varrho u}\|_{L^{n/\varrho}(\bb R^n)}\\
	& = \mc H\left(\int_{\bb R^n}\left(\frac a{d^2 + |x - \bar x|^2}\right)^n\; \d x\right)^{\varrho/n}\\
	& = \mc H\left(\frac{a}{2d}\right)^{\varrho}|\bb S^n|^{\varrho/n}. 
\end{split}
\end{equation*}
Equating the two expressions for $\|I_\mu[e^{\varrho u}]\|_{L^{2n/\mu}(\bb R^n)}$ gives
\begin{equation}
\label{eq:ad_relation}
	a 
	= 2d\left(\mc H|\bb S^n|^{1 - \frac\mu n}\right)^{-\frac 1{2\varrho}}. 
\end{equation}	
\ifdetails{\color{gray}
Indeed, 
\begin{equation*}
\begin{split}
	\mc H\left(\frac{a}{2d}\right)^{\varrho}|\bb S^n|^{\varrho/n} & = \left(\frac {2d}a\right)^{n - \mu/2}|\bb S^n|^{\mu/(2n)}\\
	& \implies\left(\frac a{2d}\right)^{2\varrho} = |\bb S^n|^{\frac\mu{2n} - \frac\varrho n}\mc H^{-1} = |\bb S^n|^{-(1 - \frac \mu n)}\mc H^{-1}. 
\end{split}
\end{equation*}
}\fi
Bringing this back to \eqref{eq:eu_straggling_constants} yields the expression for $u$ asserted in equation \eqref{eq:nonlocal_bubbles}. The expression for $I_\mu[e^{\varrho u}]$ asserted in equation \eqref{eq:Imu_erhou_form} follows by combining \eqref{eq:the_Imu_c}, \eqref{eq:ad_relation} and \eqref{eq:Imu_modulo_c}.  
\end{proof}
\appendix\section{}
Lemmata \ref{lemma:implies_const_or_infty} and \ref{lemma:implies_bubble} are restatements of Lemmata 5.7 and 5.8 of \cite{Li2004}. 
\begin{lemma}
\label{lemma:implies_const_or_infty}
Let $n\geq 1$ and let $\gamma\in \bb R$. If $f:\bb R^n\to [-\infty, +\infty]$ is a function with the property that for every $(x_0, \sigma)\in \bb R^n\times (0, \infty)$ the inequality 
\begin{equation*}
	\left(\frac{\sigma}{|x- x_0|}\right)^\gamma
	f\left(x_0 + \frac{\sigma^2(x - x_0)}{|x - x_0|^2}\right)
	\leq f(x)
	\quad \text{ for all }x\in \bb R^n\setminus B_\sigma(x_0)
\end{equation*}
holds for all $x\in \bb R^n\setminus B_\sigma(x_0)$ then either $f\equiv\text{const}$ or $f\equiv +\infty$. 
\end{lemma}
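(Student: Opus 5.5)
We prove Lemma \ref{lemma:implies_const_or_infty} directly from the inequality, which we write as $f_{x_0,\sigma}\le f$ off $B_\sigma(x_0)$, where $f_{x_0,\sigma}(x)=(\sigma/|x-x_0|)^\gamma f(x^{x_0,\sigma})$. The idea, following Li and Zhang, is first to show that $g(x)=|x|^{\gamma}f(x)$ (or rather $|x-x_0|^\gamma f$) has a limit behaviour at infinity independent of direction. Then we use two different spheres through a common point to compare values of $f$ at two arbitrary points.

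\textbf{Step 1.} Fix $x_0$ and let $|x|\to\infty$ in the hypothesis for fixed $\sigma$. This gives
\[
\liminf_{|x|\to\infty}|x-x_0|^\gamma f(x)\ \ge\ \sigma^\gamma f(x_0)
\]
for all $\sigma>0$, since $x^{x_0,\sigma}\to x_0$. Pointwise convergence of $f(x^{x_0,\sigma})$ to $f(x_0)$ is not available, because $f$ has no continuity. So instead we use the inequality with the roles reversed. The key trick is to take two points $y,z$ and choose a sphere $\partial B_\sigma(x_0)$ such that $z=y^{x_0,\sigma}$ with $|y-x_0|>\sigma$. Such spheres exist for every center $x_0$ on the ray from $y$ through $z$ beyond $z$: put $x_0=z+t(z-y)$ with $t>0$ and $\sigma^2=|z-x_0||y-x_0|$. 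The hypothesis then reads
\[
\left(\frac{\sigma}{|y-x_0|}\right)^\gamma f(z)\le f(y),\qquad\text{i.e.}\qquad \left(\frac{|z-x_0|}{|y-x_0|}\right)^{\gamma/2} f(z)\le f(y).
\]

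\textbf{Step 2.} Let $t\to\infty$. Then $|z-x_0|/|y-x_0|=t/(1+t)\to1$, so, provided $f(z)$ is finite, we get $f(z)\le f(y)$. The case $f(z)=+\infty$ forces $f(y)=+\infty$ directly, since the prefactor is positive, and $f(z)=-\infty$ is trivial. Since $y,z$ are arbitrary distinct points, exchanging them gives $f(y)\le f(z)$ whenever $f(y)$ is finite. A short case analysis finishes the argument. If some value of $f$ is $+\infty$, then every $f(y)=+\infty$ by the first inequality, so $f\equiv+\infty$. Otherwise $f<+\infty$ everywhere and $f(z)\le f(y)$ for all pairs, so $f$ is constant, possibly $-\infty$; the constant $-\infty$ is included in ``const''.

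\textbf{Main obstacle.} The only delicate point is handling extended-real values and the ordering constraint $|y-x_0|>\sigma$ in the geometric construction. Checking that $\sigma^2=|z-x_0||y-x_0|$ indeed places $z=y^{x_0,\sigma}$ on the correct side needs $|y-x_0|>|z-x_0|$, which holds because $x_0$ lies beyond $z$. The inversion identity $y^{x_0,\sigma}=x_0+\sigma^2(y-x_0)/|y-x_0|^2=z$ follows since $y-x_0$ and $z-x_0$ are parallel with the same direction. No limit of $f$ at infinity is actually needed, so Step 1 can be dropped in the final write-up.
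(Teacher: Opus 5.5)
Your proof is correct. Placing the center at $x_0=z+t(z-y)$ with $\sigma^2=|z-x_0||y-x_0|$ gives $y^{x_0,\sigma}=z$ and $|y-x_0|>\sigma$, and letting $t\to\infty$ yields $f(z)\le f(y)$ for every pair, whether $f(z)$ is finite, $+\infty$ or $-\infty$. Swapping $y$ and $z$ then gives constancy in $[-\infty,+\infty]$, so your case split on finiteness is unnecessary.

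The paper gives no proof of its own and simply cites Lemma 5.7 of Li's paper. Your elementary two-point inversion argument is a valid self-contained substitute; it is, to my recollection, essentially Li's argument, though I cannot confirm that from the paper itself.

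In the write-up, delete Step 1 and the opening remark about the behaviour of $|x|^\gamma f(x)$ at infinity. The $\liminf$ claim there is not justified without continuity of $f$, and it is never used.
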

\begin{lemma}
\label{lemma:implies_bubble}
Let $n\geq 1$, let $\gamma\in \bb R$ and let $f\in C^0(\bb R^n)$. If, for all $x_0\in \bb R^n$ there is $\Sigma(x_0)>0$ such that
\begin{equation*}
	\left(\frac{\Sigma(x_0)}{|x- x_0|}\right)^\gamma
	f\left(x_0 + \frac{\Sigma(x_0)^2(x - x_0)}{|x - x_0|^2}\right)
	= f(x)
	\quad \text{ for all }x\in \bb R^n\setminus \{x_0\}
\end{equation*}
then there are constants $a\geq 0$, $d>0$ and there is $\bar x\in \bb R^n$ such that
\begin{equation*}
	f(x)
	= \pm a(d^2 + |x - \bar x|^2)^{-\gamma/2}
	\quad \text{ for all }x\in \bb R^n. 
\end{equation*}
\end{lemma}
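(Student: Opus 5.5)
This is Lemma~5.8 of \cite{Li2004}; here is how I would reprove it directly. The plan is to extract the profile of $f$ from its behavior at infinity, computed in two ways.

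\textbf{Step 1 (a conserved quantity).} Fix $x_0$ and write $x^{*}=x_0+\Sigma(x_0)^2(x-x_0)/|x-x_0|^2$. As $|x|\to\infty$ we have $x^{*}\to x_0$ and $|x|/|x-x_0|\to 1$. Continuity of $f$ and the hypothesis then give
\begin{equation*}
\lim_{|x|\to\infty}|x|^\gamma f(x)=\Sigma(x_0)^\gamma f(x_0).
\end{equation*}
The left-hand side does not depend on $x_0$, so $A:=\Sigma(x_0)^\gamma f(x_0)$ is the same for every $x_0\in\bb R^n$.

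\textbf{Step 2 (dispose of degenerate cases).}
\begin{itemize}
\item If $A=0$, then $f\equiv 0$ because $\Sigma>0$, and this is the case $a=0$.
\item If $\gamma=0$, then $f(x_0)=A$ for all $x_0$, so $f$ is constant, which again matches the claimed form.
\end{itemize}
Otherwise $f$ has the constant sign of $A$. Replacing $f$ by $-f$ if needed, assume $f>0$. Set $s(x)=(f(x)/A)^{-2/\gamma}$, so that $s(x_0)=\Sigma(x_0)^2$ and $s$ is continuous and positive. Substituting into the hypothesis turns it into the $\gamma$-free identity
\begin{equation*}
s(x^{*})=\frac{s(x_0)\,s(x)}{|x-x_0|^2}\qquad\text{for all }x\neq x_0 .
\end{equation*}
Geometrically, this says every inversion about $\partial B_{\Sigma(x_0)}(x_0)$ is an isometry of the metric $s^{-2}|dx|^2$. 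It also gives $s(x)/|x|^2\to 1$ as $|x|\to\infty$. The goal is now to show $s(x)=d^2+|x-\bar x|^2$; then $f=A\,s^{-\gamma/2}$ has the asserted form with $a=|A|$.

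\textbf{Step 3 (expansion at infinity; the main obstacle).} If $s$ were $C^1$, I would finish by a Taylor expansion. Writing $y=x/|x|^2$, one has $|y|^2|x-x_0|^2=1-2x_0\cdot y+|x_0|^2|y|^2$, and $x^{*}=x_0+s(x_0)(y-|y|^2x_0)/(1-2x_0\cdot y+|x_0|^2|y|^2)$. Both are smooth in $y$ near $0$. Plugging them into the identity expresses the Kelvin transform $\hat s(y)=|y|^2 s(y/|y|^2)$ near $y=0$ as $s(x^{*})/s(x_0)\cdot(1-2x_0\cdot y+\dots)$. Comparing the first-order terms of this expansion for two centers $x_0$ and $x_1$ gives
\begin{equation*}
\nabla s(x_0)=2(x_0-\bar x)
\end{equation*}
for a vector $\bar x$ independent of $x_0$. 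Integrating yields $s=|x-\bar x|^2+d^2$, and $d^2>0$ because $s>0$.

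The obstacle is that $f$ is only assumed continuous, so differentiability of $s$ at $x_0$, or equivalently of $\hat s$ at $0$, must be produced. I would first try to propagate regularity through the inversions. Composing the inversions about $(x_0,\Sigma(x_0))$ and $(x_1,\Sigma(x_1))$ gives Möbius maps that preserve $s^{-2}|dx|^2$, and these maps move points continuously in $x_1$. Averaging the resulting identity over $x_1$ in a small ball, with a smooth weight, should express $s$ near a given point as an integral of $s$ against a kernel that is smooth in the evaluation point. That would give $s\in C^\infty$.

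If this is too cumbersome, the fallback is the purely metric route. The group generated by these inversions acts transitively, and it contains the point reflections at every point. A continuous metric with this much symmetry is a space form, here the round sphere pulled back by stereographic projection. Its conformal factor is exactly $(d^2+|x-\bar x|^2)^{-1}$ up to constants.
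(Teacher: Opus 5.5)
Your Steps 1 and 2 are correct, and so is the first-order comparison in Step 3. It needs differentiability of $s$ at only one point: that makes $\hat s$ differentiable at $0$, and reading the identity back through any other center makes $s$ differentiable everywhere. The gap is that this first-order information is never produced, and producing it is the real content of the lemma beyond Step 1. (The paper gives no argument of its own; it quotes Lemma 5.8 of \cite{Li2004}.)

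Route (a) is circular. The radius $\Sigma(x_1)=s(x_1)^{1/2}$ depends on $x_1$ only continuously. So the substitution $w=I_{x_1}z$, which you would need to turn the $x_1$-average into an integral of $s$ against a kernel smooth in the evaluation point, has a Jacobian involving $\nabla s$.

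Route (b) asserts without proof that the group contains a point reflection at every point. It then invokes a rigidity statement for merely continuous metrics that is not available in that form. Making it rigorous would require, e.g., showing that the closure of the group is a Lie group with compact isotropy, hence compact and conjugate into $O(n+1)$.

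The missing idea is to let the center go to infinity, where the inversions degenerate into reflections. Write $I_yx=y+s(y)(x-y)/|x-y|^2$ and $Q=s-|\cdot|^2$. The hypothesis with center $y$, evaluated at $x$, gives $s(I_yx)=s(x)s(y)/|x-y|^2\to s(x)$ as $|y|\to\infty$, and
\[
I_yx=x+\frac{2x\cdot y-|x|^2+Q(y)}{|x-y|^2}\,(x-y).
\]
The argument then runs as follows.
\begin{enumerate}
\item With $x=0$ one has $|I_y0|=|Q(y)|/|y|$, so coercivity of $s$ gives $|Q(y)|\le C(1+|y|)$.
\item If $y_k\to\infty$ with $y_k/|y_k|\to e$ and $Q(y_k)/|y_k|\to q$, then $I_{y_k}x\to x-(2x\cdot e+q)e$. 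Hence $s$ is invariant under reflection in the hyperplane $\{x\cdot e=-q/2\}$.
\item These reflections preserve the compact level sets of $s$, so the group they generate has bounded orbits. It therefore has a common fixed point $\bar x$ (the circumcenter of an orbit).
\item Consequently $q=-2\bar x\cdot e$ for every such limit, i.e.\ $Q(y)=-2\bar x\cdot y+o(|y|)$.
\end{enumerate}
Now plug this into
\[
s(z)-s(x_0)-\ell\cdot(z-x_0)=s(x_0)\,\frac{s(y)-|y-x_0|^2-\ell\cdot(y-x_0)}{|y-x_0|^2},\qquad y=I_{x_0}z,
\]
with $\ell=2(x_0-\bar x)$. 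The numerator is $o(|y|)$ and $|z-x_0|=s(x_0)/|y-x_0|$, so the right side is $o(|z-x_0|)$. Thus $s$ is differentiable at every $x_0$ with $\nabla s(x_0)=2(x_0-\bar x)$, and integrating gives $s=|x-\bar x|^2+d^2$ exactly as you planned.
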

%


\input{nthOrderChoquardClassification-arxiv.bbl}
\end{document}

%% file: structure.tex
\usepackage[T1]{fontenc} 
\usepackage[utf8]{inputenc} 
\usepackage{amssymb}
\usepackage{graphicx} 
\graphicspath{{Figures/}} 
\usepackage{enumitem} 
	\setlist[enumerate]{label={\bf \arabic*.}, ref = {\bf\arabic*}, wide = 0pt} 
	\setlist[enumerate, 2]{label={\bf (\alph*)}, wide = 15pt, leftmargin = 15pt} 
\usepackage{subfig} 
\usepackage{varioref} 
\usepackage{tikz}
\usepackage{tikz-cd}
\usetikzlibrary{arrows,arrows.meta}
\usetikzlibrary{calc}
\usepackage{esint} 
\usepackage{amsaddr} 
\theoremstyle{plain} 
\newtheorem{theorem}{Theorem}
\newtheorem{lemma}[theorem]{Lemma}
\newtheorem{prop}[theorem]{Proposition}
\newtheorem{coro}[theorem]{Corollary}

\newtheorem{oldtheorem}{Theorem}

\theoremstyle{definition} 
\newtheorem{defn}[theorem]{Definition}

\theoremstyle{remark} 
\newtheorem{remark}[theorem]{Remark}

\newtheorem{question}[theorem]{Question}

\numberwithin{equation}{section}
\numberwithin{theorem}{section}
\usepackage[bookmarks, colorlinks=true, pdfstartview=FitV, linkcolor=blue, citecolor=blue, urlcolor=blue]{hyperref}

\newcommand{\ds}{\displaystyle}
\renewcommand{\d}{\mathrm d}

\newcommand{\bdy}{\partial}
\newcommand{\lap}{\Delta}
\newcommand{\Grad}{\nabla}

\newcommand{\mc}{\mathcal}
\newcommand{\bb}{\mathbb}
\newcommand{\abs}[1]{\left|#1\right|}

\newcommand{\lb}{\langle}
\newcommand{\rb}{\rangle}

\DeclareMathOperator{\loc}{loc}


%% file: nthOrderChoquardClassification-arxiv.bbl
\begin{thebibliography}{JMMX15}

\bibitem[ARS06]{AdimurthiRobertStruwe2006}
Adimurthi, Fr\'ed\'eric Robert, and Michael Struwe.
\newblock {Concentration phenomena for Liouville's equation in dimension four}.
\newblock {\em J. Eur. Math. Soc.}, 8(2):171--180, 2006.

\bibitem[BM91]{BrezisMerle1991}
Ha{\"\i}m Brezis and Frank Merle.
\newblock {Uniform estimates and blow--up behavior for solutions of $-\Delta u
  = V(x)e^u$ in two dimensions}.
\newblock {\em Communications in Partial Differential Equations},
  16(8-9):1223--1253, 1991.

\bibitem[CC01]{ChangChen2001}
Sun-Yung~Alice Chang and Wenxiong Chen.
\newblock A note on a class of higher order conformally covariant equations.
\newblock {\em Discrete and Continuous Dynamical systems}, 7(2):275--282, 2001.

\bibitem[CL91]{ChenLi1991}
Wenxiong Chen and Congming Li.
\newblock Classification of solutions of some nonlinear elliptic equations.
\newblock {\em Duke Mathematical Journal}, 63(3):615--622, 1991.

\bibitem[Glu20]{Gluck2020classification}
Mathew Gluck.
\newblock Classification of solutions to a system of {$n^{\text{th}}$} order
  equations on {$\mathbb R^n$}.
\newblock {\em Communications on Pure \& Applied Analysis}, 19(12), 2020.

\bibitem[Glu25a]{Gluck2025classification}
Mathew Gluck.
\newblock {Classification of solutions to an elliptic equation on $\mathbb R^2$
  with nonlocal nonlinearity}.
\newblock {\em Discrete Contin. Dyn. Syst.}, 2025.

\bibitem[Glu25b]{Gluck2025quantization}
Mathew Gluck.
\newblock Quantization for sequences of blow-up solutions to an elliptic
  equation having nonlocal exponential nonlinearity, 2025.

\bibitem[Gor61]{Gorin1961}
Evgenii~A Gorin.
\newblock Asymptotic properties of polynomials and algebraic functions of
  several variables.
\newblock {\em Russian mathematical surveys}, 16(1):93--119, 1961.

\bibitem[HM15]{HyderMartinazzi2015}
Ali Hyder and Luca Martinazzi.
\newblock {Conformal metrics on $\mathbb R^{2m}$ with constant $Q$-curvature,
  prescribed volume and asymptotic behavior}.
\newblock {\em Discrete and continuous dynamical systems Series A},
  35(1):283--299, 2015.

\bibitem[HN23]{HuangNiu2023}
Genggeng Huang and Yating Niu.
\newblock {Classification of solutions of higher order critical Choquard
  equation}.
\newblock {\em arXiv preprint arXiv:2310.08264}, 2023.

\bibitem[HY15]{HuangYe2015}
Xia Huang and Dong Ye.
\newblock {Conformal metrics in $\mathbb R^{2m}$ with constant $Q$-curvature
  and arbitrary volume}.
\newblock {\em Calculus of Variations and Partial Differential Equations},
  54:3373--3384, 2015.

\bibitem[Hyd16]{Hyder2016}
Ali Hyder.
\newblock {Existence of entire solutions to a fractional Liouville equation in
  $\mathbb R^n$}.
\newblock {\em Rendiconti Lincei}, 27(1):1--14, 2016.

\bibitem[Hyd17]{Hyder2017}
Ali Hyder.
\newblock {Conformally Euclidean metrics on $\mathbb R^n$ with arbitrary total
  $Q$-curvature}.
\newblock {\em Analysis \& PDE}, 10(3):635--652, 2017.

\bibitem[Hyd19]{Hyder2019}
Ali Hyder.
\newblock {Structure of conformal metrics on $\mathbb R^n$ with constant
  $Q$-curvature}.
\newblock {\em Differ. Integral Equ.}, 32:423--454, 2019.

\bibitem[JMMX15]{Jin2015}
Tianling Jin, Ali Maalaoui, Luca Martinazzi, and Jingang Xiong.
\newblock Existence and asymptotics for solutions of a non-local
  {$Q$}-curvature equation in dimension three.
\newblock {\em Calc. Var.}, 2015(52):469--488, 2015.

\bibitem[Li04]{Li2004}
YanYan Li.
\newblock Remark on some conformally invariant integral equations: the method
  of moving spheres.
\newblock {\em Journal of the European Mathematical Society}, 6(2):153--180,
  2004.

\bibitem[Lie83]{Lieb1983}
Elliott~H. Lieb.
\newblock {Sharp constants in the Hardy-Littlewood-Sobolev and related
  inequalities}.
\newblock {\em Annals of Mathematics}, 118(2):349--374, 1983.

\bibitem[Lin98]{Lin1998}
C.~S. Lin.
\newblock {A classification of solutions of a conformally invariant fourth
  order equation in $\mathbb R^n$}.
\newblock {\em Commentarii Mathematici Helvetici}, 73:206--231, 1998.

\bibitem[LS94]{LiShafrir1994}
Yanyan Li and Itai Shafrir.
\newblock {Blow-up analysis for solutions of $-\Delta u = Ve^u$ in dimension
  two}.
\newblock {\em Indiana University Mathematics Journal}, 43(4):1255--1270, 1994.

\bibitem[Mal06]{Malchiodi2006}
Andrea Malchiodi.
\newblock Compactness of solutions to some geometric fourth-order equations.
\newblock {\em Journal für die reine und angewandte Mathematik},
  2006(594):137--174, 2006.

\bibitem[Mar09a]{Martinazzi2009}
Luca Martinazzi.
\newblock {Classification of solutions to the higher order Liouville’s
  equation on $\mathbb R^{2m}$}.
\newblock {\em Mathematische Zeitschrift}, 263(2):307--329, 2009.

\bibitem[Mar09b]{Martinazzi2009concentration}
Luca Martinazzi.
\newblock Concentration–compactness phenomena in the higher order liouville's
  equation.
\newblock {\em Journal of Functional Analysis}, 256(11):3743--3771, 2009.

\bibitem[Mar11]{Martinazzi2011}
Luca Martinazzi.
\newblock {Quantization for the prescribed $Q$-curvature equation on open
  domains}.
\newblock {\em Communications in Contemporary Mathematics}, 13(03):533--551,
  2011.

\bibitem[Mar13]{Martinazzi2013}
Luca Martinazzi.
\newblock {Conformal metrics on $\mathbb R^{2m}$ with constant $Q$-curvature
  and large volume}.
\newblock {\em Annales de l'IHP Analyse non lin{\'e}aire}, 30(6):969--982,
  2013.

\bibitem[MP10]{MartinazziPetrache2010}
Luca Martinazzi and Mircea Petrache.
\newblock Asymptotics and quantization for a mean-field equation of higher
  order.
\newblock {\em Communications in Partial Differential Equations},
  35(3):443--464, 2010.

\bibitem[Rob07]{Robert2007}
Frédéric Robert.
\newblock Quantization effects for a fourth-order equation of exponential
  growth in dimension $4$.
\newblock {\em Proceedings of the Royal Society of Edinburgh: Section A
  Mathematics}, 137(3):531–553, 2007.

\bibitem[RS04]{RobertStruwe2004}
Fr{\'e}d{\'e}ric Robert and Michael Struwe.
\newblock Asymptotic profile for a fourth order pde with critical exponential
  growth in dimension four.
\newblock {\em Advanced Nonlinear Studies}, 4(4):397--415, 2004.

\bibitem[WY08]{WeiYe2008}
Juncheng Wei and Dong Ye.
\newblock {Nonradial solutions for a conformally invariant fourth order
  equation in $\mathbb R^4$}.
\newblock {\em Calculus of Variations \& Partial Differential Equations},
  32(3), 2008.

\end{thebibliography}
